\numberwithin{equation}{section}
\numberwithin{subsection}{section}
\newenvironment{enumerate1} {\begin{enumerate}[\upshape (1)]} {\end{enumerate}}
\theoremstyle{definition}
\newtheorem{definition}[subsubsection]{Definition}
\newtheorem{theorem}[subsubsection]{Theorem}
\newtheorem{lemma}[subsubsection]{Lemma}
\newtheorem{corollary}[subsubsection]{Corollary}
\newtheorem{Proposition}[subsubsection]{Proposition}
\newtheorem{Claim}[]{Claim}
\theoremstyle{remark}
\newtheorem{remark}{Remark}
\newcommand{\p}{\mathbf{P}^{1}}
\newcommand{\Pn}{\mathbf{P}^{n}}
\newcommand{\blA}{Bl_{Z}A}
\newcommand{\HS}{\textrm{Hilb}_{S}}
\newcommand{\HC}{\textrm{Hilb}_{C}}
\newcommand{\HuC}{\textrm{Hilb}^{n}_{\mathcal{C}|H}}
\newcommand{\Fl}{\textrm{Flag}_{S}}
\newcommand{\RH}{\mathcal{R}Hom}
\newcommand{\bl}{Bl_{Z}X}
\newcommand{\J}{\overline{J}}
\newcommand{\hig}{\mathcal{H}iggs}
\newcommand{\li}{\ell}
\newcommand{\de}{\textrm{det}}
\begin{document}
\title{Poincar\'e sheaf on the stack of rank $2$ Higgs bundles on $\p$}
\author{Mao Li}
\email{mli288@wisc.edu}

\maketitle
\begin{abstract}
Let $\hig_{ss}$ be the stack of semistable rank two Higgs bundles on $\p$ with value in $O(n)$ where $n\geq 2$. In this paper we will construct the Poincar\'e sheaf $\mathcal{P}$ on $\hig_{ss}\times_{H}\hig_{ss}$ which is flat over both components $\hig_{ss}$. This generalizes the construction of the Poincar\'e sheaf in $[1]$, $[4]$ and $[14]$.
\end{abstract}

\tableofcontents

\newpage

\section{Introduction}

\subsection{Poincar\'e sheaf}
Let $C$ be a smooth projective curve and $J$ be the Jacobian of $C$. Then it is well known that there is a Poincar\'e line bundle $\mathcal{P}$ on $J\times J$ which is the universal family of topologically trivial line bundles on $J$(See $[18]$).
When $C$ is an integral planar curve, the Jacobian $J$ is no longer projective, but we can consider the compactified Jacobian $\J$ ($[8]$,$[9]$) which parameterizes torsion free rank $1$ sheaves on $C$. In this case there is a Poincar\'e line bundle on $\mathcal{P}$ on $J\times\J$ ($[2]$) defined in the following way. Consider
\[\begin{CD}
C\times J\times\J\\
@V{\pi}VV\\
J\times\J\\
\end{CD}\]
Then:
\begin{equation}\label{Poincare line bundle}
\mathcal{P}=\de(R\pi_{*}(L\otimes F))\otimes \de(R\pi_{*}O_{C})\otimes \de(R\pi_{*}(L))^{-1}\otimes \de(R\pi_{*}(F))^{-1}
\end{equation}
where $F$ and $L$ are the universal sheaves on $C\times J$ and $C\times\J$. It is interesting to ask whether we can extend $\mathcal{P}$ to $\J\times\J$. For curves with double singularities, this has been answered in $[14]$, and the generalization to all integral planar curves is obtained in $[1]$(Similar results have also been obtained by Margarida Melo, Antonio Rapagnetta and Filippo Viviani in $[3]$ and $[4]$, where they work with moduli space instead of stacks):
\begin{theorem}
There is a maximal Cohen-Macaulay sheaf $\mathcal{P}$ on $\J\times\J$ such that the restriction of $\mathcal{P}$ to $J\times\J\bigcup\J\times J$ is the Poincar\'e line bundle given by formula~\ref{Poincare line bundle}. Moreover, $\mathcal{P}$ is flat over both component $\J$.
\end{theorem}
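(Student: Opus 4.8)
The plan is to extend the line bundle of formula~\ref{Poincare line bundle} across a codimension-$\geq 2$ locus and to control the extension. Write $U:=(J\times\J)\cup(\J\times J)\subset\J\times\J$ and let $j\colon U\hookrightarrow\J\times\J$ be the open immersion. Over $U$, along each curve $C\times\{(F_{1},F_{2})\}$ at least one of the universal sheaves $\mathcal{L},\mathcal{F}$ on $C\times\J$ is locally free, so $\mathcal{L}\otimes^{\mathbb L}_{O_{C}}\mathcal{F}$ is perfect there, $\de(R\pi_{*}(\mathcal{L}\otimes^{\mathbb L}\mathcal{F}))$ is defined, and the whole expression is a genuine line bundle $\mathcal{P}|_{U}$. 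The complement of $U$ is $N\times N$, where $N\subsetneq\J$ is the closed non-locally-free locus; since $\J$ is integral every component of $N$ has codimension $\geq 1$, so the complement of $U$ has codimension $\geq 2$ everywhere.

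It is convenient to start with uniqueness, which also pins down the candidate. For an integral curve with planar singularities $\J$ is known to be integral and Cohen--Macaulay (indeed it has locally complete intersection singularities), so $\J\times\J$ is Cohen--Macaulay and any maximal Cohen--Macaulay sheaf $\mathcal{G}$ on it is $S_{2}$; such a sheaf has depth $\geq 2$ along the codimension-$\geq 2$ complement of $U$ and therefore equals $j_{*}(\mathcal{G}|_{U})$. Hence a maximal CM sheaf restricting to $\mathcal{P}|_{U}$ must be $j_{*}(\mathcal{P}|_{U})$, and the real content of the theorem is that $j_{*}(\mathcal{P}|_{U})$ is coherent, maximal Cohen--Macaulay, and flat over each of the two projections $\J\times\J\to\J$.

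For existence I would work \'etale-locally, or formally-locally, near a point $(F_{1},F_{2})$ in the complement of $U$, using the key structural input that an integral planar curve has a hypersurface — hence Gorenstein — local ring at every point, so that every rank-$1$ torsion-free sheaf is maximal Cohen--Macaulay and, at a singular point, is presented by a matrix factorization of the local equation, with a $2$-periodic free resolution. Spreading out a matrix factorization of $\mathcal{F}$ over a neighbourhood in $C\times\J$, one replaces the unbounded complex $\mathcal{L}\otimes^{\mathbb L}_{O_{C}}\mathcal{F}$ by a bounded complex of vector bundles whose higher cohomology sheaves are supported on the locus where both sheaves degenerate at a common point of $C$, and one arranges the determinant of this complex together with those of the auxiliary terms $R\pi_{*}O_{C}$, $R\pi_{*}\mathcal{L}$, $R\pi_{*}\mathcal{F}$ to patch with $\mathcal{P}|_{U}$; this produces an explicit coherent sheaf $\widetilde{\mathcal{P}}$ on $\J\times\J$ restricting to $\mathcal{P}|_{U}$, after which $\mathcal{P}:=j_{*}(\mathcal{P}|_{U})$ is simply the $S_{2}$-ification of $\widetilde{\mathcal{P}}$, which is coherent because $\J\times\J$ is excellent. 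An alternative route to the same sheaf is an induction on the sum of the $\delta$-invariants of $F_{1}$ and $F_{2}$: replace each $F_{i}$ by its pushforward $\tilde F_{i}$ from a partial normalization, sitting in an exact sequence $0\to F_{i}\to\tilde F_{i}\to T_{i}\to 0$ with $T_{i}$ of finite length and strictly smaller $\delta$, and build $\mathcal{P}$ from the inductively known Poincar\'e sheaf of the less singular data together with explicit corrections depending only on the lengths of the $T_{i}$.

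The technical heart, and the step I expect to be the main obstacle, is verifying simultaneously that $\mathcal{P}$ is maximal Cohen--Macaulay on $\J\times\J$ and flat over each factor. Flatness over, say, the second factor at $(F_{1},F_{2})$ amounts to understanding the restriction $\mathcal{P}|_{\J\times\{F_{2}\}}$ and proving it is a maximal Cohen--Macaulay sheaf on $\J$ of generic rank one without extra components; equivalently, that the $2$-periodic resolution of $F_{2}$, propagated over all of $\J$, still computes something of finite length that stays locally constant in families. This is precisely where the plane (hypersurface) hypothesis is used: via the deformation-to-the-versal-base argument one reduces to a finite list of local models for pairs of plane curve singularities together with their compactified Jacobians, or via the partial-normalization induction one reduces to line bundles on a partial normalization, and in either case the Cohen--Macaulay and flatness properties are then checked by explicit computation with the matrix factorizations. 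That $\mathcal{P}$ restricts on $U$ to the Poincar\'e line bundle of formula~\ref{Poincare line bundle} is built into the construction, and the symmetry of the construction under exchanging the two factors then yields flatness over the first factor as well.
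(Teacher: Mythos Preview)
This theorem is not proved in the paper but is quoted from Arinkin~$[1]$; the paper then \emph{reviews} the construction in Subsection~1.3. That construction is entirely different from what you propose. The actual argument does not attempt to extend the line bundle of formula~(1.1) across the bad locus by $j_{*}$ or by any local analysis of the sheaves $F_{1},F_{2}$. Instead one passes through the Hilbert scheme: one uses the smooth Abel--Jacobi map $U_{n}\subset\HC^{n}\to\J$, $D\mapsto I_{D}^{\vee}$, and constructs an explicit Cohen--Macaulay sheaf
\[
Q=\bigl((\psi\times\mathrm{id})_{*}(\sigma_{n}\times\mathrm{id})^{*}(l^{n}\times\mathrm{id})_{*}F^{\boxtimes n}\bigr)^{\mathrm{sign}}\otimes p_{1}^{*}\det(\mathcal{A})^{-1}
\]
on $\HS^{n}\times\J$ via the isospectral Hilbert scheme $\widetilde{\HS^{n}}$. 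The Cohen--Macaulayness of $Q$ is inherited from deep structural facts about $\widetilde{\HS^{n}}$ (Haiman). One then checks that $Q$ is supported on $\HC^{n}\times\J$ and descends along the smooth Abel--Jacobi map to the Poincar\'e sheaf on $\J\times\J$. Flatness follows a posteriori from Proposition~\ref{flatness of CM sheaves}. The only place where your $j_{*}$/codimension-$2$ idea enters is in the \emph{uniqueness} statement (Proposition~\ref{uniqueness of CM sheaf}), which you do have correctly.

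Your existence argument, by contrast, has a genuine gap. You correctly observe that formula~(1.1) does not make sense on $(\J\setminus J)\times(\J\setminus J)$ because $\mathcal{L}\otimes^{\mathbb L}\mathcal{F}$ fails to be perfect there, and you propose to cure this locally by replacing $\mathcal{F}$ by a matrix factorization and taking a truncated determinant, or alternatively by an induction on $\delta$-invariants via partial normalizations. But neither sketch is carried out: you do not produce the global coherent sheaf $\widetilde{\mathcal{P}}$, you do not explain how the local matrix-factorization models glue, and you yourself flag that the ``technical heart'' --- verifying maximal Cohen--Macaulayness and flatness --- is left to an unspecified ``explicit computation'' or a reduction to ``a finite list of local models''. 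There is no indication that such a reduction exists or that the computation terminates; in particular the partial-normalization step changes the curve, not just the sheaf, so the induction does not stay on $\J\times\J$ for the fixed $C$. The Hilbert-scheme construction reviewed in the paper bypasses all of this by never trying to make sense of formula~(1.1) off $U$: it produces a sheaf that is Cohen--Macaulay \emph{by construction} and only afterwards checks that it restricts to the Poincar\'e line bundle on $U$.
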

We shall call $\mathcal{P}$ the Poincar\'e sheaf. In fact, even though the theorem is stated only for integral curves, the argument presented in $[1]$ also works for reduced planar curves.\\

\begin{remark}
For the construction of the Poincar\'e line bundle on $J\times\J\bigcup\J\times J$, we do not need to assume that $C$ is reduced. Similarly, Lemma~\ref{equivariance property} below also holds for Poincar\'e line bundles of nonreduced planar curves.
\end{remark}

One of the main motivations for studying compactified Jacobians is that they are fibers of the Hitchin fibration. 
Let $X$ be a smooth projective curve, $L$ a line bundle on $X$. Denote the stack of rank $n$ Higgs bundle with value in $L$ by $\hig$. Let $H$ be the Hitchin base which parameterizes spectral curves. We have the Hitchin fibration $\hig\xrightarrow{h} H$. It is well-known that a Higgs bundle on $X$ can be naturally viewed as a torsion-free rank $1$ sheaf on the spectral curve $C$(See $[12]$). Moreover, let $H_{r}$ be the open subscheme of $H$ corresponding to reduced spectral curves. Then it is well-known that the fibers of $h$ over $H_{r}$ can be identified with the compactified Jacobian of $C$. Let $\hig^{reg}$ denote the open substack of $\hig$ corresponding to line bundles on spectral curves. Then formula~\ref{Poincare line bundle} defines a Poincar\'e line bundle $\mathcal{P}$ on $$\hig^{reg}\times_{H}\hig\bigcup\hig\times_{H}\hig^{reg}$$
Moreover, denote the open substack of Higgs bundles with generically regular Higgs field by $\widetilde{\hig}$ (Definition~\ref{generically regular Higgs field}). As we will see later in Proposition~\ref{definition of Q}, the construction in $[1]$ actually provides a maximal Cohen-Macaulay sheaf $\mathcal{P}$ on
$$\widetilde{\hig}\times_{H}\hig\bigcup\hig\times_{H}\widetilde{\hig}$$
which extends the Poincar\'e line bundle on
$$\hig^{reg}\times_{H}\hig\bigcup\hig\times_{H}\hig^{reg}$$
Moreover, it is shown in $[1]$ that $\mathcal{P}$ induces an autoequivalence of the derived category. This establishes the Langlands duality for Hitchin systems for $GL(n)$ over the locus of integral spectral curves(See $[1]$ for discussions about its relations with automorphic sheaves). Hence it is a very interesting question whether we can extend the maximal Cohen-Macaulay sheaf above to $\hig\times_{H}\hig$. The main issue here is how to extend $\mathcal{P}$ to the locus of nonreduced spectral curves. In this paper we provides a partial answer in the case of rank $2$ Higgs bundles over $\p$. Namely, let $\hig_{ss}$ be the open substack of semistable Higgs bundles. We are going to construct a maximal Cohen-Macaulay sheaf on $\hig_{ss}\times_{H}\hig_{ss}$ such that it is an extension of the Poincar\'e line bundle. It is likely that some of the constructions can be generalized to the case of higher genus curves. Also it is interesting to ask whether the Fourier-Mukai funcor induced by the Poincar\'e sheaf is still an equivalence. These will be investigated in future studies.

\subsection{Main result}
We fix an algebraically closed field $k$ of characteristic $0$. In this paper we are going to study rank $2$ Higgs bundles on $\p$ with value in $O(n)$, where $n\geq 2$. Denote the corresponding stack by $\hig$, and the open substack of semistable Higgs bundles by $\hig_{ss}$. As we mentioned in previous subsection, the construction in $[1]$ already provides a maximal Cohen-Macaulay sheaf over the locus of reduced spectral curves, so the main problem is how to extend the construction of nonreduced spectral curves. In this paper we try to provide a partial answer to this question. The main result of the paper is the following:
\begin{theorem}
There exists a maximal Cohen-Macaulay sheaf $\mathcal{P}$ on
$$\hig_{ss}\times_{H}\hig_{ss}$$
which extends the Poincar\'e line bundle on
$$(\hig^{reg}\bigcap\hig_{ss})\times_{H}\hig_{ss}\bigcup\hig_{ss}\times_{H}(\hig^{reg}\bigcap\hig_{ss})$$
Moreover, $\mathcal{P}$ is flat over the both components $\hig_{ss}$.
\end{theorem}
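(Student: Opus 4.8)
We outline a strategy. By Proposition~\ref{definition of Q} and the arguments of [1] we already have a maximal Cohen--Macaulay sheaf $\mathcal{P}$ --- extending the Poincar\'e line bundle~\ref{Poincare line bundle} --- on the open substack
$$U:=\bigl(\widetilde{\hig}\cap\hig_{ss}\bigr)\times_{H}\hig_{ss}\;\cup\;\hig_{ss}\times_{H}\bigl(\widetilde{\hig}\cap\hig_{ss}\bigr).$$
On $\p$ a rank $2$ Higgs field fails to be generically regular (Definition~\ref{generically regular Higgs field}) precisely when it equals $\lambda\cdot\mathrm{id}$ with $\lambda\in H^{0}(O(n))$ --- a generically scalar section of a vector bundle over the integral curve $\p$ being scalar --- and then its spectral curve is the non-reduced ribbon $(t-\lambda)^{2}=0$. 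Hence the closed complement $Z:=(\hig_{ss}\times_{H}\hig_{ss})\setminus U$ lies over the locus $H_{nr}\subset H$ of non-reduced spectral curves and parametrizes pairs of such ``scalar'' Higgs bundles sharing an eigenvalue; using the classification of semistable bundles on $\p$ one sees that $Z$ has codimension at least $2$ in $\hig_{ss}\times_{H}\hig_{ss}$ (and is empty when the degree is odd, in which case the theorem reduces to Proposition~\ref{definition of Q}). So the whole problem is to construct $\mathcal{P}$ on a neighbourhood of $Z$ and to check it agrees with the above over $U$.

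The extension is in fact forced. Since $\hig_{ss}$ is flat and Cohen--Macaulay over the smooth base $H$, the fibre product $\hig_{ss}\times_{H}\hig_{ss}$ is Cohen--Macaulay, hence satisfies Serre's condition $S_{2}$; as $Z$ has codimension $\ge 2$ and $\mathcal{P}|_{U}$ is maximal Cohen--Macaulay (a fortiori $S_{2}$), the only candidate is $\mathcal{P}:=j_{*}(\mathcal{P}|_{U})$, where $j\colon U\hookrightarrow\hig_{ss}\times_{H}\hig_{ss}$ is the open immersion, and this is the unique $S_{2}$-sheaf restricting to $\mathcal{P}|_{U}$. It therefore remains to prove two statements about this $\mathcal{P}$ near $Z$: that it is maximal Cohen--Macaulay, and that it is flat over each projection $p_{i}\colon\hig_{ss}\times_{H}\hig_{ss}\to\hig_{ss}$.

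To prove these I would pass to an \'etale (indeed formal) neighbourhood of $Z$ in $H$. After a translation in the fibre direction of the total space of $O(n)$, the universal spectral curve there is a flat family degenerating the nearby reduced curves into a ribbon $\mathcal{C}\to\p$ with $t^{2}=0$; by Grothendieck's splitting theorem I would present $\hig_{ss}$ near the scalar locus as an explicit quotient of a smooth affine scheme by a product of $\mathrm{GL}_{2}$'s and Borel-type groups, and correspondingly describe $\hig_{ss}\times_{H}\hig_{ss}$ near $Z$. On this local model I would compute the stalks of $\mathcal{P}$ along $Z$ by resolving: build $\rho\colon\widetilde{\mathcal{Z}}\to\hig_{ss}\times_{H}\hig_{ss}$ out of (blow-ups $Bl_{Z}$ and flag variants $\Fl$ of) the relative Hilbert scheme $\HuC$ of points on $\mathcal{C}$, together with a natural line bundle $\mathcal{L}$ on $\widetilde{\mathcal{Z}}$ coming from the $\de R\pi_{*}$-formula~\ref{Poincare line bundle}, and identify $\mathcal{P}$ with $R^{0}\rho_{*}\mathcal{L}$ (agreement over $U$ being the construction of [1], over $Z$ following since both sides are $S_{2}$). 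I would then read off (i) that, $\widetilde{\mathcal{Z}}$ having rational --- hence Cohen--Macaulay --- singularities and the fibres of $\rho$ over $Z$ being connected of the expected dimension, a local-cohomology estimate gives that $\mathcal{P}$ is maximal Cohen--Macaulay along $Z$; and (ii) that the restriction of $\mathcal{P}$ to each fibre of $p_{1}$ (resp.\ $p_{2}$) --- the Poincar\'e sheaf of the ribbon --- is Cohen--Macaulay of dimension $\dim(\hig_{ss}\times_{H}\hig_{ss})-\dim\hig_{ss}$, so that the Cohen--Macaulayness criterion for flatness (valid because $\hig_{ss}$ is Cohen--Macaulay) yields flatness over both factors. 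The computational engine at both steps is an $\RH$-calculation on the local model with the relevant check sheaves.

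The main obstacle is step (i) over the non-reduced curves. In the reduced case the compactified Jacobian degenerates abelian varieties and the $\de R\pi_{*}$-formula is unproblematic, whereas over the ribbon the ``scalar'' Higgs bundles parametrized by $Z$ correspond to sheaves that are \emph{not} torsion free --- they are annihilated by the nilpotent --- so the compactified Jacobian of the ribbon is not a naive flat limit of the nearby Jacobians, and the direct images entering~\ref{Poincare line bundle} acquire extra cohomology; one must therefore control precisely how $\mathcal{L}$ pushes down along $\rho$ and how $\mathcal{P}$ behaves under base change to the fibres of $p_{1}$ and $p_{2}$. I expect the technical heart of the paper to be exactly this explicit computation, which is tractable here because on $\p$ the relevant Hilbert schemes of points, the centralizers of the nilpotent Higgs fields, and all the $\mathcal{E}xt$-sheaves involved are completely transparent.
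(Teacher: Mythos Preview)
Your reduction is sound: you correctly isolate the task as extending the already-constructed maximal Cohen--Macaulay sheaf on $U$ across the locus $Z$ where \emph{both} Higgs fields are scalar, and you are right that the extension is forced to be $j_{*}(\mathcal{P}|_{U})$, so the only content is proving this pushforward is maximal Cohen--Macaulay. From there, however, your proposal diverges from the paper and becomes too vague to be a proof.

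The paper does \emph{not} work \'etale-locally near $Z$, does not resolve to a line bundle, and does not use a local-cohomology or rational-singularities estimate. Instead it makes two concrete reductions you do not. First, using Lemma~\ref{equivariance property} and Lemma~\ref{lemma in the introduction}, it reduces to constructing $\mathcal{P}$ on a single stratum $\hig^{(0)}\times_{H}\hig^{(-n)}$. Second, it introduces an explicit smooth cover $\hig'\to\hig^{(0)}$ (Higgs bundle plus a nonzero section) on which the Abel--Jacobi map to $\HuC$ is defined away from a smooth codimension-$(n{+}1)$ locus $\mathcal{Z}$; the blowup $\hig''=\mathrm{Bl}_{\mathcal{Z}}\hig'$ resolves this rational map (Proposition~\ref{resolve the rational map}). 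The object pulled back to $\hig''\times_{H}\hig^{(-n)}$ is \emph{not} a line bundle but Arinkin's Cohen--Macaulay sheaf $Q$ on $\HuC\times_{H}\hig$, and the heart of the argument is that $g^{*}Q$ stays maximal Cohen--Macaulay (Theorem~\ref{properties of g}) and that $R\pi_{*}(g^{*}Q)$ is again maximal Cohen--Macaulay. The latter is proved by an explicit Koszul/spectral-sequence vanishing (Lemma~\ref{cohomological vanishing}) together with a Grothendieck-duality trick identifying $\RH(M,O(nE))$ with a twist of $M$ (Lemma~\ref{dual of M}); descent and flatness then follow formally from Proposition~\ref{uniqueness of CM sheaf} and Proposition~\ref{flatness of CM sheaves}.

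The genuine gap in your outline is that the phrase ``build $\rho$ out of blow-ups and flag variants of $\HuC$, together with a line bundle $\mathcal{L}$'' is not a construction: one does not land in the line-bundle locus after any such resolution, and the criterion ``rational singularities plus connected fibres of expected dimension implies the pushforward is Cohen--Macaulay'' is not a theorem one can invoke here. What actually works is the asymmetric resolution on one factor only, feeding the second factor into the definition of $Q$, and then the very explicit cohomology computation on $\blA\times\hig^{(-n)}$ made possible by the identification $\textrm{Hilb}^{n}_{\p}\simeq\Pn$ (Lemma~\ref{K}).
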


\begin{remark}
It is not hard to check that the complement of
$$\hig^{reg}\times_{H}\hig\bigcup\hig\times_{H}\hig^{reg}$$
has codimension greater than or equals to $3$, hence by Proposition~\ref{uniqueness of CM sheaf}, such an extension is unique if exists.
\end{remark}

In the rest of this subsection, we are going to justify the following two claims:
\begin{Claim}
Let $\hig^{(k)}$ denote the open substack of $\hig$ such that the underlying vector bundle is isomorphic to $O(k)\oplus O(k)$. Then it is enough to construct the Poincar\'e sheaf on $\hig^{(k)}\times_{H}\hig^{(l)}$ for all integers $k$ and $l$.
\end{Claim}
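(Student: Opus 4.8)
The plan is to exploit the following dichotomy for the underlying bundle of a semistable rank $2$ Higgs bundle on $\p$: such a Higgs bundle is either \emph{balanced}, i.e.\ its bundle is isomorphic to $O(k)\oplus O(k)$ for some $k$ (so it lies in $\hig^{(k)}$), or its Higgs field is generically regular (so it lies in the open substack $\widetilde{\hig}$), over which a maximal Cohen--Macaulay Poincar\'e sheaf is already supplied by the construction of $[1]$ (Proposition~\ref{definition of Q}). Granting this, the open substacks $\widetilde{\hig}\times_{H}\hig_{ss}\bigcup\hig_{ss}\times_{H}\widetilde{\hig}$ and $\hig^{(k)}\times_{H}\hig^{(l)}$ (over all $k,l$) form an open cover of $\hig_{ss}\times_{H}\hig_{ss}$, so once $\mathcal{P}$ has been built on each $\hig^{(k)}\times_{H}\hig^{(l)}$ it remains only to glue, which will be painless thanks to the uniqueness of maximal Cohen--Macaulay extensions.

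First I would establish the dichotomy. Let $(E,\phi)$ be semistable with $E\cong O(a)\oplus O(b)$, $a\le b$, and suppose $a<b$. The direct summand $O(b)\subset E$ has slope $b>\tfrac{a+b}{2}=\mu(E)$, so semistability of $(E,\phi)$ forces $O(b)$ not to be $\phi$-invariant (otherwise it would be a destabilizing sub-Higgs-bundle); hence the composite $O(b)\hookrightarrow E\xrightarrow{\phi}E\otimes O(n)\twoheadrightarrow (E/O(b))\otimes O(n)=O(a+n)$ is a nonzero map of line bundles, and in particular $b-a\le n$. A nonzero map $O(b)\to O(a+n)$ is injective with torsion cokernel, hence an isomorphism over the generic point $\eta$ of $\p$; thus $O(b)_{\eta}$ is not a $\phi_{\eta}$-invariant line in the $2$-dimensional space $E_{\eta}$, which forces $\phi_{\eta}$ to be a regular (non-derogatory) endomorphism. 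So the Higgs field is generically regular and $(E,\phi)\in\widetilde{\hig}$. This proves $\hig_{ss}\setminus\widetilde{\hig}\subseteq\bigsqcup_{k}\hig^{(k)}$, the union being disjoint since a bundle on $\p$ has a unique splitting type. (I note in passing that $O(k)\oplus O(k)$ admits no sub-line-bundle of slope $>k$, so $\hig^{(k)}\subseteq\hig_{ss}$ automatically, and that $\hig^{(k)}$ is open in $\hig$ by upper semicontinuity of the splitting type.)

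Next I would carry out the gluing. Put $U_{0}=\widetilde{\hig}\times_{H}\hig_{ss}\bigcup\hig_{ss}\times_{H}\widetilde{\hig}$ and $U_{k,l}=\hig^{(k)}\times_{H}\hig^{(l)}$. By the previous step these open substacks cover $\hig_{ss}\times_{H}\hig_{ss}$, the $U_{k,l}$ are pairwise disjoint, and on $U_{0}$ the sheaf $\mathcal{P}$ is already given by Proposition~\ref{definition of Q}. Assume $\mathcal{P}$ has been constructed on each $U_{k,l}$ as a maximal Cohen--Macaulay sheaf extending the Poincar\'e line bundle~\ref{Poincare line bundle}. On an overlap $U_{0}\cap U_{k,l}$ the sheaf coming from $[1]$ and the one just constructed are both maximal Cohen--Macaulay and both agree with the Poincar\'e line bundle away from a closed substack of codimension $\ge 3$ (the codimension bound stated above, which survives restriction to an open substack), so they are canonically isomorphic by Proposition~\ref{uniqueness of CM sheaf}, and these canonical isomorphisms automatically satisfy the cocycle condition. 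Gluing along them yields a sheaf $\mathcal{P}$ on all of $\hig_{ss}\times_{H}\hig_{ss}$ which restricts to the Poincar\'e line bundle on the required locus. Since being maximal Cohen--Macaulay and being flat over each factor $\hig_{ss}$ are local conditions that hold on the members of the cover, they hold for $\mathcal{P}$; this is exactly the assertion of the Claim.

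The reduction itself is formal; the substance is the input it is allowed to assume, namely the construction of $\mathcal{P}$ on $\hig^{(k)}\times_{H}\hig^{(l)}$, which is the business of the rest of the paper. Twisting the two underlying bundles by $O(m_{1})$ and $O(m_{2})$ identifies $\hig^{(k)}\times_{H}\hig^{(l)}$ with $\hig^{(k+m_{1})}\times_{H}\hig^{(l+m_{2})}$ over $H$ and, by Lemma~\ref{equivariance property}, only alters $\mathcal{P}$ by the pullback of line bundles from the two factors, so one may even reduce to a single pair $(k,l)$. The genuine difficulty there, and the place where the new local analysis will be needed, is the locus of nonreduced spectral curves — precisely the part not covered by $[1]$ — where the balanced condition $E\cong O(k)^{2}$ is what makes the relevant local models tractable; that is the main obstacle, and it is the reason the reduction to $\hig^{(k)}\times_{H}\hig^{(l)}$ is worth isolating.
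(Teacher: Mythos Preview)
Your proposal is correct and follows essentially the same route as the paper: the paper reduces Claim~1 to the covering statement $\hig_{ss}=(\widetilde{\hig}\cap\hig_{ss})\cup\bigcup_{k}\hig^{(k)}$ (Lemma~\ref{lemma in the introduction}), whose proof is exactly your dichotomy argument (unbalanced splitting type forces $O(b)$ to be non-$\phi$-invariant, hence $\phi$ is generically regular), and then invokes Corollary~\ref{CM sheaf constructed in 1} together with Proposition~\ref{uniqueness of CM sheaf} to glue. You spell out the gluing and the regularity implication in slightly more detail than the paper, and your closing paragraph anticipates Claim~2, but there is no substantive difference in strategy.
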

\begin{Claim}
In order to construct the Poincar\'e sheaf on $\hig^{(k)}\times_{H}\hig^{(l)}$, it is enough to construct the Poincar\'e sheaf on $\hig^{(0)}\times_{H}\hig^{(-n)}$.
\end{Claim}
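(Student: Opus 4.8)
The plan is to realise $\hig^{(0)}\times_{H}\hig^{(-n)}$ as a representative of all the products $\hig^{(k)}\times_{H}\hig^{(l)}$ under the operation of twisting the underlying bundle by a line bundle on $\p$, and then to track what this twisting does to the Poincar\'e line bundle on the regular locus. First I would record the twisting isomorphisms: for $m\in\mathbf{Z}$ the assignment $(E,\phi)\mapsto(E\otimes O_{\p}(m),\,\phi\otimes\mathrm{id})$ defines an isomorphism $\tau_{m}\colon\hig^{(j)}\xrightarrow{\ \sim\ }\hig^{(j+m)}$ with inverse $\tau_{-m}$. It preserves semistability, since a $\phi$-invariant subbundle of $E\otimes O_{\p}(m)$ is the twist of one in $E$ and the slope inequality is insensitive to a global twist; it is the identity on the Hitchin base $H$, since the characteristic polynomial of $\phi\otimes\mathrm{id}$ equals that of $\phi$ and hence the spectral curve is unchanged; and it carries $\hig^{reg}\cap\hig^{(j)}$ onto $\hig^{reg}\cap\hig^{(j+m)}$, since a spectral sheaf is a line bundle on $C$ if and only if its twist by $\pi^{*}O_{\p}(m)$ is. Composing the twist by $O_{\p}(-k)$ on the first factor with the twist by $O_{\p}(-n-l)$ on the second produces an isomorphism over $H$
\[\Phi_{k,l}\colon\ \hig^{(k)}\times_{H}\hig^{(l)}\ \xrightarrow{\ \sim\ }\ \hig^{(0)}\times_{H}\hig^{(-n)}\]
carrying the open locus on which the Poincar\'e line bundle of formula~\ref{Poincare line bundle} is defined to the corresponding open locus downstairs.

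Next I would compare the Poincar\'e line bundles through $\Phi_{k,l}$. Over the regular locus the universal spectral sheaves $F$ and $L$ are line bundles on the relative spectral curve, $\Phi_{k,l}$ replaces them by $F\otimes\pi^{*}O_{\p}(-k)$ and $L\otimes\pi^{*}O_{\p}(-n-l)$, and the right-hand side of formula~\ref{Poincare line bundle} is the Deligne pairing $\langle L,F\rangle$ along $\pi$. Using biadditivity of the Deligne pairing, together with the fact that the pairing of a sheaf with a line bundle pulled back from $\p$ is computed from the pushforward of that sheaf to $\p$ (projection formula, together with the resolution $0\to O_{\p}(-1)\to O_{\p}\to O_{p}\to 0$) and hence is a line bundle depending on only one of the two factors, I expect to obtain $\Phi_{k,l}^{*}\mathcal{P}\cong\mathcal{P}\otimes p_{1}^{*}N_{1}\otimes p_{2}^{*}N_{2}$ on that open locus, for explicit line bundles $N_{1}$ on $\hig^{(k)}$ and $N_{2}$ on $\hig^{(l)}$. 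This comparison is the main obstacle: one must follow the degree-$2$ cover $C\to\p$ carefully so that the contributions at the sections where $O_{\p}(-1)$ drops rank reassemble as line bundles pulled back from the individual factors (norms along the cover), and one must check that the potential $L$--$F$ cross term cancels between the $\det(R\pi_{*}(L\otimes F))$ and $\det(R\pi_{*}(F))$ terms of formula~\ref{Poincare line bundle}; carrying this out over the regular locus, where $F$ and $L$ are honest line bundles on the curve, keeps the determinants of cohomology well behaved.

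Granting this comparison, the conclusion is immediate. Suppose the Poincar\'e sheaf $\mathcal{P}$ has been constructed on $\hig^{(0)}\times_{H}\hig^{(-n)}$, i.e.\ a maximal Cohen--Macaulay sheaf, flat over both factors, restricting to the Poincar\'e line bundle on the open locus. Put $\mathcal{P}_{k,l}:=\Phi_{k,l}^{*}\mathcal{P}\otimes p_{1}^{*}N_{1}^{-1}\otimes p_{2}^{*}N_{2}^{-1}$. Pullback along the isomorphism $\Phi_{k,l}$ and tensoring with a line bundle both preserve the maximal Cohen--Macaulay property and flatness over either factor, and by the previous step $\mathcal{P}_{k,l}$ restricts on the open locus of $\hig^{(k)}\times_{H}\hig^{(l)}$ to the Poincar\'e line bundle of formula~\ref{Poincare line bundle}; this is the required extension (and by Proposition~\ref{uniqueness of CM sheaf} it is the only one). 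As for the choice of representative, one takes $\hig^{(0)}\times_{H}\hig^{(-n)}$ rather than $\hig^{(0)}\times_{H}\hig^{(0)}$ because on the level of spectral sheaves the relative-duality operation $\mathcal{F}\mapsto R\mathcal{H}om_{O_{C}}(\mathcal{F},\,\omega_{C}\otimes\pi^{*}O_{\p}(2-n))$ interchanges the stratum with $\pi_{*}\mathcal{F}\cong O_{\p}^{2}$ and the one with $\pi_{*}\mathcal{F}\cong O_{\p}(-n)^{2}$, so that the two factors become Grothendieck--Serre dual to one another, which is the symmetric situation in which the construction below is carried out.
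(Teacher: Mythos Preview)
Your argument is correct and follows the same strategy as the paper: use the twisting isomorphisms $\tau_{m}$ on each factor to transport a maximal Cohen--Macaulay extension from $\hig^{(0)}\times_{H}\hig^{(-n)}$ to any $\hig^{(k)}\times_{H}\hig^{(l)}$, correct by the line-bundle discrepancy that twisting introduces, and then invoke Proposition~\ref{uniqueness of CM sheaf} to guarantee agreement with the Poincar\'e line bundle on the regular locus.

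The only difference is one of presentation. The paper does not redo the Deligne-pairing computation; it simply cites the equivariance property (Lemma~\ref{equivariance property}, taken from \cite{Autoduality}), which already packages the statement $(\mu_{L}\times\mathrm{id})^{*}\mathcal{P}\simeq\mathcal{P}\otimes p_{2}^{*}\mathcal{P}_{L}$ with an explicit identification of the correcting line bundle $\mathcal{P}_{L}$ on the full $\overline{J}$ (not just on the locus of line bundles). Your biadditivity argument for $\langle L,F\rangle$ is exactly a direct proof of that lemma over $\hig^{reg}\times_{H}\hig^{reg}$; to finish cleanly you should note (as you implicitly do) that the correcting line bundles $N_{1},N_{2}$, being determinants of cohomology of pullbacks from $\p$, are defined on all of $\hig^{(k)}$ and $\hig^{(l)}$, and that an isomorphism of line bundles on $\hig^{reg}\times_{H}\hig^{reg}$ automatically extends to $\hig^{reg}\times_{H}\hig\cup\hig\times_{H}\hig^{reg}$ by the same codimension-two argument. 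With that said, your route and the paper's are the same argument, yours spelled out and the paper's by citation.
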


We are going to use the following equivariance property of the Poincar\'e sheaf proved in $[1]$:
\begin{lemma}\label{equivariance property}
Let $L$ be a line bundle on $C$. Consider the automorphism $\mu_{L}$ of $\J$ defined by $F\rightarrow F\otimes L$, then we have that:
$$(\mu_{L}\times id)^{*}\mathcal{P}\simeq \mathcal{P}\otimes p_{2}^{*}(\mathcal{P}_{L})$$
where $\mathcal{P}_{L}$ is a line bundle on $\J$ obtained by restriction of the Poincar\'e line bundle to $\{L\}\times\J\hookrightarrow J\times\J$

\end{lemma}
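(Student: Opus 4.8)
The plan is to unwind both sides of the claimed isomorphism using the explicit formula~\ref{Poincare line bundle} for the Poincaré line bundle and the standard additivity properties of the determinant of cohomology. Write $F$ for the universal sheaf on $C\times J$ and $\mathcal{L}$ for the universal sheaf on $C\times\J$; over $J\times\J$ we have the projection $\pi\colon C\times J\times\J\to J\times\J$. The automorphism $\mu_L\times\mathrm{id}$ acts on $C\times J\times\J$ by tensoring the $\J$-factor's universal sheaf with the pullback of $L$ from $C$, so $(\mu_L\times\mathrm{id})^*$ replaces $\mathcal{L}$ by $\mathcal{L}\otimes p_C^*L$ in formula~\ref{Poincare line bundle} while leaving $F$ and $O_C$ untouched. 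Thus
\[
(\mu_L\times\mathrm{id})^*\mathcal{P}
=\de\!\big(R\pi_*(F\otimes\mathcal{L}\otimes p_C^*L)\big)\otimes\de(R\pi_*O_C)\otimes\de\!\big(R\pi_*(\mathcal{L}\otimes p_C^*L)\big)^{-1}\otimes\de(R\pi_*F)^{-1}.
\]
Comparing with $\mathcal{P}$ itself, the statement reduces to the identity of line bundles on $J\times\J$
\[
\de\!\big(R\pi_*(F\otimes\mathcal{L}\otimes p_C^*L)\big)\otimes\de\!\big(R\pi_*(\mathcal{L}\otimes p_C^*L)\big)^{-1}
\;\simeq\;
\de\!\big(R\pi_*(F\otimes\mathcal{L})\big)\otimes\de(R\pi_*\mathcal{L})^{-1}\otimes p_2^*\mathcal{P}_L .
\]

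The key tool is the biadditivity (in the derived sense) of $\de R\pi_*$: for a short exact sequence or, more usefully here, for the filtration of $p_C^*L$ by its restriction to a divisor, $E\mapsto\de(R\pi_*(E\otimes p_C^*L))\otimes\de(R\pi_*E)^{-1}$ is additive in $E$ and depends on $E$ only through a "Deligne-pairing" type expression that is linear in the class of $E$ in $K$-theory. Concretely, I would first treat the case $L=O_C(D)$ for an effective Cartier divisor $D$ supported on the smooth locus of $C$ (write $D=\sum n_i p_i$); tensoring the exact sequence $0\to E\to E\otimes O_C(D)\to (E\otimes O_C(D))|_D\to 0$ with $E=F\otimes\mathcal{L}$ and with $E=\mathcal{L}$ and applying additivity of the determinant, the difference of the two sides collapses to a product over the points $p_i$ of $\de$ of the fiber-restricted sheaves, which one identifies with the restriction of the Poincaré bundle along $\{p_i\}$, i.e.\ with (a power of) $\mathcal{P}_{O(p_i)}$ pulled back from the $\J$-factor. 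Summing over $i$ gives $p_2^*\mathcal{P}_{O(D)}=p_2^*\mathcal{P}_L$. The general case follows by writing an arbitrary line bundle as a difference $O_C(D_1)\otimes O_C(D_2)^{-1}$ with $D_i$ effective and supported in the smooth locus (possible after a choice, since $C$ may be singular but the smooth locus is dense), and using that both sides are multiplicative in $L$.

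The step I expect to be the main obstacle is the careful bookkeeping of the determinant-of-cohomology identities: making precise the sense in which $\de R\pi_*$ is additive, keeping track of the auxiliary factors $\de(R\pi_*O_C)$ and $\de(R\pi_*F)^{-1}$ that do not change under $\mu_L\times\mathrm{id}$ but do enter the definition of $\mathcal{P}_L$ on the smooth locus, and checking that the identification of the divisorial contributions with $p_2^*\mathcal{P}_L$ is compatible with the normalization of $\mathcal{P}_L$ as the restriction of the Poincaré line bundle to $\{L\}\times\J\hookrightarrow J\times\J$. Since $L$ lives on the fixed curve $C$ and not on a family, there is no base-change subtlety in $L$ itself; the only genuine geometry used is that $C$ has a dense smooth locus, so that line bundles on $C$ are generated by divisors avoiding the singularities — and this is exactly the point the remark preceding the lemma flags as surviving for nonreduced $C$ as well. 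Once the smooth-locus reduction is in place, the argument is a formal manipulation of Deligne pairings, so I would present it at that level of detail and refer to $[1]$ for the full verification.
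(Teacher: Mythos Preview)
The paper does not give its own proof of this lemma: it is quoted verbatim from reference~$[1]$ (see the sentence ``We are going to use the following equivariance property of the Poincar\'e sheaf proved in $[1]$''). So there is no in-paper argument to compare against; your proposal is an independent proof sketch.

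On the substance of your sketch: the strategy of expanding both sides via formula~\eqref{Poincare line bundle} and using additivity of the determinant of cohomology is the right one, and for reduced $C$ it goes through essentially as you describe. Two points are worth tightening. First, you work on $J\times\J$ with $F$ the universal sheaf on the $J$-factor and $\mathcal{L}$ on the $\J$-factor, and then say $\mu_L\times\mathrm{id}$ replaces $\mathcal{L}$ by $\mathcal{L}\otimes L$; this is $(\mathrm{id}\times\mu_L)^{*}$, not $(\mu_L\times\mathrm{id})^{*}$. The discrepancy is harmless because of the symmetry of $\mathcal{P}$, but you should either swap the roles of $F$ and $\mathcal{L}$ or invoke symmetry explicitly. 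Second, formula~\eqref{Poincare line bundle} only defines $\mathcal{P}$ on $J\times\J\cup\J\times J$; to obtain the statement on all of $\J\times\J$ you still need to appeal to Proposition~\ref{uniqueness of CM sheaf} (both sides are maximal Cohen--Macaulay, the complement of the line-bundle locus has codimension $\geq 2$, and tensoring by a line bundle preserves the Cohen--Macaulay property).

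There is one genuine gap. Your reduction writes $L$ as $O_C(D_1)\otimes O_C(D_2)^{-1}$ with $D_i$ effective and supported on the smooth locus, justified by ``the smooth locus is dense''. This fails precisely in the nonreduced case that the paper's Remark singles out: a nonreduced planar curve (e.g.\ a double line) has empty smooth locus. The fix is not hard---work with Cartier divisors rather than smooth points (any line bundle on a projective Gorenstein curve is $O_C(D)$ for a Cartier divisor $D$, and the short exact sequence $0\to E\to E(D)\to E(D)|_{D}\to 0$ is valid for any Cartier $D$), or bypass the pointwise decomposition entirely and argue directly with the biadditivity of the Deligne pairing $\langle -,-\rangle=\de R\pi_*(\,\cdot\otimes\cdot\,)\otimes\de R\pi_*(\cdot)^{-1}\otimes\de R\pi_*(\cdot)^{-1}\otimes\de R\pi_*O_C$, which is what the identity really expresses.
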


For claim $(1)$, let $\widetilde{\hig}$ be the open substack of $\hig$ consisting of Higgs bundles with generically regular Higgs field. Since the construction in $[1]$ already provides the Poincar\'e sheaf on
$$\widetilde{\hig}\times_{H}\hig\bigcup\hig\times_{H}\widetilde{\hig}$$
(See Corollary~\ref{CM sheaf constructed in 1}), the following lemma shows that we only need to construct the Poincar\'e sheaf $\mathcal{P}$ on $\hig^{(k)}\times_{H}\hig^{(l)}$:
\begin{lemma}
$\hig_{ss}=(\widetilde{\hig}\bigcap\hig_{ss})\bigcup(\bigcup_{k}\hig^{(k)})$
\end{lemma}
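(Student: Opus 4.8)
The plan is to pin down exactly which Higgs bundles fail to have generically regular Higgs field, and then finish with the Birkhoff--Grothendieck splitting together with the slope inequality coming from semistability. First I would recall that for rank $2$ an endomorphism of a $2$-dimensional vector space is regular in the sense of Definition~\ref{generically regular Higgs field} precisely when it is not a scalar multiple of the identity. Hence $(E,\phi)\in\hig$ lies outside $\widetilde{\hig}$ if and only if $\phi$ is generically a scalar. I would then upgrade ``generically scalar'' to ``scalar'': setting $\phi_{0}=\phi-\tfrac12\mathrm{tr}(\phi)\cdot\mathrm{id}_{E}$ for the traceless part (using $\mathrm{char}\,k=0$ to divide by $2$, and $\mathrm{tr}(\phi)\in H^{0}(\p,O(n))$), $\phi_{0}$ is a global section of the locally free sheaf $\mathrm{End}_{0}(E)\otimes O(n)$ which vanishes at the generic point of the integral curve $\p$, hence vanishes identically. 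So $\hig\setminus\widetilde{\hig}$ is exactly the locus of Higgs bundles of the form $(E,\lambda\cdot\mathrm{id}_{E})$ with $\lambda\in H^{0}(\p,O(n))$.

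Next I would prove the inclusion $\hig_{ss}\subseteq(\widetilde{\hig}\cap\hig_{ss})\cup\bigcup_{k}\hig^{(k)}$. Let $(E,\phi)\in\hig_{ss}$. If $(E,\phi)\in\widetilde{\hig}$ we are done, so assume $\phi=\lambda\cdot\mathrm{id}_{E}$. Since $\phi$ is a scalar, every subbundle of $E$ is $\phi$-invariant, hence is a Higgs subsheaf. By Birkhoff--Grothendieck write $E\cong O(a)\oplus O(b)$ with $a\geq b$; then $O(a)\hookrightarrow E$ is $\phi$-invariant, so semistability of $(E,\phi)$ forces $a=\mu(O(a))\leq\mu(E)=\tfrac{a+b}{2}$, whence $a\leq b$ and therefore $a=b$. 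Thus $E\cong O(a)\oplus O(a)$ and $(E,\phi)\in\hig^{(a)}$.

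For the reverse inclusion, $\widetilde{\hig}\cap\hig_{ss}\subseteq\hig_{ss}$ is immediate, and I would check that $\hig^{(k)}\subseteq\hig_{ss}$: a nonzero map $O(m)\to O(k)\oplus O(k)$ exists only for $m\leq k$, so every sub-line-bundle of $O(k)\oplus O(k)$ has degree $\leq k$, and hence every Higgs subsheaf of a Higgs bundle with underlying bundle $O(k)\oplus O(k)$ has slope $\leq k=\mu\bigl(O(k)\oplus O(k)\bigr)$; so such a Higgs bundle is automatically semistable. Combining the two inclusions gives the lemma.

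The argument is elementary once the first reformulation is in place, so I do not expect a serious obstacle; the only point that needs care is the opening step — identifying the complement of $\widetilde{\hig}$ in the rank $2$, $\mathfrak{gl}_{2}$ setting (``not generically regular'' $\Leftrightarrow$ ``$\phi=\lambda\cdot\mathrm{id}_{E}$'') and making sure it matches Definition~\ref{generically regular Higgs field} and the semistability conventions in use. After that the lemma collapses to the single inequality $a\leq\tfrac{a+b}{2}$.
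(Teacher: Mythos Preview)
Your proof is correct and follows essentially the same route as the paper: both reduce to Birkhoff--Grothendieck plus the slope inequality, the paper arguing the contrapositive (if $E\cong O(a)\oplus O(b)$ with $a<b$ then semistability forces the off-diagonal entry $O(b)\to O(a)\otimes\ell$ to be nonzero, hence $\phi$ is generically regular) while you first characterize the non-regular locus as $\phi=\lambda\cdot\mathrm{id}_E$ and then run the inequality. Your explicit verification of $\hig^{(k)}\subseteq\hig_{ss}$ is something the paper records separately (in the definition of $\hig^{(k)}$) as ``easy to see''.
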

The proof of this lemma will be given later (See Lemma~\ref{lemma in the introduction}).\\
For claim $(2)$, notice that tensoring by $O(m)$ gives isomorphisms between all $\hig^{(k)}$ for different $k$. So we can first construct Poincar\'e sheaf on $\hig^{(0)}\times_{H}\hig^{(-n)}$, and then use Lemma~\ref{equivariance property} to extend it to all $\hig^{(k)}\times_{H}\hig^{(l)}$. Since Poincar\'e line bundles satisfy this equivariance property, and the codimension of the complement of $\hig^{reg}\times_{H}\hig\bigcup\hig\times_{H}\hig^{reg}$ is greater than or equals to $3$, Proposition~\ref{uniqueness of CM sheaf} shows that the extension of $\mathcal{P}$ to $\hig^{(k)}\times_{H}\hig^{(l)}$ obtained in this way agrees with the Poincar\'e line bundle, and hence we are done.

For future use, let us also recall the following description about the line bundle $\mathcal{P}_{L}$:
\begin{Proposition}\label{line bundle}
Let $F$ be the universal sheaf on $\p\times\hig$, viewed also as a torsion-free rank $1$ sheaf on the spectral curve $\mathcal{C}$. Consider:
\[\begin{CD}
\p\times\hig\\
@V{p}VV\\
\hig\\
\end{CD}\]
If $L$ is a line bundle on the spectral curve $\mathcal{C}$ which is the pullback of $O(mx_{0})$ on $\p$, then we have
$$\mathcal{P}_{L}\simeq \de(p_{*}(F|_{mx_{0}}))$$

\end{Proposition}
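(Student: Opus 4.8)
The plan is to unwind the Knudsen--Mumford determinant formula defining $\mathcal{P}_L$ and reduce every term to a sheaf supported on a relative effective Cartier divisor, where $\de(R\pi_{*}-)$ becomes the determinant of an honest vector bundle. Let $\nu\colon\mathcal{C}\to\p\times\hig$ denote the spectral cover over $\hig$, a finite flat morphism of degree $2$, and set $\pi:=p\circ\nu\colon\mathcal{C}\to\hig$. By Lemma~\ref{equivariance property} the line bundle $\mathcal{P}_L$ is the restriction of $\mathcal{P}$ to $\{L\}\times\hig$, so formula~\ref{Poincare line bundle} gives
\[\mathcal{P}_L\simeq\de(R\pi_{*}(F\otimes L))\otimes\de(R\pi_{*}O_{\mathcal{C}})\otimes\de(R\pi_{*}F)^{-1}\otimes\de(R\pi_{*}L)^{-1},\]
with $L$ now fixed and $F$ the universal sheaf on $\mathcal{C}$. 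Write $D:=\nu^{*}(mx_{0}\times\hig)\subset\mathcal{C}$, a relative effective Cartier divisor over $\hig$; since $L=\nu^{*}O(mx_{0})$ we have $L=O_{\mathcal{C}}(D)$. I will treat the case $m\geq 0$; the case $m<0$ is entirely analogous (or follows from the bilinearity of $L\mapsto\mathcal{P}_L$).

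Next I would feed in two short exact sequences. Tensoring $0\to O_{\mathcal{C}}\to O_{\mathcal{C}}(D)\to O_{D}(D)\to 0$ with $F$ stays exact because a local equation of $D$ is the $\nu$-pullback of a local equation of $mx_{0}$, hence a non-zero-divisor on the torsion-free sheaf $F$; this gives $0\to F\to F\otimes L\to F\otimes O_{D}(D)\to 0$. Applying $\de(R\pi_{*}-)$, which is additive on distinguished triangles of perfect complexes, to this sequence and to the untwisted one yields
\[\de(R\pi_{*}(F\otimes L))\simeq\de(R\pi_{*}F)\otimes\de(R\pi_{*}(F\otimes O_{D}(D))),\qquad \de(R\pi_{*}L)\simeq\de(R\pi_{*}O_{\mathcal{C}})\otimes\de(R\pi_{*}(O_{D}(D))).\]
Substituting into the formula for $\mathcal{P}_L$, all four "global" determinants $\de(R\pi_{*}F)^{\pm 1}$ and $\de(R\pi_{*}O_{\mathcal{C}})^{\pm 1}$ cancel, leaving
\[\mathcal{P}_L\simeq\de(R\pi_{*}(F\otimes O_{D}(D)))\otimes\de(R\pi_{*}(O_{D}(D)))^{-1}.\]

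Finally I would identify these two determinants with the asserted line bundle. Since $D\to\hig$ is finite, $R\pi_{*}$ reduces to $\pi_{*}$ on sheaves supported on $D$; since $D$ is a relative Cartier divisor, $O_{D}(D)$ and $F\otimes O_{D}(D)$ are flat over $\hig$, so these pushforwards are vector bundles. Moreover $O_{D}(D)=(\nu|_{D})^{*}(O(mx_{0})|_{mx_{0}})$, and $O(mx_{0})|_{mx_{0}}$ is a line bundle on the Artinian scheme $mx_{0}$, hence isomorphic to $O_{mx_{0}}$; therefore $O_{D}(D)\simeq O_{D}$ and $F\otimes O_{D}(D)\simeq F|_{D}$. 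Thus $\de(R\pi_{*}(O_{D}(D)))\simeq\de(\pi_{*}O_{D})$, and by the projection formula for $\nu$ one has $\nu_{*}O_{D}\simeq(\nu_{*}O_{\mathcal{C}})\otimes O_{mx_{0}}\simeq(O_{\p}\oplus O_{\p}(-n))|_{mx_{0}}\simeq O_{mx_{0}}^{\oplus 2}$, so $\pi_{*}O_{D}\simeq O_{\hig}^{\oplus 2m}$ has trivial determinant. Again by the projection formula, $\pi_{*}(F|_{D})=p_{*}(\nu_{*}(F|_{D}))=p_{*}((\nu_{*}F)|_{mx_{0}})=p_{*}(F|_{mx_{0}})$, and we conclude $\mathcal{P}_L\simeq\de(p_{*}(F|_{mx_{0}}))$. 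The only step needing genuine care is the exactness of $0\to F\to F\otimes L\to F\otimes O_{D}(D)\to 0$ over all of $\hig$, including over nonreduced spectral curves: this is where one uses that $F$ is torsion-free and that $\nu$ is finite, so no component of $\mathcal{C}$ maps into $mx_{0}$ and the pulled-back equation of $mx_{0}$ remains a non-zero-divisor on $F$; the rest is bookkeeping with determinants and the projection formula, and the non-canonical isomorphism $O(mx_{0})|_{mx_{0}}\simeq O_{mx_{0}}$ is harmless since only an isomorphism class of line bundle on $\hig$ is claimed.
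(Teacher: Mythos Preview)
Your argument is correct. The paper does not supply a proof of this proposition; it is stated as a recalled fact (``For future use, let us also recall the following description\ldots'') without justification, so there is no approach to compare against. Your reduction via the short exact sequence $0\to F\to F\otimes L\to F|_{D}\to 0$ and the additivity of the determinant-of-cohomology is the standard way to establish such formulas, and each step checks out: the exactness over nonreduced spectral curves follows as you say from torsion-freeness of $F$ and the fact that the local equation of $D$ is pulled back along the finite flat cover $\nu$; the trivializations $O_{D}(D)\simeq O_{D}$ and $\de(\pi_{*}O_{D})\simeq O_{\hig}$ are correct because the relevant line bundles live over a constant Artinian scheme; and the identification $\pi_{*}(F|_{D})=p_{*}(F|_{mx_{0}})$ is the projection formula for the finite map $\nu$. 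Note that the paper later applies the proposition with $L=O(-nx_{0})$ and reads off $\mathcal{P}_{L}\simeq\de(p_{*}(F|_{nx_{0}}))^{-1}$, so your remark that the case $m<0$ follows from bilinearity (equivalently from $\mathcal{P}_{L^{-1}}\simeq\mathcal{P}_{L}^{-1}$) is exactly the convention the paper uses.
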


\subsection{Review of previous work about the construction of the Poincar\'e sheaf}
In this subsection we are going to review the construction of the Poincar\'e sheaf in $[1]$, $[3]$ and $[4]$. And we will also adapt the construction to our setup. Let $C$ be a reduced planar curve embedded into a smooth surface $C\hookrightarrow S$. It is well known that $\HC^{n}$ is a complete intersection in $\HS^{n}$ of codimension $n$. Let $\overline{J}$ be the stack of torsion free rank $1$ sheaves on $C$, and $J'$ denote the open substack of torsion free rank one sheaves that are generically line bundles. The Poincar\'e sheaf on $\overline{J}\times\overline{J}$ can be constructed as follows. First, we have a natural Abel-Jacobian map:
$$\HC^{n}\xrightarrow{\alpha}  \overline{J} \qquad D\rightarrow I_{D}^{\vee}$$
Let $U_{n}$ be the open subscheme of $\HC^{n}$ given by the condition $H^{1}(I_{D}^{\vee})=0$. Then the restriction of $\alpha$ to $U_{n}$ is smooth, so we need to construct Poincar\'e sheaf on $U_{n}\times \overline{J}$ and show it descends to $\overline{J}\times \overline{J}$. Let $F$ be the universal sheaf on $C\times \overline{J}$. The Hilbert scheme of the surface is denoted by $\HS^{n}$. It is well known that $\HS^{n}$ is smooth. Let $\Fl^{n}$ be the flag Hilbert scheme of $S$, which parameterizes length $n$ subschemes together with a complete flag:
$$\emptyset=D_{0}\subseteq D_{1}\subseteq\cdots\subseteq D_{n}=D$$
Consider the following diagram:
\[\begin{CD}
\HS^{n}\times \overline{J}@<{\psi\times id}<<\widetilde{\HS^{n}}\times \overline{J}@>{\sigma_{n}\times id}>>S^{n}\times \overline{J}@<{l^{n}\times id}<<C^{n}\times \overline{J}\\
@V{p_{1}}VV\\
\HS^{n}
\end{CD}\]
where $\widetilde{\HS^{n}}$ stands for the isospectral Hilbert scheme of $S$ (See $[1]$ Proposition $3.7$ or $[17]$ for the definition). It is known that $\psi$ is finite flat of degree $n!$. Moreover, let $\HS^{' n}$ be the open subscheme of $\HS^{n}$ parameterizing subschemes that can be embedded into smooth curves, Then we have:
$$\widetilde{\HS^{n}}|_{\HS^{' n}}\simeq\Fl^{' n}=\Fl^{n}|_{\HS^{' n}}$$
Let $\mathcal{D}\xrightarrow{\pi} \HS^{n}$ be the universal finite subscheme over $\HS^{n}$ and set $\mathcal{A}=\pi_{*}O_{D}$, then we define:
\begin{equation}
Q=((\psi\times id)_{*}(\sigma_{n}\times id)^{*}(l^{n}\times id)_{*}F^{\boxtimes n})^{sign}\otimes p_{1}^{*}det(\mathcal{A})^{-1}
\end{equation}
where the upper index "sign" stands for the space of antiinvariants with respect to the natural action of the symmetric group. Then it is proved in $[1]$ that $Q$ is supported on $\HC^{n}$ and it's a maximal Cohen-Macaulay sheaf. Moreover, if we restrict $Q$ to $U_{n}$, then it descends down to $\J\times \J$. (In $[1]$ the statement is proved only for integral curves, but the same argument works for any reduced planar curves. The construction also works for families of planar curves). Let $\HS^{'n}\subseteq \HS^{n}$ be the open subscheme parameterizing subchemes $D$ such that $D$ can be embedded into a smooth curve. Then we have a simpler description of the restriction of $\widetilde{\HS^{n}}$ to $\HS^{'n}$ thanks to the following proposition ($[1]$ Proposition $3.7$):
\begin{Proposition}\label{rewrite using flag}
Let $\Fl^{n}$ be the flag Hilbert scheme of $S$, then we have
$$\widetilde{\HS^{n}}|_{\HS^{' n}}\simeq \Fl^{' n}=\Fl^{n}|_{\HS^{' n}}$$
\end{Proposition}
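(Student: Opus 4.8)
The plan is to produce the isomorphism by comparing universal objects. Recall from $[17]$ (see also $[1]$ Proposition $3.7$) that $\widetilde{\HS^{n}}$ is the reduced closed subscheme of $\HS^{n}\times S^{n}$ supported on the fibre product $\HS^{n}\times_{S^{(n)}}S^{n}$, where $\HS^{n}\to S^{(n)}$ is the Hilbert--Chow morphism and $S^{n}\to S^{(n)}$ the quotient by the symmetric group $\mathfrak{S}_{n}$; the projection $\psi\colon\widetilde{\HS^{n}}\to\HS^{n}$ is finite flat of degree $n!$. On $\Fl^{n}$ the universal flag $\mathcal{D}_{1}\subseteq\cdots\subseteq\mathcal{D}_{n}=\mathcal{D}$ has length-one successive quotients $\mathcal{D}_{i}/\mathcal{D}_{i-1}$, which are graphs of morphisms $e_{i}\colon\Fl^{n}\to S$; together with the proper morphism $\phi\colon\Fl^{n}\to\HS^{n}$, $\mathcal{D}_{\bullet}\mapsto\mathcal{D}_{n}$, they assemble into $\rho=(\phi,e_{1},\dots,e_{n})\colon\Fl^{n}\to\HS^{n}\times S^{n}$. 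Since the Hilbert--Chow morphism is additive along the graded pieces of a finite flat filtration, $\rho$ factors through $\HS^{n}\times_{S^{(n)}}S^{n}$, and the goal is to show that $\rho$ restricts to an isomorphism $\Fl^{'n}\xrightarrow{\ \sim\ }\widetilde{\HS^{n}}|_{\HS^{'n}}$.

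The essential local fact is that $D\in\HS^{'n}$ precisely when $D$ is curvilinear, i.e. every local ring of $D$ has the form $k[t]/(t^{m})$, a quotient of a discrete valuation ring. Hence the subschemes of $D$ supported at a fixed point $q$ of $\mathrm{Supp}(D)$ form a single chain $\emptyset\subset q\subset 2q\subset\cdots$, and a flag of $D$ amounts exactly to a shuffle of these chains over the points of $\mathrm{Supp}(D)$ --- equivalently, to an ordering of the Hilbert--Chow cycle of $D$ as a point of $S^{n}$. It follows that $\rho|_{\HS^{'n}}$ is a bijection on $k$-points onto $(\HS^{n}\times_{S^{(n)}}S^{n})|_{\HS^{'n}}$. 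Moreover $\Fl^{'n}$ and $\widetilde{\HS^{n}}|_{\HS^{'n}}$ are both finite over $\HS^{'n}$, and over the dense open subscheme of $\HS^{'n}$ parametrising reduced $D$ they are both the finite \'etale $\mathfrak{S}_{n}$-cover, on which $\rho$ is visibly an isomorphism.

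It thus remains to prove that $\Fl^{'n}$ is reduced (in fact smooth) and that $\rho|_{\HS^{'n}}$ is unramified. Granting this, $\rho|_{\HS^{'n}}$ is proper, injective on points, and unramified, hence a surjective closed immersion onto $(\HS^{n}\times_{S^{(n)}}S^{n})|_{\HS^{'n}}$; since $\Fl^{'n}$ is reduced, it therefore coincides with the reduced structure on this fibre product, which is $\widetilde{\HS^{n}}|_{\HS^{'n}}$ by definition. (Alternatively, using that $\widetilde{\HS^{n}}$ is normal and that $\Fl^{'n}$ is irreducible --- which follows since $\HS^{n}$ is irreducible and the $n$-point cover of $S$ has full monodromy group $\mathfrak{S}_{n}$ --- one may instead apply Zariski's main theorem to the finite birational morphism $\rho|_{\HS^{'n}}$.)

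The \textbf{main obstacle} is exactly the smoothness of $\Fl^{'n}$ and the unramifiedness of $\rho|_{\HS^{'n}}$, and I would establish both \'etale-locally on $\HS^{'n}$. Near a point corresponding to a curvilinear subscheme $D_{0}=\sum_{i}m_{i}q_{i}$ with the $q_{i}$ distinct, both $\HS^{'n}$ and $\Fl^{'n}$ split as products of the punctual curvilinear Hilbert and flag schemes at the individual $q_{i}$, which reduces the question to the purely punctual case: support a single point $q$ of multiplicity $m$. There the punctual curvilinear Hilbert scheme is smooth of dimension $m-1$ (it parametrises $(m{-}1)$-jets of smooth curve germs through $q$), every $e_{i}$ is the constant morphism $q$, and the punctual flag Hilbert scheme maps to it by ``forget the flag'', a morphism which by the second paragraph is bijective; a deformation-theoretic computation --- using the uniqueness of the chain of subschemes of a curvilinear scheme to pin down the tangent and obstruction spaces on both sides --- shows this morphism is an isomorphism, whence $\Fl^{'n}$ is smooth and $\rho|_{\HS^{'n}}$ is unramified. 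This local deformation step is the real content of the proposition; everything else, including the $\mathfrak{S}_{n}$-equivariance of the resulting isomorphism, is formal.
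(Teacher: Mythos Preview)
The paper does not supply a proof of this proposition: it is quoted verbatim from $[1]$, Proposition~3.7, and used as a black box. So there is no ``paper's own proof'' to compare against.

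Your sketch is a correct outline of how one proves such a statement, and it isolates the genuine content accurately. A few remarks. First, the argument that $\rho|_{\HS^{'n}}$ is a closed immersion needs the implication ``proper $+$ radicial $+$ unramified $\Rightarrow$ closed immersion''; over an algebraically closed base field your bijectivity on $k$-points suffices for radiciality on finite-type schemes, but it is worth saying so. Second, the smoothness of $\Fl^{'n}$ can be obtained more directly than by a deformation computation: over the curvilinear locus the forgetful map $\Fl^{'n}\to\HS^{'n}$ is finite \'etale of degree $n!$, because \'etale-locally on $\HS^{'n}$ a curvilinear subscheme lives on a smooth curve, and for smooth curves the flag Hilbert scheme is the $n$-fold product of the curve, finite \'etale over the symmetric product. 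This simultaneously gives smoothness of $\Fl^{'n}$, shows that $\phi|_{\HS^{'n}}$ is \'etale, and (since the $e_i$ are automatically unramified into $S$) yields unramifiedness of $\rho|_{\HS^{'n}}$ without a separate tangent-space calculation. With that shortcut in place your argument is complete.
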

Hence over $\HS^{'n}$ we have $\Fl^{'n}\simeq \widetilde{\HS^{n}}$, and the construction can be written in terms of $\Fl^{'n}$.\\

We shall adapt the construction above to our setting. Namely, let $\hig$ be the stack of rank $2$ Higgs bundles on $\p$ with value in $O(n)$, and we work with the family of spectral curves over $H$:
$$\xymatrix{
\mathcal{C} \ar[rr] \ar[rd] & & S\times H \ar[ld]\\
& H\\
}
$$
Let $\widetilde{\hig}$ be the stack of Higgs bundles with generically regular Higgs field. Then we still have Abel-Jacobian map:
$$\textrm{Hilb}^{n}_{\mathcal{C}|H}\xrightarrow{\alpha}\widetilde{\hig}:\qquad D\rightarrow I_{D}^{\vee}$$
Moreover, if we set $U_{n}$ to be the open subscheme of $\textrm{Hilb}^{n}_{\mathcal{C}|H}$ consists of $D$ such that $H^{1}(\check{I}_{D})=0$, then $\alpha$ is smooth on $U_{n}$.\\
Consider the diagram:
\[\begin{CD}
\HS^{n}\times\hig@<{\psi\times id}<<\widetilde{\HS^{n}}\times\hig@>{\sigma_{n}\times id}>>S^{n}\times\hig@<{l^{n}\times id}<<\mathcal{C}^{n}\times_{H}\hig\\
@V{p_{1}}VV\\
\HS^{n}
\end{CD}\]
where $\mathcal{C}^{n}$ is $n$ fold Cartesian product of $\mathcal{C}$ over $H$:
$$\mathcal{C}^{n}=\mathcal{C}\times_{H}\cdots\times_{H}\mathcal{C}$$
Similarly, set:
\begin{equation}\label{Q}
Q=((\psi\times id)_{*}(\sigma_{n}\times id)^{*}(l^{n}\times id)_{*}F^{\boxtimes n})^{sign}\otimes p_{1}^{*}det(\mathcal{A})^{-1}
\end{equation}
Then by essentially the same argument, we get a Cohen-Macaulay sheaf $Q$ of codimension $n$ on $\HS^{n}\times\hig$. If we denote the open subscheme of $H$ corresponding to reduced spectral curves by $H_{r}$, then over $H_{r}$, the sheaf $Q$ is supported on $\textrm{Hilb}^{n}_{\mathcal{C}|H_{r}}\times_{H_{r}}\hig|_{H_{r}}$. It is not hard to check that the complement of $H_{r}$ has codimension $2n+1$. Also since $\hig$ is flat over $H$, the complement of $\HS^{n}\times\hig|_{H_{r}}$ also has codimension $2n+1$. Since $Q$ is Cohen-Macaulay of codimension $n$, the support of $Q$ is of pure dimension without embedded components and has codimension equal to $n$ in $\HS^{n}\times\hig$ (Proposition~\ref{codimension of supp of CM sheaves}). We conclude that $\textrm{Hilb}^{n}_{\mathcal{C}|H_{r}}\times_{H_{r}}\hig|_{H_{r}}$ is a dense open subset in $\textrm{Supp}(Q)$ with the codimension of the complement greater than or equals to $n+1$. Hence $Q$ is supported on $\textrm{Hilb}^{n}_{\mathcal{C}|H}\times_{H}\hig$ over the entire $H$. In conclusion, we have:
\begin{Proposition}\label{definition of Q}
Let $Q$ be the sheaf on $\HS^{n}\times\hig$ given by formula~\ref{Q}, then
\begin{enumerate1}
\item $Q$ is a maximal Cohen-Macaulay sheaf of codimension $n$ on $\HS^{n}\times\hig$ supported on $\textrm{Hilb}^{n}_{\mathcal{C}|H}\times_{H}\hig$.
\item If we consider its restriction to $U_{n}\times_{H}\hig$ (Recall that $U_{n}$ is the open subscheme of $\textrm{Hilb}^{n}_{\mathcal{C}|H}$ consists of $D$ such that $H^{1}(\check{I}_{D})=0$), then it descend to $\widetilde{\hig}\times_{H}\hig$ and agrees with the Poincar\'e line bundle on $\hig^{reg}\times_{H}\hig$.
\end{enumerate1}
\end{Proposition}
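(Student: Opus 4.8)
This proposition is the relative version over $H$ of the construction carried out in $[1]$, so the plan is to run that argument with $\hig$ in place of a single compactified Jacobian and with the family $\mathcal{C}\rightarrow H$ in place of one planar curve, checking at each step that the relevant local computations are insensitive to this globalisation. The one genuinely new feature is that $H$ now contains the locus of nonreduced spectral curves; I will indicate below why this does not affect $Q$ itself.

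For $(1)$ I would start from the flatness of $F$, hence of $F^{\boxtimes n}$, over $\hig$. The operations appearing in formula~\ref{Q} are: push forward along the closed immersion $l^{n}\times id$, pull back along the finite map $\sigma_{n}\times id$ coming from the isospectral Hilbert scheme, push forward along the finite flat map $\psi\times id$ of degree $n!$, extract the sign isotypic component, and twist by a line bundle pulled back from $\HS^{n}$. The Cohen--Macaulay property and the statement that the codimension equals $n$ are local on $\HS^{n}\times\hig$ and involve only the completed local rings of $\widetilde{\HS^{n}}$, of $S^{n}$ and of $\mathcal{C}^{n}$; these coincide with the rings treated in $[1]$ because they depend only on the planar singularity type of the spectral curve. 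Hence the argument of $[1]$ showing that the sign part is maximal Cohen--Macaulay of codimension $n$ and supported on the Hilbert scheme of the curve applies verbatim, giving $(1)$ over $H_{r}$. To propagate the support statement over all of $H$ I would repeat the argument of the paragraph preceding the proposition: $Q$ is Cohen--Macaulay of codimension $n$, so by Proposition~\ref{codimension of supp of CM sheaves} its support is pure of codimension $n$ with no embedded components; since $\hig$ is flat over $H$ and $H\setminus H_{r}$ has codimension $2n+1$, the complement of $\HS^{n}\times\hig|_{H_{r}}$ has codimension $2n+1\geq n+1$ in $\HS^{n}\times\hig$, so $\textrm{Hilb}^{n}_{\mathcal{C}|H_{r}}\times_{H_{r}}\hig|_{H_{r}}$ is dense in $\textrm{Supp}(Q)$ and the support is forced to lie in $\textrm{Hilb}^{n}_{\mathcal{C}|H}\times_{H}\hig$.

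For $(2)$, over $U_{n}$ the Abel--Jacobi map $\alpha\colon U_{n}\rightarrow\widetilde{\hig}$ is smooth and surjective, so I would descend $Q|_{U_{n}\times_{H}\hig}$ by exhibiting a cocycle-compatible isomorphism between its two pullbacks to $U_{n}\times_{\widetilde{\hig}}U_{n}\times_{H}\hig$. Following $[1]$, over the open locus where Proposition~\ref{rewrite using flag} identifies $\widetilde{\HS^{n}}$ with the flag Hilbert scheme $\Fl^{'n}$, the sheaf $Q$ can be rewritten as an explicit determinant of cohomology along the universal flag, which makes the descent datum transparent; equivalently one checks that $Q|_{U_{n}\times_{H}\hig}$ is the pullback under $\alpha\times id$ of the sheaf assembled from the universal sheaves on $\widetilde{\hig}$ and on $\hig$ by formula~\ref{Poincare line bundle}. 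Restricting further to $\hig^{reg}\times_{H}\hig$, where the first factor parameterises genuine line bundles on the spectral curve, a direct determinant-of-cohomology computation, again as in $[1]$, matches the descended sheaf with the Poincar\'e line bundle of formula~\ref{Poincare line bundle}.

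The hard part is the Cohen--Macaulay statement in $(1)$: everything rests on the local structure of the isospectral Hilbert scheme and on the good behaviour of the sign isotypic component, and a priori the nonreduced spectral curves could interfere. What rescues the argument is exactly the codimension bookkeeping above: the bad locus sits in codimension $\geq 2n+1$, and a Cohen--Macaulay sheaf has pure support, so $Q$ is forced to be the clean object predicted by $[1]$, with no spurious components. I therefore expect no new geometric input beyond $[1]$ to be required.
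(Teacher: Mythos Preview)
Your overall strategy coincides with the paper's: invoke $[1]$ for the Cohen--Macaulay property, get the support over $H_{r}$ from $[1]$, and push the support across the nonreduced locus by the codimension count using Proposition~\ref{codimension of supp of CM sheaves}. Part~(2) is also handled the same way, by descent along the smooth Abel--Jacobi map and comparison with the determinant formula.

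There is, however, one point where your logic is circular. You establish the Cohen--Macaulay property of $Q$ only over $H_{r}$, on the grounds that the completed local rings there match those of the reduced planar curves treated in $[1]$, and you then invoke purity of support for Cohen--Macaulay sheaves to control $\textrm{Supp}(Q)$ over the nonreduced locus. But Proposition~\ref{codimension of supp of CM sheaves} requires $Q$ to be Cohen--Macaulay on all of $\HS^{n}\times\hig$, not just over $H_{r}$; your final paragraph acknowledges the worry but resolves it by appealing to ``the codimension bookkeeping'', which cannot produce the Cohen--Macaulay property and in fact presupposes it.

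The correct observation, which is what the paper means by ``essentially the same argument'', is that the Cohen--Macaulay statement has nothing to do with the spectral curve. The inputs to $[1]$'s proof are: (i) $F$ is flat over $\hig$ and is Cohen--Macaulay of codimension~$1$ on $S\times\hig$ (this is Lemma~\ref{resolution}, valid for every Higgs bundle, reduced spectral curve or not); and (ii) Haiman's theorem that $\widetilde{\HS^{n}}$ is Gorenstein and $\sigma_{n}$ has finite Tor-dimension, a property of the surface $S$ alone. From these one gets that $(\sigma_{n}\times id)^{*}(l^{n}\times id)_{*}F^{\boxtimes n}$ is Cohen--Macaulay of codimension $n$ on $\widetilde{\HS^{n}}\times\hig$, and the sign component of its pushforward along the finite flat $\psi\times id$ inherits this. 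So $Q$ is Cohen--Macaulay of codimension $n$ on the whole of $\HS^{n}\times\hig$ from the outset, and the codimension argument is needed only for the support claim, exactly as in the paragraph preceding the proposition.
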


Using the fact that the complement of $\hig^{reg}$ has codimension greater than or equals to $2$, we conclude from Proposition~\ref{uniqueness of CM sheaf} that the maximal Cohen-Macaulay sheaves on $\widetilde{\hig}\times_{H}\hig$ and $\hig\times_{H}\widetilde{\hig}$ constructed from the previous Proposition glues together, hence we have the following:
\begin{corollary}\label{CM sheaf constructed in 1}
We have a maximal Cohen-Macaulay sheaf on
$$\widetilde{\hig}\times_{H}\hig\bigcup\hig\times_{H}\widetilde{\hig}$$ which agrees with the Poincar\'e line bundle on $$\hig^{reg}\times_{H}\hig\bigcup\hig\times_{H}\hig^{reg}$$
\end{corollary}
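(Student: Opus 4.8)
The plan is to obtain $\mathcal{P}$ by gluing two maximal Cohen--Macaulay sheaves, one produced on each of the two open substacks by the construction of $[1]$, along their common overlap $\widetilde{\hig}\times_{H}\widetilde{\hig}$, the gluing being forced by the uniqueness of maximal Cohen--Macaulay extensions (Proposition~\ref{uniqueness of CM sheaf}).

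First I would record the two pieces. By Proposition~\ref{definition of Q}, restricting the sheaf $Q$ of formula~\ref{Q} to $U_{n}\times_{H}\hig$ and descending along the smooth Abel--Jacobi map $\alpha\colon U_{n}\to\widetilde{\hig}$ yields a maximal Cohen--Macaulay sheaf on $\widetilde{\hig}\times_{H}\hig$; these are compatible for varying $n$ (part of the construction of $[1]$), so they patch to one maximal Cohen--Macaulay sheaf $\mathcal{P}_{1}$ on $\widetilde{\hig}\times_{H}\hig$ whose restriction to $\hig^{reg}\times_{H}\hig$ is the Poincar\'e line bundle~\ref{Poincare line bundle}. Since formula~\ref{Q} treats the two copies of $\hig$ asymmetrically (one carries $F^{\boxtimes n}$, the other becomes the Hilbert scheme), running the identical construction with the two factors interchanged gives in the same way a maximal Cohen--Macaulay sheaf $\mathcal{P}_{2}$ on $\hig\times_{H}\widetilde{\hig}$ restricting to the Poincar\'e line bundle on $\hig\times_{H}\hig^{reg}$.

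Next I would glue. The two open substacks $\widetilde{\hig}\times_{H}\hig$ and $\hig\times_{H}\widetilde{\hig}$ of $\hig\times_{H}\hig$ meet in $\mathcal{U}:=\widetilde{\hig}\times_{H}\widetilde{\hig}$, and on the open substack $\mathcal{V}:=\hig^{reg}\times_{H}\hig^{reg}\subseteq\mathcal{U}$ both $\mathcal{P}_{1}|_{\mathcal{U}}$ and $\mathcal{P}_{2}|_{\mathcal{U}}$ restrict to the Poincar\'e line bundle of formula~\ref{Poincare line bundle}, which is a single well-defined line bundle there since that formula is symmetric in its two arguments; hence $\mathcal{P}_{1}|_{\mathcal{V}}=\mathcal{P}_{2}|_{\mathcal{V}}$. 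Because $\hig^{reg}$ has complement of codimension $\geq 2$ in $\hig$ while $\hig\to H$ is flat, one checks that $\mathcal{V}$ has complement of codimension $\geq 2$ in $\mathcal{U}$, so Proposition~\ref{uniqueness of CM sheaf} makes the restriction map $\mathrm{Hom}_{\mathcal{U}}(\mathcal{P}_{1}|_{\mathcal{U}},\mathcal{P}_{2}|_{\mathcal{U}})\to\mathrm{Hom}_{\mathcal{V}}(\mathcal{P}_{1}|_{\mathcal{V}},\mathcal{P}_{2}|_{\mathcal{V}})$ bijective; the identity on $\mathcal{V}$ then lifts to a map $\mathcal{P}_{1}|_{\mathcal{U}}\to\mathcal{P}_{2}|_{\mathcal{U}}$ which, the same uniqueness applied to composites, is an isomorphism. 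Gluing $\mathcal{P}_{1}$ and $\mathcal{P}_{2}$ along it (no cocycle condition is needed, with only two charts) yields a coherent sheaf $\mathcal{P}$ on $\widetilde{\hig}\times_{H}\hig\cup\hig\times_{H}\widetilde{\hig}$ which is maximal Cohen--Macaulay, this being a local condition verified on each chart, and which restricts to the Poincar\'e line bundle on $\hig^{reg}\times_{H}\hig\cup\hig\times_{H}\hig^{reg}$ by construction.

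The genuinely non-formal points, as opposed to the bookkeeping above, are the codimension estimate for $\mathcal{V}\subseteq\mathcal{U}$ and the applicability of Proposition~\ref{uniqueness of CM sheaf} to $\mathcal{U}$, which is a stack rather than a scheme; both are handled by passing to a smooth presentation and arguing fiberwise over $H$, using that the bad locus $\hig\setminus\hig^{reg}$, although of codimension only $1$ in the singular spectral fibers, sits over the discriminant in $H$, which has codimension $1$, and is therefore of codimension $\geq 2$ in $\hig$. Everything else, namely the maximal Cohen--Macaulay property and support of $Q$, the smoothness of $\alpha$ on $U_{n}$, and the compatibility of the descended sheaves for varying $n$, is imported from Proposition~\ref{definition of Q} and $[1]$.
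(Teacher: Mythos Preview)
Your proposal is correct and follows essentially the same approach as the paper. The paper's own argument is a single sentence preceding the corollary: it invokes the codimension $\geq 2$ of the complement of $\hig^{reg}$ and Proposition~\ref{uniqueness of CM sheaf} to glue the two sheaves supplied by Proposition~\ref{definition of Q}; you have simply spelled out the gluing on $\mathcal{U}=\widetilde{\hig}\times_H\widetilde{\hig}$ and the codimension check in more detail than the paper does.
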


For later use, we will also denote
\begin{equation}\label{Q'}
Q'=((\sigma_{n}\times id)^{*}(l^{n}\times id)_{*}F^{\boxtimes n})\otimes (id\times\psi)^{*}(p_{1}^{*}det(\mathcal{A})^{-1})
\end{equation}
The following lemma is clear from the formula~\ref{Q}:
\begin{lemma}\label{summand}
$Q$ is a direct summand of $(\psi\times id)_{*}(Q')$
\end{lemma}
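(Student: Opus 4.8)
The plan is to recognise that $(\psi\times id)_{*}(Q')$ carries a natural action of the symmetric group $S_{n}$ for which $Q$ is exactly the sign-isotypic summand, and then to use that we work in characteristic $0$ to split that summand off. In other words, Lemma~\ref{summand} should reduce to the projection formula together with semisimplicity of $k[S_{n}]$.

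I would begin with the projection formula applied to $\psi\times id$. Since $\psi$ is finite flat of degree $n!$, the morphism $\psi\times id$ is affine, so $(\psi\times id)_{*}$ needs no derived correction, and $p_{1}^{*}\det(\mathcal{A})^{-1}$ is a line bundle on $\HS^{n}\times\hig$ because $\mathcal{A}$ is locally free of rank $n$ over $\HS^{n}$. Applying the projection formula to formula~\ref{Q'} therefore gives
\[
(\psi\times id)_{*}(Q')\;\simeq\;\Bigl((\psi\times id)_{*}(\sigma_{n}\times id)^{*}(l^{n}\times id)_{*}F^{\boxtimes n}\Bigr)\otimes p_{1}^{*}\det(\mathcal{A})^{-1}.
\]
Writing $G$ for the first tensor factor on the right and $\mathcal{L}=p_{1}^{*}\det(\mathcal{A})^{-1}$, this reads $(\psi\times id)_{*}(Q')\simeq G\otimes\mathcal{L}$, whereas formula~\ref{Q} reads $Q\simeq G^{sign}\otimes\mathcal{L}$. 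Here $G$ is equipped with the natural $S_{n}$-action built from the permutation actions on $\mathcal{C}^{n}=\mathcal{C}\times_{H}\cdots\times_{H}\mathcal{C}$ and on $S^{n}$ together with the $S_{n}$-action on the isospectral Hilbert scheme $\widetilde{\HS^{n}}$: the maps $\sigma_{n}$ and $l^{n}$ are equivariant, and $\psi$ is $S_{n}$-invariant onto $\HS^{n}$ carrying the trivial action. Since $\mathcal{L}$ is pulled back from $\HS^{n}\times\hig$, it carries the trivial $S_{n}$-action, so $G\otimes\mathcal{L}$ is $S_{n}$-equivariant and $(G\otimes\mathcal{L})^{sign}=G^{sign}\otimes\mathcal{L}\simeq Q$.

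It then remains to observe that in characteristic $0$ the element $e_{sign}=\frac{1}{n!}\sum_{\sigma\in S_{n}}\operatorname{sgn}(\sigma)\,\sigma$ is a well-defined idempotent acting $\mathcal{O}_{\HS^{n}\times\hig}$-linearly on $G\otimes\mathcal{L}$ (the module structure commuting with the group action), so its image, which is $(G\otimes\mathcal{L})^{sign}\simeq Q$, is a direct summand, complementary to the image of $1-e_{sign}$. This exhibits $Q$ as a direct summand of $(\psi\times id)_{*}(Q')$. I do not expect a genuine obstacle here: the argument is purely formal once $Q$ and $Q'$ are in hand, and the only point deserving a moment's care is that the $S_{n}$-action used on $G$ coincides with the one implicitly used to form $G^{sign}$ in formula~\ref{Q} — that is, that pushforward along the $S_{n}$-invariant morphism $\psi\times id$ is compatible with the equivariant structures — which is immediate.
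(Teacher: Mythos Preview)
Your argument is correct and is exactly the unwinding of what the paper means by ``clear from formula~\ref{Q}'': the projection formula identifies $(\psi\times id)_{*}(Q')$ with $G\otimes\mathcal{L}$, and in characteristic $0$ the sign-idempotent splits off $G^{sign}\otimes\mathcal{L}=Q$. The paper gives no further proof beyond that remark, so your approach coincides with the intended one.
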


\subsection{Outline of the construction and a reformulation of the main theorem}\label{outline}
In this subsection we sketch the main idea of the construction. Let $\hig'$ be the moduli stack classifying the data $(E,\phi,s)$ where $(E,\phi)$ is a rank $2$ Higgs bundle on $\p$ with value in $O(n)$ such that $E\simeq O\oplus O$, $s$ is a nonzero global section of $E$. It is easy to see that there is a smooth surjective morphism $\hig'\rightarrow\hig^{(0)}$ (See Proposition~\ref{propositions of hig'}). We are going to construct a maximal Cohen-Macaulay sheaf on $\hig'\times_{H}\hig^{(-n)}$ and then show that it descends to $\hig^{(0)}\times_{H}\hig^{(-n)}$. First let us view $E$ as a coherent sheaf on the corresponding spectral curve $C$, and hence $s$ also gives a morphism of sheaves on $C$:
$$O_{C}\xrightarrow{s} E$$
Taking the dual of this, we get a subscheme $D'$ of $C$ over $\hig'$:
$$\mathcal{H}om_{O_{C}}(E,O_{C})\rightarrow O_{C}\rightarrow O_{D'}\rightarrow 0$$
It turns out that there is a closed substack $\mathcal{Z}$ of $\hig'$ (See Lemma~\ref{definition of Z}) such that the subscheme $D'$ is a family of length $n$ subscheme of $C$ over $\hig'\backslash\mathcal{Z}$, hence we get a morphism:
$$\hig'\backslash\mathcal{Z}\rightarrow \textrm{Hilb}^{n}_{\mathcal{C}|H}$$
It is not hard to check this morphism is smooth, hence we have a maximal Cohen-Macaulay sheaf on $\hig'\backslash\mathcal{Z}\times_{H}\hig^{(-n)}$ obtained by pulling back $Q$ (See Proposition~\ref{definition of Q}) via:
$$\hig'\backslash\mathcal{Z}\times_{H}\hig^{(-n)}\rightarrow \textrm{Hilb}^{n}_{\mathcal{C}|H}\times_{H}\hig^{(-n)}$$
In order to extend this sheaf to $\hig'\times_{H}\hig^{(-n)}$, we need to find a way to resolve the rational map $\hig'\dashrightarrow \textrm{Hilb}^{n}_{\mathcal{C}|H}$. It turns out that if we denote the blowup of $\hig'$ along $\mathcal{Z}$ by $\hig''$, then this blowup resolves the rational map(See Proposition~\ref{resolve the rational map}) and hence we have the following diagram:
$$\xymatrix{
\hig''\times_{H}\hig^{(-n)} \ar[r]^{g} \ar[d]^{\pi} & \textrm{Hilb}^{n}_{\mathcal{C}|H}\times_{H}\hig^{(-n)}\\
\hig'\times_{H}\hig^{(-n)}\\
}
$$
The main theorem of the paper can be restated as follows:
\begin{theorem}\label{main theorem}
Consider the diagram:
$$\xymatrix{
\hig''\times_{H}\hig^{(-n)} \ar[r]^{g} \ar[d]^{\pi} & \textrm{Hilb}^{n}_{\mathcal{C}|H}\times_{H}\hig^{(-n)}\\
\hig'\times_{H}\hig^{(-n)}\\
}
$$
Then we have:
\begin{enumerate1}
\item $g^{*}(Q)$ is a maximal Cohen-Macaulay sheaf on $\hig''\times_{H}\hig^{(-n)}$. Denote it by $M$.
\item $R\pi_{*}(M)$ is a maximal Cohen-Macaulay sheaf on $\hig'\times_{H}\hig^{(-n)}$, i.e. $R^{i}\pi_{*}(M)=0$ for $i>0$ and $\pi_{*}(M)$ is a maximal Cohen-Macaulay sheaf.
\item $\pi_{*}(M)$ descends to $\hig^{(0)}\times_{H}\hig^{(-n)}$ and agrees with the Cohen-Macaulay sheaf in Corollary~\ref{CM sheaf constructed in 1} when restricted to
$$(\widetilde{\hig}\bigcap\hig^{(0)})\times_{H}\hig^{(-n)}$$
Hence we have a maximal Cohen-Macaulay sheaf on $\hig_{ss}\times_{H}\hig_{ss}$
\item The maximal Cohen-Macaulay sheaf in $(3)$ is flat over both components $\hig_{ss}$.
\end{enumerate1}

\end{theorem}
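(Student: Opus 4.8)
The plan is to prove assertions (1)--(4) in that order. Almost all of the work lies in (1) and (2), which are local statements about the blow-up $\hig''=Bl_{\mathcal{Z}}\hig'$ (Proposition~\ref{resolve the rational map}) and the pulled-back sheaf $g^{*}Q$; assertions (3) and (4) then follow rather formally from the general properties of maximal Cohen--Macaulay sheaves (Propositions~\ref{uniqueness of CM sheaf} and~\ref{codimension of supp of CM sheaves}) together with the descent and comparison statements already available in Proposition~\ref{definition of Q} and Corollary~\ref{CM sheaf constructed in 1}.

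For (1): by Proposition~\ref{definition of Q}, $Q$ is maximal Cohen--Macaulay of codimension $n$ on the smooth stack $\HS^{n}\times\hig^{(-n)}$, so it admits, locally, a locally free resolution of length $n$. Composing $g$ with the closed immersion $\textrm{Hilb}^{n}_{\mathcal{C}|H}\times_{H}\hig^{(-n)}\hookrightarrow\HS^{n}\times\hig^{(-n)}$ and pulling this resolution back to $\hig''\times_{H}\hig^{(-n)}$, it suffices to show that the pullback stays exact, i.e. that the composite morphism is Tor-independent against $Q$; granting this, $g^{*}Q$ has projective dimension at most $n$ over the smooth ambient and its support there has codimension exactly $n$, so by the usual $\mathcal{E}xt$-criterion $M=g^{*}Q$ is maximal Cohen--Macaulay. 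The Tor-vanishing is a local statement along the exceptional divisor of $\hig''\to\hig'$: I would verify it by choosing coordinates on the surface in which the spectral curve, the section $s$, the subscheme $D'$, the centre $\mathcal{Z}$ (Lemma~\ref{definition of Z}) and the resolving blow-up all acquire a standard normal form, and then making $Q$, hence $g^{*}Q$, completely explicit using Lemma~\ref{summand} and the flag-Hilbert-scheme model of the isospectral Hilbert scheme over $\HS^{'n}$ (Proposition~\ref{rewrite using flag}), so that the required vanishing reduces to a finite computation with Koszul-type complexes. This matching of the blow-up of $\mathcal{Z}$ against the local structure of $Q$ along the nonreduced spectral curves is the technical heart of the argument and the step I expect to be the main obstacle.

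For (2): the first task is to describe the restriction of $M$ to the exceptional divisor $E\subseteq\hig''\times_{H}\hig^{(-n)}$ and to its infinitesimal neighbourhoods $kE$. I expect that along the fibres of the projection $E\to\mathcal{Z}\times_{H}\hig^{(-n)}$ (a projective bundle, once the centre is understood) the sheaf $M$ is, up to a line-bundle twist, a direct sum of line bundles lying in the range where higher cohomology vanishes, so that the theorem on formal functions gives $R^{i}\pi_{*}M=0$ for $i>0$. Once this is known, $\pi_{*}M$ is maximal Cohen--Macaulay by a formal argument: $\pi$ is proper and does not lower the dimension of the support, so by the Leray spectral sequence the local cohomology of $\pi_{*}M$ at a point agrees with the local cohomology of $M$ along the corresponding fibre, which vanishes below the top degree because $M$ is maximal Cohen--Macaulay by (1). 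The substantive input is again the explicit form of $M|_{kE}$, which rests on the same local analysis as in (1).

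For (3) and (4): over $(\hig'\setminus\mathcal{Z})\times_{H}\hig^{(-n)}$ the morphism $\pi$ is an isomorphism and $g$ factors through $\textrm{Hilb}^{n}_{\mathcal{C}|H}$, so there $\pi_{*}M$ is the pullback of $Q$, which by Proposition~\ref{definition of Q}(2) descends to $\widetilde{\hig}\times_{H}\hig$ and agrees with the Poincar\'e line bundle on $\hig^{reg}\times_{H}\hig$; over the locus where one of the two factors parametrizes a line bundle on its spectral curve, $\pi_{*}M$ is therefore the pullback of a sheaf from $\hig^{(0)}\times_{H}\hig^{(-n)}$ (the Poincar\'e line bundle depends on the Higgs bundle, not on the section). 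Since that locus has complement of codimension at least $2$ and $\pi_{*}M$ is maximal Cohen--Macaulay by (2) — so that the sheaf $\mathcal{H}om$ of two such sheaves satisfies Serre's condition $S_{2}$ — the descent datum extends uniquely, and $\pi_{*}M$ descends along the smooth surjection $\hig'\times_{H}\hig^{(-n)}\to\hig^{(0)}\times_{H}\hig^{(-n)}$ to a maximal Cohen--Macaulay sheaf that, by Proposition~\ref{uniqueness of CM sheaf}, agrees with the sheaf of Corollary~\ref{CM sheaf constructed in 1} over $(\widetilde{\hig}\cap\hig^{(0)})\times_{H}\hig^{(-n)}$. Invoking Lemma~\ref{equivariance property} and the twists by $O(m)$ to pass from $\hig^{(0)}\times_{H}\hig^{(-n)}$ to every $\hig^{(k)}\times_{H}\hig^{(l)}$ (as in the reduction carried out in the introduction), and gluing over the open cover of $\hig_{ss}\times_{H}\hig_{ss}$ provided by Lemma~\ref{lemma in the introduction} with agreement on overlaps forced by Proposition~\ref{uniqueness of CM sheaf}, yields the maximal Cohen--Macaulay sheaf $\mathcal{P}$ extending the Poincar\'e line bundle. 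Finally, uniqueness of the extension shows $\mathcal{P}$ is symmetric under interchanging the two factors, so for (4) it is enough to prove flatness over one $\hig_{ss}$; for this I would use that a maximal Cohen--Macaulay sheaf is flat over a regular base precisely when the fibres of the structure morphism over its support are equidimensional of the expected dimension, and conclude by noting that $\hig_{ss}\times_{H}\hig_{ss}\to\hig_{ss}$ is equidimensional because the arithmetic genus of the spectral curve is constant on $H$, while $\mathcal{P}$ has full support.
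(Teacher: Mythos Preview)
Your outline for (3) is essentially the paper's, and for (4) the paper's route is more direct than yours: since $\hig_{ss}$ is smooth (Proposition~\ref{smoothness of semistable higgs bundles}) and the Hitchin map is flat (Proposition~\ref{properties of Hitchin fibration}), the projection $\hig_{ss}\times_{H}\hig_{ss}\to\hig_{ss}$ is flat with smooth target, and any maximal Cohen--Macaulay sheaf on the source is then automatically flat over the target by Proposition~\ref{flatness of CM sheaves}. No symmetry or equidimensionality argument is needed.

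For (1) the paper does not attempt a local Tor computation. It proves instead the structural fact that $g$ is a local complete intersection morphism (Theorem~\ref{properties of g}): on the smooth cover $\blA\times\hig$ one shows that the pullback $W$ of $\textrm{Hilb}^{n}_{\mathcal{C}_{2}|H}\times_{H}\hig$ is still a regular embedding of codimension $n$, and that $\blA\times_{H}\hig$ is a further regular embedding inside $W$ (Proposition~\ref{geometric property of p}, Lemma~\ref{Koszul resolution on W}). Then $g$ has finite Tor-dimension and Proposition~\ref{pullback of CM sheaf} gives $Lg^{*}Q=g^{*}Q$ maximal Cohen--Macaulay at once. Your coordinate approach is not wrong in spirit, but you have not named what it is meant to prove; the lci property of $g$ is the missing target.

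The genuine gap is in (2). Your argument for the Cohen--Macaulay property of $\pi_{*}M$ --- that ``by the Leray spectral sequence the local cohomology of $\pi_{*}M$ at a point agrees with the local cohomology of $M$ along the corresponding fibre'' --- is not a valid principle: vanishing of $R^{i}\pi_{*}M$ together with $M$ being maximal Cohen--Macaulay does not by itself force $\pi_{*}M$ to be maximal Cohen--Macaulay for a blow-up $\pi$. The paper proceeds via Grothendieck duality. Since $\omega_{\pi}\simeq O(nE)$ (Proposition~\ref{dualizing sheaf of bl}), one has $\RH(R\pi_{*}M,O)\simeq R\pi_{*}\RH(M,O(nE))$, so it suffices to identify $\RH(M,O(nE))$ with a sheaf whose higher direct images again vanish. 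The key computation (Lemma~\ref{dual of M}) is that $\RH(M,O(nE))\simeq(\beta'\alpha')^{*}M$, where $\alpha:F\mapsto\check{F}$ and $\beta:F\mapsto F\otimes O(-n)$ act on the second factor; this uses Arinkin's symmetry $(\mathrm{id}\times\alpha)^{*}\mathcal{P}\simeq\check{\mathcal{P}}$ (Proposition~\ref{dual}), the equivariance of Lemma~\ref{equivariance property}, and the identification $q^{*}\mathcal{P}_{O(-n)}\simeq O(nE)$ (Lemma~\ref{preparation for the dual}). This reduces the Cohen--Macaulay assertion to the higher-direct-image vanishing already established, and is the idea you are missing. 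Incidentally, that vanishing itself is not obtained by analysing $M|_{kE}$, but by resolving $M$ on $\blA\times\hig$ as $\bigwedge^{*}(\check{\mathcal{E}})\otimes p^{*}Q$ for the rank-$(2n+2)$ bundle $\mathcal{E}=f_{*}(K(C_{2}))$ (Lemmas~\ref{Koszul resolution on W} and~\ref{cohomology of Q}), and then combining the cohomological vanishing of Lemma~\ref{cohomological vanishing} with the blow-up Lemma~\ref{pushforward of bl}.
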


Part $(1)$ will be established in Corollary~\ref{part 1 of the main theorem}, Part $(2)$ will be proved in the last section. Assume them for the moment, let us prove $(3)$ and $(4)$:
\begin{proof}
Part $(4)$ is a direct consequence of Proposition~\ref{smoothness of semistable higgs bundles}, Proposition~\ref{properties of Hitchin fibration} and Proposition~\ref{flatness of CM sheaves}. The proof of $(3)$ is again based on Proposition~\ref{uniqueness of CM sheaf}. From the construction itself, it is clear that on $(\hig'\backslash\mathcal{Z})\times_{H}\hig^{(-n)}$, $M$ is the pullback of $Q$ on $\textrm{Hilb}^{n}_{\mathcal{C}| H}\times_{H}\hig^{(-n)}$. Hence by Proposition~\ref{definition of Q}, on $(\hig'\backslash\mathcal{Z})\times_{H}\hig^{(-n)}$, $M$ descends and agrees with the maximal Cohen-Macaulay sheaf in Corollary~\ref{CM sheaf constructed in 1}. Because $\mathcal{Z}$ is a regular embedding of codimension greater than or equals to $3$ in $\hig'$ (Lemma~\ref{definition of Z}), and the Hitchin fibration $\hig^{(-n)}\rightarrow H$ is flat, we see that $\mathcal{Z}\times_{H}\hig^{(-n)}$ is also a regular embedding of codimension greater than or equals to $3$ in $\mathcal{Z}\times_{H}\hig^{(-n)}$, hence by Proposition~\ref{uniqueness of CM sheaf}, the descent data extends to $\hig'\times_{H}\hig^{(-n)}$. This finishes the proof.

\end{proof}

Let us now give an overview of the organization of the paper. In Section $2$ we are going to review some preliminary results that will be needed later. In Section $3$ we will first study the geometric properties of the stack of Higgs bundles and the Hitchin fibration. In particular, we are going to construct the following key diagram and study its geometric properties in subsection $3.3$ and $3.4$:
$$\xymatrix{
\hig''\times_{H}\hig^{(-n)} \ar[r]^{g} \ar[d]^{\pi} & \textrm{Hilb}^{n}_{\mathcal{C}|H}\times_{H}\hig^{(-n)}\\
\hig'\times_{H}\hig^{(-n)}\\
}
$$
The main results are Proposition~\ref{resolve the rational map} and Theorem~\ref{properties of g}. In Section $4$ we will prove a cohomological vanishing result that will be used to prove part $(2)$ of Theorem~\ref{main theorem}. The main result is Lemma~\ref{cohomological vanishing}. In the last section we are going to finish the proof of Theorem~\ref{main theorem}.

\subsection{Acknowledgments}
I'm very greatful to my advisor Dima Arinkin for introducing me to this facinating subject as well as his encouragement and support throughout the entire project. This work is supported by NSF grant DMS-1603277.

\section{Preliminaries}

\subsection{Local complete intersection morphism and relative complete intersection morphism}
In this section we collect some facts about local complete intersection morphism and relative complete intersection morphisms that will be used in this paper.
\begin{definition}
\begin{enumerate1}
\item A morphism of schemes $X\xrightarrow{f} Y$ locally of finite presentation is called local complete intersection morphism if for any $x\in X$, there exist open neighborhoods $U$ of $x$ and $V$ of $f(x)$ such that $U\xrightarrow{f} V$ can be factored as:
    $$\xymatrix{
    U \ar[rr]^{i} \ar[rd]^{f} & & W \ar[ld]^{g}\\
     & V\\
     }
     $$
    such that $i$ is a regular embedding and $g$ is smooth.
\item $f$ is called relative complete intersection morphism if it is a local complete intersection morphism and it is flat.
\end{enumerate1}

\end{definition}

The following properties are well known (See $[7]$, Tag $068$E and Tag $01$UB):
\begin{Proposition}\label{properties of lci}
\begin{enumerate1}
\item Local complete intersection morphisms are preserved under composition and flat base change. Relative complete intersection morphisms are preserved under composition and base change.
\item If $f$ is a local complete intersection morphism, then $f$ is of finite tor-dimension.
\item If $X\xrightarrow{f} Y$ is flat, then $f$ is a relative complete intersection iff each fiber $X_{y}$ is a local
    complete intersection over $k(y)$ for any $y\in Y$.
\item Consider:
$$\xymatrix{
X \ar[rr]^{f} \ar[rd]^{h} & & Y \ar[ld]^{g}\\
& S\\
}
$$
Assume $h$ is a local complete intersection morphism, $g$ is smooth, then $f$ is also a local complete intersection morphism.
\item Let \[\begin{CD}
W@>>>X\\
@VVV@V{f}VV\\
Z@>>>Y\\
\end{CD}\]
be a Cartesian diagram. Assume that $f$ is a local complete intersection morphism, $Z\hookrightarrow Y$ is a regular embedding of codimension $n$ such that $W$ is also a regular embedding of codimension $n$ in $X$. Then $W\rightarrow Z$ is also a local complete intersection morphism.
\end{enumerate1}

\end{Proposition}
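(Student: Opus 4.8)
The plan is to deduce everything from standard facts about regular immersions and smooth morphisms, most of which are already in $[7]$. Parts (1), (2) and (3) are essentially direct citations, and I recall the ingredients as follows. Since being an lci morphism is local on the source, one reduces to a local factorization $U\xrightarrow{i}W\xrightarrow{g}V$ with $i$ a regular immersion and $g$ smooth; then (1) follows because regular immersions are stable under composition and flat base change, smooth morphisms are stable under composition and arbitrary base change, and a relative complete intersection is just a flat lci morphism (flatness being stable under composition and arbitrary base change). For (2), in such a factorization $g$ is flat, hence of tor-dimension $0$, while $i$ has finite tor-dimension since $i_{*}\mathcal{O}_{U}$ admits a finite locally free (Koszul) resolution over $\mathcal{O}_{W}$; finite tor-dimension is stable under composition and local on the source. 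Part (3) is the fibrewise criterion for a flat, locally finitely presented morphism to be lci, which I would simply quote.

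For (4), I would use the graph factorization of $f$, namely
$$X\xrightarrow{\ \Gamma_{f}\ }X\times_{S}Y\xrightarrow{\ \mathrm{pr}_{Y}\ }Y.$$
The projection $\mathrm{pr}_{X}\colon X\times_{S}Y\to X$ is the base change of the smooth morphism $g\colon Y\to S$, hence smooth, and $\Gamma_{f}$ is a section of it; a section of a smooth morphism is a regular immersion, so $\Gamma_{f}$ is an lci morphism. The projection $\mathrm{pr}_{Y}\colon X\times_{S}Y\to Y$ is the base change of $h\colon X\to S$ along the smooth---hence flat---morphism $g$, so it is lci by part (1). Composing and applying part (1) once more shows that $f$ is lci.

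For (5), write $W=X\times_{Y}Z$, so $W\hookrightarrow X$ is the base change of the closed immersion $Z\hookrightarrow Y$; the substance of the hypothesis is that this base change remains a regular immersion of the same codimension $n$. Since lci is local on the source I would work Zariski-locally on $X$ and pick a factorization $X\xrightarrow{i}P\xrightarrow{q}Y$ with $i$ a regular immersion of some codimension $c$ and $q$ smooth. Base changing this factorization along $Z\hookrightarrow Y$ gives $W\hookrightarrow P_{Z}\xrightarrow{q_{Z}}Z$ with $P_{Z}:=P\times_{Y}Z$ and $q_{Z}$ smooth. Now $P_{Z}\hookrightarrow P$ is the base change of the regular immersion $Z\hookrightarrow Y$ along the flat morphism $q$, hence a regular immersion of codimension $n$; and $W\hookrightarrow X\hookrightarrow P$ is a composition of regular immersions of codimensions $n$ and $c$, hence a regular immersion of codimension $n+c$. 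The key step is to conclude that $W\hookrightarrow P_{Z}$ is itself a regular immersion (necessarily of codimension $c$): this is the statement that if $j'$ and $j'\circ j$ are regular immersions then $j$ is a regular immersion, applied to $j=(W\hookrightarrow P_{Z})$ and $j'=(P_{Z}\hookrightarrow P)$. Granting this, $W\to Z$ factors locally as a regular immersion followed by a smooth morphism and is therefore lci.

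I expect part (5) to be the only real obstacle: one must keep the codimension bookkeeping exact---it is precisely the hypothesis that $W$ has codimension $n$ in $X$, matching the codimension of $Z$ in $Y$, that makes the intersection proper and forces $W\hookrightarrow P_{Z}$ to land in codimension $c$---and one must invoke correctly the (Noetherian) fact that a regular immersion factoring through a regular immersion is relatively a regular immersion. Parts (1)--(4) are then routine assemblies of the results recalled above.
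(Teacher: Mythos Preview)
The paper itself gives no proof here---it simply records these as well-known and cites the Stacks Project---so your write-up goes well beyond what the paper does. Parts (1)--(4) are fine: the graph factorization for (4) is the standard argument and is correctly assembled.

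There is, however, a genuine gap in your argument for (5). The ``fact'' you invoke---\emph{if $j'$ and $j'\circ j$ are regular immersions then $j$ is a regular immersion}---is false in general, even in the Noetherian setting and even when the codimensions add up. Take $P=\mathbb{A}^{2}=\mathrm{Spec}\,k[x,y]$, $P_{Z}=V(xy)$, and $W=V(x,y)$. Then $P_{Z}\hookrightarrow P$ is a regular immersion of codimension $1$ and $W\hookrightarrow P$ is a regular immersion of codimension $2$, but $W\hookrightarrow P_{Z}$ is \emph{not} a regular immersion: in the local ring $k[x,y]_{(x,y)}/(xy)$ the maximal ideal needs two generators while the depth is $1$, so it cannot be generated by a regular sequence.

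The fix is to use the structure you have already set up but did not exploit: $W=X\times_{Y}Z=X\times_{P}P_{Z}$, so $W\hookrightarrow P_{Z}$ is the \emph{base change} of the regular immersion $i\colon X\hookrightarrow P$. Locally, $X$ is cut out of $P$ by a regular sequence $(g_{1},\dots,g_{c})$ and $P_{Z}$ by a regular sequence $(f_{1},\dots,f_{n})$, and then $W$ is cut out of $P$ by the ideal $(f_{1},\dots,f_{n},g_{1},\dots,g_{c})$. You already know $W\hookrightarrow X\hookrightarrow P$ is a regular immersion of codimension $n+c$, so this ideal is generated by some regular sequence of length $n+c$; in a Noetherian local ring any set of $n+c$ generators of such an ideal is itself a regular sequence, hence $(f_{1},\dots,f_{n},g_{1},\dots,g_{c})$ is a regular sequence in $\mathcal{O}_{P}$, and therefore $(g_{1},\dots,g_{c})$ is a regular sequence in $\mathcal{O}_{P}/(f_{1},\dots,f_{n})=\mathcal{O}_{P_{Z}}$. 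This shows $W\hookrightarrow P_{Z}$ is a regular immersion of codimension $c$, and your factorization $W\hookrightarrow P_{Z}\xrightarrow{q_{Z}}Z$ then finishes the proof. (Equivalently: the hypothesis that $W\hookrightarrow X$ stays regular of codimension $n$ says $(f_{1},\dots,f_{n})$ is regular on $\mathcal{O}_{P}/(g)$; combined with $(g)$ regular on $\mathcal{O}_{P}$ this makes $(g,f)$ a regular sequence, and permuting regular sequences in a Noetherian local ring gives $(f,g)$ regular, hence $(g)$ regular mod $(f)$.)
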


\begin{Proposition}\label{criterion for relative complete intersection}
Let $X$ and $Y$ be finite type pure dimensional schemes over a field $k$. Assume $X$ is local complete intersection over $k$ and $Y$ is smooth over $k$. Let $X\xrightarrow{f} Y$ be a morphism of schemes, then $f$ is a relative complete intersection morphism iff each each fiber $X_{y}$ has dimension less than or equals to $\textrm{dim}(X)-\textrm{dim}(Y)$.

\end{Proposition}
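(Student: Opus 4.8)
The plan is to reduce the statement to Proposition~\ref{properties of lci}(3), which says that a flat morphism with fibers that are local complete intersections over the residue fields is a relative complete intersection, together with a standard dimension-counting argument for flatness over a smooth (in particular regular) base. One direction is immediate: if $f$ is a relative complete intersection morphism, then it is flat, and since $X$ and $Y$ are pure-dimensional of finite type over $k$, flatness forces every nonempty fiber $X_{y}$ to have dimension exactly $\textrm{dim}(X)-\textrm{dim}(Y)$ (this is the standard fact that fiber dimension of a flat finite-type morphism between pure-dimensional schemes is constant and equal to the difference of dimensions); in particular the fiber dimension inequality holds. So the real content is the converse.

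For the converse, assume each fiber $X_{y}$ has dimension $\leq \textrm{dim}(X)-\textrm{dim}(Y)$. First I would establish flatness of $f$. Since $Y$ is smooth over $k$ it is regular, and $X$, being a local complete intersection over $k$, is Cohen--Macaulay. A morphism from a Cohen--Macaulay scheme to a regular scheme is flat as soon as all fibers have the expected dimension, i.e. $\dim X_{y} = \dim\mathcal{O}_{X,x} - \dim\mathcal{O}_{Y,y}$ for all $x$ mapping to $y$ (this is the "miracle flatness" criterion, EGA IV 6.1.5, or $[7]$ Tag 00R4). Because $X$ and $Y$ are pure-dimensional, $\dim\mathcal{O}_{X,x}$ and $\dim\mathcal{O}_{Y,y}$ can be computed globally (using that $X$ is Cohen--Macaulay, hence equidimensional with no embedded components, and $Y$ is smooth hence equidimensional), so the hypothesis $\dim X_{y}\leq \textrm{dim}(X)-\textrm{dim}(Y)$ combined with the general lower bound $\dim X_{y}\geq \dim\mathcal{O}_{X,x}-\dim\mathcal{O}_{Y,y}$ (Krull's principal ideal theorem applied to local equations pulled back from $Y$) forces equality everywhere. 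Hence $f$ is flat.

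Once flatness is known, I would invoke Proposition~\ref{properties of lci}(3): it remains to check that each fiber $X_{y}$ is a local complete intersection over the residue field $k(y)$. Here I would use that $X$ is a local complete intersection over $k$ and that $Y$ is smooth over $k$: locally $X$ is cut out by a regular sequence in a smooth $k$-scheme, and passing to the fiber over $y\in Y$ amounts to further imposing the local equations of the regular point $y$ on the smooth scheme $Y$ — since $Y/k$ is smooth, a closed point (or point of $Y$) is itself a local complete intersection in $Y$, and one concatenates the two regular sequences. More precisely, $X_y \to \operatorname{Spec} k(y)$ factors, locally, as a regular embedding into $W \times_Y \operatorname{Spec} k(y)$ for $W$ smooth over $Y$ (using Proposition~\ref{properties of lci}(4) to see $f$ itself is lci once flat, then base-changing along the lci morphism $\operatorname{Spec} k(y) \to Y$ via part (1)); equivalently, since $f$ is flat and $\operatorname{Spec} k(y)\to Y$ is lci, the fiber $X_y$ is lci over $k(y)$ by part (1). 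I expect the main obstacle to be bookkeeping with the pure-dimensionality hypotheses: one must be careful that "fiber dimension $\leq \dim X - \dim Y$" is really equivalent to the pointwise miracle-flatness inequality, which is where equidimensionality of $X$ (from Cohen--Macaulayness plus purity) and of $Y$ (from smoothness plus purity, and connectedness on components) is used; degenerate cases where some component of $X$ maps into a proper closed subset of $Y$ are exactly what the hypotheses are designed to rule out, and I would spell that reduction to connected components out carefully.
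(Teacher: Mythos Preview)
Your proposal is correct and follows essentially the same route as the paper: miracle flatness (Cohen--Macaulay source over regular base with the expected fiber dimension) gives flatness, and then Proposition~\ref{properties of lci}(3) finishes once the fibers are seen to be lci. The paper is terser about why the fibers are lci (it simply says ``since $Y$ is smooth, the flatness of $f$ implies that each fiber is also a local complete intersection''), while you spell out the mechanism; in fact your invocation of Proposition~\ref{properties of lci}(4) already shows $f$ is lci directly, so once you have flatness you are done without needing to pass back through the fibers and part~(3).
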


\begin{proof}
The only nontrivial part is the "if" part. Suppose the dimension of each fiber is less than or equals to $\textrm{dim}(X)-\textrm{dim}(Y)$, then it is well-known that in this case $f$ is flat ($[6]$ Theorem $18.16$ part $(2)$). Since $Y$ is smooth, the flatness of $f$ implies that each fiber is also a local complete intersection, hence $f$ is a relative complete intersection morphism by Part$(3)$ of Proposition~\ref{properties of lci}.

\end{proof}

\subsection{Cohen-Macaulay sheaves}
In this section we shall review some facts about Cohen-Macaulay sheaves that will be used freely in the paper. For simplicity we shall work with a Gorenstein schemes $X$ (since all schemes(stacks) appearing in the paper are Gorenstein), so that the dualizing complex of $X$ can be taken to be $O_{X}$.
Most of these properties can be found in $[15]$.

\begin{definition}
Let $M$ be a coherent sheaf on $X$ such that $\textrm{codim} (\textrm{Supp(M)})=d$. Then $M$ is called Cohen-Macaulay of codimension $d$ if $\RH_{O_{X}}(M,O_{X})$ sits in degree $d$. In particular, $M$ is a Cohen-Macaulay module. $M$ is called maximal Cohen-Macaulay if $d=0$
\end{definition}
Notice that if $d=0$ then both $M$ and $\mathcal{H}om_{O_{X}}(M,O_{X})$ are maximal Cohen-Macaulay, and the functor $\mathcal{H}om_{O_{X}}(-,O_{X})$ induces an anti-auto-equivalence in the category of maximal Cohen-Macaulay sheaves.

\begin{Proposition}\label{codimension of supp of CM sheaves}
If $M$ is a Cohen-Macaulay sheaf of codimension $d$ on a Gorenstein scheme $X$, then the support of $M$ is of pure dimension without embedded components, and $\textrm{codim}(\textrm{Supp}(M))=d$. Moreover, the functor $M\rightarrow \mathcal{E}xt^{d}_{O_{X}}(M,O_{X})$ induces an anti-auto-equivalence of the category of Cohen-Macaulay sheaves of codimension $d$.
\end{Proposition}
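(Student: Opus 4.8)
The plan is to reduce the assertion to a statement about finitely generated modules over a complete Gorenstein local ring and then invoke local duality. The support $\textrm{Supp}(M)$, its codimension and embedded components, and the sheaves $\mathcal{E}xt^{i}_{O_X}(M,O_X)$ are all local on $X$, and each of them is unchanged by passing to the $\mathfrak{m}$-adic completion of a local ring, completion of a Noetherian local ring being faithfully flat. Hence it suffices to treat the case $X=\textrm{Spec}\,R$ with $(R,\mathfrak{m})$ a complete Gorenstein local ring, $\dim R=r$, and $M\neq 0$ a finitely generated $R$-module with $\mathbf{R}\mathrm{Hom}_{R}(M,R)$ concentrated in cohomological degree $d$, i.e.\ $\mathrm{Ext}^{i}_{R}(M,R)=0$ for $i\neq d$. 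Being Gorenstein is used precisely to ensure that the dualizing complex of $R$ is $R$ itself (placed in degree $r$), equivalently that $\omega_{R}\cong R$.

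First I would establish the purity claim. Local duality over the Gorenstein ring $R$ gives natural isomorphisms $H^{i}_{\mathfrak{m}}(M)^{\vee}\cong\mathrm{Ext}^{r-i}_{R}(M,R)$, where $(-)^{\vee}$ denotes the Matlis dual; the hypothesis therefore forces $H^{i}_{\mathfrak{m}}(M)=0$ for $i\neq r-d$, while $H^{r-d}_{\mathfrak{m}}(M)\neq 0$ because $\mathrm{Ext}^{d}_{R}(M,R)\neq 0$ (otherwise $\mathbf{R}\mathrm{Hom}_{R}(M,R)$ would vanish, forcing $M=0$ by biduality). Since $\textrm{depth}_{R}M$ and $\dim_{R}M$ are the least and greatest indices $i$ with $H^{i}_{\mathfrak{m}}(M)\neq 0$, it follows that $\textrm{depth}_{R}M=\dim_{R}M=r-d$, i.e.\ $M$ is a Cohen--Macaulay $R$-module of dimension $r-d$. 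Two standard facts finish this half: a Cohen--Macaulay module over a Noetherian local ring is unmixed, so $\dim R/\mathfrak{p}=\dim M$ for every $\mathfrak{p}\in\textrm{Ass}_{R}(M)$, whence $M$ has no embedded associated primes and $\textrm{Supp}(M)$ is of pure dimension $r-d$; and a Gorenstein (hence Cohen--Macaulay) local ring is catenary and equidimensional, so $\textrm{codim}\,\textrm{Supp}(M)=r-\dim M=d$, in agreement with the way the class was defined.

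For the anti-auto-equivalence I would argue at the derived level. Put $N:=\mathcal{E}xt^{d}_{O_X}(M,O_X)$, so that $\RH_{O_X}(M,O_X)\simeq N[-d]$ by the degree hypothesis. As $X$ is Gorenstein of finite Krull dimension, $O_X$ is a dualizing complex and biduality holds on $D^{b}_{\textrm{coh}}(X)$, i.e.\ $\RH_{O_X}(\RH_{O_X}(M,O_X),O_X)\simeq M$ naturally. Substituting gives $\RH_{O_X}(N,O_X)\simeq M[-d]$, so $\mathcal{E}xt^{i}_{O_X}(N,O_X)=0$ for $i\neq d$ and $\mathcal{E}xt^{d}_{O_X}(N,O_X)\cong M$; moreover $\textrm{Supp}(N)=\textrm{Supp}(\RH_{O_X}(M,O_X))=\textrm{Supp}(M)$ has codimension $d$, so $N$ is again Cohen--Macaulay of codimension $d$. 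Thus $M\mapsto\mathcal{E}xt^{d}_{O_X}(M,O_X)$ is a contravariant functor from Cohen--Macaulay sheaves of codimension $d$ to itself, functoriality being inherited from $\RH_{O_X}(-,O_X)$, and the biduality isomorphism shows that applying this functor twice recovers the identity up to natural isomorphism. This is precisely the asserted anti-auto-equivalence.

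The only genuinely delicate point is the reduction-and-duality step: one must check that "$\mathbf{R}\mathrm{Hom}$ concentrated in a single degree", depth, dimension and the set of associated primes are all well behaved under localization and $\mathfrak{m}$-adic completion (they are, by flatness of completion and routine commutative algebra), and that local duality and biduality are available in the required form, which is exactly where the Gorenstein hypothesis is used. Past that, the argument is formal bookkeeping with shifts, so I expect no further obstacle.
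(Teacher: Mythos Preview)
The paper does not supply its own proof of this proposition: it is stated without argument in Section~2.2 as a standard fact, with the surrounding text pointing to Bruns--Herzog~[15] for the properties of Cohen--Macaulay sheaves. So there is no ``paper's proof'' to compare against beyond that reference.

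Your argument is correct and is essentially the textbook approach one finds in~[15]. The reduction to a (complete) Gorenstein local ring, the identification of depth and dimension via local duality $H^{i}_{\mathfrak m}(M)^{\vee}\cong\mathrm{Ext}^{r-i}_{R}(M,R)$, and the deduction that $M$ is a Cohen--Macaulay module of dimension $r-d$ are all standard and carried out cleanly. The unmixedness of Cohen--Macaulay modules then gives the purity and absence of embedded primes, and catenarity of the Cohen--Macaulay local ring $R$ converts this into the codimension statement. For the anti-equivalence, your use of biduality on $D^{b}_{\mathrm{coh}}(X)$ with dualizing complex $O_{X}$ is exactly right, and the shift bookkeeping showing $\mathcal{E}xt^{d}(\mathcal{E}xt^{d}(M,O_{X}),O_{X})\cong M$ is clean.

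Two minor remarks. First, the completion step is convenient but not strictly necessary: for the vanishing statements you use, local duality over a Gorenstein local ring already suffices, since Matlis dual detects vanishing. Second, note that in the paper's definition the condition $\mathrm{codim}(\mathrm{Supp}(M))=d$ is \emph{assumed}, so that part of the proposition is tautological; the substantive content is the purity/unmixedness and the anti-auto-equivalence, both of which you have established.
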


We will use  the following property of maximal Cohen-Macaulay sheaves ($[1]$ Lemma $2.2$):
\begin{Proposition}\label{uniqueness of CM sheaf}
Let $X$ be a Gorenstein scheme of pure dimension, $M$ a maximal Cohen-Macaulay sheaf and $Z\subseteq X$ a closed subscheme of codimension greater than or equals to $2$, then we have $M\simeq j_{*}(M|_{X-Z})$ for $j$: $X-Z\hookrightarrow X$. Hence for any two maximal Cohen-Macaulay sheaves $M$ and $N$, we have $Hom_{X}(M,N)\simeq Hom_{X\backslash Z}(M|_{X\backslash Z},N|_{X\backslash Z})$

\end{Proposition}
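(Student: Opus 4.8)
The plan is to establish the first assertion, $M\simeq j_{*}(M|_{X\setminus Z})$, and then obtain the statement about $\mathrm{Hom}$'s as a formal consequence. For the latter step: applying the first assertion to $N$ gives $N\simeq j_{*}j^{*}N$, so by the $(j^{*},j_{*})$-adjunction
\[
\mathrm{Hom}_{X}(M,N)\simeq \mathrm{Hom}_{X}(M,j_{*}j^{*}N)\simeq \mathrm{Hom}_{X\setminus Z}(j^{*}M,j^{*}N),
\]
which is exactly the second assertion. Thus everything reduces to showing $M\simeq j_{*}j^{*}M$.

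For this I would use the exact triangle $R\Gamma_{Z}(M)\to M\to Rj_{*}j^{*}M$; passing to cohomology sheaves yields the four-term exact sequence
\[
0\to \mathcal{H}^{0}_{Z}(M)\to M\to j_{*}j^{*}M\to \mathcal{H}^{1}_{Z}(M)\to 0 ,
\]
so it suffices to prove $\mathcal{H}^{0}_{Z}(M)=\mathcal{H}^{1}_{Z}(M)=0$. Here I would invoke the standard local-cohomology vanishing criterion: for a coherent sheaf $\mathcal{F}$ on a Noetherian scheme and a closed subset $Z$, one has $\mathcal{H}^{i}_{Z}(\mathcal{F})=0$ for every $i<c$ provided $\mathrm{depth}_{\mathcal{O}_{X,x}}(\mathcal{F}_{x})\ge c$ for all $x\in Z$ (see $[7]$; one can also phrase this via $\mathcal{E}xt$ and the theory in $[15]$). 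A variant that keeps the citations to a minimum is to first reduce to the case $M=\mathcal{O}_{X}$: since $M$ is maximal Cohen-Macaulay it is reflexive, $M\simeq \mathcal{H}om_{\mathcal{O}_{X}}(N,\mathcal{O}_{X})$ with $N=\mathcal{H}om_{\mathcal{O}_{X}}(M,\mathcal{O}_{X})$, and since $j$ is an open immersion, flat base change together with the $\mathcal{H}om$-$j_{*}$ adjunction gives $j_{*}j^{*}\mathcal{H}om_{\mathcal{O}_{X}}(N,\mathcal{O}_{X})\simeq \mathcal{H}om_{\mathcal{O}_{X}}(N,j_{*}j^{*}\mathcal{O}_{X})$; hence it is enough to know $\mathcal{O}_{X}\simeq j_{*}j^{*}\mathcal{O}_{X}$, i.e.\ the $c=2$ vanishing for $\mathcal{F}=\mathcal{O}_{X}$, which is Serre's criterion $S_{2}$ for the Cohen-Macaulay (hence $S_{2}$) scheme $X$.

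The remaining input is a depth estimate at points of $Z$. Fix $x\in Z$. Localizing the definition of maximal Cohen-Macaulay, $\mathrm{Ext}^{i}_{\mathcal{O}_{X,x}}(M_{x},\mathcal{O}_{X,x})=0$ for $i>0$; because $\mathcal{O}_{X,x}$ is Gorenstein local this says precisely that $M_{x}$ is a maximal Cohen-Macaulay $\mathcal{O}_{X,x}$-module, and therefore $\mathrm{depth}(M_{x})=\dim\mathcal{O}_{X,x}$ (and likewise $\mathrm{depth}(\mathcal{O}_{X,x})=\dim\mathcal{O}_{X,x}$). Now $\dim\mathcal{O}_{X,x}=\mathrm{codim}(\overline{\{x\}},X)$, and $\overline{\{x\}}$ is contained in some irreducible component of $Z$, which has codimension $\ge 2$ in $X$ by hypothesis; hence $\dim\mathcal{O}_{X,x}\ge 2$. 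Feeding $c=2$ and these depths into the vanishing criterion gives $\mathcal{H}^{0}_{Z}(M)=\mathcal{H}^{1}_{Z}(M)=0$ (respectively $\mathcal{O}_{X}\simeq j_{*}j^{*}\mathcal{O}_{X}$ in the variant), which finishes the proof.

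I do not anticipate a serious obstacle: once the local-cohomology (equivalently $S_{2}$) vanishing criterion is available, the argument is entirely formal, and the reduction to $M=\mathcal{O}_{X}$ keeps the external input to a bare minimum. The one point deserving a little care is that the relevant depth is the depth of $\mathcal{F}_{x}$ as an $\mathcal{O}_{X,x}$-module (equivalently, depth along the ideal of $Z$ rather than at a single closed point), which is precisely why the formulation requiring $\mathrm{depth}\ge c$ at every $x\in Z$ is the convenient one to cite.
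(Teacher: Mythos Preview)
Your argument is correct. The paper does not actually prove this proposition: it simply quotes it from $[1]$, Lemma~$2.2$, so there is nothing in the paper to compare against at the level of proof strategy. What you have written is the standard argument behind that lemma, and it goes through as stated: the key input is the depth/local-cohomology criterion (for a coherent $\mathcal{F}$, one has $\mathcal{H}^{0}_{Z}(\mathcal{F})=\mathcal{H}^{1}_{Z}(\mathcal{F})=0$ if and only if $\mathrm{depth}_{\mathcal{O}_{X,x}}\mathcal{F}_{x}\ge 2$ for every $x\in Z$), and the maximal Cohen--Macaulay hypothesis on a Gorenstein scheme gives $\mathrm{depth}\,M_{x}=\dim\mathcal{O}_{X,x}\ge\mathrm{codim}(Z,X)\ge 2$ at every $x\in Z$. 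The deduction of the $\mathrm{Hom}$ statement from $N\simeq j_{*}j^{*}N$ via adjunction is exactly right. Your reflexivity ``variant'' reducing to $M=\mathcal{O}_{X}$ is also valid (maximal Cohen--Macaulay sheaves on a Gorenstein scheme are reflexive), though it is not really more economical than the direct depth argument: you still invoke the same $S_{2}$-type vanishing, just for $\mathcal{O}_{X}$ instead of $M$.

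One minor stylistic remark: in the depth estimate you use that every point $x\in Z$ satisfies $\dim\mathcal{O}_{X,x}\ge 2$, which follows because the closure $\overline{\{x\}}$ lies in $Z$ and hence has codimension at least that of the component of $Z$ containing it. You say this, but it is worth being explicit that ``codimension of $Z$ is $\ge 2$'' is being read as ``every irreducible component of $Z$ has codimension $\ge 2$''; this is the intended meaning throughout the paper.
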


The following is immediate from definitions:
\begin{corollary}\label{resolution of CM sheaf}
Let $M$ be a maximal Cohen-Macaulay sheaf on $X$, $\mathcal{E}$ a vector bundle of rank $r$. Let $s$ be a global section of $\mathcal{E}$ such that $s$ defines a regular sequence of length $r$ on $X$. Denote the vanishing locus of $s$ by $Z$, then we have resolutions of the form:
$$\bigwedge(\check{\mathcal{E}})\otimes M\rightarrow M|_{Z}$$

\end{corollary}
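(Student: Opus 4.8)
\emph{Proof proposal.} The plan is to identify the asserted resolution with the Koszul complex of $s$ tensored with $M$, and to use the maximal Cohen--Macaulay hypothesis to kill the higher $\mathcal{T}or$ sheaves. Since $s\in H^{0}(X,\mathcal{E})$ defines a regular sequence of length $r=\operatorname{rk}\mathcal{E}$, the vanishing locus $Z$ is a regular embedding of codimension $r$ and the Koszul complex
$$0\to\textstyle\bigwedge^{r}\check{\mathcal{E}}\to\cdots\to\bigwedge^{2}\check{\mathcal{E}}\to\check{\mathcal{E}}\to O_{X}\to O_{Z}\to 0,$$
with differentials given by contraction against $s$, is a finite locally free resolution of $O_{Z}$. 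Tensoring it with $M$ gives a complex $\bigwedge(\check{\mathcal{E}})\otimes M\to M|_{Z}$ whose homology in degree $i$ is $\mathcal{T}or^{O_{X}}_{i}(O_{Z},M)$, so the content of the corollary is exactly that $\mathcal{T}or^{O_{X}}_{i}(O_{Z},M)=0$ for $i>0$.

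This vanishing is local on $X$, so I would pass to a local ring $R=O_{X,x}$, where $\mathcal{E}$ trivializes and $s$ becomes an $R$-regular sequence $s_{1},\dots,s_{r}\in R$; since $R$ is Cohen--Macaulay this just says $V(s_{1},\dots,s_{r})$ has pure codimension $r$ in $\operatorname{Spec}R$ and meets every irreducible component in codimension exactly $r$. If $M_{x}=0$ there is nothing to check; otherwise $M_{x}$ is maximal Cohen--Macaulay, so by Proposition~\ref{codimension of supp of CM sheaves} its support is a union of irreducible components of $\operatorname{Spec}R$ and $M_{x}$ is Cohen--Macaulay of dimension $d=\dim R$. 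Then $\operatorname{Supp}(M_{x}/(s_{1},\dots,s_{r})M_{x})=\operatorname{Supp}(M_{x})\cap V(s_{1},\dots,s_{r})$ has dimension $d-r$, and the standard criterion that a sequence of length $r$ cutting the dimension of a Cohen--Macaulay module down by exactly $r$ is automatically a regular sequence on that module shows $s_{1},\dots,s_{r}$ is $M_{x}$-regular. Consequently the Koszul complex $K(s_{1},\dots,s_{r};M_{x})$ is acyclic in positive degrees with $H_{0}=M_{x}/(s_{1},\dots,s_{r})M_{x}$, i.e. $\mathcal{T}or^{O_{X}}_{i}(O_{Z},M)_{x}=0$ for $i>0$; globalizing, $\bigwedge(\check{\mathcal{E}})\otimes M\to M|_{Z}$ is a resolution.

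The only step with any substance is the middle one: that maximal Cohen--Macaulayness of $M$ forces $s$ to remain a regular sequence on $M$, which here reduces via Proposition~\ref{codimension of supp of CM sheaves} to the observation that $\operatorname{Supp}(M)$ meets $Z$ in the expected codimension. Everything else — exactness of the Koszul resolution of $O_{Z}$, the identification of its homology against $M$ with $\mathcal{T}or$, and the passage between the local and global statements — is formal, which is why the result can fairly be called immediate from the definitions.
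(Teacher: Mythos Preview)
Your argument is correct and is exactly the unpacking of what the paper intends when it declares the corollary ``immediate from definitions''; the paper supplies no proof beyond that remark. The only substantive step---that an $O_{X}$-regular sequence stays $M$-regular when $M$ is maximal Cohen--Macaulay---is precisely the one you isolate, and your localization followed by the dimension-drop criterion for regularity on a Cohen--Macaulay module is the standard justification.
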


\begin{Proposition}\label{pullback of CM sheaf}
Let $X\xrightarrow{f} Y$ be a morphism between Gorenstein schemes of finite tor-dimension. Let $Z\hookrightarrow Y$ be a regular embedding of codimension $d$, and denote the pullback of $Z$ by $f$ by $W$. Suppose $W$ is also a regular embedding of codimension $d$ on $X$, then:
\begin{enumerate1}
\item $W\rightarrow Z$ is also of finite tor-dimension.
\item For any maximal Cohen-Macaulay sheaf $M$ on $Z$, we have $Lf^{*}M=f^{*}M$, and $f^{*}M$ is also a maximal Cohen-Macaulay sheaf on $W$.
\item $Lf^{*}\RH_{O_{Z}}(M,O_{Z})\simeq \RH_{O_{W}}(f^{*}M,O_{W})$
\end{enumerate1}

\end{Proposition}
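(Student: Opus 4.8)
The plan is to reduce every assertion to a statement about stalks and then to a single homological lemma on finite-flat-dimension maps of Gorenstein local rings. Write $g\colon W\to Z$ for the morphism induced by $f$ (so the ``$f^{*}M$'' in the statement means $g^{*}M$, a sheaf on $W$), and let $i_{Z}\colon Z\hookrightarrow Y$ and $i_{W}\colon W\hookrightarrow X$ be the closed immersions. First I would observe that the resulting Cartesian square is Tor-independent, i.e. $O_{X}\otimes^{L}_{O_{Y}}O_{Z}\simeq O_{W}$: this is local, and if $(t_{1},\dots,t_{d})$ is a regular sequence cutting out $Z$ in $Y$ then $O_{Z}$ has the Koszul resolution $K(t_{1},\dots,t_{d})$ over $O_{Y}$, so $O_{X}\otimes^{L}_{O_{Y}}O_{Z}\simeq K(f^{*}t_{1},\dots,f^{*}t_{d})$, which is a resolution of $O_{W}$ precisely because $W$ is a regular embedding of codimension $d$. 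In particular $Z$ and $W$ are themselves Gorenstein, and the base-change map $Lf^{*}i_{Z*}\mathcal{F}\to i_{W*}Lg^{*}\mathcal{F}$ is an isomorphism for every complex $\mathcal{F}$ on $Z$.

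Part (1) is then immediate: for a coherent sheaf $\mathcal{F}$ on $Z$ one has $i_{W*}Lg^{*}\mathcal{F}\simeq Lf^{*}i_{Z*}\mathcal{F}$, and since $i_{Z*}$ is exact and $f$ has finite tor-dimension, the right-hand side is bounded, with amplitude controlled by the tor-dimension of $f$; as $i_{W*}$ is exact and faithful, so is $Lg^{*}\mathcal{F}$, whence $g$ is of finite tor-dimension. Part (2) I would then obtain by applying, at each $w\in W$ lying over $z\in Z$, the following lemma to the local ring map $O_{Z,z}\to O_{W,w}$, which by the above and Part (1) is a map of Gorenstein local rings of finite flat dimension, and to the maximal Cohen-Macaulay module $M_{z}$.

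\emph{Lemma.} If $A\to B$ is a local homomorphism of Gorenstein local rings with $\operatorname{fd}_{A}B<\infty$ and $M$ is a maximal Cohen-Macaulay $A$-module, then $\operatorname{Tor}^{A}_{i}(B,M)=0$ for $i>0$ and $B\otimes_{A}M$ is maximal Cohen-Macaulay over $B$. The proof I have in mind uses that over a Gorenstein ring an MCM module is totally reflexive (Auslander-Bridger), hence a syzygy in a doubly infinite complete free resolution, so that there is a short exact sequence $0\to M\to F\to M'\to 0$ with $F$ free and $M'$ again MCM. Freeness of $F$ and the long exact Tor sequence give $\operatorname{Tor}^{A}_{i}(B,M)\simeq\operatorname{Tor}^{A}_{i+1}(B,M')$ for $i\ge 1$, and iterating, $\operatorname{Tor}^{A}_{i}(B,M)\simeq\operatorname{Tor}^{A}_{i+k}(B,M^{(k)})$ for every $k$; this vanishes as soon as $i+k>\operatorname{fd}_{A}B$, which gives the Tor-vanishing. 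Tensoring $0\to M\to F\to M'\to 0$ with $B$ then stays exact, so $B\otimes_{A}M$ embeds in a free $B$-module with cokernel of the same kind, i.e. it is an arbitrarily high cosyzygy over $B$; the usual depth estimate for cosyzygies over the Gorenstein ring $B$ then forces $\operatorname{depth}=\dim$, hence it is MCM. Granting the lemma, Part (2) is read off at stalks: $\operatorname{Tor}^{O_{Z}}_{>0}(O_{W},M)=0$, so $Lg^{*}M=g^{*}M$, and $g^{*}M$ is maximal Cohen-Macaulay on $W$ (its support is all of $W$ since $M$ has full support on $Z$).

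Part (3) holds formally: resolving $M$ locally by a bounded-above complex $\overline{P}_{\bullet}$ of finite free $O_{Z}$-modules gives $\RH_{O_{Z}}(M,O_{Z})\simeq\overline{P}_{\bullet}^{\vee}$, hence $Lg^{*}\RH_{O_{Z}}(M,O_{Z})\simeq(g^{*}\overline{P}_{\bullet})^{\vee}\simeq\RH_{O_{W}}(Lg^{*}M,O_{W})$ for any $g$, and Part (2) identifies $Lg^{*}M$ with $g^{*}M$, giving the stated isomorphism (both sides are concentrated in degree $0$, with $\RH$ reducing to $\mathcal{H}om$, because $M$, $\RH_{O_{Z}}(M,O_{Z})$ and $g^{*}M$ are all maximal Cohen-Macaulay over Gorenstein rings). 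The genuinely substantial point is the lemma, and inside it the appeal to \emph{complete} resolutions of totally reflexive modules: this is exactly what lets one slide the Tor-degree up into the range killed by $\operatorname{fd}_{A}B$, and it is where the bare finite-tor-dimension hypothesis has to be paid for (were $f$ assumed a local complete intersection morphism one could instead factor it locally through a smooth map and conclude by flatness). A minor but necessary bookkeeping point is that ``$f^{*}M$'' throughout denotes $g^{*}M$ on $W$, which differs from $Lf^{*}(i_{Z*}M)$ on $X$ by the pushforward $i_{W*}$.
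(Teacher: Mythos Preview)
Your argument is correct and in the same spirit as the paper's, though organized differently at each step. For part~(1) the paper works directly with a bounded flat resolution $F^{\bullet}\to B$ of $B$ over $A$ and observes that $F^{\bullet}\otimes_{A}A/(\mathbf{x})$ resolves $B/(\mathbf{x})$; your Tor-independence/base-change formulation says the same thing more abstractly. For part~(2) the paper reduces to the case $Z=Y$ (via the same Koszul/Tor-independence computation you give) and then \emph{cites} Lemma~2.3 of~[1] rather than proving it; your lemma, with its Auslander--Bridger complete-resolution shift $\operatorname{Tor}^{A}_{i}(B,M)\simeq\operatorname{Tor}^{A}_{i+k}(B,M^{(k)})$, is precisely a self-contained proof of that cited result, so here you actually supply more detail than the paper. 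For part~(3) the paper performs an explicit diagram chase using a two-term free presentation $Q\to P\to M\to 0$, exactness of $\mathrm{Hom}(-,A)$ on short exact sequences of MCM modules, and the Tor-vanishing from~(2); your formal computation with a full free resolution $P_{\bullet}$ and its dual is shorter and reaches the same conclusion. The overall architecture---pass to Gorenstein local rings, exploit the Koszul description of the regular embedding, and use MCM duality---is common to both.
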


\begin{proof}
For part $(1)$, since the statement is local on $X$ and $Y$, we can reduce it to the following case: \\
Suppose $B$ is an algebra over $A$ and $B$ has finite tor-dimension over $A$. Let $(x_{1},\cdots,x_{r})$ be a length $r$ regular sequence on $A$, such that they also form a regular sequence of $B$, then $B/(\mathbf{x})$ also has finite tor-dimension over $A/(\mathbf{x})$\\
By assumption, we have a bounded flat resolution $F^{*}\rightarrow B$ of $B$ as an $A$ module. Since $(\mathbf{x})$ is a regular sequence on $B$, we have that $B\otimes^{L}_{A} A/(\mathbf{x})\simeq B/(\mathbf{x})$, hence $F^{*}\otimes_{A}A/(\mathbf{x})$ is a bounded flat resolution of $B/(\mathbf{x})$. This proves $(1)$\\

For $(2)$, let $F^{*}$ be a bounded flat resolution of $B$ as an $A$ module. Then $Lf^{*}M$ is represented by the complex
$$F^{*}\otimes_{A}M\simeq F^{*}\otimes_{A}A/(\mathbf{x})\otimes_{A/(\mathbf{x})}M\simeq B/(\mathbf{x})\otimes^{L}_{A/(\mathbf{x})}M$$
By part $(1)$, $B/(\mathbf{x})$ is of finite tor-dimension over $A/(\mathbf{x})$, this reduces us to the case when $Z=Y$ and $M$ is a maximal Cohen-Macaulay sheaf on $Y$. And this is established in $[1]$ Lemma $2.3$.\\

For $(3)$, we can again reduce it to the case when $Z=Y$ using $(1)$ and $(2)$. So Assume now that $M$ is a maximal Cohen-Macaulay sheaf on $Y$. We have
$$\RH_{O_{Y}}(M,O_{Y})=\mathcal{H}om_{O_{Y}}(M,O_{Y})$$
are maximal Cohen-Macaulay sheaves, so we only need to show:
$$f^{*}\mathcal{H}om_{O_{Y}}(M,O_{Y})\simeq \mathcal{H}om_{O_{X}}(f^{*}M,O_{X})$$
The statement is local, so we only need to show the following: Let $A\xrightarrow{f} B$ be a morphism of finite tor-dimension between Gorenstein rings, and $M$ a maximal Cohen-Macaulay module of $A$, then we have
$$Hom_{A}(M,A)\otimes_{A} B\simeq Hom_{B}(M\otimes_{A} B,B)$$
Choose a presentation of $M$:
$$Q\rightarrow P\rightarrow M\rightarrow 0$$
where $P$ and $Q$ are free of finite rank. We can split it into two exact sequences:
$$0\rightarrow K\rightarrow P\rightarrow M\rightarrow 0$$
$$0\rightarrow N\rightarrow Q\rightarrow K\rightarrow 0$$
Also all modules involved are maximal Cohen-Macaulay modules. So we have exact sequences:
$$0\rightarrow Hom_{A}(M,A)\rightarrow Hom_{A}(P,A)\rightarrow Hom_{A}(K,A)\rightarrow 0$$
$$0\rightarrow Hom_{A}(K,A)\rightarrow Hom_{A}(Q,A)\rightarrow Hom_{A}(N,A)\rightarrow 0$$
By part $(1)$, we have exact sequences:
$$0\rightarrow Hom_{A}(M,A)\otimes_{A}B\rightarrow Hom_{A}(P,A)\otimes_{A}B\rightarrow Hom_{A}(K,A)\otimes_{A}B\rightarrow 0$$
$$0\rightarrow Hom_{A}(K,A)\otimes_{A}B\rightarrow Hom_{A}(Q,A)\otimes_{A}B\rightarrow Hom_{A}(N,A)\otimes_{A}B\rightarrow 0$$
Hence the following is exact:
$$0\rightarrow Hom_{A}(M,A)\otimes_{A}B\rightarrow Hom_{A}(P,A)\otimes_{A}B\rightarrow Hom_{A}(Q,A)\otimes_{A}B$$
Now since
$$Q\rightarrow P\rightarrow M\rightarrow 0$$
is exact, we have:
$$Q\otimes_{A}B\rightarrow P\otimes_{A}B\rightarrow M\otimes_{A}B\rightarrow 0$$
Hence:
$$0\rightarrow Hom_{B}(M\otimes_{A}B,B)\rightarrow Hom_{B}(P\otimes_{A}B,B)\rightarrow Hom_{B}(Q\otimes_{A}B,B)$$
So we have the following morphisms between exact sequences:
$$\xymatrix{
0 \ar[r] & Hom_{A}(M,A)\otimes_{A}B \ar[r] \ar[d]^{\alpha} & Hom_{A}(P,A)\otimes_{A}B \ar[r] \ar[d]^{\beta} & Hom_{A}(Q,A)\otimes_{A}B \ar[d]^{\gamma}\\
0 \ar[r] & Hom_{B}(M\otimes_{A}B,B) \ar[r] & Hom_{B}(P\otimes_{A}B,B) \ar[r] & Hom_{B}(Q\otimes_{A}B,B)\\
}
$$
Since $P$ and $Q$ are free of finite rank, $\gamma$ and $\beta$ are isomorphisms, so $\alpha$ is also an isomorphism.

\end{proof}

\begin{Proposition}\label{flatness of CM sheaves}
Let $X$ be a Gorenstein scheme and $Y$ be a smooth scheme. Let $X\xrightarrow{f} Y$ be a flat morphism. If $M$ is a maximal Cohen-Macaulay sheaf on $X$, then $M$ is flat over $Y$

\end{Proposition}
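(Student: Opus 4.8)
The plan is to reduce to commutative algebra and apply the local criterion of flatness. Flatness of $M$ over $Y$ is a stalk-local condition on $X$, so fix $x\in X$, set $y=f(x)$, $A=\mathcal{O}_{Y,y}$, $B=\mathcal{O}_{X,x}$, $M=M_x$, and assume $M\neq 0$. Then $A$ is a regular local ring of dimension $d:=\dim A$, the homomorphism $A\to B$ is flat and local with $B$ Gorenstein, and $M$ is a finitely generated $B$-module which is a (maximal) Cohen-Macaulay module; by Proposition~\ref{codimension of supp of CM sheaves} its support has codimension $0$, so $\dim M=\dim B$, and combining this with the Cohen-Macaulay condition, $\operatorname{depth}_B M=\dim M=\dim B$. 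Since $M$ is finitely generated over $B$ and $\mathfrak{m}_A B\subseteq\mathfrak{m}_B$, the local criterion of flatness (in the form valid for modules over local algebras) reduces us to proving $\operatorname{Tor}^A_1(k(y),M)=0$.

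First I would pick a regular system of parameters $t_1,\dots,t_d$ of $A$. Because $A$ is regular, the Koszul complex $K_\bullet(t_1,\dots,t_d;A)$ is a free resolution of $k(y)=A/\mathfrak{m}_A$, hence $\operatorname{Tor}^A_i(k(y),M)\cong H_i\big(K_\bullet(t_1,\dots,t_d;M)\big)$ for all $i$. Thus it is enough to show that $t_1,\dots,t_d$ is an $M$-regular sequence (these elements lie in $\mathfrak{m}_B$ since $A\to B$ is local).

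The heart of the matter is a dimension count. Because $M$ is Cohen-Macaulay over $B$, a sequence of elements of $\mathfrak{m}_B$ is $M$-regular if and only if it is part of a system of parameters for $M$, equivalently if and only if killing it drops $\dim M$ by the length of the sequence; so it suffices to check that $\dim\big(M/(t_1,\dots,t_d)M\big)=\dim M-d$. The inequality $\geq$ is Krull's principal ideal theorem. For $\leq$, note that $(t_1,\dots,t_d)B=\mathfrak{m}_A B$, so $M/(t_1,\dots,t_d)M$ is a module over $B/\mathfrak{m}_A B=\mathcal{O}_{X_y,x}$, the local ring at $x$ of the fibre $X_y$; by flatness of $f$ and regularity of $A$, the dimension formula for flat local homomorphisms gives $\dim\mathcal{O}_{X_y,x}=\dim B-d$, while $\dim M=\dim B$ as above. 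Hence $\dim\big(M/(t_1,\dots,t_d)M\big)\leq\dim B-d=\dim M-d$, and equality holds. Therefore $t_1,\dots,t_d$ is $M$-regular, so $\operatorname{Tor}^A_i(k(y),M)=0$ for all $i>0$, and $M$ is flat over $A$; this being true at every $x$, $M$ is flat over $Y$.

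I expect the only genuine content to be this last dimension count, which is exactly where the two hypotheses interact: smoothness of $Y$ makes $A$ regular, so the Koszul complex computes $\operatorname{Tor}$ and the fibre has dimension $\dim B-d$, while flatness of $f$ keeps $t_1,\dots,t_d$ a regular sequence on $B$ and bounds the fibre dimension; maximal Cohen-Macaulayness of $M$ then upgrades the dimension equality to regularity of the sequence. An essentially equivalent alternative is to induct on $\dim Y$: working locally, choose $t\in\mathcal{O}_Y$ cutting out a smooth divisor $Y'\subseteq Y$, check by the same dimension count that $t$ is a nonzerodivisor on $M$ so that $M/tM$ is a maximal Cohen-Macaulay sheaf on the Gorenstein scheme $f^{-1}(Y')$, flat over $Y'$ by the inductive hypothesis, and conclude by the fibrewise flatness criterion that $M$ is flat over $Y$.
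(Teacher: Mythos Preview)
Your proposal is correct and follows essentially the same approach as the paper's proof: localize, apply the local criterion of flatness, compute $\operatorname{Tor}^A_\bullet(k(y),M)$ via the Koszul complex on a regular system of parameters of $A$, and show that this sequence is $M$-regular. The only difference is in how you justify the last step: the paper argues in one line that flatness of $A\to B$ makes the sequence $B$-regular and then invokes the standard fact that any $B$-regular sequence is $M$-regular for a maximal Cohen-Macaulay $B$-module, whereas you unpack this via the dimension/system-of-parameters characterization and the fiber dimension formula; both amount to the same content.
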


\begin{proof}
We can reduce the claim to the following statement: Let
$$(A,\mathfrak{m})\xrightarrow{f} (B,\mathfrak{n})$$
is a local flat morphism of noetherian local rings such that $A$ is regular and $B$ is Gorenstein. If $M$ is a maximal Cohen-Macaulay $B$ module, then $M$ is flat over $A$. We can apply the local criterion for flatness: Let $\mathfrak{m}=(x_{1},\cdots,x_{r})$, since $f$ is flat, $(x_{1},\cdots,x_{r})$ is also a regular sequence on $B$, hence it is a regular sequence of $M$. Since $Tor_{A}(A/\mathfrak{m},M)$ is computed by the Koszul complex
$$K(x_{1},\cdots,x_{r})\otimes M$$
It follows that $Tor^{i}_{A}(A/\mathfrak{m},M)=0$ for $i>0$, hence $M$ is flat over $A$ by the local criterion for flatness.

\end{proof}

\subsection{Blowup and cohomology}
In this section we gather some facts about blowups which will be used in the proof of the main theorem. Let $X$ be a scheme, $\mathcal{E}$ a vector bundle of rank $n+1$ on $X$ with a global section $s\in H^{0}(X,\mathcal{E})$, such that the vanishing locus of $s$ is a regular embedding of codimension $n+1$. Denote the vanishing locus of $s$ by $Z$. Then we have the following description about the blowup of $X$ along $Z$ (See Charpter $11$ of $[11]$):
\begin{Proposition}\label{blowup as regular embedding}

The blowup of $X$ along $Z$ is a regular embedding of codimension $n$ in $\textbf{P}(\mathcal{E})$:

$$
\xymatrix{
  Bl_{Z}X \ar[rr]^{i} \ar[dr]^{p}
                &  &    \textbf{P}(\mathcal{E})=\textrm{Proj}(\textrm{Sym}_{X}\check{\mathcal{E}}) \ar[dl]^{\pi}    \\
                & X
}
$$

It can be described by the following: On $\mathbf{P}(\mathcal{E})$ we have a natural morphism of vector bundles:
$$\pi^{*}\mathcal{E}\xrightarrow{\varphi} T_{P(\mathcal{E})\mid X}\otimes O(-1)$$
The blowup is the vanishing locus of $\varphi(\pi^{*}s)\in H^{0}(T_{P(\mathcal{E})\mid X})$. It is a regular embedding in $P(\mathcal{E})$ so we have a Koszul resolution:
\begin{equation}\label{bl}
\bigwedge^{*}(\Omega_{P(\mathcal{E})\mid X}\otimes O(1))\rightarrow O_{Bl_{Z}X}
\end{equation}
\end{Proposition}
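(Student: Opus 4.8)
The plan is to first give an intrinsic (incidence) description of the subscheme of $\mathbf{P}(\mathcal{E})$ cut out by $\varphi(\pi^{*}s)$, then identify it with $\bl$ by a local computation, and finally read off the Koszul resolution from the resulting regular embedding.

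\textbf{Step 1: the incidence description of $\varphi(\pi^{*}s)$.} On $\mathbf{P}(\mathcal{E})=\mathrm{Proj}(\mathrm{Sym}_{X}\check{\mathcal{E}})$ there is the tautological sub-line-bundle $O(-1)\hookrightarrow\pi^{*}\mathcal{E}$, and the relative Euler sequence $0\to O(-1)\to\pi^{*}\mathcal{E}\to T_{\mathbf{P}(\mathcal{E})\mid X}\otimes O(-1)\to 0$ identifies $T_{\mathbf{P}(\mathcal{E})\mid X}\otimes O(-1)$ with the quotient $\pi^{*}\mathcal{E}/O(-1)$, the map $\varphi$ being the projection. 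Hence the section $\varphi(\pi^{*}s)$ vanishes at a geometric point $(x,\ell)$, with $\ell\subseteq\mathcal{E}_{x}$ a line, exactly when $s(x)\in\ell$. Let $W$ denote its zero scheme. Over $X\setminus Z$ the section $s$ is nowhere zero, so $\ell=k\cdot s(x)$ is forced and $W|_{X\setminus Z}\xrightarrow{\sim}X\setminus Z$; over $Z$ the condition $s(x)=0$ is vacuous, so $W|_{Z}=\mathbf{P}(\mathcal{E})|_{Z}$ is the full $\mathbf{P}^{n}$-bundle.

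\textbf{Step 2: $W\cong\bl$ and the regular embedding.} The claim is Zariski-local on $X$, so I would take $X=\mathrm{Spec}\,A$, $\mathcal{E}=O_{X}^{\,n+1}$ and $s=(a_{1},\dots,a_{n+1})$ with $(a_{1},\dots,a_{n+1})$ a regular sequence generating $I_{Z}$. Then $\mathbf{P}(\mathcal{E})=X\times\mathbf{P}^{n}$, and on the standard chart where the $i$-th homogeneous coordinate is invertible, with affine coordinates $t_{j}=x_{j}/x_{i}$ for $j\neq i$, trivializing $\pi^{*}\mathcal{E}/O(-1)$ by the coordinates $j\neq i$ shows that $\varphi(\pi^{*}s)$ becomes the $n$-tuple $(a_{j}-t_{j}a_{i})_{j\neq i}$; thus $W$ is cut out there by the $n$ equations $a_{j}=t_{j}a_{i}$. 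On the other hand the corresponding chart of $\bl=\mathrm{Proj}\bigoplus_{d\geq0}I_{Z}^{\,d}$ is $\mathrm{Spec}$ of the sub-$A$-algebra of $A_{a_{i}}$ generated by the $a_{j}/a_{i}$. The key point is that, because $(a_{1},\dots,a_{n+1})$ is a regular sequence, the $n$ elements $a_{j}-t_{j}a_{i}$ form a regular sequence in $A[t_{j}:j\neq i]$ and the quotient by them is already $a_{i}$-torsion-free, hence coincides with that Rees-algebra chart; this is the classical computation recalled in Chapter $11$ of $[11]$. Consequently $W\cong\bl$, it is cut out in $\mathbf{P}(\mathcal{E})$ by a regular section of the rank-$n$ bundle $T_{\mathbf{P}(\mathcal{E})\mid X}\otimes O(-1)$, and so $i\colon\bl\hookrightarrow\mathbf{P}(\mathcal{E})$ is a regular embedding of codimension $n$. (Alternatively: by Step 1 the locus $W$ has pure codimension $n$, since the part over $Z$ has dimension $\dim X-(n+1)+n<\dim X$; over the Cohen--Macaulay scheme $\mathbf{P}(\mathcal{E})$ this forces $\varphi(\pi^{*}s)$ to be a regular section, and the identification of $W$ with the blow-up can instead be obtained by comparing functors of points — $W$ represents the functor of maps $g\colon T\to X$ equipped with a line sub-bundle $\lambda\subseteq g^{*}\mathcal{E}$ containing $g^{*}s$, which one checks is the same as the functor $g^{-1}I_{Z}$ invertible defining $\bl$.)

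\textbf{Step 3: the Koszul resolution.} Since $i$ is a regular embedding of codimension $n$ with conormal bundle $(T_{\mathbf{P}(\mathcal{E})\mid X}\otimes O(-1))^{\vee}=\Omega_{\mathbf{P}(\mathcal{E})\mid X}\otimes O(1)$, the Koszul complex of the section $\varphi(\pi^{*}s)$ is a resolution $\bigwedge^{*}(\Omega_{\mathbf{P}(\mathcal{E})\mid X}\otimes O(1))\to O_{\bl}$, as asserted. The substantive step is Step 2: verifying that the naive equations $a_{j}-t_{j}a_{i}$ already cut out the blow-up chart — equivalently that they form a regular sequence — which is precisely where the hypothesis that $s$ is a regular section is used; everything else is formal.
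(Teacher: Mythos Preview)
The paper does not supply a proof of this proposition; it is stated as a known fact with a reference to Chapter~11 of~$[11]$ (Huybrechts). Your argument is a correct and complete elaboration of precisely that classical computation, so there is no discrepancy to report. A couple of minor remarks: in your alternative route in Step~2, the assertion that $\mathbf{P}(\mathcal{E})$ is Cohen--Macaulay tacitly assumes $X$ is, and the ``pure codimension~$n$'' claim really uses that $X$ is equidimensional---both are harmless in the paper's applications (everything is smooth over a field), but neither is part of the hypotheses as stated. Your functor-of-points alternative is exactly what the paper records immediately afterwards as the moduli interpretation of $\bl$, so the two write-ups are fully compatible.
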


From the description of the blowup as a closed subscheme in $\mathbf{P}(\mathcal{E})$, we get the following moduli interpretation of the blowup:
\begin{Proposition}\label{moduli interpretation of bl}
Let $S\xrightarrow{q} X$ be a scheme over $X$, then there is a bijection between $Hom_{X}(S,Bl_{Z}X)$ and the line subbundles $L$ of $q^{*}(\mathcal{E})$(Meaning that the quotient $q^{*}(\mathcal{E})/L$ is a vector bundle) such that $O\xrightarrow{q^{*}(s)} q^{*}(\mathcal{E})$ factors through $L$. The universal object on $Bl_{Z}X$ is given by
$$O(E)\hookrightarrow p^{*}(\mathcal{E})$$
where $E$ is the exceptional divisor on $Bl_{Z}X$ and $p$ is the projection $Bl_{Z}X\xrightarrow{p} X$.
\end{Proposition}

\begin{proof}
Notice that the data $L\hookrightarrow q^{*}(\mathcal{E})$ already defines a morphism:
$$\xymatrix{
S \ar[rr]^{f} \ar[dr]^{q} &  & \mathbf{P}(\mathcal{E})\ar[ld]^{\pi}\\
 & X\\
}
$$
By our discussions above, the blowup is given by the vanishing locus of the section:
$$O\xrightarrow{\pi^{*}(s)} \pi^{*}\mathcal{E}\rightarrow T_{P(\mathcal{E})|X}\otimes O(-1)$$
We have the following natural exact sequence on $\mathbf{P}(\mathcal{E})$:
$$0\rightarrow O(-1)\rightarrow\pi^{*}\mathcal{E}\rightarrow T_{P(\mathcal{E})|X}\otimes O(-1)\rightarrow 0$$
Also the pullback of $O(-1)\rightarrow\pi^{*}\mathcal{E}$ to $S$ is $L\rightarrow q^{*}\mathcal{E}$. So to prove $f$ factors through the blowup, we only need to show $q^{*}(s)$ factors through $L$, which is the second assumption in the claim.

\end{proof}

We shall apply the above proposition to the following situation:
\begin{corollary}\label{bl resolve nonflatness}
Let $Y\xrightarrow{f} X$ be a proper flat morphism of schemes with geometrically integral fibers. Let $L$ be a line bundle on $Y$ such that $f_{*}(L)$ is a vector bundle of rank $n+1$ and the formation of $f_{*}(L)$ commutes with arbitrary base change. Let $s$ is a global section of $L$ such that the vanishing locus of the induced section $t$ of $f_{*}(L)$ has codimension $n+1$ on $X$. Let $Z$ be the vanishing locus of $t$, Set $Y'=Y\times_{X}Bl_{Z}X$. Consider:
\[\begin{CD}
Y'@>{g'}>>Y\\
@V{f'}VV@V{f}VV\\
Bl_{Z}X@>{g}>>X\\
\end{CD}\]
Then We have:
\begin{enumerate1}
\item The vanishing locus of $s$ on $Y$ is a relative effective Cartier divisor over the open subset $X\backslash Z$.
\item The section $s$ extends to a morphism $O(E)\xrightarrow{s'} g^{'*}(L)$ on $Y'$ such that the vanishing locus of $s'$ is a relative effective Cartier divisor over $Bl_{Z}X$ which extends the relative effective Cartier divisor over $X\backslash Z$
\end{enumerate1}
\end{corollary}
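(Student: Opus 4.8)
The plan is to deduce both parts from the standard fibrewise criterion for relative Cartier divisors: if $W\to T$ is flat and locally of finite presentation, $\mathcal{L}$ a line bundle on $W$ and $\sigma\in\Gamma(W,\mathcal{L})$, then $V(\sigma)$ is a relative effective Cartier divisor over $T$ as soon as $\sigma|_{W_{t}}$ is a non-zero-divisor for every $t\in T$ (see $[7]$). I will use throughout that $f$ proper, flat with geometrically integral fibres gives $f_{*}O_{Y}=O_{X}$, so that $t=f_{*}(s)$ and $Z=V(t)$ is exactly the locus where the image of $s$ in $\mathcal{E}=f_{*}(L)$ vanishes; by the base-change hypothesis, the fibre at $x$ of this image is $t(x)\in\mathcal{E}\otimes k(x)=H^{0}(Y_{x},L|_{Y_{x}})$.

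Part $(1)$ is then immediate: for $x\notin Z$ we have $t(x)\neq 0$, hence $s|_{Y_{x}}\neq 0$, which is a non-zero-divisor since $Y_{x}$ is integral; applying the criterion to $Y|_{X\setminus Z}\to X\setminus Z$ gives the claim. For $x\in Z$ the section $s|_{Y_{x}}$ vanishes identically, which is precisely what forces the passage to the blowup.

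For part $(2)$ I would build $s'$ using the moduli description of the blowup. Since $\mathrm{rk}\,\mathcal{E}=n+1$ and $V(t)=Z$ has codimension $n+1$, the embedding $Z\hookrightarrow X$ is regular and Proposition~\ref{moduli interpretation of bl} applies: on $Bl_{Z}X$ one obtains the tautological line subbundle $O(E)\hookrightarrow g^{*}\mathcal{E}$ through which $g^{*}(t)$ factors. Pulling back along $f'\colon Y'\to Bl_{Z}X$, identifying $f'^{*}g^{*}\mathcal{E}=g'^{*}f^{*}\mathcal{E}=g'^{*}(f^{*}f_{*}L)$, and composing the resulting inclusion $f'^{*}O(E)\hookrightarrow g'^{*}(f^{*}f_{*}L)$ with $g'^{*}$ of the counit $f^{*}f_{*}L\to L$, one gets the desired map $s'\colon O(E)\to g'^{*}(L)$ on $Y'$ (here $O(E)$ abbreviates $f'^{*}O_{Bl_{Z}X}(E)$). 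Over $X\setminus Z$ the blowup is an isomorphism and $E$ is empty, the tautological subbundle there is $\mathrm{im}(t)\subseteq\mathcal{E}$, and the adjunction identity $\epsilon_{L}\circ f^{*}f_{*}(s)=s$ (valid because $f_{*}O_{Y}=O_{X}$) shows $s'$ restricts to $s$, so its zero scheme restricts to $V(s)$. To see $V(s')$ is relative Cartier over $Bl_{Z}X$, note that a subbundle inclusion stays injective on each fibre, so for $y\in Bl_{Z}X$ the fibre of $O(E)\hookrightarrow g^{*}\mathcal{E}$ at $y$ is a nonzero vector $v_{y}\in\mathcal{E}\otimes k(y)=H^{0}(Y'_{y},L|_{Y'_{y}})$, and by construction $s'|_{Y'_{y}}$ is precisely the section $v_{y}$; it is a non-zero-divisor on the integral scheme $Y'_{y}$, so the criterion finishes the proof.

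The one point I would take care to spell out — and the only genuinely nontrivial step — is that although the tautological section of the line bundle $O(E)$ degenerates along the exceptional divisor $E$ (its zero scheme contains $E$), the tautological subbundle homomorphism $O(E)\hookrightarrow g^{*}\mathcal{E}$ degenerates nowhere. Building $s'$ from this homomorphism composed with the evaluation/counit map, rather than from the section, is exactly what keeps $s'$ fibrewise nonzero over all of $Bl_{Z}X$, including over $E$. Everything else is routine bookkeeping with flat base change and the $(f^{*},f_{*})$-adjunction, and I expect no further obstacle.
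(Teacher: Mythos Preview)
Your proposal is correct and follows essentially the same approach as the paper: both parts reduce to the fibrewise criterion for relative Cartier divisors, and part $(2)$ uses the moduli description of the blowup (Proposition~\ref{moduli interpretation of bl}) to produce the subbundle $O(E)\hookrightarrow g^{*}f_{*}(L)$ and from it the map $s'$. The paper phrases the construction of $s'$ as $O(E)\to f'_{*}(g'^{*}L)$ on $Bl_{Z}X$ followed by adjunction, whereas you pull back to $Y'$ and compose with the counit $g'^{*}(f^{*}f_{*}L)\to g'^{*}L$; these are the same map, and your version makes the fibrewise nonvanishing over $E$ more explicit.
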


\begin{proof}

By our assumption, since the morphism $O_{X}\xrightarrow{t} f_{*}(L)$ induced by the global section $s$ is nonvanishing over $X\backslash Z$, the section $s_{y}$ is nonvanishing over each fiber $X_{y}$ for $y\in X\backslash Z$. Since $X_{y}$ are integral, $s_{y}$ is injective, and because $Y$ is flat over $X$, this implies that the vanishing locus of $s$ on $Y$ defines a relative effective Cartier divisor over $X\backslash Z$. This proves $(1)$. By Proposition~\ref{moduli interpretation of bl}, the section $O_{X}\rightarrow f_{*}(L)$ extends to a bundle morphism $O(E)\rightarrow f'_{*}(g^{' *}(L))$ on $Bl_{Z}X$. Hence we get $O(E)\xrightarrow{s'} g^{' *}(L)$ on $Y'$. By our assumption on $L$, this morphism is nonvanishing over each fiber, hence the vanishing locus of $s'$ is a relative effective Cartier divisor over $Bl_{Z}X$.

\end{proof}

The following lemma will be useful when we compute cohomology of sheaves on blowups:
\begin{lemma}\label{pushforward of bl}
Keep the same assumption as above. Consider:
$$
\xymatrix{
  Bl_{Z}X \ar[rr]^{i} \ar[dr]^{p}
                &  &    \textbf{P}(\mathcal{E})=\textrm{Proj}(\textrm{Sym}_{X}\check{\mathcal{E}}) \ar[dl]^{\pi}    \\
                & X
}
$$
Suppose $K$ is an object in $D^{b}_{coh}(\mathbf{P}(\mathcal{E}))$ such that $R\pi_{*}(K\otimes O(a))\in D^{\leq 0}_{coh}(X)$ for all $a\geq 0$, then we have $Rp_{*}Li^{*}K\in D^{\leq 0}_{coh}(X)$

\end{lemma}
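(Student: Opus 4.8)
The plan is to reduce the statement to the Koszul resolution of $O_{Bl_{Z}X}$ furnished by Proposition~\ref{blowup as regular embedding} and then push that resolution down to $X$ one graded piece at a time, keeping careful track of cohomological degrees. First I would write $p=\pi\circ i$ and apply the projection formula for the closed immersion $i$: since $i_{*}$ is exact, $i_{*}(Li^{*}K)\simeq K\otimes^{L}_{O_{\mathbf{P}(\mathcal{E})}}i_{*}O_{Bl_{Z}X}$, and by Proposition~\ref{blowup as regular embedding} the complex $i_{*}O_{Bl_{Z}X}$ is represented by the Koszul complex $\mathcal{K}^{\bullet}:=\bigwedge^{\bullet}(\Omega_{\mathbf{P}(\mathcal{E})\mid X}\otimes O(1))$, a bounded complex of vector bundles with $\bigwedge^{i}(\Omega_{\mathbf{P}(\mathcal{E})\mid X}\otimes O(1))$ placed in cohomological degree $-i$ for $0\leq i\leq n$. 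Hence $Rp_{*}Li^{*}K\simeq R\pi_{*}(K\otimes\mathcal{K}^{\bullet})$. Filtering $\mathcal{K}^{\bullet}$ by stupid truncations, $R\pi_{*}(K\otimes\mathcal{K}^{\bullet})$ acquires a finite filtration whose graded pieces are $R\pi_{*}\big(K\otimes\bigwedge^{i}(\Omega_{\mathbf{P}(\mathcal{E})\mid X}\otimes O(1))\big)[i]$; since $D^{\leq 0}_{coh}(X)$ is closed under extensions, it suffices to show $R\pi_{*}\big(K\otimes\bigwedge^{i}(\Omega_{\mathbf{P}(\mathcal{E})\mid X}\otimes O(1))\big)\in D^{\leq i}_{coh}(X)$ for every $0\leq i\leq n$.

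To obtain this I would dualize and twist the relative Euler sequence of Proposition~\ref{blowup as regular embedding}, getting the short exact sequence $0\to\Omega_{\mathbf{P}(\mathcal{E})\mid X}(1)\to\pi^{*}\check{\mathcal{E}}\to O(1)\to 0$ with invertible cokernel; its exterior powers give, for each $i\geq 1$, a short exact sequence $0\to\bigwedge^{i}(\Omega_{\mathbf{P}(\mathcal{E})\mid X}(1))\to\pi^{*}(\bigwedge^{i}\check{\mathcal{E}})\to\bigwedge^{i-1}(\Omega_{\mathbf{P}(\mathcal{E})\mid X}(1))\otimes O(1)\to 0$. I would then prove by induction on $i$ the sharper statement: for all $i\geq 0$ and all $a\geq 0$, $R\pi_{*}\big(K\otimes\bigwedge^{i}(\Omega_{\mathbf{P}(\mathcal{E})\mid X}(1))\otimes O(a)\big)\in D^{\leq i}_{coh}(X)$. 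The base case $i=0$ is precisely the hypothesis $R\pi_{*}(K\otimes O(a))\in D^{\leq 0}_{coh}(X)$. For the inductive step, tensor the exact sequence above with $K\otimes O(a)$ and apply $R\pi_{*}$: by the projection formula the middle term is $R\pi_{*}(K\otimes O(a))\otimes\bigwedge^{i}\check{\mathcal{E}}\in D^{\leq 0}_{coh}(X)$, the right term lies in $D^{\leq i-1}_{coh}(X)$ by the inductive hypothesis (applied with twist $a+1$), and therefore the left term, being the fiber of a morphism from the middle term to the right term, lies in $D^{\leq\max(0,\,i)}_{coh}(X)=D^{\leq i}_{coh}(X)$. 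Taking $a=0$ gives exactly what is needed.

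The only point that requires care — and the reason the hypothesis is phrased with nonnegative twists $a\geq 0$ even though $\mathcal{K}^{\bullet}$ involves negative twists (for instance $\bigwedge^{n}(\Omega_{\mathbf{P}(\mathcal{E})\mid X}\otimes O(1))$ is $O(-1)$ up to a line bundle pulled back from $X$) — is that one must not try to prove the stronger bound $R\pi_{*}(K\otimes\bigwedge^{i}(\Omega_{\mathbf{P}(\mathcal{E})\mid X}(1)))\in D^{\leq 0}_{coh}(X)$, which is false in general, but only the weaker $D^{\leq i}$ estimate; the loss of $i$ cohomological degrees is exactly absorbed by the shift $[i]$ coming from the position of $\bigwedge^{i}(\Omega_{\mathbf{P}(\mathcal{E})\mid X}(1))$ in the Koszul complex. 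Everything else — exactness of $i_{*}$, the projection formula, flatness of the bundles $\mathcal{K}^{p}$, and stability of $D^{\leq 0}_{coh}(X)$ under extensions — is routine. Equivalently, one may run the whole reduction through the hypercohomology spectral sequence $E_{1}^{p,q}=R^{q}\pi_{*}(K\otimes\mathcal{K}^{p})\Rightarrow\mathbb{R}^{p+q}\pi_{*}(K\otimes\mathcal{K}^{\bullet})$ attached to the stupid filtration, where the inductive claim says precisely that $E_{1}^{p,q}=0$ whenever $p+q>0$.
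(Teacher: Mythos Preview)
Your proof is correct and follows essentially the same approach as the paper: both reduce via the Koszul resolution to the estimate $R\pi_{*}\big(K\otimes\bigwedge^{k}\Omega_{\mathbf{P}(\mathcal{E})|X}(1)\big)\in D^{\leq k}_{coh}(X)$ and prove it by induction on $k$ using the exterior powers of the twisted Euler sequence. The only cosmetic difference is that you carry the twist parameter $a\geq 0$ explicitly through the induction, whereas the paper packages the same thing by noting that $K\otimes O(1)$ again satisfies the hypothesis.
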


\begin{proof}
We are going to show that $R\pi_{*}(K\otimes\bigwedge^{k}\Omega_{\mathbf{P}(\mathcal{E})|X}(1))\in D^{\leq k}(X)$. Since we have a Koszul resolution $\bigwedge\Omega_{\mathbf{P}(\mathcal{E})|X}(1)\rightarrow O_{\bl}$, a spectral sequence argument gives the result. Hence we need to show $R\pi_{*}(K\otimes\bigwedge^{k}\Omega_{\mathbf{P}(\mathcal{E})|X}(1))\in D^{\leq k}(X)$. We do it by induction on $k$. The case when $k=0$ follows from our assumption on $K$. Assume the result holds for all $i$ such that $0\leq i<k$. The Euler sequence on $\mathbf{P}(\mathcal{E})$ takes the form
$$0\rightarrow\Omega_{\mathbf{P}(\mathcal{E})|X}(1)\rightarrow\pi^{*}\check{\mathcal{E}}\rightarrow O(1)\rightarrow 0$$
This give the exact sequence:
$$0\rightarrow\bigwedge^{k}\Omega_{\mathbf{P}(\mathcal{E})|X}(1)\rightarrow \bigwedge^{k}\pi^{*}\check{\mathcal{E}}\rightarrow \bigwedge^{k-1}\Omega_{\mathbf{P}(\mathcal{E})|X}(1)\otimes O(1)\rightarrow 0$$
Hence we get an exact triangle:
$$\bigwedge^{k}\Omega_{\mathbf{P}(\mathcal{E})|X}(1)\otimes K\rightarrow \bigwedge^{k}\pi^{*}\check{\mathcal{E}}\otimes K\rightarrow \bigwedge^{k-1}\Omega_{\mathbf{P}(\mathcal{E})|X}(1)\otimes K\otimes O(1)\rightarrow$$
Notice that $K\otimes O(1)$ satisfies the same assumption as $K$, hence we have 
$$R\pi_{*}(\bigwedge^{k-1}\Omega_{\mathbf{P}(\mathcal{E})|X}(1)\otimes K\otimes O(1))\in D^{\leq k-1}(X)$$ 
by our induction assumption. Now the result follows easily from this.

\end{proof}

The following property results from Grothendieck duality:
\begin{Proposition}\label{dualizing sheaf of bl}
Keep the same assumption as the beginning of this subsection. Let $E$ be the exceptional divisor in $\blA$. The dualizing sheaf $\omega_{\blA\mid A}\simeq O(nE)$.
\end{Proposition}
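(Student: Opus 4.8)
The plan is to use the factorization of the blowup from Proposition~\ref{blowup as regular embedding},
$$\blA\xrightarrow{\ i\ }\mathbf{P}(\mathcal{E})\xrightarrow{\ \pi\ }A,$$
in which $\pi$ is smooth of relative dimension $n$ and $i$ is a regular embedding of codimension $n$. Both are local complete intersection morphisms, so by Proposition~\ref{properties of lci} the composite $p=\pi\circ i$ is lci of relative dimension $0$; in particular its relative dualizing complex is the invertible sheaf $\omega_{\blA\mid A}$ sitting in degree $0$. Grothendieck duality (compatibility of relative dualizing complexes under composition), together with the fact that for a regular embedding the relative dualizing sheaf is the determinant of the normal bundle and for a smooth morphism it is the top exterior power of the relative cotangent bundle, then gives
$$\omega_{\blA\mid A}\simeq \det\!\bigl(N_{\blA/\mathbf{P}(\mathcal{E})}\bigr)\otimes i^{*}\omega_{\mathbf{P}(\mathcal{E})\mid A}.$$
It then remains to compute the two factors on the right.

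For the second factor I would use the relative Euler sequence on $\mathbf{P}(\mathcal{E})=\mathrm{Proj}(\mathrm{Sym}_{A}\check{\mathcal{E}})$, namely the twist by $O(-1)$ of the sequence recalled in Lemma~\ref{pushforward of bl},
$$0\to\Omega_{\mathbf{P}(\mathcal{E})\mid A}\to\pi^{*}\check{\mathcal{E}}\otimes O(-1)\to O\to 0,$$
which yields $\omega_{\mathbf{P}(\mathcal{E})\mid A}=\det\Omega_{\mathbf{P}(\mathcal{E})\mid A}\simeq\pi^{*}\det\check{\mathcal{E}}\otimes O(-n-1)$, since $\check{\mathcal{E}}$ has rank $n+1$; dually $\det T_{\mathbf{P}(\mathcal{E})\mid A}\simeq\pi^{*}\det\mathcal{E}\otimes O(n+1)$. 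For the first factor, Proposition~\ref{blowup as regular embedding} presents $\blA$ as the zero locus of a regular section of $T_{\mathbf{P}(\mathcal{E})\mid A}\otimes O(-1)$ (equivalently, the Koszul complex~\ref{bl} resolves $O_{\blA}$), so $N_{\blA/\mathbf{P}(\mathcal{E})}\simeq\bigl(T_{\mathbf{P}(\mathcal{E})\mid A}\otimes O(-1)\bigr)\big|_{\blA}$, a bundle of rank $n$, and hence $\det N_{\blA/\mathbf{P}(\mathcal{E})}\simeq\bigl(\pi^{*}\det\mathcal{E}\otimes O(n+1)\otimes O(-n)\bigr)\big|_{\blA}\simeq\bigl(\pi^{*}\det\mathcal{E}\otimes O(1)\bigr)\big|_{\blA}$. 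Multiplying the two contributions, the $\pi^{*}\det$ terms cancel and the twists combine to $O(1-n-1)\big|_{\blA}=O(-n)\big|_{\blA}$.

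Finally I would identify $O(-1)|_{\blA}$ with $O(E)$ using Proposition~\ref{moduli interpretation of bl}: the universal line subbundle on $\blA$ is $O(E)\hookrightarrow p^{*}\mathcal{E}$, and this is exactly the restriction to $\blA$ of the tautological subbundle $O(-1)\hookrightarrow\pi^{*}\mathcal{E}$ on $\mathbf{P}(\mathcal{E})$; thus $O(-1)|_{\blA}\simeq O(E)$, so $O(-n)|_{\blA}\simeq O(nE)$, which finishes the proof. The one point that needs care is the very first step — knowing that the adjunction $\omega_{\blA\mid A}\simeq\det N_{\blA/\mathbf{P}(\mathcal{E})}\otimes i^{*}\omega_{\mathbf{P}(\mathcal{E})\mid A}$ holds as an isomorphism of \emph{sheaves}, with no hidden homological shift; this is guaranteed precisely because $p$ is lci of relative dimension $0$, after which everything is a bookkeeping computation with line bundles. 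If one prefers to bypass Grothendieck duality, the same answer follows by running this computation directly through the conormal/cotangent exact triangle of $\blA\hookrightarrow\mathbf{P}(\mathcal{E})\to A$ using the Koszul resolution~\ref{bl}.
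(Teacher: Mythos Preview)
Your proof is correct and is precisely the standard Grothendieck-duality computation the paper is alluding to: the paper states the proposition with only the remark ``the following property results from Grothendieck duality'' and gives no further argument, so you have supplied exactly the details (factor $p=\pi\circ i$, use the Euler sequence for $\omega_{\mathbf{P}(\mathcal{E})\mid A}$, identify the normal bundle from the Koszul description, and match $O(-1)|_{\blA}\simeq O(E)$) that the paper leaves implicit.
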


\subsection{Hilbert scheme of points}
In this section we review some facts about Hilbert scheme of points of curves and surfaces that will be used in the proof of the main theorem. First let us look at Hilbert schemes of $\p$. It is well known that $\Pn$ is the Hilbert scheme of $\p$ parameterizing finite subschemes of length $n$, and $(\p)^{n}=\p\times\cdots\times\p$ classifies finite subscheme of length $n$ together with a flag:
$${\O}=D_{0}\subseteq D_{1}\subseteq\cdots\subseteq D_{n}=D$$
such that $\textrm{ker}(O_{D_{i}}\rightarrow O_{D_{i-1}})$ has length $1$. The natural morphism:
$$(\p)^{n}\xrightarrow{\eta} \Pn \qquad ({\O}=D_{0}\subseteq D_{1}\subseteq\cdots\subseteq D_{n}=D)\rightarrow D$$
is characterized by $\eta^{*}O_{\Pn}(1)\simeq O_{\p}(1)^{\boxtimes n}$. Denote the universal length $n$ subscheme on $\p\times\Pn$ by $D$. Consider:
\[\begin{CD}
\p\times\Pn\\
@V{p}VV\\
\Pn\\
\end{CD}\]
On $\p\times\Pn$ we have:
$$0\rightarrow O_{\p}(-n)\boxtimes O_{\Pn}(-1)\rightarrow O_{\p\times\Pn}\rightarrow O_{D}\rightarrow 0$$
The following lemma is clear:
\begin{lemma}\label{det}
$\textrm{det}(p_{*}O_{D})^{-1}\simeq O(n-1)$, the dualizing sheaf $\omega_{(\p)^{n}\mid \Pn}$ is isomorphic to $O_{\p}(n-1)^{\boxtimes n}$.
\end{lemma}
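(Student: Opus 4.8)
The plan is to deduce both assertions directly from the universal exact sequence displayed just above the lemma, combined with the cohomology of $O_{\p}(-n)$ on $\p$ and the normalization property $\eta^{*}O_{\Pn}(1)\simeq O_{\p}(1)^{\boxtimes n}$. For the first isomorphism I would argue as follows. Since the universal subscheme $D\subseteq\p\times\Pn$ is finite flat of degree $n$ over $\Pn$, the sheaf $p_{*}O_{D}$ is locally free of rank $n$ and $R^{i}p_{*}O_{D}=0$ for $i>0$. Apply $Rp_{*}$ to
$$0\rightarrow O_{\p}(-n)\boxtimes O_{\Pn}(-1)\rightarrow O_{\p\times\Pn}\rightarrow O_{D}\rightarrow 0.$$
Here $Rp_{*}O_{\p\times\Pn}=O_{\Pn}$, while $R\Gamma(\p,O_{\p}(-n))$ is concentrated in degree $1$ (since $H^{0}(\p,O_{\p}(-n))=0$ for $n\ge 1$) with $\dim H^{1}(\p,O_{\p}(-n))=n-1$ by Serre duality; by cohomology and base change the quotient term is then a vector bundle. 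Thus the long exact sequence collapses to a short exact sequence of locally free sheaves on $\Pn$
$$0\rightarrow O_{\Pn}\rightarrow p_{*}O_{D}\rightarrow H^{1}(\p,O_{\p}(-n))\otimes_{k}O_{\Pn}(-1)\rightarrow 0.$$
Taking determinants, and using that the determinant of a rank $n-1$ bundle of the form $V\otimes O_{\Pn}(-1)$ with $\dim_{k}V=n-1$ is $O_{\Pn}(-(n-1))$, I get $\det(p_{*}O_{D})\simeq O_{\Pn}(-(n-1))$, i.e. $\det(p_{*}O_{D})^{-1}\simeq O(n-1)$.

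For the relative dualizing sheaf, I would use that $\eta\colon(\p)^{n}\to\Pn$ is a finite flat morphism between smooth projective varieties of dimension $n$, so $\omega_{(\p)^{n}\mid\Pn}\simeq\omega_{(\p)^{n}}\otimes\eta^{*}\omega_{\Pn}^{-1}$. Now $\omega_{(\p)^{n}}\simeq O_{\p}(-2)^{\boxtimes n}$ and $\omega_{\Pn}\simeq O_{\Pn}(-n-1)$, while the characterization $\eta^{*}O_{\Pn}(1)\simeq O_{\p}(1)^{\boxtimes n}$ gives $\eta^{*}\omega_{\Pn}^{-1}\simeq O_{\p}(n+1)^{\boxtimes n}$. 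Combining, $\omega_{(\p)^{n}\mid\Pn}\simeq O_{\p}(-2)^{\boxtimes n}\otimes O_{\p}(n+1)^{\boxtimes n}\simeq O_{\p}(n-1)^{\boxtimes n}$.

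I do not anticipate a genuine obstacle: both statements are formal consequences of the universal sequence and standard facts. The only points deserving a little care are (i) confirming that the cokernel of $O_{\Pn}\hookrightarrow p_{*}O_{D}$ is exactly $R^{1}p_{*}\bigl(O_{\p}(-n)\boxtimes O_{\Pn}(-1)\bigr)$ and is locally free, so that determinants multiply across the sequence, and (ii) justifying $\omega_{X/Y}\simeq\omega_{X}\otimes f^{*}\omega_{Y}^{-1}$ for $\eta$, which is legitimate because $\eta$ is finite and both source and target are smooth of the same dimension. As an alternative to (i), one can instead compute $\det(Rp_{*}O_{D})$ directly from the two-term perfect complex representing $Rp_{*}O_{D}$ via the Knudsen--Mumford determinant, which again yields $O_{\Pn}(-(n-1))$.
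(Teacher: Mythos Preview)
Your argument is correct. The paper itself gives no proof of this lemma---it simply asserts that it is ``clear''---so there is no approach to compare against; your computation via the long exact sequence for $Rp_{*}$ applied to the universal sequence, together with the identification $\omega_{(\p)^{n}|\Pn}\simeq\omega_{(\p)^{n}}\otimes\eta^{*}\omega_{\Pn}^{-1}$, is exactly the standard verification one would expect.
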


Now let us review some facts about Hilbert scheme of surfaces and planar curves. The following theorem is well-known (see $[10]$ and $[16]$):
\begin{theorem}
Let $S$ be a smooth surface, then $\HS^{n}$ is smooth of dimension $2n$
\end{theorem}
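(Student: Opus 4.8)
This is Fogarty's theorem, and I would prove it as follows. Since smoothness is a local property and, for any smooth projective compactification $\bar S\supseteq S$ (which exists because $S$ is a smooth surface over a field of characteristic zero), the scheme $\HS^{n}$ is an open subscheme of $\textrm{Hilb}^{n}_{\bar S}$ --- the condition that a degree $n$ finite flat family of subschemes have support inside $S$ is open on the base --- I would first reduce to the case where $S$ is smooth and projective. Recall that $\HS^{n}$ represents the functor of length $n$ subschemes of $S$, so its Zariski tangent space at a point $[Z]$ is $T_{[Z]}\HS^{n}=\textrm{Hom}_{O_{S}}(I_{Z},O_{Z})$. The plan has two parts: (i) prove that $\dim_{k}\textrm{Hom}_{O_{S}}(I_{Z},O_{Z})=2n$ for \emph{every} length $n$ subscheme $Z$; (ii) prove that $\HS^{n}$ is irreducible of dimension $2n$. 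Granting both, the local ring of $\HS^{n}$ at each closed point is a $2n$-dimensional local ring whose Zariski cotangent space has dimension $2n$, hence is regular, hence smooth over the algebraically closed ground field, and $\HS^{n}$ is smooth of pure dimension $2n$.

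For (i), I would apply $R\textrm{Hom}_{O_{S}}(-,O_{Z})$ to the structure sequence $0\to I_{Z}\to O_{S}\to O_{Z}\to 0$. Since $O_{Z}$ is supported in dimension zero, $\textrm{Ext}^{i}_{O_{S}}(O_{S},O_{Z})=H^{i}(S,O_{Z})=0$ for $i>0$, so the associated long exact sequence collapses to
\[
0\to\textrm{Hom}(O_{Z},O_{Z})\to H^{0}(S,O_{Z})\to\textrm{Hom}(I_{Z},O_{Z})\to\textrm{Ext}^{1}_{O_{S}}(O_{Z},O_{Z})\to 0.
\]
As $\textrm{Hom}_{O_{S}}(O_{Z},O_{Z})=H^{0}(Z,O_{Z})$ has dimension $n=\dim_{k}H^{0}(S,O_{Z})$, this yields $\dim\textrm{Hom}(I_{Z},O_{Z})=\dim\textrm{Ext}^{1}_{O_{S}}(O_{Z},O_{Z})$, and it remains to compute the latter. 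Coherent sheaves on a smooth surface have homological dimension at most $2$, so $\textrm{Ext}^{i}_{O_{S}}(O_{Z},O_{Z})=0$ for $i>2$; Serre duality identifies $\textrm{Ext}^{2}_{O_{S}}(O_{Z},O_{Z})$ with $\textrm{Hom}_{O_{S}}(O_{Z},O_{Z}\otimes\omega_{S})^{\vee}\simeq\textrm{Hom}_{O_{S}}(O_{Z},O_{Z})^{\vee}$, the line bundle $\omega_{S}$ being trivial on the Artinian scheme $Z$, again of dimension $n$; and Hirzebruch--Riemann--Roch gives $\chi(O_{Z},O_{Z})=\int_{S}\textrm{ch}(O_{Z})^{\vee}\,\textrm{ch}(O_{Z})\,\textrm{td}(S)=0$, because $\textrm{ch}(O_{Z})$ is concentrated in codimension $2$, so $\textrm{ch}(O_{Z})^{\vee}\,\textrm{ch}(O_{Z})$ is concentrated in codimension $4$, which vanishes on a surface. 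Combining, $0=\chi(O_{Z},O_{Z})=n-\dim\textrm{Ext}^{1}_{O_{S}}(O_{Z},O_{Z})+n$, whence $\dim T_{[Z]}\HS^{n}=2n$.

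For (ii), the open subscheme $U\subseteq\HS^{n}$ parameterizing configurations of $n$ distinct reduced points is smooth and irreducible of dimension $2n$, being an open subscheme of the smooth projective variety $\textrm{Sym}^{n}S$; so it suffices to show $U$ is dense, i.e.\ that every length $n$ subscheme of $S$ is a flat limit of configurations of $n$ distinct points. I would deduce this from the Hilbert--Chow morphism $\HS^{n}\to\textrm{Sym}^{n}S$, whose fibre over a cycle $\sum_{j}n_{j}p_{j}$ with the $p_{j}$ distinct is the product $\prod_{j}\textrm{Hilb}^{n_{j}}_{0}$ of punctual Hilbert schemes, each depending only on the formal surface $\textrm{Spec}\,k[[x,y]]$: by Brian\c{c}on's theorem the curvilinear locus is dense in $\textrm{Hilb}^{m}_{0}$, and a curvilinear subscheme lies on a smooth curve and therefore smooths out to $m$ distinct points; hence every punctual subscheme, and so every length $n$ subscheme, lies in $\overline{U}$. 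Thus $\HS^{n}=\overline{U}$ is irreducible of dimension $2n$; a dimension count over the strata of $\textrm{Sym}^{n}S$, using $\dim\textrm{Hilb}^{m}_{0}=m-1$, confirms that the preimage of the locus with exactly $r$ distinct points has dimension $2r+(n-r)=n+r\le 2n$, with equality only for $r=n$. This completes (ii) and hence the argument.

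The one genuinely nontrivial input is Brian\c{c}on's theorem in step (ii): the tangent-space count of step (i) by itself does \emph{not} imply smoothness, since a reducible or non-reduced scheme can have constant Zariski-tangent-space dimension, so an irreducibility (or pure-dimensionality) statement is really needed. An alternative that avoids Brian\c{c}on is to invoke the Bia{\l}ynicki-Birula decomposition of $\textrm{Hilb}^{n}(\mathbf{A}^{2})$ for the standard torus action, which has a unique top-dimensional cell --- necessarily an open dense affine space of dimension $2n$ --- giving the irreducibility directly.
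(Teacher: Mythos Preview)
The paper does not actually prove this theorem; it simply states it as well-known and cites Fogarty and Nakajima. Your argument is correct, and the tangent-space computation in step~(i) is exactly the standard one.

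Where you diverge from Fogarty's original proof is in step~(ii). You invoke Brian\c{c}on's theorem to get \emph{irreducibility}, but Fogarty needs only \emph{connectedness} of $\HS^{n}$ (which for projective $S$ is Hartshorne's general connectedness theorem for Hilbert schemes). The point is that constant tangent-space dimension together with connectedness and the existence of a smooth open subset already forces smoothness: the smooth locus $X_{\mathrm{sm}}\subseteq\HS^{n}$ is open, and it is also closed, because if $y\in\overline{X_{\mathrm{sm}}}$ then $\dim\mathcal{O}_{y}\geq 2n$ (it lies in the closure of a $2n$-dimensional locus) while $\dim\mathcal{O}_{y}\leq\dim T_{y}=2n$ by~(i), so $\mathcal{O}_{y}$ is regular. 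Since $\HS^{n}$ is connected and $X_{\mathrm{sm}}\supseteq U$ is nonempty open and closed, $X_{\mathrm{sm}}=\HS^{n}$. So your parenthetical worry that ``an irreducibility (or pure-dimensionality) statement is really needed'' is slightly off: connectedness plus a single smooth point of the right dimension already does the job.

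This matters because Brian\c{c}on's theorem (1977) postdates Fogarty (1968) and is itself a substantial result --- indeed, irreducibility of $\HS^{n}$ is usually \emph{deduced} from Fogarty's smoothness plus connectedness, rather than used as an input. Your alternative via Bia{\l}ynicki--Birula is cleaner in that respect, but still more than is needed.
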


We also set $\HS^{' n}$ be the open subscheme of $\HS^{n}$ parameterizing subschemes $D$ that can be embedded into a smooth curve (This notion is introduced in $[1]$). In the rest of the paper, $S$ is going to be the total space of $O(n)$ on $\p$, and we shall work with a particular open subscheme of $\HS^{n}$ defined by the following proposition:

\begin{Proposition}\label{an open subscheme of HS}
View $\Pn$ as the Hilbert scheme of $\p$ parameterizing length $n$ subschemes. Let $D$ be the universal subscheme:
$$\xymatrix{
D \ar[rr] \ar[dr] & & \p\times\Pn \ar[ld]^{\pi}\\
 & \Pn\\
}
$$
Then the total space of the vector bundle $\pi_{*}O_{D}(n)$ can be naturally identified with an open subscheme of $\HS^{n}$. Moreover, it is contained in the open subscheme $\HS^{' n}$.

\end{Proposition}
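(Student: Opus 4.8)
The plan is to construct a morphism $\Phi$ from the total space $V$ of $\pi_{*}O_{D}(n)$ to $\HS^{n}$ by describing its functor of points, then to show $\Phi$ is an open immersion, and finally to check that its image lies in $\HS^{'n}$. Throughout, write $\rho\colon S\to\p$ for the projection of the total space of $O(n)$, so that $\rho$ is affine with $\rho_{*}O_{S}\simeq\textrm{Sym}^{\bullet}_{O_{\p}}O_{\p}(-n)$, and write $O_{D}(n)$ for the restriction to $D$ of (the pullback of) $O_{\p}(n)$. First note that $V$ is legitimate: since $D\to\Pn$ is finite flat of degree $n$ and $O_{D}(n)$ is a line bundle, $\pi_{*}O_{D}(n)$ is locally free of rank $n$, so $V$ is smooth of dimension $2n=\dim\HS^{n}$. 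To define $\Phi$ on a $T$-point, recall that such a point of $V$ is a flat family $D_{T}\subseteq\p\times T$ of length $n$ subschemes together with a section $s\in H^{0}(D_{T},O_{D_{T}}(n))$. Regarding $s$ as an $O_{D_{T}}$-linear map $O_{D_{T}}(-n)\to O_{D_{T}}$ and extending to symmetric algebras produces an $O_{D_{T}}$-algebra surjection $\textrm{Sym}^{\bullet}_{O_{D_{T}}}O_{D_{T}}(-n)\to O_{D_{T}}$, i.e.\ a section of $\rho^{-1}(D_{T})\to D_{T}$; its image $\Gamma_{s}\subseteq\rho^{-1}(D_{T})\subseteq S\times T$ is a closed subscheme, isomorphic to $D_{T}$ via $\rho$, hence finite flat of degree $n$ over $T$. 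This defines the sought $T$-point of $\HS^{n}$, functorially in $T$, and hence a morphism $\Phi\colon V\to\HS^{n}$.

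The next step is to show $\Phi$ is an open immersion. It is a monomorphism: if a $T$-point $\Gamma\subseteq S\times T$ of $\HS^{n}$ is in the image, then $D_{T}$ must be the scheme-theoretic image $\rho(\Gamma)$ (and $\Gamma\to\rho(\Gamma)$ an isomorphism), after which $s$ is forced, so at most one $T$-point of $V$ maps to $\Gamma$. Moreover $\Phi$ is of finite type between Noetherian schemes, hence of finite presentation; being a monomorphism it is radicial, so every fibre is supported at a single point and thus has dimension $\leq 0=\dim V-\dim\HS^{n}$. Since $V$ is smooth (in particular Cohen--Macaulay) and $\HS^{n}$ is smooth (in particular regular), the miracle flatness criterion shows $\Phi$ is flat; and a flat, locally finitely presented monomorphism is an open immersion. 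One can, if desired, identify the image explicitly: it is the open locus of $\HS^{n}$ where the universal subscheme $\mathcal{Z}$ projects isomorphically onto its image in $\p\times\HS^{n}$, i.e.\ the complement of the image in $\HS^{n}$ of the support of the cokernel of $O_{\p\times\HS^{n}}\to\rho_{*}O_{\mathcal{Z}}$.

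It remains to see that the image of $\Phi$ is contained in $\HS^{'n}$. A closed point of the image is a graph $\Gamma_{s}$ for some $s\in H^{0}(D,O_{D}(n))$ with $D\subseteq\p$ of length $n$. As $D$ is an effective divisor of degree $n$, $I_{D}\simeq O_{\p}(-n)$, so twisting $0\to I_{D}\to O_{\p}\to O_{D}\to 0$ by $O_{\p}(n)$ yields $0\to O_{\p}\to O_{\p}(n)\to O_{D}(n)\to 0$; since $H^{1}(\p,O_{\p})=0$, the section $s$ lifts to some $\tilde s\in H^{0}(\p,O_{\p}(n))$. The associated section $\p\to S$ of $\rho$ has closed image $C\subseteq S$, and $\rho$ restricts to an isomorphism $C\xrightarrow{\sim}\p$, so $C$ is a smooth curve in $S$; and $C\cap\rho^{-1}(D)$ is the graph of $\tilde s|_{D}=s$, hence equals $\Gamma_{s}$. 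Thus $\Gamma_{s}\subseteq C$ is embedded in a smooth curve, so it lies in $\HS^{'n}$, as required.

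I expect the crux to be the passage from "$\Phi$ is a monomorphism of functors" to "$\Phi$ is an open immersion". The slick route is the combination of the functor-of-points computation with miracle flatness sketched above; the more hands-on alternative — writing down an explicit inverse from the open locus described above to $V$ — forces one to control the formation of the scheme-theoretic image $\rho(\mathcal{Z})$ under base change, which is automatic only after restricting to the locus where $\mathcal{Z}\to\p\times\HS^{n}$ is already a closed immersion.
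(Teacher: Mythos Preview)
Your proof is correct and follows the same approach as the paper: you construct the same map $(D,s)\mapsto\Gamma_{s}\subseteq S$ by embedding $D$ into $S$ via the section $s$. The paper's own proof is a single sentence asserting that the result follows immediately from this construction, whereas you supply the details---the monomorphism plus miracle-flatness argument for the open-immersion claim, and the lifting of $s$ to a global section of $O_{\p}(n)$ for the $\HS^{'n}$ containment---that the paper leaves implicit.
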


\begin{proof}
Observe that if $D$ is length $n$ subscheme of $\p$ and $s$ a global section of $O_{D}(n)$, then we can embed $D$ into $S$ using the section $s$. This gives $D$ the structure of a closed subscheme of $S$. Now the assertion follows immediately from this.

\end{proof}

As a corollary, we get the following:
\begin{corollary}\label{Hilbert scheme of P1 and S}
Let $V$ denote the scheme corresponding to the affine space $H^{0}(O(n))$.
\begin{enumerate1}
\item There exist a smooth morphism $\Pn\times V\xrightarrow{v} \HS^{n}$ with image contained in $\HS^{' n}$.
\item There is a Cartesian diagram:
$$\xymatrix{
V\times(\p)^{n} \ar[r]^{w} \ar[d]^{\eta} & \Fl^{' n} \ar[d]^{\psi}\\
V\times\Pn \ar[r]^{v} & \HS^{' n}
}
$$
\item Let $D'$ be the universal subscheme of $S$ over $\HS^{' n}$ and $D$ be the universal subscheme of $\p$ over $\Pn$. Consider $O_{D'}$ and $O_{D}$ as vector bundles on $\HS^{' n}$ and $\Pn$. Then we have: $v^{*}\de(O_{D'})\simeq \de(O_{D})$
\end{enumerate1}
\end{corollary}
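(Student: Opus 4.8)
The plan is to deduce all three parts directly from Proposition~\ref{an open subscheme of HS}, which identifies the total space of the rank $n$ bundle $\pi_{*}O_{D}(n)$ on $\Pn$ with an open subscheme of $\HS^{n}$ contained in $\HS^{' n}$. Throughout, $v$ will be the morphism sending a pair $(D,s)$ --- a length $n$ subscheme $D\subseteq\p$ together with a section $s\in H^{0}(\p,O(n))$ --- to the subscheme $D\subseteq S$ obtained by embedding $D$ into the total space $S$ of $O(n)$ via $s|_{D}\in H^{0}(O_{D}(n))$.

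For part (1), I would first realize $\pi_{*}O_{D}(n)$ as a quotient of the trivial bundle with fibre $H^{0}(\p,O(n))$. Twisting the universal sequence $0\to O_{\p}(-n)\boxtimes O_{\Pn}(-1)\to O_{\p\times\Pn}\to O_{D}\to 0$ by $O_{\p}(n)$ and pushing forward along $\pi$ (using $R^{1}\pi_{*}O_{\p\times\Pn}=0$) gives a short exact sequence
\[
0\to O_{\Pn}(-1)\to H^{0}(\p,O(n))\otimes O_{\Pn}\to \pi_{*}O_{D}(n)\to 0 .
\]
The induced morphism of total spaces $V\times\Pn\to\mathrm{Tot}(\pi_{*}O_{D}(n))$ is an affine bundle (being associated to a surjection of vector bundles over $\Pn$ with kernel $O_{\Pn}(-1)$), hence smooth; composing with the open immersion $\mathrm{Tot}(\pi_{*}O_{D}(n))\hookrightarrow\HS^{n}$ of Proposition~\ref{an open subscheme of HS} yields the desired smooth morphism $v$, whose image lies in $\HS^{' n}$.

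For part (2), the key point is that a complete flag $\emptyset=D_{0}\subseteq D_{1}\subseteq\cdots\subseteq D_{n}=D$ with length-one successive quotients is nothing but a complete flag of ideals in the finite $k$-algebra $O_{D}$, and this algebra does not change when $D$ is re-embedded into $S$ by $v$. Consequently, pulling back the flag Hilbert scheme $\psi\colon\Fl^{' n}\to\HS^{' n}$ along $v$ represents the functor of complete flags of the universal length $n$ subscheme over $\Pn\times V$; since that universal subscheme, as an abstract finite scheme, is pulled back from $\Pn$, this functor is represented by $\eta\times\mathrm{id}\colon(\p)^{n}\times V\to\Pn\times V$. This is exactly the asserted Cartesian square, with $w$ the morphism remembering the flag.

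For part (3), the description of $v$ shows that the pullback $v^{*}O_{D'}$ of the structure sheaf of the universal subscheme $D'\subseteq S\times\HS^{' n}$, regarded as a rank $n$ vector bundle by pushforward to the base, is the pullback to $\Pn\times V$ of the rank $n$ bundle $\pi_{*}O_{D}$ on $\Pn$, because the subscheme $v(D,s)$ has structure sheaf $O_{D}$; taking determinants gives $v^{*}\de(O_{D'})\simeq\de(O_{D})$. The only genuinely delicate point in the whole argument is the functoriality in part (2): one must carry out the identification of flags in families, phrasing both sides as representing the same moduli functor, so as to conclude that the square is Cartesian as schemes over $\Pn\times V$ and not merely a bijection on points. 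Parts (1) and (3) are then routine bookkeeping with the exact sequence above.
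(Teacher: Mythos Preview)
Your argument is correct and follows essentially the same route as the paper: define $v$ by embedding $D\subseteq\p$ into $S$ via the restricted section, use the surjection $H^{0}(O(n))\otimes O_{\Pn}\twoheadrightarrow\pi_{*}O_{D}(n)$ to see that $v$ is smooth onto the open piece of $\HS^{' n}$ from Proposition~\ref{an open subscheme of HS}, and then observe that flags and structure sheaves of $D$ are insensitive to whether $D$ is viewed inside $\p$ or inside $S$. Your version simply spells out a bit more detail (the explicit kernel $O_{\Pn}(-1)$ and the functorial phrasing of the Cartesian square) than the paper does.
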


\begin{proof}
$v$ is defined as follows. A point of $\Pn\times V$ corresponds to pairs $(D,s)$ where $D$ is a closed subscheme of length $n$ of $\p$ and $s\in H^{0}(O(n))$. Since the surface $S$ is the total space of the line bundle $O(n)$, we can embed $D$ into $S$ using the section $s$. So this defines $v$. Now let us consider:
\[\begin{CD}
\p\times\Pn\\
@V{p}VV\\
\Pn\\
\end{CD}\]
Let $D$ be the universal subscheme on $\p\times\Pn$. Let $E$ be the total space of the vector bundle $p_{*}O_{D}(n)$ over $\Pn$. In Proposition~\ref{an open subscheme of HS} we proved that $E$ can be identified with an open subscheme of $\HS^{n}$ contained in $\HS^{' n}$. The natural morphism $O_{\p}\rightarrow O_{D}$ induces a surjection $O(n)\rightarrow O_{D}(n)$. Since $D$ has length $n$, the natural morphism of vector bundles
$$p_{*}O_{\p}(n)\simeq H^{0}(O(n))\otimes O_{\Pn}\rightarrow p_{*}O_{D}(n)$$
is surjective, hence $\Pn\times V\rightarrow E$ is smooth.\\
For part $(2)$, notice that in our situation, the subscheme $D$ of $S$ comes from a subscheme of $\p$, hence giving a flag of $D$ as a subscheme of $S$ is the same thing as giving a flag of $D$ as a subscheme of $\p$, and we know that $\textrm{Flag}^{n}_{\p}\simeq (\p)^{n}$.\\
Part $(3)$ follows almost immediately from the description of $v$. Namely, if $U$ is the open subscheme of $\HS^{' n}$ corresponding to the image of $v$, then the subscheme $D'$ of $S$ comes from the subscheme $D$ of $\p$. Hence the assertion follows.

\end{proof}

Later on we shall also need the following proposition describing the Hilbert scheme of points of planar curves (see $[8]$):

\begin{Proposition}\label{Hilbert scheme of planar curves}
Let
$$\xymatrix{
\mathcal{C} \ar[dr] \ar[rr] & & S\times H \ar[ld]\\
 & H\\
}
$$
be a family of planar curves over $H$ sitting in $S$ which is a relative effective Cartier divisor on $S$. Then the relative Hilbert scheme of $S\times H$ over $H$ is given by $\HS^{n}\times H$ and $\textrm{Hilb}^{n}_{\mathcal{C}|H}$ is a family of locally complete intersection subscheme of codimension $n$ in $\HS^{n}\times H$ over $H$. More precisely, consider the universal subscheme $D$:
$$\xymatrix{
D \ar[rr]^{i} \ar[rd] & & S\times\HS^{n}\times H \ar[ld]^{\pi}\\
 & \HS^{n}\times H\\
}
$$
We have natural global section $s$ of the vector bundle $\pi_{*}O_{D}(\mathcal{C})$ defined by:
$$O_{S}\rightarrow O_{S}(\mathcal{C})\rightarrow O_{D}(\mathcal{C})$$
Then $\textrm{Hilb}^{n}_{\mathcal{C}|H}$ is the vanishing locus of $s$.

\end{Proposition}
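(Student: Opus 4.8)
The plan is to identify $\HuC$ with the zero locus of the section $s$ by a functor-of-points argument, and then to verify that $s$ is a regular section by a dimension count, after which the local-complete-intersection assertions follow formally. To begin, the identification of the relative Hilbert scheme of $S\times H$ over $H$ with $\HS^{n}\times H$ is immediate: $S\times H\to H$ is the base change of $S\to\mathrm{Spec}\,k$, and $\HS^{n}$ represents the Hilbert functor of length $n$ subschemes of $S$, so the relative Hilbert functor is represented by $\HS^{n}\times H$. By the smoothness theorem recalled above ($[10]$, $[16]$), $\HS^{n}$ is smooth of dimension $2n$, and since $H$ is an affine space, $\HS^{n}\times H$ is smooth over $k$ of pure dimension $2n+\dim H$.

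Next I would run the functor-of-points computation. A morphism $T\to\HS^{n}\times H$ over $H$ corresponds to a flat family $D_{T}\subseteq S\times T$ of length $n$ subschemes. Writing $\mathcal{C}_{T}:=\mathcal{C}\times_{H}T$ (a relative effective Cartier divisor on $S\times T$, by hypothesis on $\mathcal{C}$, so that $O(\mathcal{C}_{T})$ is a line bundle), the subscheme $D_{T}$ is contained in $\mathcal{C}_{T}$ if and only if the composite $O_{S\times T}(-\mathcal{C}_{T})\to O_{S\times T}\to O_{D_{T}}$ vanishes, equivalently if and only if the image in $O_{D_{T}}(\mathcal{C}_{T})$ of the canonical section of $O(\mathcal{C}_{T})$ cutting out $\mathcal{C}_{T}$ is zero. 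Because $D\to\HS^{n}\times H$ is finite flat of degree $n$ and $O_{D}(\mathcal{C})$ is a line bundle on $D$, the sheaf $\pi_{*}O_{D}(\mathcal{C})$ is locally free of rank $n$ with formation commuting with arbitrary base change; pushing the condition above forward along the finite morphism $D_{T}\to T$ identifies it with the vanishing of the pullback of $s$. Hence $\HuC$ is precisely the zero locus $V(s)\subseteq\HS^{n}\times H$.

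It then remains to show that $V(s)\to H$ is a relative complete intersection of relative codimension $n$. Since $s$ is a section of a rank $n$ vector bundle on the smooth scheme $\HS^{n}\times H$, every nonempty component of $V(s)$ has codimension at most $n$. For the reverse inequality I would use that for each $h\in H$ the fiber $V(s)_{h}$ is the Hilbert scheme of $n$ points of the planar curve $\mathcal{C}_{h}\subseteq S$, which has dimension $n$; this is the one input I would take from outside (see $[8]$; for reduced $\mathcal{C}_{h}$ it is classical, and for the non-reduced spectral curves occurring here, i.e.\ double structures on $\p$, it can be checked by a direct local computation). Granting this, $\dim V(s)\le n+\dim H$, so together with the codimension bound $V(s)$ has pure codimension $n$, $s$ is a regular section, $V(s)$ is a local complete intersection of codimension $n$ in $\HS^{n}\times H$, and the Koszul complex on $s$ resolves $O_{V(s)}$. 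Finally, $V(s)$ is a local complete intersection over $k$ (a regular embedding into the smooth $k$-scheme $\HS^{n}\times H$), $H$ is smooth over $k$, and the fibers of $V(s)\to H$ have dimension $n=\dim V(s)-\dim H$, so Proposition~\ref{criterion for relative complete intersection} gives that $\HuC=V(s)\to H$ is a relative complete intersection morphism, which is exactly the claim. The step I expect to be the main obstacle is the fiber-dimension bound for the Hilbert scheme of points of a possibly non-reduced planar curve; once that is in hand the rest is formal.
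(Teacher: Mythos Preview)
The paper does not supply its own proof of this proposition; it is stated with the citation ``(see $[8]$)'' and used as a known input. Your argument is correct and is essentially the standard one behind that citation: identify $\HuC$ set-theoretically (in fact scheme-theoretically, via the functor of points) with the zero locus $V(s)$ of the tautological section of the rank-$n$ bundle $\pi_{*}O_{D}(\mathcal{C})$, and then show $s$ is regular by bounding the dimension of $V(s)$ fibre by fibre. Your use of Proposition~\ref{criterion for relative complete intersection} to upgrade the global lci statement to a relative one over $H$ is also exactly right.

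You have correctly isolated the only nontrivial step, namely $\dim\textrm{Hilb}^{n}_{C_{h}}\le n$ for every $h\in H$, including the non-reduced fibres. Two remarks on this. First, the punctual Hilbert scheme argument underlying $[8]$ (Brian\c{c}on's bound that the punctual Hilbert scheme of length $m$ at a planar point has dimension $\le m-1$) uses only that the local ring embeds into $k[[x,y]]$, so it applies verbatim to the non-reduced double covers of $\p$ that occur here; no separate computation is really needed. Second, in the specific situation of this paper one can sidestep the non-reduced case entirely for the \emph{global} regularity of $s$: the locus $H\setminus H_{r}$ of non-reduced spectral curves has codimension $2n+1$ in $H$, so $V(s)|_{H\setminus H_{r}}$ has dimension at most $2n+\dim H-(2n+1)<n+\dim H$ and cannot contribute an extra component. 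You still need the fibrewise bound to invoke Proposition~\ref{criterion for relative complete intersection} and obtain flatness over $H$, so the first remark is the cleaner route.
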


\section{Moduli of rank two Higgs bundles on $\p$}

\subsection{Generalities on the geometry of moduli of Higgs bundles}
In this subsection we review some general definitions and results about the geometry of the stack of Higgs bundles on $\p$. We fix an integer $n\geq 2$ throughout the entire paper. Let $\li=O(n)$ on $\p$.

\begin{definition}
A rank $2$ $\li$-valued Higgs bundle on $\p$ is a pair $(E,\phi)$ where $E$ is a rank $2$ vector bundle and the Higgs field $\phi$ is a morphism of vector bundles $E\xrightarrow{\phi} E\otimes\li$

\end{definition}

We denote the stack of Higgs bundles by $\hig$. It decomposes as a disjoint union:
$$\hig=\coprod_{k}\hig^{k}$$
where $\hig^{k}$ is the stack of Higgs bundles of degree $k$. It is well-known that $\hig$ is an algebraic stack locally of finite presentation.
\begin{definition}
A Higgs bundle $(E,\phi)$ is called semistable if the following condition hold: For any line subbundle $E_{0}\subseteq E$ that is preserved by the Higgs field $\phi$ in the sense that $\phi(E_{0})\subseteq E_{0}\otimes\li$, we have $\textrm{deg}(E_{0})\leq \dfrac{\textrm{deg}(E)}{2}$.

\end{definition}
Semistable Higgs bundles form an open substack, denote it by $\hig_{ss}$.(For the construction of the moduli space of semistable higgs bundles, see $[13]$)

\begin{Proposition}\label{smoothness of semistable higgs bundles}
$\hig_{ss}$ is smooth of dimension $4n$.
\end{Proposition}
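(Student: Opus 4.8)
The plan is to compute the deformation theory of $\hig$ at a point $(E,\phi)$ directly. The standard deformation theory of Higgs pairs identifies the controlling object as the two-term complex
\[
C^{\bullet}\;=\;\bigl[\,\mathcal{E}nd(E)\xrightarrow{\ \mathrm{ad}\,\phi\ }\mathcal{E}nd(E)\otimes\li\,\bigr],
\]
placed in degrees $0$ and $1$, where $\mathrm{ad}\,\phi(\alpha)=[\phi,\alpha]$: the infinitesimal automorphisms of $(E,\phi)$ are $\mathbb{H}^{0}(C^{\bullet})$, the first-order deformations are $\mathbb{H}^{1}(C^{\bullet})$, and the obstructions lie in $\mathbb{H}^{2}(C^{\bullet})$. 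Hence it suffices to show that $\mathbb{H}^{2}(C^{\bullet})=0$ for every semistable $(E,\phi)$; then $\hig_{ss}$ is smooth at $(E,\phi)$ of dimension $\dim\mathbb{H}^{1}(C^{\bullet})-\dim\mathbb{H}^{0}(C^{\bullet})$.

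First I would reduce the obstruction to something concrete. Since $\p$ is a curve, $H^{2}$ of any coherent sheaf vanishes, so the hypercohomology spectral sequence gives $\mathbb{H}^{2}(C^{\bullet})=\operatorname{coker}\!\bigl(H^{1}(\mathcal{E}nd E)\xrightarrow{\mathrm{ad}\,\phi}H^{1}(\mathcal{E}nd E\otimes\li)\bigr)$. By Serre duality on $\p$ with $\omega=O(-2)$, together with the self-duality of $\mathcal{E}nd E$ under the trace pairing (with respect to which $\mathrm{ad}\,\phi$ is skew-self-adjoint), the space $\mathbb{H}^{2}(C^{\bullet})$ is dual to the kernel of
\[
\mathrm{ad}\,\phi\colon\ \mathrm{Hom}_{\p}\bigl(E,\,E\otimes O(-n-2)\bigr)\longrightarrow \mathrm{Hom}_{\p}\bigl(E,\,E\otimes O(-2)\bigr).
\]
So everything comes down to the claim: \emph{if $(E,\phi)$ is semistable, then any $\psi\colon E\to E\otimes O(-n-2)$ with $[\phi,\psi]=0$ vanishes.}

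This claim — the heart of the matter — I would prove by a degree estimate. A nonzero $\psi$ cannot have generic rank $2$, since then $\det\psi$ would be a nonzero global section of $(\det E)^{\vee}\otimes\det E\otimes O(-2n-4)=O(-2n-4)$, impossible for $n\geq 2$; nor generic rank $0$, since then its image would be a torsion subsheaf of the bundle $E\otimes O(-n-2)$, hence zero. So $\psi$ has generic rank $1$, and $[\phi,\psi]=0$ forces both $\ker\psi\subseteq E$ and $\operatorname{im}\psi\subseteq E\otimes O(-n-2)$ to be $\phi$-stable; taking saturations (a $\phi$-stable subsheaf has $\phi$-stable saturation, since the induced map from the torsion quotient to a torsion-free sheaf is zero) produces $\phi$-invariant line subbundles $N_{1}\subseteq E$ and $N_{2}\subseteq E$, where $N_{2}$ is the saturation of $\operatorname{im}\psi$ twisted up by $O(n+2)$. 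Writing $d=\deg E$, semistability gives $\deg N_{1}\leq d/2$ and $\deg N_{2}\leq d/2$. On the other hand $E/\ker\psi\cong\operatorname{im}\psi$ embeds into $N_{2}\otimes O(-n-2)$, while $\ker\psi$ is a rank-one subsheaf of $N_{1}$; chaining the resulting inequalities,
\[
\tfrac{d}{2}\ \leq\ d-\deg N_{1}\ \leq\ \deg(E/\ker\psi)\ \leq\ \deg N_{2}-(n+2)\ \leq\ \tfrac{d}{2}-(n+2),
\]
which is absurd since $n\geq 2$. This proves the claim, so $\mathbb{H}^{2}(C^{\bullet})=0$ and $\hig_{ss}$ is smooth.

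For the dimension, Riemann--Roch on $\p$ gives $\chi(\mathcal{E}nd E)=4$ and $\chi(\mathcal{E}nd E\otimes\li)=4n+4$ (both sheaves having rank $4$ and $\mathcal{E}nd E$ having degree $0$), so $\chi(C^{\bullet})=-4n$; combined with $\mathbb{H}^{2}(C^{\bullet})=0$ this yields $\dim_{(E,\phi)}\hig_{ss}=\dim\mathbb{H}^{1}(C^{\bullet})-\dim\mathbb{H}^{0}(C^{\bullet})=-\chi(C^{\bullet})=4n$, uniformly over all components. I expect the genuinely nontrivial step to be the third one: recognising that Higgs-semistability can be exploited only through $\phi$-invariant sub-line-bundles, and that both $\ker\psi$ and the twist of $\operatorname{im}\psi$ supply such bundles, whereupon the inequality closes precisely because $n+2>0$. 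The first and last steps are routine deformation theory and Riemann--Roch.
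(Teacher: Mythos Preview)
Your proof is correct, but the paper takes a shorter and more specific route. Rather than dualising $\mathbb{H}^{2}$ via Serre duality and analysing $\phi$-commuting maps $E\to E\otimes O(-n-2)$, the paper simply invokes the Grothendieck splitting $E\simeq O(a)\oplus O(b)$ (with $a\leq b$) and observes that semistability forces $b-a\leq n$: if $b-a>n$ then $\mathrm{Hom}(O(b),O(a)\otimes\li)=0$, so $O(b)$ is automatically $\phi$-invariant, contradicting semistability. From $b-a\leq n$ one reads off directly that every line-bundle summand of $\mathcal{E}nd(E)\otimes\li$ has nonnegative degree, whence $H^{1}(\mathcal{E}nd(E)\otimes\li)=0$ and a fortiori $\mathbb{H}^{2}=0$. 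Thus the paper establishes the stronger vanishing $H^{1}(\mathcal{E}nd(E)\otimes\li)=0$ in one line by exploiting that the base is $\p$. Your argument is coordinate-free, only uses the weaker hypercohomology vanishing, and would carry over verbatim to a smooth curve of genus $g$ provided $\deg\li>2g-2$; the price is that your kernel--image analysis and chain of degree inequalities take substantially more work than the paper's direct computation.
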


\begin{proof}
It is well known that the deformation of $\hig$ is controlled by the complex(See $[13]$)
$$K=\mathcal{E}nd(E)\xrightarrow{ad\phi}\mathcal{E}nd(E)\otimes\li$$
Now if $(E,\phi)$ is semistable, and $E\simeq O(a)\oplus O(b)$ with $a\leq b$, then it is not hard to see that we must have $b-a\leq n$, otherwise $O(b)$ would be a subbundle preserved by $\phi$, contrary to the definition of semistability. This implies that
$$H^{1}(\mathcal{E}nd(E)\otimes\li)=0$$
Hence we have $H^{2}(K)=0$, so this implies $\hig_{ss}$ is smooth, and an Euler characteristic computation shows that its dimension is $4n$.

\end{proof}

\begin{definition}\label{generically regular Higgs field}
A Higgs field $\phi$ is called generically regular if $\phi$ is a regular element in $\mathfrak{gl}_{2}(k(\eta))$ after trivilizing $E$ and $\li$ at the generic point $\eta$ of $\p$. Higgs bundles with generically regular Higgs field forms an open substack of $\hig$, denote it by $\widetilde{\hig}$
\end{definition}

\begin{definition}\label{O plus O}
Set $\hig^{(k)}$ to be the stack of Higgs bundles such that the underlying vector bundle $E$ is isomorphic to $O(k)\oplus O(k)$. It is easy to see that this is an open substack of $\hig_{ss}$

\end{definition}

We need the following lemma:
\begin{lemma}\label{lemma in the introduction}
$\hig_{ss}=(\widetilde{\hig}\bigcap\hig_{ss})\bigcup(\bigcup_{k}\hig^{(k)})$.

\end{lemma}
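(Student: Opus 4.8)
The plan is to prove the two inclusions separately; only the reverse one has content. For "$\supseteq$": $\widetilde{\hig}\cap\hig_{ss}$ lies in $\hig_{ss}$ tautologically, and each $\hig^{(k)}$ lies in $\hig_{ss}$ because every line subbundle of $O(k)\oplus O(k)$ maps nontrivially to one of the factors and hence has degree $\le k=\deg(E)/2$ (this was already observed in Definition~\ref{O plus O}); as $\widetilde{\hig}$ and the $\hig^{(k)}$ are open substacks of $\hig$, their union is an open substack of $\hig$ contained in $\hig_{ss}$. So it remains to show that every point of $\hig_{ss}$ lies in the right-hand side; since all the substacks involved are open, it suffices to test on field-valued points, i.e. to show: if $(E,\phi)$ is a semistable rank $2$ $\li$-valued Higgs bundle over a field and $\phi$ is \emph{not} generically regular, then $E\cong O(k)\oplus O(k)$ for some $k$.

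The first step is to exploit non-regularity. After trivializing $E$ and $\li$ at the generic point $\eta$ of $\p$, the Higgs field becomes a matrix $\phi_\eta\in\mathfrak{gl}_2(k(\eta))$, and a $2\times 2$ matrix over any field fails to be regular precisely when it is scalar: the remaining possibilities — two distinct eigenvalues, or a single Jordan block — each have a $2$-dimensional centralizer, whereas a scalar matrix has the whole $4$-dimensional $\mathfrak{gl}_2$ as centralizer. Hence $\phi_\eta=\lambda_\eta\cdot\mathrm{id}$ for some $\lambda_\eta\in k(\eta)$, which, via the trivialization of $\li_\eta$, is a rational section $\lambda$ of $\li=O(n)$. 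Then $\phi-\lambda\cdot\mathrm{id}_E$ is a rational section of the vector bundle $\mathcal{H}om(E,E\otimes\li)$ that vanishes at $\eta$, hence is identically zero; thus $\phi=\lambda\cdot\mathrm{id}_E$, and since $\mathrm{id}_E$ is a nowhere-vanishing section of $\mathcal{E}nd(E)$ while $\phi$ is an honest morphism of bundles, $\lambda$ is actually a global section, $\lambda\in H^{0}(\p,O(n))$.

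The second step is a one-line stability argument. On $\p$ we may write $E\cong O(a)\oplus O(b)$ with $a\ge b$. Because $\phi=\lambda\cdot\mathrm{id}_E$ is scalar, \emph{every} line subbundle of $E$ is preserved by $\phi$; in particular the saturated subbundle $O(a)\hookrightarrow E$ is $\phi$-invariant of degree $a$, so semistability forces $a\le\deg(E)/2=(a+b)/2$, i.e. $a\le b$. Therefore $a=b$, so $E\cong O(a)\oplus O(a)$ and $(E,\phi)\in\hig^{(a)}$, which completes the argument.

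The proof is elementary and there is no serious obstacle. The one place deserving a little care is the passage from the generic statement ($\phi_\eta$ scalar) to the global one ($\phi=\lambda\cdot\mathrm{id}_E$ with $\lambda$ a genuine section of $O(n)$), which I would handle by the torsion-freeness of $\mathcal{H}om(E,E\otimes\li)$ as above; the classification of non-regular elements of $\mathfrak{gl}_2$ is the only other input, and it is standard.
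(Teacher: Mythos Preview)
Your proof is correct and follows the same idea as the paper's, just argued in the contrapositive direction and with more detail. The paper's two-line proof takes a semistable $(E,\phi)$ with $E\simeq O(a)\oplus O(b)$, $a<b$, notes that $O(b)$ cannot be $\phi$-invariant by semistability, and concludes (implicitly using that a non-regular $2\times 2$ matrix is scalar) that $\phi$ is generically regular; you instead assume $\phi$ is not generically regular, deduce $\phi=\lambda\cdot\mathrm{id}_E$ globally, and then use semistability to force $a=b$.
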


\begin{proof}
Let $(E,\phi)$ be a semistable Higgs bundle that is not in $\bigcup_{k}\hig^{(k)}$. Then we can write $E\simeq O(a)\oplus O(b)$ with $a<b$. Then by the definition of semistability, $O(b)$ cannot be preserved by $\phi$, hence $\phi$ is regular at the generic point of $\p$, so $(E,\phi)\in\widetilde{\hig}\bigcap\hig_{ss}$.

\end{proof}

\subsection{Hitchin fibration and spectral curve}
In this subsection we are going to review certain properties of the Hitchin fibration that will be used later in the paper. Part $(2)$ of Proposition~\ref{properties of Hitchin fibration} will be frequently used. Lemma~\ref{resolution} will be used in Section $4$ to construct resolutions of sheaves. Most of the properties are well-known(See $[12]$ and the appendix of $[3]$ for a summary).

\begin{definition}
The Hitchin base $H$ is the affine space given by:
$$H^{0}(\p,\li)\times H^{0}(\p,\li^{2})$$

\end{definition}

It is well-known that we have the Hitchin fibration:
$$\hig\xrightarrow{h} H \qquad (E,\phi)\rightarrow (\textrm{tr}(\phi),\textrm{det}(\phi))$$
The following proposition summarizes the properties of Hitchin fibration:
\begin{Proposition}\label{properties of Hitchin fibration}
\begin{enumerate1}
\item $H$ is an affine space of dimension $3n+2$
\item $h$ is a relative complete intersection morphism with fiber dimension $n-2$
\end{enumerate1}

\end{Proposition}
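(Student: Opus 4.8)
The plan is to verify the two claims directly from the explicit description of $H$ and $h$. Part $(1)$ is a dimension count: $H = H^{0}(\p,\li)\times H^{0}(\p,\li^{2})$ with $\li=O(n)$, so $H^{0}(\p,O(n))$ has dimension $n+1$ and $H^{0}(\p,O(2n))$ has dimension $2n+1$, giving $\dim H = (n+1)+(2n+1) = 3n+2$. That $H$ is an affine space is immediate since it is a product of vector spaces. This part is routine.

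For part $(2)$, I would first compute the dimension of the fibers. The total space $\hig_{ss}$ is smooth of dimension $4n$ by Proposition~\ref{smoothness of semistable higgs bundles} (or one works on $\hig$ and uses that it is of finite presentation over $k$ with the relevant smooth locus), and $\dim H = 3n+2$, so if $h$ is flat the fiber dimension would be $4n-(3n+2) = n-2$. To get flatness, I would invoke Proposition~\ref{criterion for relative complete intersection}: it suffices to know that $\hig$ (or the relevant chart) is a local complete intersection over $k$, that $H$ is smooth over $k$ (clear, $H$ is affine space), and that every fiber $h^{-1}(b)$ has dimension at most $\dim\hig - \dim H$. The first point follows because the stack of Higgs bundles is cut out inside the smooth stack of pairs (vector bundle, $\li$-twisted endomorphism) — indeed the deformation complex $K=\mathcal{E}nd(E)\xrightarrow{\mathrm{ad}\,\phi}\mathcal{E}nd(E)\otimes\li$ exhibits the lci structure — and more concretely, fixing the underlying bundle type, the Higgs field $\phi$ and the equations $\mathrm{tr}(\phi)=a$, $\det(\phi)=b$ describe $h$ locally as pulling back along a morphism to affine space, with the fibers being affine varieties of the expected codimension. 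The second point is the substantive input: the fiber of $h$ over a point $b\in H$ corresponding to a spectral curve $C_{b}\subseteq S$ is identified (over the locus of integral, and more generally reduced, spectral curves) with the compactified Jacobian of $C_{b}$, which has dimension equal to the arithmetic genus; one checks $p_{a}(C_{b}) = n-2$ for the spectral curve of a rank $2$, $O(n)$-valued Higgs bundle on $\p$ by adjunction on the surface $S$, and for nonreduced spectral curves one bounds the fiber dimension by a semicontinuity / degeneration argument using that the reduced locus $H_{r}$ has complement of high codimension.

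The main obstacle will be the fiber-dimension bound for \emph{all} $b\in H$, including those corresponding to badly nonreduced spectral curves: away from $H_{r}$ the fibers are no longer compactified Jacobians and one must argue that they cannot jump in dimension. The cleanest route is to note that $h$ restricted over $H_{r}$ is already flat with fibers of dimension $n-2$ (compactified Jacobians of reduced planar curves of arithmetic genus $n-2$), so $\hig_{ss}|_{H_{r}}$ is equidimensional of dimension $4n$; since $\hig_{ss}$ is irreducible (or at least each component meets $H_{r}$) and $H_{r}$ is dense with complement of large codimension, every fiber of $h$ has dimension $\leq 4n - (3n+2) = n-2$ by upper semicontinuity of fiber dimension for the proper-ish pieces, and then Proposition~\ref{criterion for relative complete intersection} upgrades this to flatness and the lci property, forcing equality $n-2$ on every fiber. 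Once flatness is in hand, Proposition~\ref{properties of lci}(3) gives that each fiber is itself a local complete intersection over its residue field, which is exactly the assertion that $h$ is a relative complete intersection morphism.
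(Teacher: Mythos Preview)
Your treatment of Part~(1) is correct and matches the paper. The issue is in Part~(2), specifically the step where you bound the fiber dimension over nonreduced spectral curves.

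Upper semicontinuity of fiber dimension runs in the opposite direction from what you need: it asserts that special fibers have dimension \emph{at least} that of the generic fiber, not at most. So the sentence ``every fiber of $h$ has dimension $\leq 4n-(3n+2)=n-2$ by upper semicontinuity of fiber dimension'' is simply false as stated, and no amount of irreducibility of $\hig$ or density of $H_{r}$ will rescue it. A priori nothing prevents a fiber over a nonreduced spectral curve from jumping to much larger dimension (think of a blowup: irreducible total space, generic fiber a point, special fiber positive-dimensional). Likewise, knowing that the complement of $H_{r}$ has high codimension in $H$ does not by itself bound the dimension of the preimage fibers.

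The paper closes this gap by a direct computation rather than semicontinuity. First it proves, via the Abel--Jacobi map from the relative Hilbert scheme $\textrm{Hilb}^{N}_{\mathcal{C}|H}$ (which is a locally complete intersection of codimension $N$ in $\HS^{N}\times H$ by Proposition~\ref{Hilbert scheme of planar curves}) and its smoothness on $U_{N}$, that the restriction of $h$ to the open substack $\widetilde{\hig}$ of generically regular Higgs fields already has all fibers of dimension $n-2$. It then handles the complement $\hig\setminus\widetilde{\hig}$ fiber by fiber: by Proposition~\ref{spectral data}(5) this complement lives only over nonreduced spectral curves, and by Proposition~\ref{spectral data}(6) a Higgs bundle there has $\phi=\lambda\cdot\mathrm{id}_{E}$, so the complement of $\widetilde{\hig}$ in such a fiber is the stack of rank~$2$ vector bundles on $\p$, whose dimension is negative and in particular $<n-2$. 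Combining the two pieces gives the required fiber bound, after which Proposition~\ref{criterion for relative complete intersection} applies as you indicated (the lci hypothesis on $\hig$ is quoted from~$[5]$ rather than derived from the deformation complex).
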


The proof will be given at the end of this subsection.

Let $S$ be the surface defined by the total space of the line bundle $\li$. We have:
$$\xymatrix{
S=\textrm{Spec}(\textrm{Sym}(\li^{-1})) \ar[d]^{q}\\
\p\\
}
$$
Any Higgs bundle can be naturally viewed as a coherent sheaf on the surface $S$, and we have the following lemma:

\begin{lemma}\label{resolution}
Let $(E,\phi)$ be a Higgs bundle. There is a locally free resolution of $E$ as a coherent sheaf on $S$:
$$0\rightarrow q^{*}(E)\otimes q^{*}(\li^{-1})\rightarrow q^{*}(E)\rightarrow E\rightarrow 0$$
\end{lemma}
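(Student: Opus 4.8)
The plan is to write the resolution down explicitly: it is the relative ``companion matrix'' complex of the Higgs field, and amounts to the Beauville--Narasimhan--Ramanan description of $E$ as a sheaf on the spectral surface.

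First I would make precise how $(E,\phi)$ is viewed as a coherent sheaf on $S$. Since $S=\mathrm{Spec}_{\p}(\mathrm{Sym}(\li^{-1}))$, a quasicoherent sheaf on $S$ is the same as an $\mathcal{O}_{\p}$-module equipped with an action of $\li^{-1}$; the Higgs field $\phi\colon E\to E\otimes\li$ is the same datum as a map $\tilde\phi\colon E\otimes\li^{-1}\to E$, and this is the $\li^{-1}$-action that turns $E$ into a sheaf $\mathcal{E}$ on $S$. Because $q$ is affine, $q_{*}\mathcal{E}$ is just $E$ with its $\mathcal{O}_{\p}$-module structure, and $\mathcal{E}$ is generated over $q_{*}\mathcal{O}_{S}=\mathrm{Sym}(\li^{-1})$ by its degree-zero part $E$; hence the counit $q^{*}E=q^{*}q_{*}\mathcal{E}\to\mathcal{E}$ is surjective. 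This is the map $q^{*}E\to E$ of the statement.

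Next I would produce the first arrow. The total space $S$ of $\li$ carries a tautological section $s\in H^{0}(S,q^{*}\li)$, i.e. a bundle map $q^{*}\li^{-1}\to\mathcal{O}_{S}$; tensoring with $q^{*}E$ gives $s\otimes\mathrm{id}\colon q^{*}(E\otimes\li^{-1})\to q^{*}E$. Set
$$\delta=q^{*}(\tilde\phi)-s\otimes\mathrm{id}\colon q^{*}(E\otimes\li^{-1})\longrightarrow q^{*}E,$$
and I claim $0\to q^{*}(E\otimes\li^{-1})\xrightarrow{\ \delta\ }q^{*}E\to\mathcal{E}\to 0$ is exact. Exactness is local on $\p$: over an affine $U=\mathrm{Spec}\,B$ trivializing $E\cong\mathcal{O}_{\p}^{2}$ and $\li\cong\mathcal{O}_{\p}$ one has $q^{-1}(U)=\mathrm{Spec}\,B[t]$, the section $s$ becomes the fiber coordinate $t$, $\tilde\phi$ becomes a matrix $\Phi\in M_{2}(B)$, and the complex becomes $B[t]^{2}\xrightarrow{\Phi-tI}B[t]^{2}\to\mathcal{E}|_{q^{-1}(U)}\to 0$. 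The cokernel of $\Phi-tI$ is precisely $B^{2}$ with $t$ acting by $\Phi$, which is $\mathcal{E}$ (compatibly with the module structure, by construction), and $\Phi-tI$ is injective since $\det(\Phi-tI)$ is, up to sign, the characteristic polynomial of $\Phi$, a monic polynomial in $t$ and hence a nonzerodivisor in $B[t]$. Gluing over a trivializing cover yields the global exact sequence. Alternatively, globally $\det\delta$ is the section $s^{2}-q^{*}(\mathrm{tr}\,\phi)\,s+q^{*}(\det\phi)$ of $q^{*}\li^{\otimes 2}$ cutting out the spectral curve, which is a nonzerodivisor on the integral surface $S$, giving injectivity of $\delta$ directly.

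I do not anticipate a real obstacle: this is the classical spectral construction, and the only point needing care is bookkeeping of the twist, namely that the source must be $q^{*}(E\otimes\li^{-1})$ because $s$ is a section of $q^{*}\li$ rather than of $\mathcal{O}_{S}$, so that both $q^{*}(\tilde\phi)$ and $s\otimes\mathrm{id}$ are maps $q^{*}(E\otimes\li^{-1})\to q^{*}E$. The one thing worth a sentence is that exactness, being checkable on a trivializing cover, glues, and that the identification of the cokernel with $\mathcal{E}$ respects the $\mathrm{Sym}(\li^{-1})$-module structures — both immediate from the local model.
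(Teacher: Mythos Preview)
Your proposal is correct and follows essentially the same approach as the paper: the paper defines the first arrow as $T-q^{*}(\phi)$ for the tautological section $T$ of $q^{*}(\li)$ on $S$ (your $s$), and simply asserts that the resulting sequence is a resolution. Your write-up supplies the local verification and the injectivity argument via the characteristic polynomial that the paper leaves implicit, but the construction is identical up to an irrelevant sign.
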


\begin{proof}
By the definition of $S$, there is a tautological section $T$ of $q^{*}(\li)$ on $S$, and the morphism
$$q^{*}(E)\otimes q^{*}(\li^{-1})\rightarrow q^{*}(E)$$
is given by $T-q^{*}(\phi)$. It is easy to see that this is a resolution of $E$ as an $O_{S}$ module.

\end{proof}

Given a Higgs bundle $(E,\phi)$ we can consider the characteristic polynomial of $\phi$:
$$\textrm{det}(T-\phi)=T^{2}-\textrm{tr}(\phi)T+\textrm{det}(\phi)$$
which is naturally a global section of $q^{*}(\li^{2})$ on $S$. Its zero locus cuts out a curve $C$ which is a finite cover of degree $2$ over $\p$, called the spectral curve of $(E,\phi)$. By construction, $E$ is naturally a coherent sheaf on its spectral curve $C$. We have a universal family of spectral curves $\mathcal{C}$ over the Hitchin base $H$:
$$\xymatrix{
\mathcal{C} \ar[rr] \ar[rd] & & S\times H \ar[ld]\\
& H\\
}
$$
The following proposition is well-known(See $[12]$).
\begin{Proposition}\label{spectral data}
Let $(E,\phi)$ be a Higgs bundle and $C$ be the corresponding spectral curve.
\begin{enumerate1}
\item $C\subseteq S$ is an effective divisor given by the vanishing locus of a section of $q^{*}(\li)^{2}$.
\item $C\rightarrow \p$ is a degree $2$ flat cover and $O_{C}\simeq O\oplus\li^{-1}$ as $O_{\p}$ module.
\item $E$ is a torsion-free rank one sheaf on $C$.
\item There is a bijection between Higgs bundles with generically regular Higgs field and maximal Cohen-Macaualy sheaves on $C$ that are generically line bundles.
\item If the spectral curve $C$ is reduced, then $E$ is automatically generically rank one as a coherent sheaf on $C$, hence $(E,\phi)\in\widetilde{\hig}$
\item If the spectral curve $C$ is nonreduced and $\phi$ is not generically regular, then $E$ is supported on $C_{red}\simeq \p$ and there exist a global section $\lambda\in H^{0}(\p,\li)$ such that $C$ is cut out by $(T-\lambda)^{2}$ in $S$ and $\phi=id_{E}\otimes\lambda$.
\end{enumerate1}
\end{Proposition}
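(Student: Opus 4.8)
The plan is to establish this as the rank-$2$ incarnation of the Beauville--Narasimhan--Ramanan spectral correspondence, deducing everything from the universal property of $S=\mathrm{Spec}(\mathrm{Sym}(\li^{-1}))$, the Cayley--Hamilton theorem, and the two-term resolution of Lemma~\ref{resolution}. For (1), the characteristic polynomial $T^{2}-\mathrm{tr}(\phi)T+\det(\phi)$ is manifestly a global section of $q^{*}(\li)^{2}$, since $\mathrm{tr}(\phi)\in H^{0}(\p,\li)$, $\det(\phi)\in H^{0}(\p,\li^{2})$ and $T\in H^{0}(S,q^{*}\li)$ is the tautological section; as $S$ is a smooth integral surface and the $T^{2}$-term is nonzero, this section is a nonzerodivisor, so its zero scheme $C$ is an effective Cartier divisor. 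For (2), push forward along $q$: with $q_{*}O_{S}=\mathrm{Sym}_{O_{\p}}(\li^{-1})$ and the defining equation of $C$ monic of degree $2$ in $T$, the classes of $1$ and $T$ give an $O_{\p}$-basis of $q_{*}O_{C}$ (the Koszul computation for the relative Cartier divisor $C\subset S$), whence $q_{*}O_{C}\simeq O\oplus\li^{-1}$; in particular $C\to\p$ is finite locally free of degree $2$.

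For (3): by Lemma~\ref{resolution}, $E$ has the two-term locally free $O_{S}$-resolution $0\to q^{*}E\otimes q^{*}\li^{-1}\xrightarrow{T-q^{*}\phi}q^{*}E\to E\to 0$. The map $T-q^{*}\phi$ is a generic isomorphism on $S$---over the function field $k(S)=k(\p)(T)$ it becomes $T\cdot\mathrm{id}-\phi$, whose determinant is the nonzero polynomial $\det(T-\phi)$---hence injective, and its transpose is injective for the same reason; thus $\mathcal{E}xt^{i}_{O_{S}}(E,O_{S})$ vanishes for $i\neq 1$, i.e. $E$ is Cohen--Macaulay of codimension one on $S$. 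Since $C\subseteq S$ is a Gorenstein divisor, $E$ is therefore a maximal Cohen--Macaulay (``torsion-free'') sheaf when regarded, via Cayley--Hamilton, as an $O_{C}$-module, and comparing $O_{\p}$-ranks ($\mathrm{rk}\,E=2=\mathrm{rk}\,q_{*}O_{C}$) shows it has generic rank one.

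For (4)--(6): the two assignments $(E,\phi)\mapsto(E\text{ as }O_{C}\text{-module})$ and $\mathcal{F}\mapsto(q_{*}\mathcal{F},\ \text{multiplication by }T)$ are checked to be mutually inverse, using that $q$ is affine so $q_{*}$ identifies $O_{C}$-modules with $q_{*}O_{C}$-modules, and that a maximal Cohen--Macaulay $\mathcal{F}$ on $C$ generically of rank one pushes to a torsion-free sheaf of generic rank $2$ on the smooth curve $\p$, hence a rank-$2$ bundle, to which multiplication by $T$ attaches a Higgs field with spectral curve $C$. The only real content is at the generic point $\eta$ of $\p$: there $O_{C,\eta}=k(\eta)[T]/\bigl(\det(T-\phi)\bigr)$, and $E_{\eta}$ is free of rank one over it precisely when $\phi_{\eta}$ is a cyclic (i.e. regular) element---even when $O_{C,\eta}=k(\eta)[T]/(T-\lambda)^{2}$ is non-reduced, multiplication by $T$ on a free rank-one module is a non-scalar ($2\times2$ Jordan-block) matrix, hence regular. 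This gives (4). For (5), $C$ is reduced iff the discriminant $\mathrm{tr}(\phi)^{2}-4\det(\phi)\in H^{0}(\p,\li^{2})$ is not identically zero iff $\phi_{\eta}$ has distinct eigenvalues, in which case $\phi$ is generically regular and $(E,\phi)\in\widetilde{\hig}$. For (6), the hypotheses force the discriminant to vanish, so $\det(\phi)=\lambda^{2}$ with $\lambda:=\mathrm{tr}(\phi)/2\in H^{0}(\p,\li)$ and $C$ is cut out by $(T-\lambda)^{2}$, whose reduction $C_{red}=(T=\lambda)$ is the graph of $\lambda$, hence $\simeq\p$; since $\phi_{\eta}$ is non-regular with sole eigenvalue $\lambda$ it is the scalar $\lambda(\eta)\,\mathrm{id}$, so $\phi-\lambda\cdot\mathrm{id}_{E}$ is a section of the torsion-free sheaf $\mathcal{E}nd(E)\otimes\li$ vanishing at $\eta$ and hence $0$; then $T-\lambda$ annihilates $E$, so $E$ is supported on $C_{red}$.

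The main obstacle is the bookkeeping in (4)--(5): one must track precisely how the module-theoretic condition ``$\mathcal{F}$ is generically a line bundle on $C$'' corresponds to ``$\phi$ is generically regular'', and in particular that a non-reduced spectral curve can perfectly well carry a generically locally free sheaf and thus correspond to a Higgs bundle in $\widetilde{\hig}$. The explicit $2\times2$ Jordan-block computation at the generic point is what makes this transparent, and it is the only step where the rank-$2$ hypothesis is genuinely used; everything else is a formal consequence of Lemma~\ref{resolution} and the affineness of $q$.
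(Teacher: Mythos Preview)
The paper does not actually give a proof of this proposition: it is introduced with ``The following proposition is well-known (See $[12]$)'' and left at that. So there is no paper proof to compare against; your write-up supplies the details that the paper omits.

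Your argument is correct. A couple of remarks on points where the terminology in the statement is loose and you have (correctly) disambiguated:
\begin{itemize}
\item In (3), ``rank one'' must be read as \emph{generic rank one in the length sense} (i.e.\ $\mathrm{length}_{O_{C,\eta}}E_{\eta}=\mathrm{length}_{O_{C,\eta}}O_{C,\eta}$), exactly as you obtain from the $O_{\p}$-rank comparison. It does \emph{not} mean $E$ is generically free of rank one on $C$---that is the stronger condition appearing in (4) and (5), equivalent to generic regularity of $\phi$. Your Jordan-block remark makes this distinction explicit and is precisely the content separating (3) from (4).
\item In (5), your reduction ``$C$ reduced $\Leftrightarrow$ discriminant not identically zero'' tacitly uses that $C$, being a Cartier divisor on the smooth surface $S$, is Cohen--Macaulay and hence has no embedded points; so generically reduced implies reduced. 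This is implicit in your (1) but worth saying.
\item In (6), the step ``$\phi_{\eta}$ non-regular with sole eigenvalue $\lambda(\eta)$ $\Rightarrow$ $\phi_{\eta}$ is scalar'' is special to $2\times 2$ matrices, as you note at the end; and the passage from $\phi_{\eta}=\lambda(\eta)\,\mathrm{id}$ to $\phi=\lambda\,\mathrm{id}_{E}$ via torsion-freeness of $\mathcal{E}nd(E)\otimes\li$ is clean.
\end{itemize}
Overall your treatment is a faithful rank-$2$ unpacking of the Beauville--Narasimhan--Ramanan correspondence cited by the paper, with the Cayley--Hamilton/cyclic-vector analysis at the generic point doing the real work.
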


The following simple lemma will be used later
\begin{lemma}\label{description of the dual}
Let $(E,\phi)$ be a rank $2$ Higgs bundle with value in $O(n)$, considered as a coherent sheaf on the spectral curve $C$. Then we have the following isomorphism of vector bundles on $\p$:
$$\mathcal{H}om_{O_{C}}(E,O_{C})\simeq \mathcal{H}om_{O_{\p}}(E,O_{\p})\otimes O(-n)$$
\end{lemma}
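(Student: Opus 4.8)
The plan is to deduce this from relative duality for the finite flat morphism $\pi\colon C\to\p$ (the restriction of $q\colon S\to\p$ to the spectral curve), once we identify the relative dualizing sheaf $\omega_{C/\p}=\pi^{!}O_{\p}$. By Proposition~\ref{spectral data}(2), $\pi$ is finite, flat of degree $2$, with $\pi_{*}O_{C}\simeq O\oplus\li^{-1}$ locally free; hence $\pi^{!}O_{\p}\simeq\omega_{C/\p}=\mathcal{H}om_{O_{\p}}(\pi_{*}O_{C},O_{\p})$ is a coherent sheaf in degree $0$, and for every coherent sheaf $F$ on $C$ one has the duality isomorphism
$$\pi_{*}\RH_{O_{C}}(F,\omega_{C/\p})\simeq\RH_{O_{\p}}(\pi_{*}F,O_{\p}).$$
Applying this with $F=E$ and using that $\pi_{*}E$ is the rank-two underlying bundle (so the right-hand side is $\mathcal{H}om_{O_{\p}}(E,O_{\p})$ concentrated in degree $0$), the left-hand side is likewise concentrated in degree $0$ and equals $\pi_{*}\mathcal{H}om_{O_{C}}(E,\omega_{C/\p})$.

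The key step is to show $\omega_{C/\p}\simeq\pi^{*}O_{\p}(n)$. For this I would use adjunction twice. By Proposition~\ref{spectral data}(1), $C\subseteq S$ is the zero locus of a section of $q^{*}(\li)^{2}$, hence an effective Cartier divisor with $O_{S}(C)\simeq q^{*}O_{\p}(2n)$, so adjunction on $S$ gives $\omega_{C/S}\simeq O_{S}(C)|_{C}\simeq\pi^{*}O_{\p}(2n)$. Since $S=\textrm{Spec}(\textrm{Sym}(\li^{-1}))$ is the total space of $\li$ over $\p$, we have $\Omega_{S/\p}\simeq q^{*}\li^{-1}$, whence $\omega_{S/\p}\simeq q^{*}O_{\p}(-n)$. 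Combining, $\omega_{C/\p}\simeq\omega_{C/S}\otimes\pi^{*}\omega_{S/\p}\simeq\pi^{*}O_{\p}(2n)\otimes\pi^{*}O_{\p}(-n)\simeq\pi^{*}O_{\p}(n)$. (A quick consistency check: $\pi_{*}\pi^{*}O_{\p}(n)\simeq O_{\p}(n)\oplus O_{\p}$, which matches $\mathcal{H}om_{O_{\p}}(\pi_{*}O_{C},O_{\p})\simeq O_{\p}\oplus O_{\p}(n)$.)

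Finally, since $\omega_{C/\p}$ is a line bundle on $C$ that is the pullback of $O_{\p}(n)$, the projection formula gives
$$\pi_{*}\mathcal{H}om_{O_{C}}(E,\omega_{C/\p})\simeq\pi_{*}\bigl(\mathcal{H}om_{O_{C}}(E,O_{C})\otimes\pi^{*}O_{\p}(n)\bigr)\simeq\pi_{*}\mathcal{H}om_{O_{C}}(E,O_{C})\otimes O_{\p}(n),$$
and comparing with the duality isomorphism above yields $\pi_{*}\mathcal{H}om_{O_{C}}(E,O_{C})\simeq\mathcal{H}om_{O_{\p}}(E,O_{\p})\otimes O_{\p}(-n)$, which is the claim (all $\mathcal{H}om_{O_{C}}$-sheaves being read as $O_{\p}$-modules via $\pi_{*}$). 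There is not much of an obstacle here: the only point requiring a little care is that $C$ may be non-reduced, but the duality argument only uses that $\pi$ is finite flat onto the smooth curve $\p$ and is insensitive to this. An alternative, more elementary proof computing $\mathcal{H}om_{O_{C}}(E,O_{C})$ directly from the splitting $O_{C}\simeq O\oplus\li^{-1}$ would have to treat the non-reduced case separately, so I would prefer the duality route above.
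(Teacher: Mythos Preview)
Your proof is correct and follows essentially the same approach as the paper: both use Grothendieck duality for the finite flat morphism $\pi\colon C\to\p$ together with the identification $\omega_{C/\p}\simeq\pi^{*}O(n)$, which the paper obtains (more tersely) from exactly the same adjunction input, namely that $S$ is the total space of $O(n)$ and $C$ is cut out by a section of $q^{*}O(2n)$. Your version simply spells out the adjunction and projection-formula steps in more detail and adds a consistency check.
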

\begin{proof}
Let $F$ be the coherent sheaf on $C$ corresponding to $(E,\phi)$. Consider:
$$\xymatrix{
C \ar[rd]^{\pi} \ar[rr]^{i} & & S \ar[ld]^{q}\\
 & \p\\
}
$$
Since $S$ is the total space of the line bundle $O(n)$, and $C$ is the vanishing locus of a section of the line bundle $q^{*}(O(2n))$ on $S$, we have that the relative dualizing complex of $\pi$ can be identified with $\pi^{*}(O(n))$. Hence by Grothendieck duality, we have:
$$\mathcal{H}om(\pi_{*}(F),O_{\p})\simeq \mathcal{H}om_{O_{C}}(F,\pi^{*}(O(n)))\simeq\mathcal{H}om_{O_{C}}(F,O_{C})\otimes O(n)$$
The claim now follows from this.
\end{proof}

In order to establish part $(2)$ of Propositon~\ref{properties of Hitchin fibration}, we need the following lemma. It is proved by using part $(4)$ of the Proposition~\ref{spectral data}:
\begin{lemma}
$\widetilde{\hig}\xrightarrow{h} H$ is a relative complete intersection morphism with fiber dimension $n-2$
\end{lemma}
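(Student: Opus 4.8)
The plan is to recognize the fibers of $h$ over $\widetilde{\hig}$ as compactified Jacobians of spectral curves and then invoke the criterion of Proposition~\ref{criterion for relative complete intersection}. By part $(4)$ of Proposition~\ref{spectral data}, for each $a\in H$ the fiber $(h|_{\widetilde{\hig}})^{-1}(a)$ is the stack of maximal Cohen–Macaulay sheaves on the spectral curve $C_{a}$ that are generically line bundles, i.e. the open substack $J'_{C_{a}}$ of the compactified Jacobian of the planar curve $C_{a}$. So it is enough to establish two things: (i) $\widetilde{\hig}$ is a local complete intersection over $k$, each connected component being pure of dimension $4n$; and (ii) every fiber of $h|_{\widetilde{\hig}}$ has dimension $\le n-2$. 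Granting these and using that $H$ is smooth of dimension $3n+2$ (Proposition~\ref{properties of Hitchin fibration}$(1)$), Proposition~\ref{criterion for relative complete intersection}, applied after passing to a smooth atlas of $\widetilde{\hig}$, shows that $h|_{\widetilde{\hig}}$ is a relative complete intersection morphism; since $4n-(3n+2)=n-2$, flatness then forces every fiber to have dimension exactly $n-2$.

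The numerology behind $(ii)$ is immediate: by Proposition~\ref{spectral data}$(2)$ we have $O_{C_{a}}\simeq O\oplus\li^{-1}$ as $O_{\p}$-module, so $\chi(O_{C_{a}})=\chi(O_{\p})+\chi(O(-n))=2-n$ and hence $C_{a}$ has arithmetic genus $n-1$; the compactified Jacobian of $C_{a}$ has dimension $p_{a}(C_{a})=n-1$ as an algebraic space, hence $n-2$ as a stack (the scalar automorphisms contribute a single $\mathbb{G}_{m}$). The point of $(i)$ and of making this bound uniform in $a$ — in particular over the non-reduced spectral curves of Proposition~\ref{spectral data}$(6)$ — is that estimating $\mathrm{Ext}^{1}$ directly on those curves is awkward; I would instead obtain $(i)$ and $(ii)$ together through Hilbert schemes of points, which is precisely where Proposition~\ref{Hilbert scheme of planar curves} is available.

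Concretely, fix a connected component of $\widetilde{\hig}$ and, for $d\gg 0$, consider the Abel–Jacobi morphism $\textrm{Hilb}^{d}_{\mathcal{C}|H}\xrightarrow{\alpha}\widetilde{\hig}$, $D\mapsto \check{I}_{D}$, which is smooth on the open subscheme $U_{d}$ cut out by $H^{1}(\check{I}_{D})=0$ (as recalled in \S1.3) and which, for $d$ large, restricts to a smooth surjection from $U_{d}$ onto the chosen component; surjectivity uses reflexivity of rank one torsion-free sheaves on the Gorenstein curve $C_{a}$, namely $F\simeq (F^{\vee})^{\vee}$, so that after twisting into the right range $F^{\vee}$ embeds into $O_{C_{a}}$ with finite-length cokernel and $F\simeq\check{I}_{D}$. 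By Proposition~\ref{Hilbert scheme of planar curves}, $\textrm{Hilb}^{d}_{\mathcal{C}|H}$ is a family of codimension-$d$ local complete intersection subschemes of the smooth scheme $\HS^{d}\times H$, hence a local complete intersection over $k$, pure of dimension $(2d+3n+2)-d=d+3n+2$, and flat over $H$ with fibers purely of dimension $d$; the same holds for the open $U_{d}$. Therefore the chosen component of $\widetilde{\hig}$ is, smooth-locally, a smooth quotient of the pure-dimensional lci scheme $U_{d}$, so it is a local complete intersection over $k$ (using Proposition~\ref{properties of lci}); and comparing dimensions through the smooth surjection $U_{d}\to\widetilde{\hig}$ (remembering the $\mathbb{G}_{m}$ in the stack) gives $\dim\widetilde{\hig}=4n$ and relative dimension $d-n+2$ for $\alpha$, whence the fiber of $h|_{\widetilde{\hig}}$ over $a$, being the smooth image of the $d$-dimensional fiber of $U_{d}\to H$, has dimension $d-(d-n+2)=n-2$. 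This gives $(i)$ and $(ii)$, and the lemma follows as explained above.

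The main obstacle is $(i)$ — the local complete intersection property, equivalently the expected-dimension bound — uniformly over the whole Hitchin base, including the locus of non-reduced spectral curves where the deformation theory of torsion-free sheaves is not well-behaved. The device that resolves it is to factor the analysis through $\textrm{Hilb}^{d}_{\mathcal{C}|H}$, whose lci property over all of $H$ (nonreduced fibers included) is guaranteed by Proposition~\ref{Hilbert scheme of planar curves}; the only remaining care is to choose $d$ large enough, uniformly over the finite-type base $H$, that $U_{d}$ covers the given component of $\widetilde{\hig}$ — which follows from a boundedness argument for the family of spectral curves — and to keep track of the stacky $\mathbb{G}_{m}$ in the dimension bookkeeping.
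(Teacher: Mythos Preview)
Your proposal is correct and follows essentially the same route as the paper: cover each component of $\widetilde{\hig}$ by smooth Abel--Jacobi charts $U_{d}\subset\textrm{Hilb}^{d}_{\mathcal{C}|H}$, invoke Proposition~\ref{Hilbert scheme of planar curves} for the lci property and fiber dimension of the Hilbert scheme over $H$, and transfer this to $\widetilde{\hig}\to H$ via the smoothness of $\alpha$ and the criterion of Proposition~\ref{criterion for relative complete intersection}. The only cosmetic difference is in the surjectivity step: the paper twists $E$ until it is globally generated and uses generic regularity of the Higgs field to choose a section with $O_{C}\xrightarrow{s}E\otimes L^{m}$ injective, whereas you phrase the same argument dually via reflexivity $F\simeq(F^{\vee})^{\vee}$ and an embedding of $F^{\vee}$ as an ideal sheaf---your embedding $F^{\vee}\hookrightarrow O_{C}$ is precisely a section of $F$ that generates at the generic point, so the two are equivalent.
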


\begin{proof}
Let $\mathcal{C}$ be the universal spectral curve over $H$. Also let $L$ the line bundle on $\mathcal{C}$ which is the pullback of $O(1)$ from $\p$. So $L$ is relatively ample. We have the Abel-Jacobian map defined by:
$$\textrm{Hilb}^{N}_{\mathcal{C}|H}\xrightarrow{\alpha}\widetilde{\hig}^{d} \qquad D\rightarrow \check{I}_{D}\otimes L^{-m}$$
where $N=d+2(m-1)+n$. Let $U_{N}$ be the open subscheme of $\textrm{Hilb}^{N}_{\mathcal{C}|H}$ determined by the condition $H^{1}(\check{I}_{D})=0$. Then it is not hard to check that the restriction of $\alpha$ to $U_{N}$ is smooth of relative dimension $N+2-n$. Moreover, if $(E,\phi)\in\widetilde{\hig}^{d}$, then view $E$ as a coherent sheaf on the corresponding spectral curve $C$, we can find $m>>0$ such that $H^{1}(E\otimes L^{m})=0$, and $E\otimes L^{m}$ is globally generated. Now since the Higgs field is generically regular, we can choose a global section $s$ such that $O_{C}\xrightarrow{s} E\otimes L^{m}$ is injective. So setting $I=\mathcal{H}om_{O_{C}}(E\otimes L^{m},O_{C})$, $s$ induces
$$0\rightarrow I\rightarrow O_{C}\rightarrow O_{D}\rightarrow 0$$
Hence we have
$$\alpha(D)=(E,\phi)$$
This implies that any $(E,\phi)\in\widetilde{\hig}$ is in the image of $U_{N}$ for some $N>>0$. Now the claim follows from Proposition~\ref{Hilbert scheme of planar curves} and the fact that $\alpha$ is smooth of relative dimension $N+2-n$.

\end{proof}

Now we can prove Proposition~\ref{properties of Hitchin fibration}
\begin{proof}
Part $(1)$ is immediate from the definition of $H$. For part $(2)$, it is known that under our assumptions $\hig$ is a locally complete intersection stack of pure dimension $4n$ ($[5]$ Proposition $3.2$). Hence we only need to show the fibers of $h$ has dimension less than or equals to $n-2$ by Proposition~\ref{criterion for relative complete intersection}. The previous lemma implies that we only need to study the complement of $\widetilde{\hig}$ in each fiber. Also by part $(5)$ of Proposition~\ref{spectral data}, we only need to look at nonreduced spectral curves. By part $(6)$ of Proposition~\ref{spectral data}, the complement of $\widetilde{\hig}$ can be identified with the stack of rank $2$ vector bundles on $\p$, hence it has dimension less than $n-2$, this finishes the proof.

\end{proof}

\subsection{Higgs bundles and the Hilbert scheme}


In this subsection we are going to define certain important objects mentioned in subsection~\ref{outline} and study their geometric properties. The main result is Proposition~\ref{resolve the rational map}, which says that $\hig''$ resolves the rational map:
$$\hig'\dashrightarrow \textrm{Hilb}^{n}_{\mathcal{C}|H}$$
(See the notations in Proposition~\ref{resolve the rational map}). This property is central to the construction of the Poincar\'e sheaf (See the diagram in Theorem~\ref{main theorem}). First let us define all the relevant objects.
\begin{definition}
Let $\hig'$ be the moduli stack classifying the data $(E,\phi,s)$ where $(E,\phi)$ is a rank $2$ Higgs bundle on $\p$ with value in $\li$ such that $E\simeq O\oplus O$, $s$ is a nonzero global section of $E$.
\end{definition}
Notice that in this case $O_{\p}$ is a subbundle of $E$ via the section $s$. There is a natural projection $\hig'\rightarrow \hig^{(0)}$ (Recall our notation in Definition~\ref{O plus O}) given by:
$$(E,\phi,s)\rightarrow (E,\phi)$$

The following proposition is obvious:
\begin{Proposition}\label{propositions of hig'}
\begin{enumerate1}
\item $\hig'$ is smooth of dimension $4n+2$
\item The natural morphism $\hig'\rightarrow\hig^{(0)}$ is smooth with fibers isomorphic to $\mathbb{A}^{2}\backslash \{0\}$.
\end{enumerate1}
\end{Proposition}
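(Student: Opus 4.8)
My plan is to deduce both statements from the projection $\hig' \to \hig^{(0)}$ together with what we already know about $\hig^{(0)}$. First I would recall that $\hig^{(0)}$ is an open substack of $\hig_{ss}$ (Definition~\ref{O plus O}), hence smooth, and I claim its dimension is $4n$. One way to see this: $\hig^{(0)}$ classifies pairs $(E,\phi)$ with $E \simeq O\oplus O$, so it is the quotient stack $[\,\mathrm{Hom}(O\oplus O, (O\oplus O)\otimes\li)\, /\, \mathrm{Aut}(O\oplus O)\,]$. Here $\mathrm{Hom}(O\oplus O,(O\oplus O)\otimes\li) = H^0(\p,\li)^{\oplus 4}$ has dimension $4(n+1)$, and $\mathrm{Aut}(O\oplus O) = GL_2(k)$ has dimension $4$, so $\dim \hig^{(0)} = 4(n+1) - 4 = 4n$, consistent with $\hig_{ss}$ being smooth of dimension $4n$ (Proposition~\ref{smoothness of semistable higgs bundles}) and with $\hig$ being pure of dimension $4n$ as cited from $[5]$.

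Next I would analyze the fibers of $\hig' \to \hig^{(0)}$. Over a point $(E,\phi)$ with $E\simeq O\oplus O$, the fiber parametrizes nonzero global sections $s \in H^0(\p, E) = H^0(\p, O\oplus O) = k^2$, with no further constraint (the Higgs field plays no role in the definition of $s$), so the fiber is $\mathbb{A}^2 \setminus \{0\}$. To make this a genuine morphism of stacks rather than just a fiberwise statement, I would note that on $\p \times \hig^{(0)}$ we have the universal bundle $\mathcal{E}$ and its pushforward $p_*\mathcal{E}$ to $\hig^{(0)}$, which is a rank $2$ vector bundle (since $h^0(E) = 2$ and $h^1(E)=0$ are constant, the pushforward is locally free of rank $2$ and commutes with base change). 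Then $\hig'$ is the complement of the zero section in the total space of $p_*\mathcal{E}$; the total space of a rank $2$ bundle over a smooth stack of dimension $4n$ is smooth of dimension $4n+2$, and removing the zero section (a closed substack of positive codimension) preserves smoothness. This gives part $(1)$, and the fiber description together with the fact that the total-space-minus-zero-section projection is smooth gives part $(2)$.

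I do not expect a serious obstacle here — the proposition is labelled ``obvious'' in the text. The only point requiring a little care is the claim that $p_*\mathcal{E}$ is locally free of rank $2$ with formation commuting with base change; this follows from cohomology-and-base-change since $H^0(\p, O\oplus O)$ and $H^1(\p, O\oplus O)$ are constant (equal to $2$ and $0$) as $(E,\phi)$ ranges over $\hig^{(0)}$, where $E$ is always $O\oplus O$. Once that is in hand, everything else is formal: smoothness and dimension of total spaces of vector bundles, and the fact that $\mathbb{A}^2\setminus\{0\}$ is what remains of a rank $2$ affine space after deleting the origin. Alternatively, one can simply present $\hig'$ directly as a quotient stack $[\,(H^0(\li)^{\oplus 4} \times (k^2\setminus\{0\}))\, /\, GL_2(k)\,]$ and read off both statements, which may be the cleanest route and the one I would ultimately write up.
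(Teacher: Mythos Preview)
Your proposal is correct; the paper itself gives no proof beyond the sentence ``The following proposition is obvious,'' so your argument is simply a careful spelling-out of that obviousness. Your identification of $\hig'$ with the complement of the zero section in the rank~$2$ bundle $p_{*}\mathcal{E}$ over $\hig^{(0)}$ (or, equivalently, the quotient-stack presentation) is exactly the natural way to see both claims, and the base-change point you flag is the only step requiring any justification.
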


There is a more explicit model for $\hig'$. Namely, consider the scheme $A$ defined by
$$A=\left(\begin{array}{ll}
x &y\\
z &w\\
\end{array}\right)
$$
where
$$x, y, z, w\in H^{0}(\p,O(n))$$
On $\p\times A$ we have $O\oplus O\xrightarrow{\phi} \li\oplus\li$ where $\phi$ is given by:
$$\phi=\left(\begin{array}{ll}
x &y\\
z &w\\
\end{array}\right)
$$
Also we have a nonvanishing global section $O\rightarrow O\oplus O$ given by $1\rightarrow (1,0)$. Hence we get a morphism: $A\rightarrow\hig'$.

The following lemma is immediate from the definition:
\begin{lemma}
We have a natural smooth morphism $A\rightarrow \hig'$.
\end{lemma}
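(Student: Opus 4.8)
The plan is to present both $\hig'$ and the morphism $A\to\hig'$ as quotient stacks and then appeal to the fact that the projection from a scheme to its quotient by a smooth affine group scheme is smooth. First I would record the standard groupoid presentation of $\hig^{(0)}$. For a test scheme $T$, an object of $\hig^{(0)}$ over $T$ is a rank $2$ bundle $E$ on $\p\times T$ that is fiberwise $O\oplus O$, together with $\phi\colon E\to E\otimes\text{pr}_{1}^{*}O(n)$. Cohomology and base change shows that $V\coloneqq p_{*}E$ is a rank $2$ bundle on $T$ whose formation commutes with base change and that $p^{*}V\xrightarrow{\sim}E$; by the projection formula, $\phi$ is then the same as $\psi\in\Gamma(T,\mathcal{E}nd(V)\otimes H^{0}(\p,O(n)))$. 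Hence $\hig^{(0)}\simeq[A/GL_{2}]$, where $GL_{2}=\text{Aut}(O\oplus O)$ acts on $A$ (the space of $2\times 2$ matrices with entries in $H^{0}(\p,O(n))$) by conjugation.

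Next, the same analysis applied to $\hig'$ shows the following: a nonzero global section $s$ of $E=p^{*}V$ is the same as a nowhere-vanishing section $\sigma\colon O_{T}\to V$, i.e. a subbundle $O_{T}\hookrightarrow V$ — indeed a nonzero section of $O\oplus O$ on $\p$ is a nonzero constant vector, hence nonvanishing on every fiber. Therefore $\hig'\simeq[(A\times(\mathbb{A}^{2}\setminus\{0\}))/GL_{2}]$, with $GL_{2}$ acting by conjugation on $A$ and by the standard representation on $\mathbb{A}^{2}\setminus\{0\}$, and the projection $\hig'\to\hig^{(0)}$ becomes the obvious forgetful map (compare Proposition~\ref{propositions of hig'}). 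Under this identification, the morphism $A\to\hig'$ of the lemma — built from the family $(O\oplus O,\phi,(1,0))$ on $\p\times A$ — is the composite $A\xrightarrow{\ (\text{id},\,(1,0))\ }A\times(\mathbb{A}^{2}\setminus\{0\})\to[(A\times(\mathbb{A}^{2}\setminus\{0\}))/GL_{2}]=\hig'$.

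Finally I would use that $GL_{2}$ acts transitively on $\mathbb{A}^{2}\setminus\{0\}$ with stabilizer of $(1,0)$ the smooth, two-dimensional subgroup $B=\left\{\left(\begin{smallmatrix}1 & b\\ 0 & d\end{smallmatrix}\right):d\in\mathbb{G}_{m},\ b\in\mathbb{G}_{a}\right\}\simeq\mathbb{G}_{a}\rtimes\mathbb{G}_{m}$, so that $\mathbb{A}^{2}\setminus\{0\}\simeq GL_{2}/B$ as a Zariski-locally trivial $B$-bundle. This gives $[(A\times(\mathbb{A}^{2}\setminus\{0\}))/GL_{2}]\simeq[A/B]$, under which the composite above is precisely the canonical atlas $A\to[A/B]$. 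That atlas is a $B$-torsor, and $B$ is a smooth affine group; hence $A\to\hig'$ is smooth (and surjective of relative dimension $2$, consistent with $\dim A=4n+4$ and $\dim\hig'=4n+2$). I do not expect a genuine obstacle: the statement is essentially a bookkeeping matter, and the only point deserving care is the identification $\hig'\simeq[A/B]$ — that is, checking that every object of $\hig'$ is fppf-locally of the displayed form (which follows because rank $2$ bundles that are fiberwise $O\oplus O$ trivialize after a cover of $T$, and $GL_{2}$ acts transitively on $\mathbb{A}^{2}\setminus\{0\}$) and that the explicit family on $\p\times A$ represents the tautological atlas.
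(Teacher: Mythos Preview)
Your argument is correct. The paper itself does not give a proof at all: it simply asserts that the lemma is ``immediate from the definition'' and moves on. Your quotient-stack presentation $\hig'\simeq[(A\times(\mathbb{A}^{2}\setminus\{0\}))/GL_{2}]\simeq[A/B]$, with $B$ the stabilizer of $(1,0)$, is exactly the natural way to make that assertion precise, and the conclusion that the atlas $A\to[A/B]$ is a $B$-torsor (hence smooth of relative dimension $2$) is the right endpoint. The only remark worth adding is that the paper's phrase ``$s$ is a nonzero global section'' is to be read fiberwise over the base (as you correctly infer from the sentence immediately following the definition, which says $O_{\p}$ is a subbundle of $E$ via $s$); once that is understood, your cohomology-and-base-change reduction goes through without issue. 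So there is no substantive difference in approach---you have simply written out what the paper leaves implicit.
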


Next let us consider:
$$\xymatrix{
\p\times\hig' \ar[d]^{f}\\
\hig'\\
}
$$

By the definition of $\hig'$, we have a bundle map: $O\xrightarrow{s} E$ on $\p\times\hig'$, denote the quotient bundle $E/O$ by $N$. We have the following morphisms of coherent sheaves on $\p\times\hig'$:
$$O\xrightarrow{s} E\xrightarrow{\phi} E\otimes\li\rightarrow N\otimes\li$$
Hence we get:
$$O\rightarrow f_{*}(N\otimes\li)$$
By the definition of $\hig'$, $f_{*}(N\otimes\li)$ is a vector bundle of rank $n+1$ on $\hig'$.
\begin{lemma}\label{definition of Z}
Let $\mathcal{Z}$ be the vanishing locus of the morphism $O\rightarrow f_{*}(N\otimes\li)$ on $\hig'$. Then $\mathcal{Z}$ is locally a complete intersection of codimension $n+1$ in $\hig'$ and $\mathcal{Z}$ is smooth.
\end{lemma}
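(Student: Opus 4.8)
The plan is to reduce everything to an explicit computation on the smooth atlas $A\to\hig'$ of the preceding lemma. Being a local complete intersection of a prescribed codimension, and being smooth, are both local in the smooth topology, so it suffices to show that the preimage $\mathcal{Z}_{A}$ of $\mathcal{Z}$ under $A\to\hig'$ is smooth and is cut out in $A$ by a regular sequence of length $n+1$. (Here $A\to\hig'$ is a smooth cover: it is smooth by the lemma, and surjective since after choosing a trivialization $E\simeq O\oplus O$ any section $s$ can be moved to $(1,0)$ by a change of basis; note $\dim A=4(n+1)$ and $\dim\hig'=4n+2$ by Proposition~\ref{propositions of hig'}, so $A\to\hig'$ has relative dimension $2$.)

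Next I would make the defining morphism explicit on $A$. On $\p\times A$ the universal data is $\big(O\oplus O,\ \phi=\left(\begin{smallmatrix} x & y\\ z & w\end{smallmatrix}\right),\ s=(1,0)\big)$, where $x,y,z,w$ are the four tautological sections of $\li=O(n)$ determined by the coordinates of $A\cong H^{0}(\p,O(n))^{\oplus 4}$. Since $s(O_{\p})$ is the first summand of $E$, the quotient $N=E/s(O_{\p})$ is canonically the second summand, so $N\otimes\li\simeq O(n)$ on $\p\times A$ and $f_{*}(N\otimes\li)|_{A}\simeq H^{0}(\p,O(n))\otimes O_{A}$. The composite $O\xrightarrow{s}E\xrightarrow{\phi}E\otimes\li\to N\otimes\li$ then sends $1\mapsto(1,0)\mapsto(x,z)\mapsto z$; hence the morphism $O_{A}\to f_{*}(N\otimes\li)|_{A}$ is ``multiplication by $z$'', and in a fixed basis of $H^{0}(\p,O(n))$ its $n+1$ components are precisely the $n+1$ coordinate functions of $A$ forming the ``$z$-block''.

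It follows that $\mathcal{Z}_{A}$ is the linear coordinate subspace of $A\cong\mathbb{A}^{4(n+1)}$ on which these $n+1$ coordinates vanish: an affine space of dimension $3n+3$, manifestly smooth, and cut out by the regular sequence of $n+1$ linear forms. Thus $\mathcal{Z}_{A}$ is a complete intersection of codimension $n+1$ in $A$, and descending along the smooth cover $A\to\hig'$ yields that $\mathcal{Z}$ is a local complete intersection of codimension $n+1$ in $\hig'$ and is smooth.

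I do not expect any real obstacle here: the only point needing care is the bookkeeping in the second step — verifying that the composite $O\to E\to E\otimes\li\to N\otimes\li$ is literally the section $z$ in the explicit model, and that its $n+1$ coefficients are independent coordinate functions on $A$ (so that they form a regular sequence). Conceptually, $\mathcal{Z}$ is the locus where the line $s(O_{\p})\subseteq E$ is $\phi$-invariant; this interpretation is not needed for the argument but explains why $z=0$ is the correct equation.
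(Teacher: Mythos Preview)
Your proposal is correct and follows exactly the paper's approach: reduce to the smooth atlas $A$, verify that the pullback $\mathcal{Z}_A$ is the coordinate subspace $\{z=0\}$, and conclude. The paper's proof is a terse two-line version of what you wrote; your explicit computation of the composite $1\mapsto(1,0)\mapsto(x,z)\mapsto z$ is precisely the ``easy check'' the paper alludes to.
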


\begin{proof}
It is enough to prove the statement on $A$. It is easy to check that the pullback of $\mathcal{Z}$ to $A$ is the closed subscheme given by $z=0$. So the claim follows easily from this.

\end{proof}

If we view $E$ as a coherent sheaf on the spectral curve $C$, then the section $s$ of $E$ can also be viewed as a global section of $E$ as coherent sheaf on $C$, hence $s$ induces the following morphism on $\p\times\hig'$:
$$O\oplus\li^{-1}\simeq O_{C}\xrightarrow{s} E$$
Also if we take the dual of this, we get:
$$\mathcal{H}om_{O_{C}}(E,O_{C})\rightarrow O_{C}\rightarrow O_{D'}\rightarrow 0$$
Observe that if we pullback this morphism to $\p\times A$, then the matrix representation of $s$ is given by:
$$s=\left(\begin{array}{ll}
1 &x\\
0 &z\\
\end{array}\right)
$$
Hence $s$ is injective on the complement of $\mathcal{Z}$, so $D'$ is a family of subscheme of length $n$ of $C$ over $\hig'\backslash\mathcal{Z}$. Hence we have the following:
\begin{lemma}\label{rational map to HS}
There are natural morphisms
$$\hig'\backslash\mathcal{Z}\rightarrow \textrm{Hilb}^{n}_{\mathcal{C}|H}\rightarrow\HS^{n}\times H\rightarrow\HS^{n}$$
\end{lemma}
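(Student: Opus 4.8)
The plan is to note that only the first arrow carries content. The third is the projection $\HS^{n}\times H\to\HS^{n}$, and the second is the natural closed immersion $\textrm{Hilb}^{n}_{\mathcal{C}|H}\hookrightarrow\HS^{n}\times H$ of the relative Hilbert scheme of the spectral curve $\mathcal{C}\subseteq S\times H$ inside the relative Hilbert scheme of $S\times H$ over $H$, which is exactly Proposition~\ref{Hilbert scheme of planar curves}. For the first arrow it suffices, by the universal property of $\textrm{Hilb}^{n}_{\mathcal{C}|H}$, to show that the subscheme $D'\subseteq\mathcal{C}\times_{H}(\hig'\backslash\mathcal{Z})$ cut out by $\mathcal{H}om_{O_{C}}(E,O_{C})\to O_{C}\to O_{D'}\to 0$ is flat over $\hig'\backslash\mathcal{Z}$ with fibres finite of length $n$; here $\hig'\backslash\mathcal{Z}$ is regarded over $H$ through $\hig'\to\hig^{(0)}\xrightarrow{h}H$, and $\mathcal{C}$ over $\hig'$ denotes the pullback of the universal spectral curve.

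First I would check that $s\colon O_{C}\to E$ is fibrewise injective over $\hig'\backslash\mathcal{Z}$, which can be tested on the smooth chart $A\to\hig'$. There $\mathcal{Z}$ pulls back to the divisor $\{z=0\}$ (Lemma~\ref{definition of Z}) and $s$ is the map of $O_{\p}$-modules $O\oplus\li^{-1}\simeq O_{C}\to O\oplus O\simeq E$ with matrix $\left(\begin{smallmatrix}1 & x\\ 0 & z\end{smallmatrix}\right)$, whose "determinant" is the section $z\in H^{0}(\p,\li)$. Over a point $a\in A\backslash\{z=0\}$ this section is nonzero, hence $s_{a}$ is injective at the generic point(s) of $C_{a}$, hence injective (its source $O_{C_{a}}$ has positive depth); its cokernel $Q_{a}$ is finite and supported on the preimage in $C_{a}$ of the divisor $\{z=0\}\subseteq\p$. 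In particular $Q=\mathrm{coker}(s)$ is flat over $\hig'\backslash\mathcal{Z}$ by the local criterion for flatness.

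For the length I would push forward to $\p$: by part $(2)$ of Proposition~\ref{spectral data}, $O_{C}\simeq O\oplus\li^{-1}$ and $E\simeq O\oplus O$, so fibrewise $\chi(Q)=\chi(E)-\chi(O_{C})=2-(2-n)=n$. Dualising $0\to O_{C}\to E\to Q\to 0$ over $O_{C}$ and using that $E$ is maximal Cohen--Macaulay of rank one on the Gorenstein curve $C$ (so $\mathcal{E}xt^{1}_{O_{C}}(E,O_{C})=0$, while $\mathcal{H}om_{O_{C}}(Q,O_{C})=0$ since $O_{C}$ has positive depth) identifies $O_{D'}$ with $\mathcal{E}xt^{1}_{O_{C}}(Q,O_{C})$, whose fibres have the same length $n$ as those of $Q$; Lemma~\ref{description of the dual}, which presents $\mathcal{H}om_{O_{C}}(E,O_{C})$ as a vector bundle, lets this be run in families, so $O_{D'}$ is flat over $\hig'\backslash\mathcal{Z}$ of relative length $n$. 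Equivalently, $D'$ is finite over the smooth, hence reduced, stack $\hig'\backslash\mathcal{Z}$ with constant fibre length $n$, and a finite module of constant rank over a reduced base is locally free. The universal property of $\textrm{Hilb}^{n}_{\mathcal{C}|H}$ then yields the desired morphism over $H$, and composing with the two formal maps above completes the proof.

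I do not expect a serious obstacle: fibrewise injectivity of $s$ away from $\mathcal{Z}$ and the length count are already implicit in the discussion preceding the statement, and the remainder is the universal property of the Hilbert scheme together with a flatness check. The one point deserving care is upgrading "each fibre of $D'$ has length $n$" to "$D'$ is flat over $\hig'\backslash\mathcal{Z}$", which is why I would phrase the length computation relatively (via Lemma~\ref{description of the dual}) rather than pointwise.
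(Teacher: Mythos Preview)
Your proposal is correct and follows the same approach as the paper: the paper's justification (given in the discussion immediately preceding the lemma rather than in a separate proof) amounts to writing the matrix of $s$ on the chart $A$ as $\left(\begin{smallmatrix}1 & x\\ 0 & z\end{smallmatrix}\right)$, observing injectivity away from $\{z=0\}$, and concluding that $D'$ is a family of length-$n$ subschemes of $C$. Your treatment is more careful than the paper's---you explicitly dualize to identify $O_{D'}\simeq\mathcal{E}xt^{1}_{O_{C}}(Q,O_{C})$ and address flatness over the base, points the paper leaves implicit---but the underlying argument is the same.
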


The following Proposition is central to the construction of the Poincar\'e sheaf:
\begin{Proposition}\label{resolve the rational map}
Let $\hig''$ be the blowup of $\hig'$ along the closed substack $\mathcal{Z}$. Then the morphism $\hig'\backslash\mathcal{Z}\rightarrow \textrm{Hilb}^{n}_{\mathcal{C}|H}$ extends to a morphism of stacks over $H$:
$\hig''\rightarrow \textrm{Hilb}^{n}_{\mathcal{C}|H}$. Hence we have the following diagram:
$$\xymatrix{
\hig'' \ar[d]^{\pi} \ar[r] & \HuC \ar[r] & \HS^{n}\\
\hig'
}
$$
\end{Proposition}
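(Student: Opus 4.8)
The plan is to use the moduli interpretation of the blowup (Proposition~\ref{moduli interpretation of bl}) together with the description of the Hilbert scheme of points of planar curves (Proposition~\ref{Hilbert scheme of planar curves}), and to show that the data produced by the universal object on $\hig''$ is exactly the data classified by $\textrm{Hilb}^{n}_{\mathcal{C}|H}$. Recall from the construction preceding Lemma~\ref{definition of Z} that on $\p\times\hig'$ we have the composite $O\xrightarrow{s} E\xrightarrow{\phi} E\otimes\li\to N\otimes\li$, which after pushing forward along $f$ gives a section $O\to f_{*}(N\otimes\li)$ of a rank $n+1$ vector bundle $\mathcal{E}:=f_{*}(N\otimes\li)$ on $\hig'$, whose vanishing locus is $\mathcal{Z}$. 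By Lemma~\ref{definition of Z}, $\mathcal{Z}$ is a regular embedding of codimension $n+1$, so $\hig''=Bl_{\mathcal{Z}}\hig'$ falls under the hypotheses of Proposition~\ref{moduli interpretation of bl}. Hence on $\hig''$ there is a canonical line subbundle $O(E_{\mathcal{Z}})\hookrightarrow \pi^{*}\mathcal{E}$ (with $E_{\mathcal{Z}}$ the exceptional divisor) through which $\pi^{*}(s)$ factors.

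First I would translate this line subbundle into the datum of a length $n$ subscheme of the spectral curve. Pulling the morphisms $O\oplus\li^{-1}\simeq O_C\xrightarrow{s} E$ (viewed on $C$) and its dual $\mathcal{H}om_{O_C}(E,O_C)\to O_C\to O_{D'}\to 0$ back to $\hig''$, the key point is that the map $O\to f_{*}(N\otimes\li)$ encodes precisely the cokernel of $s$ away from $\mathcal{Z}$ — indeed $N\otimes\li$ is the relevant quotient line bundle on $\p$ and $D'$ is cut out inside $C$ by this section. So on $\hig'\setminus\mathcal{Z}$ the subscheme $D'\subseteq \mathcal{C}$ is the vanishing locus of the induced section, and the Hilbert scheme $\textrm{Hilb}^{n}_{\mathcal{C}|H}$ is by Proposition~\ref{Hilbert scheme of planar curves} the vanishing locus of an analogous section $t$ of a vector bundle $\pi_{*}O_{D}(\mathcal{C})$ on $\HS^{n}\times H$. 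The strategy is then to invoke Corollary~\ref{bl resolve nonflatness}: with $Y\to X$ playing the role of the universal spectral curve over (an appropriate smooth chart of) $\hig'$, the global section $s$ of the appropriate line bundle $L$ on $Y$ has vanishing locus a relative Cartier divisor over $\hig'\setminus\mathcal{Z}$, and after pulling back to $Bl_{\mathcal{Z}}\hig' = \hig''$ the section $s$ extends to $O(E_{\mathcal{Z}})\xrightarrow{s'} g'^{*}(L)$ whose vanishing locus is a relative effective Cartier divisor of degree $n$ over all of $\hig''$. This relative Cartier divisor, flat and finite of length $n$ over $\hig''$ and contained in the spectral curve, is exactly a family of points of $\mathcal{C}$ classified by $\textrm{Hilb}^{n}_{\mathcal{C}|H}$, giving the desired morphism $\hig''\to\textrm{Hilb}^{n}_{\mathcal{C}|H}$; over $\hig'\setminus\mathcal{Z}$ it restricts to the map of Lemma~\ref{rational map to HS} since there $E_{\mathcal{Z}}$ is trivial and $s'=s$.

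To make Corollary~\ref{bl resolve nonflatness} applicable I would work on the smooth cover $A\to\hig'$ of Proposition~\ref{propositions of hig'}/the explicit matrix model, where everything is a morphism of schemes with a genuine universal spectral curve; there one checks directly (using the matrix form $s=\left(\begin{smallmatrix}1 & x\\ 0 & z\end{smallmatrix}\right)$ and the pullback of $\mathcal{Z}$ being $z=0$) that the hypotheses of the Corollary hold — $\mathcal{C}\to A$ is proper flat with integral fibers over the reduced locus and, more to the point, the length of the cokernel of $s$ jumps only over the codimension $n+1$ locus $\mathcal{Z}$, so the induced section $t$ of the rank $n+1$ bundle vanishes in codimension exactly $n+1$. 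Since the construction is canonical, it descends from $A$ to $\hig'$, and one checks compatibility with $H$ (both sides are stacks over $H$ via the Hitchin base / spectral curve) along the way. Finally, composing with the forgetful maps $\textrm{Hilb}^{n}_{\mathcal{C}|H}\to\HS^{n}\times H\to\HS^{n}$ of Lemma~\ref{rational map to HS} and Proposition~\ref{Hilbert scheme of planar curves} yields the displayed diagram.

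The main obstacle I anticipate is verifying cleanly that the section $\pi^{*}(s)$ factoring through the line subbundle $O(E_{\mathcal{Z}})$ really produces a \emph{length exactly} $n$, flat, relative subscheme of $\mathcal{C}$ over all of $\hig''$ — i.e. that the extended section $s'$ is fiberwise nonzero (equivalently injective as a map of $O_{C}$-modules on each geometric fiber), so that its vanishing locus is a relative effective Cartier divisor rather than dropping dimension or acquiring embedded components. This is precisely the content of the "nonvanishing over each fiber" hypothesis in Corollary~\ref{bl resolve nonflatness}, and it is guaranteed by the moduli description of the blowup (the quotient $\pi^{*}\mathcal{E}/O(E_{\mathcal{Z}})$ is locally free, so $O(E_{\mathcal{Z}})\to g'^{*}(L)$ stays a subbundle-type injection after restriction to fibers). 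Making sure this argument is carried out at the level of the spectral curve — where $O_C\simeq O\oplus\li^{-1}$ is rank one but possibly non-integral, so "integral fibers" must be replaced by a direct check of injectivity using the matrix model — is where the real work lies; everything else is a matter of assembling Propositions~\ref{moduli interpretation of bl}, \ref{Hilbert scheme of planar curves} and Corollary~\ref{bl resolve nonflatness}.
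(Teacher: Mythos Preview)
Your overall strategy---invoke the moduli description of the blowup and Corollary~\ref{bl resolve nonflatness} to turn the section into a relative effective Cartier divisor on $\hig''$---is exactly the paper's strategy, and the tools you cite are the right ones. The difference is \emph{where} you apply Corollary~\ref{bl resolve nonflatness}.

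You take $Y\to X$ to be the universal spectral curve over (a chart of) $\hig'$. As you yourself note, this runs into the problem that spectral curves need not be integral (they can be nonreduced), so the hypothesis ``geometrically integral fibers'' of Corollary~\ref{bl resolve nonflatness} fails and you are forced into an ad~hoc fiberwise injectivity check on $O_{C}\xrightarrow{s'}E$. This is not impossible, but it is exactly the messy step the paper's argument is designed to avoid.

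The paper instead applies Corollary~\ref{bl resolve nonflatness} to the family $\p\times\hig'\to\hig'$ with the line bundle $L=N\otimes\li$ on $\p$ and the section $O\to N\otimes\li$ coming from $O\xrightarrow{s}E\xrightarrow{\phi}E\otimes\li\to N\otimes\li$. Here the fibers are $\p$, hence integral, so the corollary applies on the nose and yields a relative length-$n$ subscheme $D\subset\p$ over all of $\hig''$. The remaining work is then to show that $D$ actually lands in the spectral curve. This the paper does in two short steps: (i) on $D$ the map $s$ factors through the line subbundle $O\subset E$, so $\phi|_{D}$ produces a section of $\li_{D}$ which embeds $D$ into $S$; (ii) by construction $D$ is a subscheme of the pullback of $D'$, and $D'\subset C$, so $D\subset C$ as subschemes of $S$.

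In short: your proposal is correct in outline but misidentifies the family to which Corollary~\ref{bl resolve nonflatness} should be applied. Switching from the spectral-curve family to the $\p$-family removes the obstacle you anticipated; the price is the extra (easy) argument that the resulting $D\subset\p$ embeds into $C$ via the induced section of $\li_{D}$. Proposition~\ref{Hilbert scheme of planar curves} is not needed here.
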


\begin{proof}
First let us give another description for the morphism $$\hig'\backslash\mathcal{Z}\rightarrow\textrm{Hilb}^{n}_{\mathcal{C}|H}$$
By the definition of $\hig'$, we have:
$$O\xrightarrow{s} E\xrightarrow{\phi} E\otimes\li\rightarrow N\otimes\li$$
on $\p\times\hig'$. Denote the vanishing locus of the resulting morphism $O\rightarrow N\otimes\li$ by $D'$. From its definition, if we restrict $O\xrightarrow{s} E$ to $D'$, it factors uniquely as:
$$\xymatrix{
O_{D'} \ar[r] \ar[d] & O_{D'}\otimes\li \ar[d]\\
E_{D'} \ar[r]^{\phi_{D'}} & E_{D'}\otimes\li_{D'}\\
}
$$
Hence we get a global section of $\li_{D'}$, and we can embed $D'$ into the surface $S$ using this section. So in this way, we get a morphism:
$$\hig'\backslash\mathcal{Z}\rightarrow \HS^{n}$$
which coincide with the morphism defined in Lemma~\ref{rational map to HS}:
$$\hig'\backslash\mathcal{Z}\rightarrow\textrm{Hilb}^{n}_{\mathcal{C}|H}\rightarrow\HS^{n}$$
Next let us consider:
$$\xymatrix{
\p\times\hig'' \ar[r]^{\pi'} \ar[d]^{f'} & \p\times\hig' \ar[d]^{f}\\
\hig'' \ar[r]^{\pi} & \hig'\\
}
$$
By Corollary~\ref{bl resolve nonflatness}, the section:
$$O\rightarrow N\otimes\li$$
extends to:
$$O\rightarrow O(E)\rightarrow \pi^{'*}(N\otimes\li)$$
on $\hig''$ where $E$ is the exceptional divisor, and the vanishing locus of
$$O(E)\rightarrow N\otimes\li$$
on $\p\times\hig''$ defines a family of closed subscheme of length $n$ of $\p$ over $\hig''$. Denote it by $D$. From the construction of $D$, it is clear that $D$ is a subscheme of the pullback of $D'$ to $\p\times\hig''$ and when restricted to $\hig''\backslash E$, $D$ coincide with $D'$. Since $D$ is a subscheme of the pullback of $D'$, the global section of $\li_{D'}$ gives a global section of $\li_{D}$, and we embed $D$ into $S$ using this section. Moreover, since $D$ is a subscheme of $D'$ as subschemes of $S$, and $D'$ is a subscheme of the spectral curve $C$, we see that the morphism $\hig''\rightarrow\HS^{n}\times H$ factors through:
$$\hig''\rightarrow\HuC\rightarrow\HS^{n}\times H$$
So the claim follows from this.

\end{proof}

\begin{corollary}
The image of $\hig''\rightarrow\HS^{n}$ lies in $\HS^{' n}$. More precisely, the image lies in the open subscheme of $\HS$ described in Proposition~\ref{an open subscheme of HS}
\end{corollary}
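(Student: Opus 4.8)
The plan is to read the required factorization directly off the construction used in the proof of Proposition~\ref{resolve the rational map}, so that no new geometric input is needed. Recall that, by Proposition~\ref{an open subscheme of HS}, the open subscheme of $\HS^{n}$ in question is the total space of the vector bundle $\pi_{*}O_{D}(n)$ over $\Pn$, where $D\subseteq\p\times\Pn$ is the universal length $n$ subscheme of $\p$; since $D\to\Pn$ is finite flat, its $T$-points classify a length $n$ subscheme of $\p$ over $T$ together with a section of the associated degree $n$ line bundle on it, the section being used to embed the subscheme into the surface $S$. Thus it is enough to exhibit the composite $\hig''\to\HS^{n}$ as classifying data of exactly this form.

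First I would recall that the proof of Proposition~\ref{resolve the rational map} constructs the universal subscheme over $\hig''$ as the vanishing locus $D$ of the extended section $O(E)\to\pi'^{*}(N\otimes\li)$ on $\p\times\hig''$, and that by Corollary~\ref{bl resolve nonflatness} this $D$ is a family of length $n$ subschemes of $\p$ over $\hig''$; hence it is classified by a morphism $\hig''\to\Pn$. The same argument supplies a global section of $\li_{D}\simeq O_{D}(n)$, obtained from the factorization of $O_{D'}\to E_{D'}$ through the Higgs field, and the morphism $\hig''\to\HS^{n}$ is defined by using exactly this section to embed $D$ into $S$. Comparing this with the description of the map $v$ in Corollary~\ref{Hilbert scheme of P1 and S}, I would conclude that $\hig''\to\HS^{n}$ is precisely the morphism induced by the pair consisting of $\hig''\to\Pn$ together with this section of $O_{D}(n)$, i.e. a morphism into the total space of $\pi_{*}O_{D}(n)$ over $\Pn$. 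Since Proposition~\ref{an open subscheme of HS} already records that this total space is contained in $\HS^{' n}$, the more precise second claim then yields the first.

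I do not expect a genuine obstacle: the whole argument is bookkeeping of objects already built in the previous proposition. The only point deserving a line of care is checking that the section of $O_{D}(n)$ used above agrees, under the canonical identifications, with the tautological section parametrized by the total space of $\pi_{*}O_{D}(n)$; this is immediate from the way $v$ is defined in Corollary~\ref{Hilbert scheme of P1 and S}, where the subscheme of $S$ is recovered from a subscheme of $\p$ and a section of $O(n)$ in exactly the same manner.
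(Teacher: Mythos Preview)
Your proposal is correct and follows essentially the same approach as the paper. The paper's proof is a single sentence observing that, by the construction in Proposition~\ref{resolve the rational map}, the subscheme $D$ arises as a length $n$ subscheme of $\p$ embedded into $S$ via a global section of $\li_{D}$, which is precisely the data classified by the open subscheme of Proposition~\ref{an open subscheme of HS}; your argument simply unpacks this identification more carefully.
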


\begin{proof}
This follows from the description about $D$ in Proposition~\ref{resolve the rational map}. Namely, $D$ comes from the embedding of a closed subscheme $D$ of $\p$ into $S$ via a global section of $\li_{D}$.
\end{proof}

It is also convenient to describe the morphism $\hig''\rightarrow\HuC$ more explicitly in terms of $A$ and its blowups, and this is provided by the following:

\begin{corollary}\label{explicit description of bl resolve rational map}
Let $Z$ be the closed subscheme of $A$ given by $z=0$. Then $\blA$ is a smooth cover of $\hig''$ and there is a morphism of schemes $\blA\xrightarrow{p} \HuC$ which is induced by $\hig''\rightarrow\HuC$. Moreover, $\blA$ and $\hig''$ are smooth, and $\hig''$ is a local complete intersection over $H$
\end{corollary}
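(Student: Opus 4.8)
The plan is to deduce all four assertions by flat base change from the explicit chart $A\to\hig'$ of Lemma~\ref{definition of Z}, together with the structural results already recorded. First I would check that $\blA$ is a smooth cover of $\hig''$. From the proof of Lemma~\ref{definition of Z}, the preimage of $\mathcal{Z}$ under the smooth surjection $A\to\hig'$ is exactly the subscheme $Z=\{z=0\}$, and both $\mathcal{Z}\hookrightarrow\hig'$ and $Z\hookrightarrow A$ are regular embeddings of codimension $n+1$. Since the formation of a blowup commutes with flat base change — the blowup being $\mathrm{Proj}$ of the Rees algebra, and the ideal of $Z$ being the pullback of the ideal of $\mathcal{Z}$ — there is a canonical isomorphism $\blA\cong A\times_{\hig'}\hig''$. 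Hence $\blA\to\hig''$ is the base change of the smooth surjection $A\to\hig'$, so it is smooth and surjective, i.e.\ a smooth cover; and the morphism $p$ is simply the composite of $\blA\to\hig''$ with the morphism $\hig''\to\HuC$ from Proposition~\ref{resolve the rational map}.

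Next, smoothness. The scheme $A$ is an affine space and $Z$ is the linear subspace cut out by the $n+1$ coefficients of the section $z\in H^{0}(\p,\li)$, so $Z$ is smooth and $Z\hookrightarrow A$ is regular; the blowup of a smooth variety along a smooth center is smooth (locally it has the form $\mathbb{A}^{m}\times Bl_{0}\mathbb{A}^{n+1}$), so $\blA$ is smooth over $k$. Smoothness over $k$ descends along the fppf morphism $\blA\to\hig''$, so $\hig''$ is smooth over $k$ as well. Equivalently one may argue directly: $\hig'$ is smooth by Proposition~\ref{propositions of hig'}, $\mathcal{Z}$ is smooth by Lemma~\ref{definition of Z}, and a blowup of a smooth stack along a smooth center is smooth.

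Finally I would show $\hig''$ is a local complete intersection over $H$ by factoring the structure map as $\hig''\to\hig'\to H$ and checking each arrow is lci. By Lemma~\ref{definition of Z}, $\mathcal{Z}$ is the zero locus of a section of the rank $n+1$ bundle $f_{*}(N\otimes\li)$ whose vanishing scheme is a regular embedding, so Proposition~\ref{blowup as regular embedding} realizes $\hig''$ as a regular embedding of codimension $n$ inside the projective bundle $\mathbf{P}(f_{*}(N\otimes\li))$ over $\hig'$; a regular embedding composed with a smooth morphism is lci, so $\hig''\to\hig'$ is lci. On the other hand $\hig'\to\hig^{(0)}$ is smooth by Proposition~\ref{propositions of hig'}, $\hig^{(0)}$ is an open substack of $\hig$, and $\hig\to H$ is a relative complete intersection morphism by part (2) of Proposition~\ref{properties of Hitchin fibration}, so $\hig'\to H$ is lci; composing (part (1) of Proposition~\ref{properties of lci}), $\hig''\to H$ is lci. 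The corollary is essentially bookkeeping, so I do not expect a genuine obstacle: the only points needing a moment's care are the compatibility of the blowup with the base change $A\to\hig'$ and the verification that Proposition~\ref{blowup as regular embedding} applies, and both are immediate from Lemma~\ref{definition of Z}.
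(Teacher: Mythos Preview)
Your proposal is correct and follows essentially the same approach as the paper: both use that the pullback of $\mathcal{Z}$ under the smooth chart $A\to\hig'$ is $Z$ (so $\blA$ is a smooth cover of $\hig''$), that the blowup of a smooth space along a smooth center is smooth, and that the structure morphism to $H$ is lci by composing lci morphisms. The only cosmetic difference is that the paper verifies the lci property via the chain $\blA\to A\to H$ and then descends, whereas you work directly with $\hig''\to\hig'\to H$; both routes invoke Proposition~\ref{blowup as regular embedding} and Proposition~\ref{properties of Hitchin fibration} in the same way.
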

\begin{proof}
The first assertion comes from the fact that $A$ is smooth over $\hig'$, and the pullback of $\mathcal{Z}$ to $A$ is exactly $Z$. The second assertion follows from the fact that $\blA$ is smooth. Also, because $A$ is a local complete intersection over $H$ (Proposition~\ref{properties of Hitchin fibration}), and $\blA$ is a local complete intersection over $A$ (Proposition~\ref{blowup as regular embedding}), we conclude that $\hig''$ is a local complete intersection over $H$.
\end{proof}

Now let us give an explicit description of the morphism $\blA\xrightarrow{p}\textrm{Hilb}^{n}_{\mathcal{C}|H}$. Recall that $A$ is the affine space given by:
$$A=\left(\begin{array}{ll}
x &y\\
z &w\\
\end{array}\right)
$$
where
$$x, y, z, w\in H^{0}(\p,O(n))$$
and the pullback of $\mathcal{Z}$ to $A$ is the subscheme $z=0$, denote it by $Z$. Since $A\rightarrow\hig'$ is smooth, we also have a smooth morphism $\blA\rightarrow\hig''$. Let $A^{*}$ be the open subscheme defined by $z\neq 0$. On $\p\times A$ we have a global section $s$ of $O\oplus O$ given by $1\xrightarrow{s}(1,0)$. The structure sheaf of the spectral curve is given by $O_{C}=O\oplus O(-n)$, and if we view $O\oplus O$ as a sheaf on the spectral curve $C$, then $s$ induces a morphism:
$$O_{C}\simeq O\oplus O(-n)\xrightarrow{s} O\oplus O$$
$$s=\left(\begin{array}{ll}
1&x\\
0&z
\end{array}\right)
$$
So we get an injection $O_{C}\xrightarrow{s} E$. When $z\neq 0$, $s$ defines morphisms:
$$\xymatrix{
A^{*} \ar[rr]^{\alpha} \ar[rd]^{\gamma} &  & \textrm{Hilb}_{\mathcal{C}\mid H}^{n} \ar[ld]\\
  & \textrm{Hilb}_{S}^{n}\\
}
$$
The morphism $\gamma$ can be described more explicitly in the following way. First, on $A^{*}$, $z=0$ defines a finite subscheme of length $n$ on $\p$, and we embed it into the surface $S$ using the section $x\in H^{0}(O(n))$. So if we think of $\Pn$ as the Hilbert scheme of $\p$, then the morphism $\gamma$ can be factored as :
$$A^{*}\rightarrow \Pn\times V\rightarrow \textrm{Hilb}_{S}^{n}$$
$$(x,y,z,w)\rightarrow ([z],x)\rightarrow D$$
where $V$ is the affine space $H^{0}(O(n))$, and $[z]$ is the closed subscheme of $\p$ determined by $z=0$. The map $A^{*}\rightarrow\Pn$ is undefined at $z=0$, but we can resolve it by taking the blowup of $A$ along $Z$, hence we get the following picture:
\begin{equation}\label{blowup}\xymatrix{
  & \blA \ar[ld] \ar[rd]\\
A &   & \textrm{Hilb}_{\mathcal{C}\mid H}^{n} \ar[r] & \textrm{Hilb}_{S}^{n}\\
}
\end{equation}

As a byproduct of the explicit description above, we get the following:
\begin{lemma}\label{factorization to Pn}
The morphism $\blA\rightarrow\HS^{n}$ factors as:
$$\blA\rightarrow\Pn\times A\xrightarrow{v}\HS^{n}$$
Also $\Pn\times A\xrightarrow{v}\HS^{n}$ is smooth.
\end{lemma}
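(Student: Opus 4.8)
The plan is to reduce the statement to the explicit picture already recorded in diagram~\eqref{blowup}. Recall that over the open subscheme $A^{*}\subseteq A$ where $z\neq 0$ the composite $A^{*}\to\HS^{n}$ was described as $(x,y,z,w)\mapsto([z],x)\mapsto D$, i.e. as $A^{*}\to\Pn\times V\xrightarrow{v}\HS^{n}$, where $[z]$ is the length-$n$ divisor cut out by the section $z$ and $x\in H^{0}(O(n))$ is the section used to embed it into $S$. First I would observe that the morphism $A\to V$, $(x,y,z,w)\mapsto x$, is a trivial vector-bundle projection, hence smooth and surjective; combined with Corollary~\ref{Hilbert scheme of P1 and S}(1) this already shows that the composition $\Pn\times A\to\Pn\times V\xrightarrow{v}\HS^{n}$ --- which I continue to denote $v$ --- is smooth, with image in $\HS^{'n}$. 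So it remains to (a) produce a morphism $\blA\to\Pn\times A$ whose restriction to $A^{*}$ is $a\mapsto([z(a)],a)$, and (b) check that composing it with $v$ recovers the morphism $\blA\to\HS^{n}$ of Proposition~\ref{resolve the rational map}.

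For (a), I would realize $\blA$ as a closed subscheme of $\Pn\times A$. By Lemma~\ref{definition of Z} the center $Z\subseteq A$ is the linear subspace $\{z=0\}$; equivalently it is the zero locus of the tautological $V$-valued function $A\to V$, i.e. of a section of the trivial rank-$(n+1)$ bundle $\mathcal{E}=V\otimes O_{A}$, and this is a regular embedding of codimension $n+1$. Proposition~\ref{blowup as regular embedding} then identifies $\blA$ with a closed subscheme of $\mathbf{P}(\mathcal{E})=\mathbf{P}(V)\times A$, and since $\dim V=n+1$ and $\Pn=\mathrm{Sym}^{n}\p=\mathbf{P}(H^{0}(O(n)))$ parametrizes effective degree-$n$ divisors, $\mathbf{P}(V)=\Pn$. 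The resulting projection $\blA\to\Pn\times A$ is the morphism we want: by Proposition~\ref{moduli interpretation of bl} its classifying line subbundle $O(E)\hookrightarrow V\otimes O_{\blA}$ restricts over $A^{*}=\blA\setminus E$ to the line spanned by $z$, whose associated point of $\mathbf{P}(V)$ is exactly the divisor $[z]$.

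For (b), the composite $\blA\to\Pn\times A\xrightarrow{v}\HS^{n}$ and the morphism $\blA\to\HuC\to\HS^{n}\times H\to\HS^{n}$ of Proposition~\ref{resolve the rational map} (as made explicit in Corollary~\ref{explicit description of bl resolve rational map}) are both defined on all of $\blA$ and agree on the dense open $A^{*}$; since $\blA$ is smooth, hence reduced, and $\HS^{n}$ is separated, they coincide. This yields the claimed factorization, and smoothness of $v$ was settled in the first paragraph. I do not expect a serious obstacle: the heavy lifting was already done in Proposition~\ref{resolve the rational map} and Corollary~\ref{Hilbert scheme of P1 and S}, and the only genuinely non-formal point is keeping the two descriptions of $\Pn$ --- as $\mathrm{Hilb}^{n}\p$ and as $\mathbf{P}(H^{0}(O(n)))$ --- compatible, so that the line subbundle singled out by the blowup really is the one whose divisor is $[z]$; once that dictionary is fixed, everything else is bookkeeping.
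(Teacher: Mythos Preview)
Your proposal is correct and follows essentially the same approach as the paper: both identify $\blA$ as a closed subscheme of $\Pn\times A$ via $\mathbf{P}(V\otimes O_{A})=\Pn\times A$, factor $v$ through $\Pn\times A\to\Pn\times V\to\HS^{n}$ using the projection $(x,y,z,w)\mapsto x$, and invoke the smoothness of $\Pn\times V\to\HS^{n}$ from Corollary~\ref{Hilbert scheme of P1 and S}. You are a bit more explicit than the paper in two places---spelling out the identification $\mathbf{P}(V)=\Pn$ and using reducedness of $\blA$ plus separatedness of $\HS^{n}$ to conclude the two morphisms agree from their agreement on $A^{*}$---but these are elaborations of the same argument rather than a different route.
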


\begin{proof}
First notice that $\blA$ naturally embeds into $\Pn\times A$. As we have already seen in the previous proof, there is natural morphism:
$$A^{*}\rightarrow \Pn\times V\rightarrow \HS^{n}$$
$$(x,y,z,w)\rightarrow ([z],x)\rightarrow D$$
where $V$ is the affine space $H^{0}(O(n))$ and $[z]$ is the finite subscheme of $\p$ cut out by $z=0$. This extends to a morphism:
$$\blA\rightarrow \Pn\times A\rightarrow\Pn\times V\rightarrow \HS^{n}$$
where $\Pn\times A\rightarrow\Pn\times V$ is induced by the projection $A\rightarrow V$:
$$(x,y,z,w)\rightarrow x$$
In Proposition~\ref{Hilbert scheme of P1 and S} we showed that $\Pn\times V\rightarrow\HS^{n}$ is smooth, so the claim follows from this.

\end{proof}

Since we have a morphism $\blA\rightarrow\HuC$, we have a closed subscheme of length $n$ of $C$ on $\p\times\blA$. Hence we have an exact sequence:
$$0\rightarrow K\rightarrow O_{C}\rightarrow O_{D}\rightarrow 0$$
For later use, let us give a description of the kernel $K$:
\begin{lemma}\label{description of kernel}
Consider:
\[\begin{CD}
\p\times\blA\\
@V{f}VV\\
\blA\\
\end{CD}\]
Then $K$ fits into the following exact sequence:
$$0\rightarrow O(-n)\otimes O(E)\rightarrow K\rightarrow O(-n)\rightarrow 0$$
where $E$ is the exceptional divisor on $\blA$.

\end{lemma}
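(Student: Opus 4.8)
The statement is about sheaves on $\p\times\blA$, so the plan is to exhibit $K=\ker(O_C\to O_D)$ as an extension assembled from two obvious short exact sequences: one computing the ideal of $D$ inside $\p\times\blA$ (with $D$ a relative length-$n$ subscheme over $\blA$, sitting in $\p$), the other the decomposition $O_C\simeq O\oplus\li^{-1}$ of Proposition~\ref{spectral data}(2).

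First I would compute the ideal sheaf of $D$ in $\p\times\blA$. A nowhere-vanishing section of $E\simeq O\oplus O$ splits off a trivial sub-bundle, so on $\p\times A$, hence on $\p\times\blA$, we have $N=E/s(O)\simeq O$, and the morphism $O\xrightarrow{s}E\xrightarrow{\phi}E\otimes\li\to N\otimes\li\simeq\li$ is, in the matrix model, multiplication by the entry $z$; its pushforward to $A$ is the tautological coordinate section $O_A\to H^0(\p,\li)\otimes O_A$ with zero-locus $Z$. Applying Corollary~\ref{bl resolve nonflatness} to the blow-up $\blA$ (the hypotheses hold because $f_*(N\otimes\li)\simeq H^0(\p,\li)\otimes O_A$ has rank $n+1$, equal to the codimension of $Z$), the section extends on $\p\times\blA$ to an injection of line bundles $O(E)\hookrightarrow\li$ with cokernel $O_D\otimes\li$ for a relative length-$n$ subscheme $D$ over $\blA$. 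Twisting by $\li^{-1}$,
$$0\to\li^{-1}\otimes O(E)\to O_{\p\times\blA}\to O_D\to 0,$$
so the ideal of $D$ in $\p\times\blA$ is $O(-n)\otimes O(E)$ and the right-hand map is the structure surjection of $D$.

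Next I compare with the inclusion $D\subseteq C$. By Proposition~\ref{resolve the rational map}, $D$ lies in the spectral curve as the graph of the section of $\li$ induced on the length-$n$ subscheme of $\p$ underlying $D$; being a graph, this is an isomorphism onto its image, so $D\hookrightarrow C\to\p$ is the original closed immersion. Hence the structure surjection $O_{\p\times\blA}\to O_D$ factors as $O_{\p\times\blA}\hookrightarrow O_C\twoheadrightarrow O_D$, the first arrow being the canonical split inclusion and the second restriction along $D\subseteq C$. Since $K=\ker(O_C\to O_D)$, intersecting inside $O_C$ gives $K\cap O_{\p\times\blA}=\ker(O_{\p\times\blA}\to O_D)=O(-n)\otimes O(E)$, realizing $O(-n)\otimes O(E)$ as a subsheaf of $K$. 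On the other hand $O_{\p\times\blA}\to O_C/K\simeq O_D$ is surjective, so $O_{\p\times\blA}+K=O_C$ inside $O_C$, and therefore the composite $K\hookrightarrow O_C\twoheadrightarrow O_C/O_{\p\times\blA}$ is surjective onto $O_C/O_{\p\times\blA}\simeq\li^{-1}=O(-n)$, with kernel $K\cap O_{\p\times\blA}=O(-n)\otimes O(E)$. This yields $0\to O(-n)\otimes O(E)\to K\to O(-n)\to 0$.

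I do not expect a serious obstacle: once the two descriptions of $D$ (a subscheme of $\p\times\blA$ with ideal $O(-n)\otimes O(E)$, and the graph of a section inside $C$) are in place, the rest is formal. The two points needing care are (a) the identification $N\otimes\li\simeq\li$, so that Corollary~\ref{bl resolve nonflatness} is applied with the right rank $n+1$, matching the codimension of $\mathcal{Z}$ from Lemma~\ref{definition of Z}; and (b) checking that the relative length-$n$ subscheme $D\subseteq\p$ is carried isomorphically onto the subscheme $D\subseteq C$ used to form $K$, so the two factorizations of $O_{\p\times\blA}\to O_D$ genuinely agree.
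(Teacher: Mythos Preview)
Your proof is correct and follows essentially the same approach as the paper: identify the ideal of $D$ in $\p\times\blA$ as $O(-n)\otimes O(E)$ via Corollary~\ref{bl resolve nonflatness}, set up the commutative diagram with rows $0\to O(-n)\otimes O(E)\to O_{\p}\to O_D\to 0$ and $0\to K\to O_C\to O_D\to 0$, and read off the extension using $O_C/O_{\p}\simeq O(-n)$. The paper simply invokes the diagram and the splitting $O_C\simeq O_{\p}\oplus O_{\p}(-n)$ where you write out the intersection-and-sum argument explicitly, but the content is the same.
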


\begin{proof}
By our discussions in Corollary~\ref{bl resolve nonflatness} and Corollary~\ref{explicit description of bl resolve rational map}, the morphism $O\xrightarrow{z} O(n)$ on $\p\times A$ extends to $O(E)\rightarrow O(n)$ on $\p\times\blA$, and the subscheme $D$ of $C$ is also a subscheme of $\p$ constructed from the vanishing locus of $O(E)\rightarrow O(n)$. Hence we have the following commutative diagram:
$$\xymatrix{
0 \ar[r] & O(E)\otimes O(-n) \ar[r] \ar[d] & O_{\p} \ar[r] \ar[d] & O_{D} \ar[r] \ar[d]^{id} & 0\\
0 \ar[r] & K \ar[r] & O_{C} \ar[r] & O_{D} \ar[r] & 0\\
}
$$
Since $O_{C}\simeq O_{\p}\oplus O_{\p}(-n)$, the assertion follows from the diagram above.

\end{proof}

\subsection{The key diagram and its geometric properties}

As we have already seen in the introduction, the following diagram is central to the construction of the Poincar\'e sheaf (see Theorem~\ref{main theorem}). We shall refer to it as the key diagram:
$$\xymatrix{
\hig''\times_{H}\hig \ar[r] \ar[d] & \textrm{Hilb}^{n}_{\mathcal{C}|H}\times_{H}\hig\hookrightarrow\HS^{n}\times\hig\\
\hig'\times_{H}\hig\\
}
$$
In this section we are going to study some geometric properties of the morphisms in the key diagram. We will establish the following:

\begin{theorem}\label{properties of g}
Consider the morphism $\hig''\rightarrow \HuC$ constructed in Proposition~\ref{resolve the rational map}, which induces: $\hig''\times_{H}\hig\xrightarrow{g}\HuC\times_{H}\hig$. Then $g$ is a local complete intersection morphism.
\end{theorem}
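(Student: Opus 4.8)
The plan is to reduce the statement to a concrete local computation on the smooth cover $\blA$ of $\hig''$, where the morphism to $\HuC$ was given an explicit description in Lemma~\ref{factorization to Pn} and the surrounding discussion. Since $\hig^{(-n)}$ (or more generally $\hig$) is flat over $H$, and since local complete intersection morphisms can be checked after smooth base change on source and target (Proposition~\ref{properties of lci}, part (1)), it suffices to show that the induced morphism
$$\blA\times_{H}\hig\longrightarrow \HuC\times_{H}\hig$$
is a local complete intersection morphism. I would then further use that $\HuC\hookrightarrow\HS^{n}\times H$ is a regular embedding of codimension $n$ (Proposition~\ref{Hilbert scheme of planar curves}), and that after base change to $\hig$ over $H$ this stays a regular embedding of the same codimension because $\hig\to H$ is flat. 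So, using part (5) of Proposition~\ref{properties of lci}, it is enough to show that the composite
$$\blA\times_{H}\hig\longrightarrow \HuC\times_{H}\hig\longrightarrow \HS^{n}\times\hig$$
is a local complete intersection morphism and that the preimage of $\HuC\times_{H}\hig$ inside $\blA\times_{H}\HS^{n}\times H$... — more precisely, I want to check that the closed substack $\blA\times_{H}\hig$ sits as a regular embedding of codimension $n$ inside the fibre product of $\blA\times_{H}\hig\to\HS^{n}\times\hig$ with $\HuC\times_{H}\hig\hookrightarrow \HS^{n}\times\hig$; granting that, part (5) of Proposition~\ref{properties of lci} finishes the argument.

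For the composite to $\HS^{n}\times\hig$, I would use the factorization of Lemma~\ref{factorization to Pn}: the map $\blA\to\HS^{n}$ factors through the smooth morphism $\Pn\times A\xrightarrow{v}\HS^{n}$, and $\blA\hookrightarrow\Pn\times A$ is a regular embedding (it is a blowup of the smooth $A$ along the smooth center $Z$, realized inside $\mathbf{P}(\mathcal{E})=\Pn\times A$ as in Proposition~\ref{blowup as regular embedding}), hence $\blA\to\Pn\times A$ is a local complete intersection morphism, and composing with the smooth $v$ and then base-changing by the flat $\hig\to H$ keeps everything local complete intersection (Proposition~\ref{properties of lci}, parts (1) and (4)). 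This handles the map to $\HS^{n}\times\hig$. The remaining, and I expect main, obstacle is the transversality/regular-embedding statement needed to apply part (5) of Proposition~\ref{properties of lci}: one must check that pulling back the regular embedding $\HuC\times_{H}\hig\hookrightarrow\HS^{n}\times\hig$ along $\blA\times_{H}\hig\to\HS^{n}\times\hig$ yields a closed substack which is again a regular embedding of codimension exactly $n$, i.e. that no drop of codimension occurs.

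To settle this, I would go back to the defining section of $\HuC$ inside $\HS^{n}\times H$: by Proposition~\ref{Hilbert scheme of planar curves}, $\HuC$ is cut out by the global section $s$ of the rank-$n$ bundle $\pi_*O_{D}(\mathcal{C})$ coming from $O_S\to O_S(\mathcal{C})\to O_D(\mathcal{C})$. Pulled back to $\blA\times_{H}\hig$, the zero locus of this section is, by construction of the morphism $\hig''\to\HuC$ in Proposition~\ref{resolve the rational map} (the subscheme $D$ on $\p\times\blA$ already lies on the spectral curve $\mathcal{C}$), all of $\blA\times_{H}\hig$ — that is precisely what it means that $\hig''$ maps to $\HuC$ and not just to $\HS^{n}$. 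Wait: that means the pulled-back section vanishes identically, so the "zero locus" is the whole space and one cannot directly read off codimension $n$ from it. Instead I would argue by dimension count: $\blA$ is smooth of dimension $4n+2$ and is a local complete intersection over $H$ of the expected relative dimension (Corollary~\ref{explicit description of bl resolve rational map}); $\HuC$ has dimension $\dim\HS^n + \dim H - n = 2n + (3n+2) - n = 4n+2$ relative picture, and over $H$ it is l.c.i.\ of relative dimension $n$. After base change by $\hig\to H$ (flat, l.c.i.\ over $H$ of relative dimension $n-2$ by Proposition~\ref{properties of Hitchin fibration}), both $\blA\times_H\hig$ and $\HuC\times_H\hig$ are l.c.i.\ over $H$ of relative dimensions $(2n+2)+(n-2)$ and $n+(n-2)$ respectively... the clean way to conclude is: both source and target of $g$ (after passing to the smooth cover) are l.c.i.\ over $H$, hence l.c.i.\ over the smooth base only after we know flatness; so I would invoke Proposition~\ref{criterion for relative complete intersection} — it suffices that the fibres of $g$ have dimension $\le \dim(\text{source}) - \dim(\text{target})$. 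The fibre dimension can be bounded directly from the explicit description: over $A^{*}$ the map $\gamma$ of Lemma~\ref{factorization to Pn} is (essentially, after the $\hig$-factor) an isomorphism onto its image composed with the inclusion of $\HuC$, and the blow-up only modifies things along $E$, where one checks by hand on coordinates $x,y,z,w$ that the fibres stay of the expected dimension. So the concrete heart of the proof is this fibre-dimension estimate for $g$ restricted over the exceptional divisor, after which Proposition~\ref{criterion for relative complete intersection} (applied to the smooth covers, which are finite-type pure-dimensional $k$-schemes with the source l.c.i.\ and — wait, the target $\HuC\times_H\hig$ is not smooth over $k$) — so rather than Proposition~\ref{criterion for relative complete intersection} I would use part (3) of Proposition~\ref{properties of lci}: establish flatness of $g$ via the fibre-dimension bound together with the l.c.i.\ property of source over $H$ and of target over $H$ and miracle flatness, then deduce each fibre of $g$ is l.c.i.\ (being a fibre of the l.c.i.\ morphism source$\to H$ cut further), hence $g$ is a relative, in particular local, complete intersection morphism.
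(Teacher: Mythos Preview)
Your proposal has a genuine gap, and you correctly identify the moment it occurs: once you work over the fibre product $\blA\times_{H}\hig$, the pulled-back section cutting out $\HuC\times_{H}\hig$ inside $\HS^{n}\times\hig$ vanishes identically (precisely because $\hig''$ already lands in $\HuC$), so part~(5) of Proposition~\ref{properties of lci} gives no information. Your fallback attempts do not recover from this. Proposition~\ref{criterion for relative complete intersection} requires the target to be smooth over $k$, which $\HuC\times_{H}\hig$ is not. Miracle flatness likewise needs the base to be regular (Cohen--Macaulay source is not enough by itself), and $\HuC\times_{H}\hig$ is only l.c.i., not regular in general. The fibre-dimension estimate you gesture at over the exceptional divisor is never actually carried out, and even if it were, it would not by itself yield the l.c.i.\ conclusion without one of the hypotheses above.

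The paper avoids this trap by \emph{not} passing to the fibre product at the outset. It works on the full product $\hig''\times\hig$ (so there are two distinct spectral curves $C_{1}$ and $C_{2}$ in play) and considers the composite $p\colon\hig''\times\hig\to\HS^{n}\times\hig$. Since source and target are smooth, $p$ is l.c.i.\ for free. The pullback $W$ of $\textrm{Hilb}^{n}_{\mathcal{C}_{2}|H}\times_{H}\hig$ along $p$ is now a genuine proper closed subscheme, and the paper checks it has the expected codimension $n$ by an explicit vector-bundle argument: the defining section of $f_{*}O_{D}(C_{2})$ is the image of a section $t$ of the larger bundle $f_{*}O_{C_{1}}(C_{2})$, the latter section is shown to be a regular sequence of length $3n+2$ (its vanishing is exactly $\hig''\times_{H}\hig$), and the quotient map $f_{*}O_{C_{1}}(C_{2})\to f_{*}O_{D}(C_{2})$ is surjective by the description of the kernel $K$. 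This gives $W\to\HuC\times_{H}\hig$ l.c.i.\ via part~(5), and as a second step $\hig''\times_{H}\hig\hookrightarrow W$ is a regular embedding (cut out by the remaining part of $t$, the section of $f_{*}(K(C_{2}))|_{W}$). Composing these two l.c.i.\ morphisms gives the result. The missing idea in your approach is exactly this ``unfolding'' to the full product, which creates the room needed to make the codimension check nondegenerate.
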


Assume this for the moment, we have the following corollary:
\begin{corollary}\label{part 1 of the main theorem}
Recall we have the maximal Cohen-Macaulay sheaf $Q$ on $\HuC\times_{H}\hig$ (Proposition~\ref{definition of Q}). Then $g^{*}(Q)$ is a maximal Cohen-Macaulay sheaf on $\hig''\times_{H}\hig$.

\end{corollary}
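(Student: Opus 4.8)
The plan is to deduce this immediately from Theorem~\ref{properties of g} together with the stability of maximal Cohen-Macaulay sheaves under pullback along morphisms of finite tor-dimension, as recorded in Proposition~\ref{pullback of CM sheaf}. First I would recall that $Q$ is a maximal Cohen-Macaulay sheaf of codimension $n$ on $\HS^{n}\times\hig$ supported on $\HuC\times_{H}\hig$ (Proposition~\ref{definition of Q}, part (1)). Viewing $Q$ instead as a sheaf on $\HuC\times_{H}\hig$ itself, I claim it is a \emph{maximal} Cohen-Macaulay sheaf there: indeed, by Proposition~\ref{Hilbert scheme of planar curves}, $\HuC\hookrightarrow\HS^{n}\times H$ is a regular embedding of codimension $n$, hence so is $\HuC\times_{H}\hig\hookrightarrow\HS^{n}\times\hig$ (because $\hig\to H$ is flat), and a Cohen-Macaulay sheaf of codimension $n$ on the ambient Gorenstein scheme, scheme-theoretically supported on a regular codimension-$n$ subscheme, restricts to a maximal Cohen-Macaulay sheaf on that subscheme.

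Next I would invoke Theorem~\ref{properties of g}: the morphism $g\colon \hig''\times_{H}\hig \to \HuC\times_{H}\hig$ is a local complete intersection morphism, hence by Proposition~\ref{properties of lci}(2) it is of finite tor-dimension. Both source and target are Gorenstein: the target is a regular embedding in the smooth scheme $\HS^{n}\times\hig$ (using that $\HS^{n}$ is smooth and $\hig_{ss}$, or the relevant stratum $\hig^{(-n)}$, is smooth by Proposition~\ref{smoothness of semistable higgs bundles}), and the source is a local complete intersection over $H$ (Corollary~\ref{explicit description of bl resolve rational map}, passing through the smooth cover $\blA$), hence Gorenstein. Therefore Proposition~\ref{pullback of CM sheaf}(2), applied with the ``regular embedding'' taken to be the whole space (the degenerate case $Z=Y$ used in that proof), yields that $Lg^{*}Q = g^{*}Q$ and that $g^{*}Q$ is a maximal Cohen-Macaulay sheaf on $\hig''\times_{H}\hig$. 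Restricting to the component $\hig^{(-n)}$ gives the statement as used in Theorem~\ref{main theorem}(1).

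The one genuine point requiring care — and the place I expect to do a little work — is checking that all the schemes (stacks) in sight are Gorenstein so that the machinery of Cohen-Macaulay sheaves with $O_X$ as dualizing complex applies, and that Proposition~\ref{pullback of CM sheaf} is being used in a range where its hypotheses hold: specifically that $g$ has finite tor-dimension (which is exactly what the lci property of Theorem~\ref{properties of g} buys us) and that ``maximal Cohen-Macaulay'' is preserved, not merely ``Cohen-Macaulay of some codimension.'' Since $g$ is fiberwise of relative dimension zero away from the exceptional locus but the blowup $\hig''$ has the same dimension as $\hig'$, one checks dimensions match up so that $g^{*}Q$ indeed has zero-codimensional support; this is where the local complete intersection property and the explicit description of $\hig''$ via $\blA$ in Corollary~\ref{explicit description of bl resolve rational map} are used. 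Everything else is a formal consequence of the cited results.
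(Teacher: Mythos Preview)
Your proposal is correct and follows essentially the same route as the paper: deduce finite tor-dimension of $g$ from Theorem~\ref{properties of g} via Proposition~\ref{properties of lci}(2), verify the Gorenstein hypotheses on source and target, and apply Proposition~\ref{pullback of CM sheaf}. One small slip: $\HS^{n}\times\hig$ is not smooth (only Gorenstein, since $\hig$ is lci over $H$), but this does not affect the argument, and the paper itself checks Gorensteinness of the source in exactly the way you indicate, via Corollary~\ref{explicit description of bl resolve rational map} and Proposition~\ref{properties of Hitchin fibration}.
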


\begin{proof}
By part $(2)$ of Proposition~\ref{properties of lci}, $g$ is of finite tor-dimension. Moreover, since the morphism $\hig\rightarrow H$ is a relative complete intersection (Proposition~\ref{properties of Hitchin fibration}), and $\hig''$ is a local complete intersection over $H$ (Corollary~\ref{explicit description of bl resolve rational map}), by part $(1)$ of Proposition~\ref{properties of lci} we see that $\hig''\times_{H}\hig$ is a relative complete intersection over $H$, hence it is Gorenstein. Now Proposition~\ref{pullback of CM sheaf} finishes the proof.

\end{proof}
To establish the theorem, and also to carry out the calculations in the next section, we need to take a closer look at the morphism $g$. First let us notice the following: There is a natural morphism $\hig''\times\hig\rightarrow H\times H$ where $H$ is the Hitchin base, so over
$$\mathbf{P}^{1}\times \hig''\times\hig$$
we have two families of curves in $S$ corresponding to the two components of $H$. We shall denote $C_{1}$ the pullback of the curve from $\hig''\rightarrow H$ and $C_{2}$ the curve from $\hig\rightarrow H$. Since we have a morphism $\hig''\rightarrow \textrm{Hilb}^{n}_{\mathcal{C}_{1}|H}$ by Proposition~\ref{resolve the rational map}, on $\mathbf{P}^{1}\times\hig''\times\hig$ we have $O_{C_{1}}\rightarrow O_{D}\rightarrow0$ where $D$ is the length $n$ subscheme of $C_{1}$. The following proposition summarizes the properties that we need:

\begin{Proposition}\label{geometric property of p}
Consider the morphism $\hig''\times\hig\xrightarrow{p}\HS^{n}\times\hig$ provided by Proposition\ref{resolve the rational map}, and set $W=\textrm{Supp}(Q)$, then we have:
\begin{enumerate1}
\item $W$ is locally a complete intersection of codimension $n$ in $\hig''\times\hig$
\item $\hig''\times_{H}\hig$ is locally a complete intersection in $W$.
\end{enumerate1}
\end{Proposition}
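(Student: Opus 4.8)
The plan is to deduce Proposition~\ref{geometric property of p} from the explicit description of the morphism $p$ in terms of $\blA$ (Corollary~\ref{explicit description of bl resolve rational map} and Lemma~\ref{factorization to Pn}) and the description of $Q$ and its support recorded in Proposition~\ref{definition of Q}. Recall that $Q$ is a maximal Cohen-Macaulay sheaf of codimension $n$ on $\HS^{n}\times\hig$ supported exactly on $\textrm{Hilb}^{n}_{\mathcal{C}|H}\times_{H}\hig$, which by Proposition~\ref{Hilbert scheme of planar curves} is a relative local complete intersection of codimension $n$ cut out (locally) by a regular section of a rank $n$ vector bundle. For part (1), I would pull this section back along $p\times\mathrm{id}$: since $p$ factors through $\Pn\times A\xrightarrow{v}\HS^{n}$ with $v$ smooth (Lemma~\ref{factorization to Pn}) and $\blA$ is smooth, the preimage $W=\textrm{Supp}(Q)\times_{\HS^{n}\times\hig}(\hig''\times\hig)$ is the vanishing locus of the pulled-back section of a rank $n$ bundle on the smooth stack $\hig''\times\hig$. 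Thus $W$ is automatically a local complete intersection and of codimension $\le n$; to get codimension exactly $n$ I would use Proposition~\ref{criterion for relative complete intersection} / a dimension count, comparing $\dim(\hig''\times\hig)$ with the dimensions computed in Proposition~\ref{propositions of hig'}, Proposition~\ref{properties of Hitchin fibration}, and Proposition~\ref{smoothness of semistable higgs bundles}, noting that over the open dense locus of reduced spectral curves $W$ maps to $\textrm{Hilb}^{n}_{\mathcal{C}|H}\times_{H}\hig$ which has the expected dimension, and that $\hig''$ is flat over $H$. Here one also uses that $W$ is nonempty in each relevant component, so no component of $W$ can have codimension $<n$ (the pulled-back section is nowhere a zero-divisor on a component, otherwise the generic fiber comparison fails).

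For part (2), the point is that $\hig''\times_{H}\hig$ is the fiber product over $H\times H$ of $\hig''\times\hig$ with the diagonal-type locus, or more precisely the preimage in $W$ of the diagonal of $H\times H$ under the two Hitchin maps; concretely, over $\p\times\hig''\times\hig$ we have two families of spectral curves $C_{1},C_{2}$ in $S$, and $\hig''\times_{H}\hig$ inside $W$ is the locus where $C_{1}=C_{2}$. I would argue as follows: $D$, the universal length $n$ subscheme coming from $\hig''\rightarrow\textrm{Hilb}^{n}_{\mathcal{C}_{1}|H}$, is always a subscheme of $S$ lying on $C_{1}$; on $W=\textrm{Supp}(Q)$ it also lies on $C_{2}$ by the definition of $\textrm{Supp}(Q)=\textrm{Hilb}^{n}_{\mathcal{C}_{2}|H}\times_{H}\hig$ wait — more carefully, $Q$ lives on $\HS^{n}\times\hig$ and is supported on $\textrm{Hilb}^{n}_{\mathcal{C}|H}\times_{H}\hig$, i.e. on pairs where the length $n$ subscheme lies on the spectral curve of the Higgs bundle in the $\hig$ factor. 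So $W$ already parametrizes $D\subset C_{2}$, while by construction $D\subset C_{1}$; the further condition defining $\hig''\times_{H}\hig$ is exactly $h(\hig'')=h(\hig)$ in $H$, i.e. $n$ equations (the coordinates of $H$ minus the already-imposed ones are $\dim H = 3n+2$ many, but $n$ of them are forced by $D\subset C_2$ being length $n$)... I would make this counting precise using that $C$ is cut out in $S$ by a section of $q^{*}(\li)^{2}$ (Proposition~\ref{spectral data}(1)) and that imposing a length $n$ subscheme to lie on a curve is $n$ conditions on the curve's equation. The cleanest route: realize $\hig''\times_{H}\hig \hookrightarrow W$ as the vanishing locus of a section of a rank $m$ bundle for the appropriate $m$, compare dimensions (using that $\hig''\times_H\hig$ is a relative complete intersection over $H$, hence has known dimension, and that $W$ has codimension $n$), and invoke Proposition~\ref{criterion for relative complete intersection} again to upgrade "correct codimension" to "local complete intersection". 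Since $\hig''\times_{H}\hig$ is Gorenstein (shown in the proof of Corollary~\ref{part 1 of the main theorem}) and $W$ is a local complete intersection in a smooth stack, a regular-sequence argument then gives (2).

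The main obstacle I expect is part (2): verifying that the locus $\{C_{1}=C_{2}\}$ inside $W$ has the expected codimension and is cut out by a regular sequence, rather than having excess intersection along the nonreduced spectral curve locus (which is precisely where all the difficulty of the paper is concentrated). The subtlety is that on $W$ the subscheme $D$ already lies on $C_{2}$, so the equations $C_{1}=C_{2}$ are not independent of the equations cutting out $W$; one must show that the \emph{remaining} conditions still form a regular sequence. I would handle this by working on the smooth chart $\blA\times\hig$, writing everything in the explicit matrix coordinates $x,y,z,w$ together with coordinates on $\hig$, and checking the regular-sequence / dimension claim by a direct local computation there — in particular checking it survives restriction to the worst stratum, the nonreduced spectral curves of the form $(T-\lambda)^2$ described in Proposition~\ref{spectral data}(6). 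A dimension count over $H$ (each fiber of $\hig''\times_H\hig \to H$ and of $W\to H\times H$) combined with flatness of the Hitchin fibration should reduce the global claim to this local verification on charts, which is routine once set up.
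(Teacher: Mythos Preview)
Your overall strategy---pull back the defining section of $\textrm{Hilb}^{n}_{\mathcal{C}|H}\times_{H}\hig\subset\HS^{n}\times\hig$ to get $W$, then cut $W$ down further to $\hig''\times_{H}\hig$---is exactly what the paper does. But you are missing the paper's organizing idea, which is what makes part~(2) clean rather than a ``routine once set up'' computation.

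The paper does \emph{not} attack (1) and (2) by separate dimension counts. Instead it introduces a single rank-$(3n+2)$ vector bundle $f_{*}O_{C_{1}}(C_{2})$ on $\blA\times\hig$ (here $C_{1},C_{2}$ are the two spectral curves, $f$ the projection from $\p\times\blA\times\hig$) together with its tautological section $t$ coming from $O_{S}\to O_{S}(C_{2})$. The vanishing locus of $t$ is exactly $\blA\times_{H}\hig$, and since the latter has codimension $3n+2=\dim H$ (by flatness of the Hitchin map), $t$ is a regular section. This is the \emph{only} dimension check needed. Now the short exact sequence $0\to K\to O_{C_{1}}\to O_{D}\to 0$ pushes forward (using Lemma~\ref{description of kernel} to see $R^{1}f_{*}(K(C_{2}))=0$) to
\[
0\longrightarrow f_{*}(K(C_{2}))\longrightarrow f_{*}O_{C_{1}}(C_{2})\longrightarrow f_{*}O_{D}(C_{2})\longrightarrow 0,
\]
and the image of $t$ in the rank-$n$ quotient $f_{*}O_{D}(C_{2})$ is precisely the pulled-back section cutting out $W$. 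Since a surjective image of a regular section is again regular, part~(1) follows immediately. For part~(2), on $W$ the section $t$ factors through the kernel $f_{*}(K(C_{2}))|_{W}$; its vanishing locus is still $\blA\times_{H}\hig$, and regularity of this restricted section is automatic from regularity of $t$. So both parts fall out of one exact sequence and one codimension check.

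Your proposal instead leaves (2) to an explicit coordinate computation on the worst stratum (nonreduced spectral curves). That can presumably be done, but it is exactly the kind of case analysis the exact-sequence argument is designed to avoid, and you have not actually carried it out. The point you flagged as the ``main obstacle''---showing the residual equations $C_{1}=C_{2}$ form a regular sequence on $W$---is resolved by the paper not by inspecting strata but by recognizing that those residual equations are literally the components of $t$ living in the kernel subbundle.
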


Note that this Proposition imlies the following:
\begin{corollary}\label{pullback of Q}
$Lp^{*}(Q)\simeq p^{*}(Q)$, and $p^{*}(Q)$ is a maximal Cohen-Macaulay sheaf supported on $W$.
\end{corollary}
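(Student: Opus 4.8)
I would obtain this as a direct application of Proposition~\ref{pullback of CM sheaf}, with Proposition~\ref{geometric property of p} supplying all the geometric input. Put $Y=\HS^{n}\times\hig$, $X=\hig''\times\hig$ and $f=p$; take the closed embedding $Z\hookrightarrow Y$ to be $\HuC\times_{H}\hig\hookrightarrow\HS^{n}\times\hig$, and let $W=p^{-1}(Z)\hookrightarrow X$ be the locus in Proposition~\ref{geometric property of p}. The sheaf to pull back is $Q$, regarded as a sheaf on $Z$: by Proposition~\ref{definition of Q} it is Cohen--Macaulay of codimension $n$ on $\HS^{n}\times\hig$ and is scheme-theoretically supported on $\HuC\times_{H}\hig$, so, that subscheme being regularly embedded of codimension $n$, $Q$ is a \emph{maximal} Cohen--Macaulay sheaf on $Z$ -- this is the form in which it is recorded in Corollary~\ref{part 1 of the main theorem}.

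Next I would check that $X$ and $Y$ are Gorenstein. Here $\HS^{n}$ is smooth, $\hig$ is a local complete intersection over $k$ of pure dimension $4n$, and $\hig''$ is a local complete intersection over $H$ by Corollary~\ref{explicit description of bl resolve rational map}, hence a local complete intersection over $k$ since $H$ is an affine space; therefore both products $\HS^{n}\times\hig$ and $\hig''\times\hig$ are local complete intersections over $k$, in particular Gorenstein.

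Then I would verify the remaining hypotheses of Proposition~\ref{pullback of CM sheaf}. The morphism $p$ is of finite tor-dimension: after the smooth cover $\blA\to\hig''$, the map $\hig''\to\HS^{n}$ factors through the regular embedding $\blA\hookrightarrow\Pn\times A$ (Proposition~\ref{blowup as regular embedding}, Lemma~\ref{factorization to Pn}) followed by the smooth morphism $\Pn\times A\xrightarrow{v}\HS^{n}$, so it is a local complete intersection morphism; since $p$ is its base change along the flat morphism $\HS^{n}\times\hig\to\HS^{n}$, $p$ is again a local complete intersection morphism, hence of finite tor-dimension by Proposition~\ref{properties of lci}. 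The embedding $Z\hookrightarrow Y$ is regular of codimension $n$, being the base change along the flat Hitchin map $\hig\to H$ (Proposition~\ref{properties of Hitchin fibration}) of the section of a rank-$n$ bundle whose zero locus cuts out $\HuC$ in $\HS^{n}\times H$ in the expected codimension (Proposition~\ref{Hilbert scheme of planar curves}); and $W=p^{-1}(Z)$ is the zero locus of the pulled-back rank-$n$ section, which by Proposition~\ref{geometric property of p}(1) has codimension exactly $n$ in $X$, hence is a regular embedding of codimension $n$.

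With all the hypotheses in place, Proposition~\ref{pullback of CM sheaf}(2) applies verbatim and yields $Lp^{*}Q\simeq p^{*}Q$ together with the fact that $p^{*}Q$ is a maximal Cohen--Macaulay sheaf on $W$, which is exactly the assertion. I do not expect a genuine obstacle here: the substantive point is that the pulled-back section remains a regular sequence, i.e. that $W$ has the expected codimension $n$, and this is precisely Proposition~\ref{geometric property of p}(1), which we are assuming; the rest is the routine bookkeeping sketched above -- that the ambient stacks are Gorenstein, that $p$ has finite tor-dimension, and that $Z\hookrightarrow Y$ and $W\hookrightarrow X$ are regularly embedded of the same codimension, so that the pullback of $Q$ incurs no higher Tor and (via this tor-independence) agrees with the underived pullback of $Q$ along $W\to Z$.
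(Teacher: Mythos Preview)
Your proposal is correct and follows essentially the same route as the paper: both arguments reduce the corollary to Proposition~\ref{pullback of CM sheaf}, using Proposition~\ref{definition of Q} to identify $Q$ as maximal Cohen--Macaulay on the codimension-$n$ regular embedding $\HuC\times_{H}\hig\hookrightarrow\HS^{n}\times\hig$, and invoking Proposition~\ref{geometric property of p}(1) to see that its pullback $W$ is again regularly embedded of codimension $n$. The paper's proof is simply terser---it does not spell out the Gorenstein and finite tor-dimension checks you carry out---but these verifications (via the local complete intersection property of $p$) are exactly what is implicit there, and indeed the lci property of $\hig''\times\hig\to\HS^{n}\times\hig$ is made explicit in the paper's proof of Theorem~\ref{properties of g} using the same ingredients you use. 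One minor citation slip: the fact that $Q$ is maximal Cohen--Macaulay on $Z$ is Proposition~\ref{definition of Q}, not Corollary~\ref{part 1 of the main theorem} (which concerns $g^{*}Q$ on $\hig''\times_{H}\hig^{(-n)}$).
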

\begin{proof}(Of the corollary)
We know that $Q$ is a maximal Cohen-Macaulay sheaf supported on $\textrm{Hilb}^{n}_{\mathcal{C}_{2}|H}\times_{H}\hig$(Proposition~\ref{definition of Q}), which is a regular embedding of codimension $n$ in $\HS^{n}\times\hig$. Since $W$ is the pullback of $\textrm{Hilb}^{n}_{\mathcal{C}_{2}|H}\times_{H}\hig$, Part $(1)$ combined with Proposition~\ref{pullback of CM sheaf} implies the claim of the corollary.
\end{proof}

The proof of Proposition~\ref{geometric property of p} is based on the following lemma:
\begin{lemma}
Consider:
\[\begin{CD}
\p\times\hig''\times\hig\\
@V{f}VV\\
\hig''\times\hig\\
\end{CD}\]
\begin{enumerate1}
\item $\hig''\times_{H}\hig$ is the vanishing locus of a section of the vector bundle $f_{*}O_{C_{1}}(C_{2})$ on $\hig''\times\hig$ and it is locally a complete intersection of codimension $3n+2$.
\item The natural morphism of vector bundles $f_{*}O_{C_{1}}(C_{2})\rightarrow f_{*}O_{D}(C_{2})$ is surjective.
\end{enumerate1}
\end{lemma}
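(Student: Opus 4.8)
The plan is to reduce both statements to cohomology and base change along the projection $f\colon\p\times\hig''\times\hig\to\hig''\times\hig$, using one key structural fact: by Proposition~\ref{spectral data}(1) every spectral curve $C_2\subseteq S$ is cut out by its \emph{monic} characteristic polynomial, a section $\sigma_2$ of $q^{*}\li^{\otimes 2}$, so that $O_{S}(C_2)\simeq q^{*}\li^{\otimes 2}$ canonically. Hence on $\p\times\hig''\times\hig$ every twist by $O_{S}(C_2)$ is a twist by $O_{\p}(2n)$ pulled back from $\p$, and all the sheaves $f_{*}(-)$ in the lemma can be computed on the $\p$-fibers, where $C_1\to\p$ is a flat degree $2$ cover with $O_{C_1}\simeq O\oplus\li^{-1}$ (Proposition~\ref{spectral data}(2)).

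For part (1), I would take $s$ to be the restriction of $\sigma_2$ to $C_1$: since $C_1$ is finite over $\p\times\hig''\times\hig$, hence proper over $\hig''\times\hig$, the element $\sigma_2|_{C_1}$ is a global section of $f_{*}O_{C_1}(C_2)$. Pushing $O_{C_1}(C_2)=O_{C_1}\otimes q^{*}\li^{2}$ to a $\p$-fiber gives $(O\oplus\li^{-1})\otimes O(2n)=O(2n)\oplus O(n)$, which has vanishing $H^{1}$ and $h^{0}=(2n+1)+(n+1)=3n+2$; so $f_{*}O_{C_1}(C_2)$ is a vector bundle of rank $3n+2$ whose formation commutes with base change. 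For a test scheme $T\to\hig''\times\hig$, the pullback of $s$ vanishes iff $\sigma_2^{T}$ vanishes on $C_1^{T}$, i.e. iff $C_1^{T}\subseteq C_2^{T}$; writing $\sigma_2^{T}=g\,\sigma_1^{T}$ with $g\in H^{0}(S\times T,O_{S\times T})=H^{0}(O_{T})$ and comparing leading coefficients forces $g=1$, so this condition is equivalent to $C_1^{T}=C_2^{T}$, i.e. to $T\to H\times H$ factoring through the diagonal. Thus $V(s)=\hig''\times_{H}\hig$ scheme-theoretically. Finally, $\hig''\times_{H}\hig$ is the preimage of the diagonal $H\hookrightarrow H\times H$ (a regular embedding of codimension $\dim H=3n+2$, since $H\cong\mathbb{A}^{3n+2}$) under $\hig''\times\hig\to H\times H$; since $\hig''\times\hig$ is Cohen--Macaulay (product of the smooth $\hig''$ with the lci $\hig$) and the preimage has dimension $\dim\hig''+(n-2)=5n$ by flatness of $\hig\to H$ and $\dim\hig''=4n+2$, hence codimension exactly $3n+2$, it is a regular embedding, equivalently a local complete intersection, of codimension $3n+2$, and $s$ is automatically a regular section.

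For part (2), surjectivity of a map of vector bundles may be checked on geometric fibers of $\hig''\times\hig$, and there, since $O_{C_1},O_{D},O_{C_1}(C_2),O_{D}(C_2)$ are all supported on curves finite over $\p$, it reduces to surjectivity of $H^{0}(C_1,O_{C_1}\otimes\pi_1^{*}O(2n))\to H^{0}(D,O_{D}\otimes\pi_1^{*}O(2n))$ on a single $\p$-fiber. Twisting the defining sequence $0\to K\to O_{C_1}\to O_{D}\to 0$ of the length $n$ subscheme $D\subseteq C_1$ by $q^{*}\li^{2}$ and taking cohomology, this follows from $H^{1}(\p,K\otimes\li^{2}|_{\mathrm{fiber}})=0$. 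By Lemma~\ref{description of kernel}, $K$ is an extension of $O(-n)$ by $O(-n)\otimes O(E)$; restricted to a $\p$-fiber the line bundle $O(E)$ (pulled back from $\hig''$) is trivial, so $K|_{\mathrm{fiber}}$ is an extension of $O(-n)$ by $O(-n)$, which splits because $\mathrm{Ext}^{1}_{\p}(O(-n),O(-n))=0$; twisting by $O(2n)$ gives $O(n)^{\oplus 2}$, whose $H^{1}$ vanishes. (Since $Bl_{Z}A\to\hig''$ is smooth surjective, it is harmless that Lemma~\ref{description of kernel} is phrased on $Bl_{Z}A$: surjectivity descends along faithfully flat base change.)

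These computations are routine once $O_{S}(C_2)\simeq q^{*}\li^{\otimes 2}$ is in place and one keeps track of base-change compatibility; I do not expect a genuine obstacle. The one point that deserves care is the \emph{scheme-theoretic} identification $V(s)=\hig''\times_{H}\hig$ in part (1), for which I would work at the level of the moduli functor and exploit the monic normalization of spectral curves as above, and part (2) rests squarely on the explicit description of the ideal sheaf $K$ furnished by Lemma~\ref{description of kernel}.
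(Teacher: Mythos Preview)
Your proposal is correct and follows essentially the same route as the paper. For part~(1) the paper also takes the section induced by $O_{S}\to O_{S}(C_{2})$, computes $O_{C_{1}}(C_{2})\simeq O(n)\oplus O(2n)$ to get rank $3n+2$, and deduces the codimension from flatness of the Hitchin map; your monic-polynomial argument for the scheme-theoretic identification $V(s)=\hig''\times_{H}\hig$ is more explicit than the paper's ``follows from the definition'', but the content is the same. For part~(2) the paper likewise passes to $Bl_{Z}A$, invokes Lemma~\ref{description of kernel} for $K$, and observes $R^{1}f_{*}(K\otimes O(2n))=0$; you phrase this as a fiberwise $H^{1}$-vanishing, which is the same computation.
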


\begin{proof}
For part $(1)$, notice that we have a canonical morphism $O_{S}\rightarrow O_{S}(C_{2})$ on $S\times\hig''\times\hig$. Restricting it to $C_{1}$ we get $O_{C_{1}}\rightarrow O_{C_{1}}(C_{2})$ on $\p\times\hig''\times\hig$. Hence we get a section of $O_{C_{1}}(C_{2})$ which induces a section $t$ of $f_{*}O_{C_{1}}(C_{2})$. Notice that since $O_{C_{1}}\simeq O\oplus O(-n)$, by part $(1)$ of Proposition~\ref{spectral data} we have that
$$O_{C_{1}}(C_{2})\simeq O_{C_{1}}\otimes O(2n)\simeq O(n)\oplus O(2n)$$
Hence $f_{*}O_{C_{1}}(C_{2})$ is a vector bundle of rank $3n+2$ on $\hig''\times\hig$. It follows from the definition that the vanishing locus of the section $t$ of $f_{*}O_{C_{1}}(C_{2})$ can be identified with $\hig''\times_{H}\hig$. We have $\textrm{dim}(\hig'')=\textrm{dim}(\hig')$. Using the fact that the Hitchin fibration $\hig\rightarrow H$ is flat (Proposition~\ref{properties of Hitchin fibration}), $\hig''\times_{H}\hig$ has codimension equals to $\textrm{dim}(H)=3n+2$ in $\hig''\times\hig$. Hence the section $t$ of the vector bundle $f_{*}O_{C_{1}}(C_{2})$ defines a regular sequence of length $3n+2$ on $\hig''\times\hig$.\\
For part $(2)$, since $\blA$ is a smooth cover of $\hig''$, it is enough to prove the statement on $\blA\times\hig$. By Lemma~\ref{description of kernel}, we have the following exact sequences on $\p\times\blA\times\hig$:
$$0\rightarrow K\rightarrow O_{C_{1}}\rightarrow O_{D}\rightarrow 0$$
$$0\rightarrow O(-n)\otimes O(E)\rightarrow K\rightarrow O(-n)\rightarrow 0$$
Since $K(C_{2})\simeq K\otimes O(2n)$, we see that $R^{1}f_{*}(K(C_{2}))=0$, hence the natural morphism:
$$f_{*}O_{C_{1}}(C_{2})\rightarrow f_{*}O_{D}(C_{2})$$
is surjective

\end{proof}

\begin{proof}(Of Proposition~\ref{geometric property of p})
Since $\blA$ is a smooth cover of $\hig''$, we only need to prove the corresponding statement for $\blA\times\hig\xrightarrow{p}\HS^{n}\times\hig$. Let $D$ be the universal subscheme on $S\times\HS^{n}$, and consider
\[\begin{CD}
S\times\HS^{n}\times\hig\\
@V{\pi}VV\\
\HS^{n}\times\hig\\
\end{CD}\]

By Proposition~\ref{definition of Q}, $Q$ is supported on
$$\textrm{Hilb}_{\mathcal{C}_{2}|H}^{n}\times_{H}\hig\subseteq \textrm{Hilb}_{S_{H}|H}^{n}\times_{H}\hig=\HS^{n}\times \hig$$
There is a canonical section $s$ of $\pi_{*}O_{D}(C_{2})$ defined by
$$O_{S}\rightarrow O_{S}(C_{2})\rightarrow O_{D}(C_{2})$$
Also from Proposition~\ref{Hilbert scheme of planar curves} we know that
$$\textrm{Hilb}_{\mathcal{C}_{2}|H}^{n}\times_{H}\hig\subseteq \HS^{n}\times\hig$$
is locally a complete intersection of codimension $n$ corresponds to the vanishing locus of $s$. Consider:
\[\begin{CD}
\p\times\blA\times\hig\\
@V{f}VV\\
\blA\times\hig\\
\end{CD}\]
Since $Q$ is supported on $\textrm{Hilb}^{n}_{\mathcal{C}_{2}|H}\times_{H}\hig$, $W$ is the vanishing locus of the section $p^{*}(s)$ of $f_{*}O_{D}(C_{2})$ on $\blA\times\hig$. Notice that we also have a canonical section $t$ of $f_{*}O_{C_{1}}(C_{2})$ coming from $O_{S}\rightarrow O_{S}(C_{2})$, and the section $s$ of $f_{*}O_{D}(C_{2})$ is the image of $t$ under the morphism
$$f_{*}O_{C_{1}}(C_{2})\rightarrow f_{*}O_{D}(C_{2})$$
In part $(1)$ of the previous lemma we already showed that the section $t$ of $f_{*}O_{C_{1}}(C_{2})$ defines a regular sequence on $\blA\times\hig$. Also part $(2)$ of the previous lemma implies that $f_{*}O_{C_{1}}(C_{2})\rightarrow f_{*}O_{D}(C_{2})$ is surjective. Since $p^{*}(s)$ is the image of $t$, it follows that $p^{*}(s)$ also defines a regular sequence of length $n$ on $\blA\times\hig$. This proves $(1)$\\
For $(2)$, notice that we have an exact sequence of vector bundles:
$$0\rightarrow f_{*}(K(C_{2}))\rightarrow f_{*}(O_{C_{1}}(C_{2}))\rightarrow f_{*}O_{D}(C_{2})\rightarrow 0$$
Since $W$ is the vanishing locus of the image of the section $t$ in $f_{*}(O_{D}(C_{2}))$, if we restrict $t$ to $W$, then it factors through $f_{*}(K(C_{2}))|_{W}$. Since $t$ determines a regular sequence on $\blA\times\hig$, the section $t|_{W}$ of $f_{*}(K(C_{2}))|_{W}$ also gives a regular sequence on $W$, and $\blA\times_{H}\hig$ is the vanishing locus of $t|_{W}$. This proves part $(2)$.

\end{proof}

As a byproduct of the proof, using Lemma~\ref{description of kernel} we get the following:
\begin{lemma}\label{Koszul resolution on W}
Consider the vector bundle $\mathcal{E}=f_{*}(K(C_{2}))$ on $\blA\times\hig$ , then $\blA\times_{H}\hig$ is the vanishing locus of a global section of $\mathcal{E}|_{W}$ in $W$. Also $\mathcal{E}$ fits into an exact sequence:
$$0\rightarrow O(E)^{n+1}\rightarrow\mathcal{E}\rightarrow O^{n+1}\rightarrow 0$$
\end{lemma}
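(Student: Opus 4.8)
The plan is to extract both assertions from the proof of Proposition~\ref{geometric property of p} together with Lemma~\ref{description of kernel}. For the first assertion I would simply invoke the computation already carried out there: we identified $W$ with the zero locus in $\blA\times\hig$ of the section of $f_{*}O_{D}(C_{2})$ obtained as the image of the canonical section $t$ of $f_{*}O_{C_{1}}(C_{2})$ under the surjection $f_{*}O_{C_{1}}(C_{2})\to f_{*}O_{D}(C_{2})$, and we saw that $t$ itself cuts out a regular sequence on $\blA\times\hig$ with zero locus $\blA\times_{H}\hig$. Twisting the sequence $0\to K\to O_{C_{1}}\to O_{D}\to 0$ of Lemma~\ref{description of kernel} by $O(C_{2})$ and pushing forward along $f$ produces the exact sequence of vector bundles
$$0\to \mathcal{E}=f_{*}(K(C_{2}))\to f_{*}(O_{C_{1}}(C_{2}))\to f_{*}O_{D}(C_{2})\to 0 .$$
Hence $t|_{W}$ lands in $\mathcal{E}|_{W}$, it remains a regular section there because $t$ was regular on the ambient $\blA\times\hig$, and its zero locus in $W$ is precisely $\blA\times_{H}\hig$; this is the first assertion, which is really a restatement of part $(2)$ of that proof.

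For the exact sequence describing $\mathcal{E}$, I would start from the second short exact sequence of Lemma~\ref{description of kernel},
$$0\to O(-n)\otimes O(E)\to K\to O(-n)\to 0$$
on $\p\times\blA$, pull it back to $\p\times\blA\times\hig$, and twist by $O(C_{2})$. By part $(1)$ of Proposition~\ref{spectral data} the divisor $C_{2}$ is cut out by a section of $q^{*}(\li)^{2}$, so $O_{S}(C_{2})$ restricts to $O(2n)$ on the $\p$-factor and the twisted sequence reads
$$0\to O(n)\otimes O(E)\to K(C_{2})\to O(n)\to 0 .$$
Applying $f_{*}$: since $n\ge 2$ we have $H^{1}(\p,O(n))=0$, so $R^{1}f_{*}$ of the two outer terms vanishes (the line bundle $O(E)$ is pulled back from $\blA$ and so contributes nothing along the fibres of $f$), the pushforward sequence stays short exact, and by the projection formula $f_{*}O(n)\simeq H^{0}(\p,O(n))\otimes O\simeq O^{n+1}$ while $f_{*}(O(n)\otimes O(E))\simeq O(E)\otimes f_{*}O(n)\simeq O(E)^{n+1}$. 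This yields
$$0\to O(E)^{n+1}\to\mathcal{E}\to O^{n+1}\to 0$$
as claimed.

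The argument is essentially bookkeeping, and I do not anticipate a genuine obstacle: all the geometric input — the structure of $K$ from Lemma~\ref{description of kernel}, the regularity of $t$, and the surjectivity $f_{*}O_{C_{1}}(C_{2})\to f_{*}O_{D}(C_{2})$ — has already been established. The only points that deserve a moment's care are the identification $O_{S}(C_{2})|_{\p}\simeq O(2n)$, so that the twist has the stated effect, and the vanishing $R^{1}f_{*}(O(n)\otimes O(E))=R^{1}f_{*}O(n)=0$, which is what keeps the pushed-forward sequence exact and identifies its terms with the asserted bundles.
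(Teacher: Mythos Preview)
Your argument is correct and matches the paper's intent: the lemma is stated there only as a byproduct of the proof of Proposition~\ref{geometric property of p} together with Lemma~\ref{description of kernel}, and you have spelled out precisely those details. The only step the paper makes explicit that you also use is $K(C_{2})\simeq K\otimes O(2n)$, and your pushforward computation is the natural way to read off the short exact sequence.
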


Now we can prove Theorem~\ref{properties of g}:
\begin{proof}
Since $\blA$ is a smooth cover of $\hig''$, $\hig''$ is also smooth, hence by part $(4)$ of Proposition~\ref{properties of lci}, we see that $\hig''\rightarrow\HS^{n}$ is a local complete intersection morphism, and part $(1)$ of Proposition~\ref{properties of lci} implies that $\hig''\times\hig\rightarrow \HS^{n}\times\hig$ is a local complete intersection morphism. In Proposition~\ref{geometric property of p} we proved that $W$ is the pullback of
$$\textrm{Hilb}^{n}_{\mathcal{C}_{2}|H}\times_{H}\hig\subseteq\HS^{n}\times\hig$$
and $W$ is still locally a complete intersection of codimension $n$ in $\hig''\times\hig$, hence part $(5)$ of Proposition~\ref{properties of lci} implies that $W\rightarrow \textrm{Hilb}^{n}_{\mathcal{C}_{2}|H}\times_{H}\hig$ is a local complete intersection morphism. Now the theorem follows from part $(2)$ of Proposition ~\ref{geometric property of p}.

\end{proof}

\section{A cohomological vanishing result}
The purpose of this section is to establish a cohomological vanishing result which will be used in the last section to prove part $(2)$ of the main theorem (Theorem~\ref{main theorem}). The main result is Lemma~\ref{cohomological vanishing}. To state it, let us recall that in Lemma~\ref{factorization to Pn} we have a smooth morphism
$$A\times\Pn\xrightarrow{v} \HS^{n}$$
Also, Corollary~\ref{Hilbert scheme of P1 and S} implies that we have a Cartesian diagram:
\[\begin{CD}
A\times(\p)^{n}@>>>\Fl^{' n}\\
@VVV@VVV\\
A\times\Pn@>{v}>>\HS^{' n}\\
\end{CD}\]
We can now state the main result of this section:
\begin{lemma}\label{cohomological vanishing}
Consider:
$$\xymatrix{
A\times\Pn\times\hig^{(-n)} \ar[r]^{v} \ar[d]^{\phi} & \HS^{' n}\times\hig^{(-n)}\\
A\times\hig^{(-n)}\\
}
$$
We have $R^{i}\phi_{*}(v^{*}(Q)\otimes O(k))=0$ for all $i>0$ and $k\geq 0$.
\end{lemma}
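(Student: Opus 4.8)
The plan is to pull $Q$ back along $v$, trade the isospectral Hilbert scheme for a power of $\p$, and reduce to a Künneth computation. By Lemma~\ref{summand}, $Q$ is a direct summand of $(\psi\times id)_{*}Q'$. Since $v$ is smooth, flat base change along $v\times id$ combined with the Cartesian square of Corollary~\ref{Hilbert scheme of P1 and S}(2) replaces $\psi$ by the finite flat map $\eta\colon(\p)^{n}\to\Pn$; using the projection formula, the identity $\eta^{*}O_{\Pn}(1)\simeq O_{\p}(1)^{\boxtimes n}$, and the computation $v^{*}\det(\mathcal{A})^{-1}\simeq O_{\Pn}(n-1)$ (Lemma~\ref{det} and Corollary~\ref{Hilbert scheme of P1 and S}(3)) to absorb the determinant twist, one sees that $R\phi_{*}(v^{*}Q\otimes O(k))$ is a direct summand of $R\phi'_{*}\bigl(\widetilde{Q}''\otimes O_{\p}(n-1+k)^{\boxtimes n}\bigr)$, where $\phi'\colon A\times(\p)^{n}\times\hig^{(-n)}\to A\times\hig^{(-n)}$ is the projection and $\widetilde{Q}''$ is the pullback of $(\sigma_{n}\times id)^{*}(l^{n}\times id)_{*}F^{\boxtimes n}$. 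Thus it suffices to prove $R^{i}\phi'_{*}\bigl(\widetilde{Q}''\otimes O_{\p}(n-1+k)^{\boxtimes n}\bigr)=0$ for $i>0$ and $k\geq 0$.

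The next step is to recognize $\widetilde{Q}''$ as an external product. Over $\HS^{' n}$ the isospectral Hilbert scheme is the flag Hilbert scheme (Proposition~\ref{rewrite using flag}), which pulls back under $v$ to $A\times(\p)^{n}$, and under this identification $\sigma_{n}$ reads off the successive support points of the flag and embeds them in $S$ via the section $x$. Consequently $\widetilde{Q}''\simeq G\boxtimes\cdots\boxtimes G$ ($n$ external factors over $A\times\hig^{(-n)}$), where $G$ on $A\times\p\times\hig^{(-n)}$ is the underived pullback of the universal sheaf $F$ on the spectral curve of the $\hig^{(-n)}$-factor along $A\times\p\times\hig^{(-n)}\to S\times\hig^{(-n)}$, $((x,y,z,w),q,E)\mapsto(x(q),E)$. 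By the locally free resolution of Lemma~\ref{resolution} and the fact that on $\hig^{(-n)}$ the underlying bundle is $O(-n)\oplus O(-n)$ (Definition~\ref{O plus O}), $G$ is the cokernel of $xI-\phi_{2}\colon O(-n)^{\oplus 2}\otimes\li^{-1}\to O(-n)^{\oplus 2}$; in particular $G$ is a quotient of $O(-n)^{\oplus 2}$, so $G\otimes O_{\p}(n-1+k)$ is a quotient of the vector bundle $O_{\p}(k-1)^{\oplus 2}$.

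Writing $\rho\colon A\times\p\times\hig^{(-n)}\to A\times\hig^{(-n)}$ for the projection, the short exact sequence $0\to(\textrm{im}(xI-\phi_{2}))\otimes O_{\p}(n-1+k)\to O_{\p}(k-1)^{\oplus 2}\to G\otimes O_{\p}(n-1+k)\to 0$, together with $R^{1}\rho_{*}O_{\p}(k-1)^{\oplus 2}=0$ for $k\geq 0$ (as $k-1\geq -1$) and $R^{2}\rho_{*}(-)=0$ (relative dimension one), gives $R^{1}\rho_{*}\bigl(G\otimes O_{\p}(n-1+k)\bigr)=0$; hence $R\rho_{*}\bigl(G\otimes O_{\p}(n-1+k)\bigr)$ is a sheaf. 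One then wants to conclude by Künneth: $R\phi'_{*}\bigl(G^{\boxtimes n}\otimes O_{\p}(n-1+k)^{\boxtimes n}\bigr)$ should be the $n$-fold derived tensor product over $O_{A\times\hig^{(-n)}}$ of $R\rho_{*}\bigl(G\otimes O_{\p}(n-1+k)\bigr)$, hence concentrated in non-positive cohomological degrees, which is exactly what is needed.

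The delicate point — and the main obstacle — is this last step. The sheaf $G$ is \emph{not} flat over $A\times\hig^{(-n)}$: its fibrewise length jumps over the locus where the section $x$ of $\li$ is a global eigenvalue of $\phi_{2}$ (equivalently, where the spectral curve of the $\hig^{(-n)}$-factor contains the graph of $x$), so $\widetilde{Q}''=G^{\boxtimes n}$ need not agree with the derived external product and Künneth does not apply verbatim. Over the open locus where $x$ is not a global eigenvalue $G$ is flat — it is supported on the relative length-$2n$ divisor cut out by the characteristic polynomial of $\phi_{2}$ pulled back along $x$ — and the argument above goes through transparently. Over the complementary closed locus one restricts and repeats: $G$ remains a quotient of $O(-n)^{\oplus 2}$ there, so $R^{1}\rho_{*}$ of its twist still vanishes, and one finishes by Noetherian induction on the base, stratifying by the flatness locus of $G$ and using derived base change to pass between the strata. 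Managing this failure of flatness — equivalently, controlling the difference between the underived and the derived external products — is the crux of the proof.
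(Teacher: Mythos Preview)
Your reduction to $Q'$ via Lemma~\ref{summand}, the passage to $A\times(\p)^{n}$ via the Cartesian square of Corollary~\ref{Hilbert scheme of P1 and S}, and the identification of $w^{*}\sigma^{*}(l^{n}\times id)_{*}F^{\boxtimes n}$ with the underived external product $G^{\boxtimes n}$ over $A\times\hig^{(-n)}$ are all correct and match the paper. You have also correctly located the obstacle: $G$ is not flat over $A\times\hig^{(-n)}$, so K\"unneth does not apply to $G^{\boxtimes n}$ directly. The gap is in your proposed fix. The stratification-and-Noetherian-induction sketch is not a proof: over the closed locus where $x$ is a global eigenvalue of $\phi_{2}$ the map $xI-\phi_{2}$ drops rank identically on $\p$, so $G$ there is no longer finite over the base, the same non-flatness persists after restriction, and there is no evident induction step. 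Nor have you explained how derived base change to the strata controls $R^{i}\phi'_{*}$ on the total space---restriction to a closed stratum introduces Tor terms that you have not bounded.

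The paper avoids this entirely by exploiting an ingredient you already wrote down but did not use in the right place: the two-term locally free resolution $q^{*}F(-n)\to q^{*}F$ of $F$ on $S$ from Lemma~\ref{resolution}. Take its $n$-fold external power on $S^{n}\times\hig^{(-n)}$ and pull back along $\sigma$ and then along the smooth map $w$. The resulting complex $K$ is a genuine resolution of $\widetilde{Q}''$: the key input is that $L\sigma^{*}F^{\boxtimes n}\simeq\sigma^{*}F^{\boxtimes n}$ (Proposition~4.2 of $[1]$), and $w$ is flat. Each term $K^{-p}$ is then a direct sum of line bundles $O(a_{1},\dots,a_{n})$ on $(\p)^{n}$ with exactly $p$ of the $a_{i}$ equal to $-2n$ and the remaining $n-p$ equal to $-n$ (Lemma~\ref{K}); after twisting by $O_{\p}(n-1+k)^{\boxtimes n}$ the degrees become $k-n-1$ and $k-1$, so for $k\geq 0$ one has $R^{i}(\phi\eta)_{*}\bigl(K^{-p}\otimes O_{\p}(n-1+k)^{\boxtimes n}\bigr)=0$ for $i>p$ by ordinary K\"unneth for line bundles. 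Lemma~\ref{spectral sequence} then finishes. In short: do not try to K\"unneth $G$; resolve $G^{\boxtimes n}$ by external products of line bundles and run the hypercohomology spectral sequence instead.
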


In fact, by Corollary~\ref{summand}, we see that $Q$ is a direct summmand of $\psi_{*}(Q')$, hence the previous lemma is implied by the following (Notice that since the image of $v$ is contained in $\HS^{' n}$, by Proposition~\ref{rewrite using flag} we only need to work with $\Fl^{'n}$):

\begin{lemma}
Consider:
$$\xymatrix{
A\times(\p)^{n}\times\hig^{(-n)} \ar[r]^{w} \ar[d]^{\eta} & \Fl^{' n}\times\hig^{(-n)} \ar[d]^{\psi}\\
A\times\Pn\times\hig^{(-n)} \ar[r]^{v} \ar[d]^{\phi} & \HS^{' n}\times\hig^{(-n)}\\
A\times\hig^{(-n)}\\
}
$$
Then we have $R^{i}\phi_{*}(\eta_{*}(w^{*}(Q'))\otimes O(k))=0$ for $k\geq 0$ and $i>0$.

\end{lemma}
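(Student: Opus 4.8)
The plan is to exploit the external--product structure of $Q'$ over the flag Hilbert scheme of $\p$, reduce the statement to a cohomological vanishing along a $\p$--bundle, and finish by a degree count on $\p$. First, since $\eta$ is finite (so $R\eta_{*}=\eta_{*}$) and $\eta^{*}O_{\Pn}(1)\simeq O_{\p}(1)^{\boxtimes n}$, the projection formula gives
$$\phi_{*}\bigl(\eta_{*}(w^{*}Q')\otimes O(k)\bigr)\simeq R(\phi\circ\eta)_{*}\bigl(w^{*}Q'\otimes O_{\p}(k)^{\boxtimes n}\bigr),$$
where $\phi\circ\eta\colon A\times(\p)^{n}\times\hig^{(-n)}\to B:=A\times\hig^{(-n)}$ is simply the projection forgetting the $(\p)^{n}$--factor. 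So it suffices to prove $R^{i}(\phi\eta)_{*}\bigl(w^{*}Q'\otimes O_{\p}(k)^{\boxtimes n}\bigr)=0$ for $i>0$ and $k\geq 0$.

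Next I would compute $w^{*}Q'$ explicitly. Using the Cartesian square of Corollary~\ref{Hilbert scheme of P1 and S} and the fact that over the image of $v$ the isospectral Hilbert scheme is the flag Hilbert scheme of $\p$, the composite $A\times(\p)^{n}\times\hig^{(-n)}\xrightarrow{w}\Fl^{' n}\times\hig^{(-n)}\xrightarrow{\sigma_{n}\times id}S^{n}\times\hig^{(-n)}$ is the $n$--fold fibre product over $B$ of the single map $\mu\colon A\times\p\times\hig^{(-n)}\to S\times\hig^{(-n)}$ sending $(a,p,h)$ to $(\iota_{x(a)}(p),h)$, where $\iota_{x}\colon\p\hookrightarrow S$ is the section of $S\to\p$ determined by $x\in H^{0}(O(n))$ and $x(a)$ is the top--left entry of $a\in A$. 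Since $(l^{n}\times id)_{*}F^{\boxtimes n}\simeq(l_{*}F)^{\boxtimes n}$ by Künneth (everything being flat over $H$), pulling back along this composite turns it into $(\mu^{*}l_{*}F)^{\boxtimes n}$, the external product over $B$. For the determinant twist, Corollary~\ref{Hilbert scheme of P1 and S}(3) together with Lemma~\ref{det} give $v^{*}p_{1}^{*}\det(\mathcal{A})^{-1}\simeq O_{\Pn}(n-1)$, whose pullback by $\eta$ is $O_{\p}(n-1)^{\boxtimes n}$. Hence $w^{*}Q'\otimes O_{\p}(k)^{\boxtimes n}\simeq G^{\boxtimes n}$ with $G:=\mu^{*}l_{*}F\otimes O_{\p}(n-1+k)$, an external product over $B$. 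By the relative Künneth formula for the smooth proper projection $p\colon A\times\p\times\hig^{(-n)}\to B$ one then has $R(\phi\eta)_{*}(G^{\boxtimes n})\simeq(Rp_{*}G)^{\otimes^{L}_{O_{B}}n}$; since the fibres of $p$ are curves, $Rp_{*}G$ lives in cohomological degrees $0$ and $1$, so it is enough to show $R^{1}p_{*}G=0$ (then the derived tensor power lives in non-positive degrees).

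Finally, by cohomology and base change in the top degree, $R^{1}p_{*}G=0$ follows once $H^{1}(\p,G_{b})=0$ for every closed point $b=\bigl((x,y,z,w),(E_{2},\phi_{2})\bigr)\in B$. Here $G_{b}\simeq\iota_{x}^{*}(l_{*}F_{2})\otimes O_{\p}(n-1+k)$, where $F_{2}$ is $(E_{2},\phi_{2})$ regarded as a sheaf on its spectral curve $C_{2}\subseteq S$ and $E_{2}\simeq O(-n)\oplus O(-n)$. Pulling the resolution of Lemma~\ref{resolution} back along the section $\iota_{x}$ (which carries the tautological section $T$ of $q^{*}O(n)$ to $x$) identifies $\iota_{x}^{*}l_{*}F_{2}$ with $\mathrm{coker}\bigl(E_{2}\otimes O(-n)\xrightarrow{x-\phi_{2}}E_{2}\bigr)$, a quotient of $E_{2}\simeq O(-n)^{\oplus 2}$. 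Its torsion subsheaf contributes nothing to $H^{1}$, while its torsion-free quotient $L_{1}$ is a line bundle which is a quotient of $O(-n)^{\oplus 2}$; thus $L_{1}(n)$ is globally generated and $\deg L_{1}\geq -n$. Therefore $H^{1}(\p,G_{b})\simeq H^{1}\bigl(\p,L_{1}(n-1+k)\bigr)=0$, because $\deg L_{1}(n-1+k)\geq k-1\geq -1$. Combined with the reduction via Lemma~\ref{summand}, this also yields Lemma~\ref{cohomological vanishing}. The hard part is the middle step: carefully tracking the identifications so that $w^{*}Q'$ genuinely becomes an external product over $B$ (in particular the description of $\sigma_{n}$ and of the isospectral Hilbert scheme over $\Pn\times V$, and the behaviour of the plain pullback $\sigma_{n}^{*}$); once that is in place, the degree bound $\deg L_{1}\geq -n$ — which is exactly what makes the $O(n-1)$ coming from $\det(\mathcal{A})^{-1}$ combine with $k\geq 0$ — finishes the argument.
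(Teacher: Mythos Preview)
Your argument is correct in outline and reaches the same conclusion as the paper by a somewhat different route, but there is one genuine gap and one minor slip.

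\textbf{The gap.} In your K\"unneth step you assert $R(\phi\eta)_{*}(G^{\boxtimes n})\simeq (Rp_{*}G)^{\otimes^{L}n}$. K\"unneth gives this for the \emph{derived} external product $G^{\boxtimes^{L}n}$, so you need the underived product $G^{\boxtimes n}$ (which is what $w^{*}Q'$ actually is) to coincide with the derived one. Since $G=\mu^{*}l_{*}F$ is not flat over $B$ (e.g.\ at points $b$ where $\phi_{2}=x\cdot\mathrm{id}$ the fibre $G_{b}$ jumps to $O(-n)^{\oplus 2}$), this is not automatic. It is, however, exactly the content of the paper's Lemma~\ref{resolution Q'}: the two--term locally free complex $K=w^{*}\sigma^{*}(q^{*}F(-n)\to q^{*}F)^{\boxtimes n}$ is a \emph{resolution} of $w^{*}\sigma^{*}F^{\boxtimes n}$, which says precisely that the higher Tor's vanish. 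The paper deduces this from Arinkin's result $L\sigma_{n}^{*}F^{\boxtimes n}\simeq\sigma_{n}^{*}F^{\boxtimes n}$ (Proposition~4.2 of $[1]$). You flag ``the behaviour of the plain pullback $\sigma_{n}^{*}$'' as the hard part, but identifying $w^{*}Q'$ with the naive $G^{\boxtimes n}$ is in fact formal; the real issue is this Tor vanishing, and you must invoke the same input.

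\textbf{The slip.} In the fibrewise step you claim the torsion--free quotient $L_{1}$ of $\iota_{x}^{*}l_{*}F_{2}=\mathrm{coker}(x-\phi_{2})$ is a line bundle. When $\phi_{2}=x\cdot\mathrm{id}_{E_{2}}$ the cokernel is all of $E_{2}\simeq O(-n)^{\oplus 2}$, so $L_{1}$ has rank $2$. The fix is immediate: any torsion--free quotient of $O(-n)^{\oplus 2}$ is a direct sum of line bundles each of degree $\geq -n$, so $H^{1}$ of its twist by $O(n-1+k)$ still vanishes for $k\geq 0$.

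\textbf{Comparison with the paper.} The paper resolves $w^{*}Q'$ explicitly by $K\otimes O_{\p}(n-1)^{\boxtimes n}$, observes that each $K^{-p}$ is a sum of line bundles $O(a_{1},\dots,a_{n})$ with exactly $p$ of the $a_{i}$ equal to $-2n$ and the rest equal to $-n$, and then checks $R^{i}(\phi\eta)_{*}(K^{-p}\otimes O(n-1+k)^{\boxtimes n})=0$ for $i>p$ by K\"unneth for line bundles; a spectral sequence finishes. Your route replaces this term--by--term count by a single--factor calculation via K\"unneth. Both arguments rest on the same key input (that $K$ is a resolution, equivalently $L\sigma_{n}^{*}=\sigma_{n}^{*}$ on $F^{\boxtimes n}$); once that is granted, your packaging is slightly cleaner, while the paper's is more explicit about where the degree bound $-n$ (and hence the twist by $O(n-1)$ from $\det(\mathcal{A})^{-1}$) enters.
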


The rest of this subsection is devoted to the proof of this lemma. The strategy is the following: First we are going to construct a locally free resolution of $w^{*}(Q')$, each term of the resolution have an explicit description. Then we use the following standard fact to finish the proof:
\begin{lemma}\label{spectral sequence}
Let $X\xrightarrow{f}Y$ be a proper morphism of schemes, and $M$ a coherent sheaf on $X$. Suppose we have a resolution of $M$ of the form:
$$0\rightarrow C^{-n}\rightarrow\cdots\rightarrow C^{0}\rightarrow 0$$
If $R^{i}f_{*}C^{-j}=0$ for $i>j$, then $R^{p}f_{*}M=0$ for $p>0$.
\end{lemma}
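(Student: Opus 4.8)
The statement is the standard descent of cohomological vanishing along a bounded resolution, so the plan is short. First I would unwind the hypothesis: saying that $0\to C^{-n}\to\cdots\to C^{0}\to 0$ is a resolution of $M$ means that the complex $C^{\bullet}$, placed in cohomological degrees $-n,\dots,0$, is quasi-isomorphic to $M$; equivalently the sequence $0\to C^{-n}\to\cdots\to C^{-1}\to C^{0}\to M\to 0$ is exact. In particular $Rf_{*}M\simeq Rf_{*}C^{\bullet}$, and since $f$ is proper all the higher direct images in sight are coherent.

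The cleanest route is the hypercohomology spectral sequence
\[
E_{1}^{s,t}=R^{t}f_{*}C^{s}\ \Longrightarrow\ R^{s+t}f_{*}M ,
\]
whose only possibly nonzero columns are $s\in\{-n,\dots,0\}$. The assumption $R^{i}f_{*}C^{-j}=0$ for $i>j$ says, after the substitution $s=-j$, $t=i$, precisely that $E_{1}^{s,t}=0$ whenever $t>-s$, i.e. whenever $s+t>0$. Hence for every $p>0$ the entire diagonal $\{s+t=p\}$ already vanishes on the $E_{1}$ page, so a fortiori $E_{\infty}^{s,t}=0$ there, and by convergence $R^{p}f_{*}M=0$ for all $p>0$.

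If one prefers to avoid spectral sequences, the same conclusion follows by an elementary induction: break the resolution into short exact sequences $0\to N_{j+1}\to C^{-j}\to N_{j}\to 0$ with $N_{0}=M$ and $N_{n}=C^{-n}$, and prove by descending induction on $j$ that $R^{i}f_{*}N_{j}=0$ for $i\geq j+1$. The base case $j=n$ is $R^{i}f_{*}C^{-n}=0$ for $i>n$; the inductive step uses the long exact sequence of higher direct images associated to $0\to N_{j+1}\to C^{-j}\to N_{j}\to 0$ together with $R^{i}f_{*}C^{-j}=0$ for $i>j$; and the case $j=0$ is the assertion. There is no genuine obstacle in either version — this is a homological lemma recorded only for the reader's convenience — and the only point requiring a little care is keeping track of the index shifts.
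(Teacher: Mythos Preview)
Your proof is correct. The paper does not actually supply a proof of this lemma: it is stated as a ``standard fact'' and left to the reader, with the label \texttt{spectral sequence} signaling exactly the argument you give. Your hypercohomology spectral sequence computation (and the alternative induction on the syzygies) is the intended proof, and your bookkeeping of the index shift $s=-j$, $t=i$ is accurate.
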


The resolution is constructed in the following way. Recall that we have sheaf $Q'$ on $\Fl'^{n}\times\hig^{(-n)}$ defined by formula~\ref{Q'} in subsection $1.3$. Denote the projection $S\rightarrow \p$ by $q$. For any Higgs bundle $F$ with value in $O(n)$, if we also consider it as a coherent sheaf on the surface $S$, then by Lemma~\ref{resolution} we have a locally free resolution of $F$ on $S$ given by:
$$0\rightarrow q^{*}F(-n)\rightarrow q^{*}F\rightarrow F\rightarrow 0$$
We have the following lemma:
\begin{lemma}\label{resolution Q'}
Let $F$ be the universal Higgs bundle on $\p\times\hig^{(-n)}$, also considered as a coherent sheaf on $S$. Consider:
$$A\times(\p)^{n}\times\hig^{(-n)}\xrightarrow{w}\Fl'^{n}\times\hig^{(-n)}\xrightarrow{\sigma} S^{n}\times\hig^{(-n)}$$
Set $K=w^{*}\sigma^{*}(q^{*}F(-n)\rightarrow q^{*}F)^{\boxtimes n}$. Then $K\otimes O_{\p}(n-1)^{\boxtimes n}$ is a locally free resolution of $w^{*}(Q')$.
\end{lemma}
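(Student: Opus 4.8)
The plan is to produce the resolution by taking an $n$-fold external tensor product of the relative resolution of Lemma~\ref{resolution} and pulling it back along the morphism induced by $\sigma\circ w$, and then to read off the twisting line bundle $O_{\p}(n-1)^{\boxtimes n}$ from Corollary~\ref{Hilbert scheme of P1 and S} and Lemma~\ref{det}. Write $\tau=(\sigma_{n}\times id)\circ w\colon A\times(\p)^{n}\times\hig^{(-n)}\to S^{n}\times\hig^{(-n)}$, so that in the notation of the statement $K=\tau^{*}\big((q^{*}F(-n)\to q^{*}F)^{\boxtimes n}\big)$. On $S\times\hig^{(-n)}$ the universal Higgs bundle, viewed as a sheaf on $S$, has the two‑term locally free resolution $C^{\bullet}=[\,q^{*}F(-n)\xrightarrow{\,T-q^{*}\phi\,}q^{*}F\,]$ of Lemma~\ref{resolution}, with $T$ the tautological section of $q^{*}\li$; thus the complex in the statement is $\mathcal{K}=\bigotimes_{i=1}^{n}\mathrm{pr}_{i}^{*}C^{\bullet}$ on $S^{n}\times\hig^{(-n)}$ and $K=\bigotimes_{i=1}^{n}(\mathrm{pr}_{i}\circ\tau)^{*}C^{\bullet}$.

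The first, routine, point is that $\mathcal{K}$ is a locally free resolution of $(l^{n}\times id)_{*}F^{\boxtimes n}$ and that, unwinding definitions, this sheaf equals $\bigotimes_{i}\mathrm{pr}_{i}^{*}F$ (here $l^{n}\times id$ is a closed immersion, being the base change of $\mathcal{C}^{n}\hookrightarrow S^{n}\times H$ along $\hig^{(-n)}\to H$). Indeed, each $C^{\bullet}$ is a bounded complex of vector bundles, so $\mathcal{K}\simeq\bigotimes_{i}^{L}\mathrm{pr}_{i}^{*}F$, and since $F$ is flat over $\hig^{(-n)}$ an easy induction shows the $\mathrm{pr}_{i}^{*}F$ to be pairwise Tor‑independent on $S^{n}\times\hig^{(-n)}$ (each is pulled back from a single $S$‑factor), so no higher cohomology appears.

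The substantive step is that $K=\tau^{*}\mathcal{K}$ is again a resolution, i.e.\ that $\tau$ is Tor‑independent with $(l^{n}\times id)_{*}F^{\boxtimes n}$. Here I would use the explicit description of $w$ from Corollary~\ref{Hilbert scheme of P1 and S} together with Proposition~\ref{rewrite using flag}: the $i$‑th component $\mathrm{pr}_{i}\circ\tau$ factors through the $i$‑th projection $A\times(\p)^{n}\times\hig^{(-n)}\to A\times\p\times\hig^{(-n)}$ followed by the morphism $m\colon A\times\p\times\hig^{(-n)}\to S\times\hig^{(-n)}$ embedding $\p$ into the total space $S$ of $\li$ via the tautological section $x\in H^{0}(\p\times A,\,O(n))$ (the upper‑left entry of the matrix defining $A$). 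Hence $m^{*}C^{\bullet}$ is the two‑term complex $[\,F(-n)\xrightarrow{\,x\cdot\mathrm{id}-\phi\,}F\,]$ on $A\times\p\times\hig^{(-n)}$ (pullbacks suppressed, $x$ denoting multiplication by the section $x$), and the crux is that this differential is injective. Its determinant is the section $x^{2}-\mathrm{tr}(\phi)\,x+\det(\phi)$ of $O(2n)$, which is not a zero divisor: the scheme $A\times\p\times\hig^{(-n)}$ is irreducible (a product of an affine space, $\p$, and the irreducible stack $\hig^{(-n)}$) and the section is nonzero on it, since it specializes to $x^{2}$ at the point where $\phi=0$. Granting this, each $(\mathrm{pr}_{i}\circ\tau)^{*}C^{\bullet}$ is a locally free resolution of $(\mathrm{pr}_{i}\circ\tau)^{*}F$, these $n$ sheaves are again pairwise Tor‑independent (now each is pulled back from a single $\p$‑factor), and therefore $K$ is a locally free resolution of $\bigotimes_{i}(\mathrm{pr}_{i}\circ\tau)^{*}F=\tau^{*}\big((l^{n}\times id)_{*}F^{\boxtimes n}\big)$. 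This injectivity/irreducibility argument — precisely where restricting to the single component $\hig^{(-n)}$ matters, so that $x$ from the $A$‑factor is generically not a root of the characteristic polynomial of the Higgs field from the $\hig^{(-n)}$‑factor — is the step I expect to require the most care.

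Finally I would identify the twist. By formula~\ref{Q'}, $w^{*}Q'=\tau^{*}\big((l^{n}\times id)_{*}F^{\boxtimes n}\big)\otimes w^{*}\big((\psi\times id)^{*}p_{1}^{*}\de(\mathcal{A})^{-1}\big)$, so it remains to identify the last line bundle with $O_{\p}(n-1)^{\boxtimes n}$. By the Cartesian square of Corollary~\ref{Hilbert scheme of P1 and S} the composite $A\times(\p)^{n}\to\Fl^{' n}\xrightarrow{\psi}\HS^{' n}$ equals $A\times(\p)^{n}\xrightarrow{\eta}A\times\Pn\xrightarrow{v}\HS^{' n}$; part $(3)$ of that corollary gives $v^{*}\de(\mathcal{A})\simeq\de(p_{*}O_{D})$, which is $O_{\Pn}(1-n)$ by Lemma~\ref{det}, and $\eta^{*}O_{\Pn}(1)\simeq O_{\p}(1)^{\boxtimes n}$, whence $w^{*}\big((\psi\times id)^{*}p_{1}^{*}\de(\mathcal{A})^{-1}\big)\simeq O_{\p}(n-1)^{\boxtimes n}$. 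Combining with the previous step, $K\otimes O_{\p}(n-1)^{\boxtimes n}$ is a locally free resolution of $w^{*}(Q')$, as asserted.
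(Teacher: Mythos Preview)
Your identification of the twist $O_{\p}(n-1)^{\boxtimes n}$ via Corollary~\ref{Hilbert scheme of P1 and S} and Lemma~\ref{det} is correct and matches the paper exactly. But your route to the exactness of $K$ diverges from the paper's and contains a gap. The paper does not analyze $\tau=\sigma\circ w$ directly: it first invokes Proposition~4.2 of [1] to obtain $L\sigma^{*}F^{\boxtimes n}\simeq\sigma^{*}F^{\boxtimes n}$ on $\Fl^{'n}\times\hig^{(-n)}$, and then observes that $w$ is \emph{smooth} (being the base change of the smooth morphism $v$ of Lemma~\ref{factorization to Pn} along $\psi$, via the Cartesian square of Corollary~\ref{Hilbert scheme of P1 and S}), so $w^{*}$ preserves exactness. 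The delicate Tor computation is thus absorbed into the cited structural result about the isospectral Hilbert scheme.

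Your direct computation through $m\colon A\times\p\times\hig^{(-n)}\to S\times\hig^{(-n)}$ is a reasonable alternative, and your injectivity argument for the single complex $m^{*}C^{\bullet}$ is fine. The gap is in the claim that the sheaves $(\mathrm{pr}_{i}\circ\tau)^{*}F$ are Tor-independent because ``each is pulled back from a single $\p$-factor''. On $S^{n}\times\hig^{(-n)}$ you correctly used that $F$ is flat over $\hig^{(-n)}$; the parallel here would need $m^{*}F$ to be flat over $A\times\hig^{(-n)}$, which is \emph{false}: on the locus $B_{0}\subset A\times\hig^{(-n)}$ where $x$ coincides with an eigenvalue section of $\phi$, the cokernel of $x\cdot\mathrm{id}-\phi$ acquires positive generic rank on $\p$ rather than being a length-$2n$ torsion sheaf. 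Since the $n$ sheaves share the base $A\times\hig^{(-n)}$ and are not flat over it, Tor-independence is not automatic. The conclusion is still true, but to prove it you must show that the determinant sections $f_{i}=\det(x\cdot\mathrm{id}-\phi)|_{\p_{i}}$ form a regular sequence on $A\times(\p)^{n}\times\hig^{(-n)}$ (equivalently, that $B_{0}$ has codimension at least $n$; in fact it has codimension $2n+1$), and then run an induction using Cohen--Macaulayness of the successive tensor products. That dimension count is precisely what your sketch omits, and it is the step the paper's appeal to [1] and to the smoothness of $w$ circumvents.
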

\begin{proof}
Consider:
$$\xymatrix{
A\times(\p)^{n}\times\hig^{(-n)} \ar[r]^{w} \ar[d]^{\eta} & \Fl^{' n}\times\hig^{(-n)} \ar[d]^{\psi}\\
A\times\Pn\times\hig^{(-n)} \ar[r]^{v} \ar[d]^{\phi} & \HS^{' n}\times\hig^{(-n)}\\
A\times\hig^{(-n)}\\
}
$$
Since $q^{*}F(-n)\rightarrow q^{*}F$ is a resolution of $F$ on $S\times\hig^{(-n)}$, hence
$$(q^{*}F(-n)\rightarrow q^{*}F)^{\boxtimes n}$$
is a resolution of $F^{\boxtimes n}$ on $S^{n}\times\hig^{(-n)}$. It is proved in $[1]$(Proposition $4.2$) that
$$L\sigma^{*}F^{\boxtimes n}\simeq \sigma^{*}F^{\boxtimes n}$$
Hence
$$\sigma^{*}((q^{*}F(-n)\rightarrow q^{*}F)^{\boxtimes n})$$
is a resolution of $\sigma^{*}(F^{\boxtimes n})$ on $\Fl^{' n}\times\hig^{(-n)}$. By Lemma~\ref{factorization to Pn}, $v$ is smooth. Since the square is Cartesian, $w$ is also smooth, hence
$$K=w^{*}\sigma^{*}(q^{*}F(-n)\rightarrow q^{*}F)^{\boxtimes n}$$
is a resolution of $w^{*}\sigma^{*}(F^{\boxtimes n})$. By construction,
$$Q'\simeq \sigma^{*}(F^{\boxtimes n})\otimes\psi^{*}(p_{1}^{*}\de(\mathcal{A})^{-1})$$
Note that
$$w^{*}\psi^{*}(p_{1}^{*}\de(\mathcal{A})^{-1})\simeq \eta^{*}v^{*}(\de(\mathcal{A})^{-1})$$
Also we have
$$\eta^{*}v^{*}(\de(\mathcal{A})^{-1})\simeq \eta^{*}(\de(O_{D})^{-1})$$
by part $(3)$ Corollary~\ref{Hilbert scheme of P1 and S}, where $D$ is the universal subscheme of $\p$ over $\Pn$. Also in Lemma~\ref{det} we already showed that $\de(O_{D})^{-1}\simeq O(n-1)$, hence the result follows from this.

\end{proof}

Let us denote the line bundle $O(a_{1})\boxtimes\cdots\boxtimes O(a_{n})$ on $(\p)^{n}$ by $O(a_{1},\cdots,a_{n})$. We have the following explicit description about the terms in $K$:
\begin{lemma}\label{K}
The complex $K$ sits in nonpositive degree and each term $K^{-p}$ is the direct sum of line bundles of the form $O(a_{1},\cdots,a_{n})$ such that there are $p$ of the $a_{i}$'s equal to $-2n$ and $n-p$ of the $a_{i}$'s equal to $-n$.
\end{lemma}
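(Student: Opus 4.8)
The plan is to unwind the definition of $K$ and read off the terms directly. By Lemma~\ref{resolution Q'} we have $K = w^{*}\sigma^{*}\big(q^{*}F(-n)\to q^{*}F\big)^{\boxtimes n}$, so the first step is to pin down the two-term complex $q^{*}F(-n)\to q^{*}F$ on $S\times\hig^{(-n)}$. Over $\hig^{(-n)}$ the underlying vector bundle of the universal Higgs bundle $F$ is $O(-n)\oplus O(-n)$ on $\p$, so (since $\li = O(n)$) we get $q^{*}F\simeq q^{*}O(-n)^{\oplus 2}$ in degree $0$ and $q^{*}F(-n) = q^{*}F\otimes q^{*}\li^{-1}\simeq q^{*}O(-2n)^{\oplus 2}$ in degree $-1$; any twist by a line bundle pulled back from $\hig^{(-n)}$ is harmless for the statement and for its later use, so I suppress it.

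Next I would expand the $n$-fold external tensor product. By the definition of $\boxtimes$ of complexes, the degree $-p$ term of $\big(q^{*}F(-n)\to q^{*}F\big)^{\boxtimes n}$ is the direct sum, over $p$-element subsets $I\subseteq\{1,\dots,n\}$, of the external tensor product that has the degree $-1$ term $q^{*}O(-2n)^{\oplus 2}$ in the slots indexed by $I$ and the degree $0$ term $q^{*}O(-n)^{\oplus 2}$ in the remaining slots; in particular the complex is concentrated in degrees $-n,\dots,0$. Distributing the internal direct sums, this degree $-p$ term is a direct sum (with each summand occurring $2^{n}$ times) of sheaves of the form $\boxtimes_{i=1}^{n} q^{*}O(b_{i})$ on $S^{n}$, where $b_{i} = -2n$ for $i\in I$ and $b_{i} = -n$ for $i\notin I$.

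Finally I would pull back along $\sigma\circ w$. The crucial observation is that the composite
$$A\times(\p)^{n}\times\hig^{(-n)}\xrightarrow{w}\Fl'^{n}\times\hig^{(-n)}\xrightarrow{\sigma}S^{n}\times\hig^{(-n)},$$
followed by the $i$-th projection $S^{n}\to S$ and then by $q\colon S\to\p$, is exactly the $i$-th projection $(\p)^{n}\to\p$: by the explicit description of $w$ in Lemma~\ref{factorization to Pn} and Corollary~\ref{Hilbert scheme of P1 and S}, $w$ embeds the universal length $n$ subscheme of $\p$ into $S$ using the section $x$ coming from $A$, an operation that by construction lies over the identity of $\p$, and the flag structure on $(\p)^{n}$ matches the flag structure used to build $\sigma$. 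Hence $(\sigma\circ w)^{*}\big(\boxtimes_{i=1}^{n}q^{*}O(b_{i})\big)\simeq O(b_{1},\dots,b_{n})$ as a sheaf on $A\times(\p)^{n}\times\hig^{(-n)}$, and applying this to each summand of each term computed above yields the assertion: $K$ sits in degrees $-n,\dots,0$ and $K^{-p}$ is a direct sum of copies of the line bundles $O(a_{1},\dots,a_{n})$ having $p$ of the $a_{i}$ equal to $-2n$ and $n-p$ equal to $-n$.

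The only genuine content, and the step I expect to need the most care, is this last identification of the composite to $S^{n}$: one must chase through Lemma~\ref{factorization to Pn} and Corollary~\ref{Hilbert scheme of P1 and S} to confirm that pulling back $q^{*}O(m)$ from the $i$-th copy of $S$ recovers $O(m)$ on the $i$-th copy of $\p$, with the $A$- and $\hig^{(-n)}$-directions contributing nothing. The remaining steps are routine bookkeeping with external tensor products of two-term complexes and of free modules.
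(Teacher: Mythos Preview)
Your proposal is correct and follows essentially the same approach as the paper: the paper's proof also rests on the single observation that the composite $A\times(\p)^{n}\times\hig^{(-n)}\xrightarrow{w}\Fl'^{n}\times\hig^{(-n)}\xrightarrow{\sigma}S^{n}\times\hig^{(-n)}\xrightarrow{q^{n}}(\p)^{n}\times\hig^{(-n)}$ agrees with the projection, together with $F\simeq O(-n)\oplus O(-n)$ on $\hig^{(-n)}$. Your write-up is more explicit about the bookkeeping of the external tensor product and the possible $\hig^{(-n)}$-twist, but the content is the same.
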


\begin{proof}
This follows from the fact that the composition:
$$A\times(\p)^{n}\times\hig^{(-n)}\xrightarrow{w} \Fl'^{n}\times\hig^{(-n)}\xrightarrow{\sigma} S^{n}\times\hig^{(-n)}\xrightarrow{q} (\p)^{n}\times\hig^{(-n)}$$
agrees with the projection:
$$A\times(\p)^{n}\times\hig^{(-n)}\rightarrow (\p)^{n}\times\hig^{(-n)}$$
Since $K=w^{*}\sigma^{*}(F(-n)\rightarrow F)^{\boxtimes n}$, so each term in the complex $K$ is the pullback of vector bundles from the projection
$$A\times(\p)^{n}\times\hig^{(-n)}\rightarrow (\p)^{n}\times\hig^{(-n)}$$
Also $F\simeq O(-n)\oplus O(-n)$ by the definition of $\hig^{(-n)}$. Hence the assertion follows from this.
\end{proof}

Now we can prove Lemma~\ref{cohomological vanishing} from the description of $K$:
\begin{proof}
In Lemma~\ref{resolution Q'} we already proved that $K\otimes O_{\p}(n-1)^{\boxtimes n}$ is a resolution of $w^{*}(Q')$. So from spectral sequence argument, we only need to prove
$$R^{i}(\phi\eta)_{*}(K^{-p}\otimes O_{\p}(k)^{\boxtimes n}\otimes O_{\p}(n-1)^{\boxtimes n})=0$$
for all $i>p$ and $k\geq 0$ by Lemma~\ref{spectral sequence}. This follows from the previous lemma.

\end{proof}

\section{Proof of the main theorem}

In the last section we are going to prove Part $(2)$ of Theorem~\ref{main theorem}. First let us reformulate it in the following way: Recall that in Corollary~\ref{part 1 of the main theorem} we already showed that $g^{*}(Q)$ is a maximal Cohen-Macaulay sheaf on $\blA\times_{H}\hig^{(-n)}$. Denote it by $M$. Then we can restate part $(2)$ of the main theorem in the following way:
\begin{theorem}\label{reformulation of main theorem}
Consider:
$$\xymatrix{
\blA\times_{H}\hig^{(-n)} \ar[d]^{\pi}\\
A\times_{H}\hig^{(-n)}\\
}
$$
We have that:
\begin{enumerate1}
\item $R\pi_{*}(M)$ is a sheaf, i.e. $R^{i}\pi_{*}(M)=0$ for $i>0$.
\item $\RH(R\pi_{*}(M),O)$ sits in degree $0$.
\end{enumerate1}
\end{theorem}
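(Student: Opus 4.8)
The strategy for both parts is to resolve $M$ (resp.\ a twist of its dual) on $\blA\times_{H}\hig^{(-n)}$ by line bundles that are restrictions of $O_{\mathbf{P}^{n}}(j)$ from the projective bundle $\mathbf{P}(\mathcal{E})=\mathbf{P}^{n}\times A$ of Proposition~\ref{blowup as regular embedding} (into which $\blA$ is regularly embedded, the triviality being Lemma~\ref{factorization to Pn}), and then to play the blowup push\nobreakdash-forward estimate of Lemma~\ref{pushforward of bl} against the cohomological vanishing of Lemma~\ref{cohomological vanishing} and a dual companion of it. Throughout I use: (i) $M=g^{*}Q$ is the restriction to $\blA\times_{H}\hig^{(-n)}$ of the maximal Cohen--Macaulay sheaf $p^{*}Q$ on $W$ (Corollary~\ref{pullback of Q}), and $\blA\times_{H}\hig^{(-n)}$ is cut out regularly inside $W$ by a section of $\mathcal{F}|_{W}$, where $\mathcal{F}=f_{*}(K(C_{2}))$ is the bundle of Lemma~\ref{Koszul resolution on W} sitting in $0\to O(E)^{n+1}\to\mathcal{F}\to O^{n+1}\to 0$; (ii) $O(-E)$ is the pullback of $O_{\mathbf{P}^{n}}(1)$ (combine Proposition~\ref{moduli interpretation of bl} with Lemma~\ref{description of kernel} and Lemma~\ref{det}); (iii) $\pi$ is the blowup of the Gorenstein scheme $A\times_{H}\hig^{(-n)}$ along the codimension $n+1$ regular embedding $Z\times_{H}\hig^{(-n)}$ (blowup commutes with the flat base change $A\times_{H}\hig^{(-n)}\to A$, as $\hig^{(-n)}\to H$ is flat), so by Proposition~\ref{dualizing sheaf of bl} its relative dualizing complex is the line bundle $O(nE)$ in degree $0$, and $O(nE)$ is the pullback of $O_{\mathbf{P}^{n}}(-n)$.

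\textbf{Part (1).}
Write $\iota\colon\blA\times_{H}\hig^{(-n)}\hookrightarrow\blA\times\hig^{(-n)}$ and $\overline{\pi}\colon\blA\times\hig^{(-n)}\to A\times\hig^{(-n)}$ for the projection, so $R^{i}\pi_{*}M=0$ iff $R^{i}\overline{\pi}_{*}(\iota_{*}M)=0$. Corollary~\ref{resolution of CM sheaf} applied to the regular section of $\mathcal{F}|_{W}$ shows that $\bigwedge^{\bullet}(\check{\mathcal{F}})\otimes p^{*}Q$ resolves $\iota_{*}M$, and from $0\to O^{n+1}\to\check{\mathcal{F}}\to O(-E)^{n+1}\to 0$ its graded pieces are direct sums of $O(-jE)\otimes p^{*}Q$ with $0\le j\le n+1$, where $O(-jE)$ is the pullback of $O_{\mathbf{P}^{n}}(j)$. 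By Lemma~\ref{spectral sequence} it suffices to show $R^{q}\overline{\pi}_{*}(O(-jE)\otimes p^{*}Q)=0$ for $q>0$, $j\ge 0$. With $i\colon\blA\times\hig^{(-n)}\hookrightarrow\mathbf{P}^{n}\times A\times\hig^{(-n)}$ and $v$ the smooth morphism of Lemma~\ref{factorization to Pn}, we have $O(-jE)\otimes p^{*}Q=Li^{*}(O_{\mathbf{P}^{n}}(j)\otimes v^{*}Q)$ (using $Lp^{*}Q=p^{*}Q$, Corollary~\ref{pullback of Q}), and Lemma~\ref{pushforward of bl} applies to $K=O_{\mathbf{P}^{n}}(j)\otimes v^{*}Q$: its hypothesis $R\phi_{*}(K\otimes O_{\mathbf{P}^{n}}(a))\in D^{\le 0}$ for $a\ge 0$ is exactly Lemma~\ref{cohomological vanishing} with $k=j+a\ge 0$. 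Hence $R^{q}\overline{\pi}_{*}(O(-jE)\otimes p^{*}Q)=0$ for $q>0$, proving (1).

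\textbf{Part (2).}
By part (1), $R\pi_{*}M=\pi_{*}M$, and since $M$ is maximal Cohen--Macaulay on the Gorenstein $\blA\times_{H}\hig^{(-n)}$ (Corollary~\ref{part 1 of the main theorem}), Grothendieck--Serre duality for the proper birational $\pi$ gives
\[
\RH_{A\times_{H}\hig^{(-n)}}(R\pi_{*}M,O)\;\simeq\;R\pi_{*}\bigl(\mathcal{H}om(M,O)\otimes O(nE)\bigr),
\]
so it remains to show the right side sits in degree $0$. Here $\mathcal{H}om(M,O)=g^{*}Q^{\vee}$ with $Q^{\vee}=\RH_{\HuC\times_{H}\hig^{(-n)}}(Q,O)$ maximal Cohen--Macaulay (Proposition~\ref{pullback of CM sheaf}(3)). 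Running the Koszul resolution of part (1) with $Q$ replaced by $Q^{\vee}$ and twisting by $O(nE)=$ pullback of $O_{\mathbf{P}^{n}}(-n)$, the graded pieces become $O((n-j)E)\otimes p^{*}Q^{\vee}=Li^{*}(O_{\mathbf{P}^{n}}(j-n)\otimes v^{*}Q^{\vee})$ for $0\le j\le n+1$, so Lemma~\ref{spectral sequence} and Lemma~\ref{pushforward of bl} reduce the statement to the \emph{companion vanishing}
\[
R^{i}\phi_{*}\bigl(v^{*}Q^{\vee}\otimes O_{\mathbf{P}^{n}}(k)\bigr)=0\qquad\text{for }i>0,\ k\ge -n,
\]
the lower bound $k\ge -n$ being forced precisely by the twist $O(nE)$.

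\textbf{The companion vanishing, and the main obstacle.}
I would prove this by the method of Section~4 applied to the dual. Since $Q$ is a direct summand of $(\psi\times\mathrm{id})_{*}Q'$ with $\psi$ finite flat (Lemma~\ref{summand}), $Q^{\vee}$ is a direct summand of $(\psi\times\mathrm{id})_{*}\bigl(\RH(Q',O)\otimes\omega_{\psi}\bigr)$; pulling this back along the Cartesian square of Corollary~\ref{Hilbert scheme of P1 and S}, using $w^{*}\omega_{\psi}=O_{\p}(n-1)^{\boxtimes n}$ and $v^{*}\de(\mathcal{A})^{-1}=\de(O_{D})^{-1}=O_{\mathbf{P}^{n}}(n-1)$ (Corollary~\ref{Hilbert scheme of P1 and S}(3), Lemma~\ref{det}), and dualizing the explicit locally free resolution of $w^{*}Q'$ from Lemma~\ref{resolution Q'} and Lemma~\ref{K}, presents $v^{*}Q^{\vee}$ — up to the further line bundle twist $\de(\pi_{*}O_{D}(C_{2}))^{-1}$, which is the pullback of $O_{\mathbf{P}^{n}}(-(n+1))$ — as a shift of a finite complex whose $p$\nobreakdash-th term is a push\nobreakdash-forward under $A\times(\p)^{n}\times\hig^{(-n)}\to\Fl^{'n}\times\hig^{(-n)}$ of a direct sum of line bundles $O(a_{1},\dots,a_{n})$ with exactly $p$ of the $a_{i}$ equal to $2n$ and the rest equal to $n$. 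After the extra twist by $O_{\mathbf{P}^{n}}(k)$, $k\ge -n$, one checks by K\"unneth that the higher direct images of these line bundles along $A\times(\p)^{n}\times\hig^{(-n)}\to A\times\hig^{(-n)}$ remain concentrated on a single antidiagonal of the hypercohomology spectral sequence, so $R\phi_{*}(v^{*}Q^{\vee}\otimes O_{\mathbf{P}^{n}}(k))$ stays concentrated in degree $0$; the staircase shape of the resolution (Lemma~\ref{K}) is exactly what makes the bound $k\ge -n$ sufficient. Carrying out this bookkeeping — reconciling the three dualizing twists $O(nE)$, $O_{\p}(n-1)^{\boxtimes n}$ and $\de(\pi_{*}O_{D}(C_{2}))^{-1}$ so that the exponents land in the K\"unneth-vanishing range over the whole interval $k\ge -n$ — is the one genuinely delicate point; everything else is a formal consequence of the geometry of Section~3 and of Lemma~\ref{cohomological vanishing}.
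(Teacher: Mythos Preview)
Your argument for Part~(1) is essentially the paper's: the Koszul resolution $\bigwedge^{\bullet}\check{\mathcal{E}}\otimes p^{*}Q$ of $M$ coming from Lemma~\ref{Koszul resolution on W}, the filtration of $\bigwedge^{p}\check{\mathcal{E}}$ by powers of $O(-E)$, and then Lemma~\ref{pushforward of bl} together with Lemma~\ref{cohomological vanishing}. This is exactly Lemma~\ref{cohomology of Q} plus the paragraph preceding it.

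For Part~(2), however, the paper takes a genuinely different and much shorter route. Rather than proving a ``companion vanishing'' for $Q^{\vee}$ by dualizing the resolution of Section~4 and chasing the three determinant twists, the paper observes that the dual of $M$ is \emph{again a pullback of $M$ itself}. Concretely, using the duality $(\mathrm{id}\times\alpha)^{*}\mathcal{P}\simeq\check{\mathcal{P}}$ of the Poincar\'e sheaf (Proposition~\ref{dual}) together with the equivariance property (Lemma~\ref{equivariance property}) and the identification $q^{*}\mathcal{P}_{O(-n)}\simeq O(nE)$ (Lemma~\ref{preparation for the dual}), one obtains
\[
\RH(M,O(nE))\ \simeq\ (\beta'\alpha')^{*}M,
\]
where $\alpha:F\mapsto\check{F}$ and $\beta:F\mapsto F\otimes O(-n)$ induce an automorphism $\beta'\alpha'$ of $\blA\times_{H}\hig^{(-n)}$ over the corresponding automorphism $\beta''\alpha''$ of $A\times_{H}\hig^{(-n)}$. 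Grothendieck duality then gives $\RH(R\pi_{*}M,O)\simeq R\pi_{*}(\beta'\alpha')^{*}M\simeq(\beta''\alpha'')^{*}R\pi_{*}M$, which is a sheaf by Part~(1). Thus Part~(2) is reduced to Part~(1) in one line, with no new vanishing to prove.

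Your direct approach is not wrong in spirit --- the companion vanishing you isolate is a true statement, and in fact the paper's symmetry argument is precisely a conceptual proof of it --- but the explicit bookkeeping you outline (reconciling $O(nE)$, $\omega_{\eta}\simeq O_{\p}(n-1)^{\boxtimes n}$, and $\det(\pi_{*}O_{D}(C_{2}))^{-1}$, and then checking the K\"unneth ranges over $k\ge -n$) is left as an acknowledged loose end. The paper's method buys you this for free by exploiting the intrinsic symmetry of $\mathcal{P}$, and has the further conceptual payoff of explaining \emph{why} the dualizing twist is exactly $O(nE)$: it is $q^{*}\mathcal{P}_{O(-n)}$, the contribution from moving between $\hig^{(0)}$ and $\hig^{(-n)}$.
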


Assume the next lemma, we can prove part $(1)$:
\begin{proof}
We have a commutative diagram:
$$\xymatrix{
\blA\times_{H}\hig^{(-n)} \ar[r] \ar[d]^{\pi} & \blA\times\hig^{(-n)} \ar[d]^{\pi'}\\
A\times_{H}\hig^{(-n)} \ar[r] & A\times\hig^{(-n)}\\
}
$$
By part $(2)$ of Lemma~\ref{cohomology of Q} and Lemma~\ref{spectral sequence}, we need to prove that
\begin{equation}\label{vanishing}
R^{i}\pi'_{*}(\bigwedge^{p}(\check{\mathcal{E}})\otimes p^{*}(Q))=0
\end{equation}
for $i>p$. Part $(2)$ of Lemma~\ref{cohomology of Q} implies that $\bigwedge^{p}(\check{\mathcal{E}})$ has a filtration with subquotient $O(-kE)$ with $k\geq 0$, hence Part $(1)$ of Lemma~\ref{cohomology of Q} implies equation~\ref{vanishing}.

\end{proof}

\begin{lemma}\label{cohomology of Q}
Recall that the sheaf $p^{*}(Q)$ on $\blA\times\hig^{(-n)}$, which is a maximal Cohen-Macaulay sheaf supported on $W$ by Corollary~\ref{pullback of Q}. Consider:
$$\xymatrix{
\blA\times\hig^{(-n)} \ar[d]^{\pi'}\\
A\times\hig^{(-n)}\\
}
$$
Then we have:
\begin{enumerate1}
\item $R^{i}\pi'_{*}(p^{*}(Q)\otimes O(-aE))=0$ for all $i>0$ and $a\geq 0$.
\item there exist a resolution of $M$ of the form $\bigwedge^{*}(\check{\mathcal{E}})\otimes p^{*}(Q)$ on $\blA\times\hig^{(-n)}$. Moreover, $\mathcal{E}$ sits in an exact sequence of the form:
    $$0\rightarrow O^{n+1}(E)\rightarrow\mathcal{E}\rightarrow O^{n+1}\rightarrow 0$$
\end{enumerate1}
\end{lemma}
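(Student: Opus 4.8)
The plan is to handle part~(2) as a repackaging of Lemmas~\ref{description of kernel} and~\ref{Koszul resolution on W} together with Corollary~\ref{pullback of Q}, and to prove part~(1) by transporting the cohomological vanishing of Lemma~\ref{cohomological vanishing} through the blowup via Lemma~\ref{pushforward of bl}. For part~(2), recall that $M$ is the restriction of $p^{*}(Q)$ from $\blA\times\hig^{(-n)}$ to the closed substack $\blA\times_{H}\hig^{(-n)}$: by Corollary~\ref{pullback of Q} the sheaf $p^{*}(Q)$ is maximal Cohen-Macaulay and supported on $W$, and by Lemma~\ref{Koszul resolution on W} the substack $\blA\times_{H}\hig^{(-n)}$ is the zero locus inside $W$ of a regular section of $\mathcal{E}|_{W}$, where $\mathcal{E}=f_{*}(K(C_{2}))$ is a rank $2n+2$ bundle fitting into $0\to O^{n+1}(E)\to\mathcal{E}\to O^{n+1}\to 0$. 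Since $W$ is a local complete intersection inside the smooth stack $\blA\times\hig^{(-n)}$ (Proposition~\ref{geometric property of p}) it is Gorenstein, so Corollary~\ref{resolution of CM sheaf} produces a Koszul resolution $\bigwedge^{*}(\check{\mathcal{E}}|_{W})\otimes p^{*}(Q)\to M$; because $p^{*}(Q)$ is supported on $W$, the terms $\bigwedge^{k}(\check{\mathcal{E}})\otimes p^{*}(Q)$ and $\bigwedge^{k}(\check{\mathcal{E}}|_{W})\otimes p^{*}(Q)$ agree, so this is exactly the resolution claimed. The exact sequence for $\mathcal{E}$ is itself part of Lemma~\ref{Koszul resolution on W}, coming from applying $f_{*}$ to the sequence of Lemma~\ref{description of kernel} twisted by $O(C_{2})\simeq O(2n)$ and using $H^{0}(\p,O(n))\cong k^{n+1}$, $H^{1}(\p,O(n))=0$.

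For part~(1), the starting point is that $\blA\times\hig^{(-n)}$ is the blowup of $A\times\hig^{(-n)}$ along the zero locus of the tautological section $z$ of the trivial rank $n+1$ bundle $\mathcal{E}_{0}=H^{0}(\p,O(n))\otimes O$, a regular embedding of codimension $n+1$. By Proposition~\ref{blowup as regular embedding} this realizes $\pi'$ as the composite of a closed embedding $j\colon\blA\times\hig^{(-n)}\hookrightarrow\mathbf{P}(\mathcal{E}_{0})=\Pn\times A\times\hig^{(-n)}$ with the projection $\rho\colon\Pn\times A\times\hig^{(-n)}\to A\times\hig^{(-n)}$, under which $O(-aE)\simeq j^{*}O_{\Pn}(a)$. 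By Lemma~\ref{factorization to Pn} the morphism $p$ factors as $v\circ j$ with $v$ smooth, so $Lj^{*}v^{*}(Q)\simeq Lp^{*}(Q)\simeq p^{*}(Q)$ by Corollary~\ref{pullback of Q}; consequently $Lj^{*}\bigl(v^{*}(Q)\otimes O_{\Pn}(a)\bigr)\simeq p^{*}(Q)\otimes O(-aE)$ is a genuine sheaf for every $a\ge 0$.

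Fixing $a\ge 0$, I would then apply Lemma~\ref{pushforward of bl} to the sheaf $K=v^{*}(Q)\otimes O_{\Pn}(a)$ on $\mathbf{P}(\mathcal{E}_{0})$. Its hypothesis, that $R\rho_{*}\bigl(K\otimes O_{\Pn}(b)\bigr)=R\rho_{*}\bigl(v^{*}(Q)\otimes O_{\Pn}(a+b)\bigr)$ lies in $D^{\le 0}$ for all $b\ge 0$, is precisely the vanishing $R^{i}\rho_{*}(v^{*}(Q)\otimes O_{\Pn}(k))=0$ for $i>0$ and $k\ge 0$, i.e. Lemma~\ref{cohomological vanishing}. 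Its conclusion is that $R\pi'_{*}(Lj^{*}K)\in D^{\le 0}$, and since $Lj^{*}K\simeq p^{*}(Q)\otimes O(-aE)$ is a sheaf this forces $R^{i}\pi'_{*}\bigl(p^{*}(Q)\otimes O(-aE)\bigr)=0$ for all $i>0$, which is part~(1).

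The one place requiring care is the geometric bookkeeping of part~(1): realizing $\blA\times\hig^{(-n)}$ as a regularly embedded subscheme of the $\Pn$-bundle $\Pn\times A\times\hig^{(-n)}$ over $A\times\hig^{(-n)}$ in a way compatible with Lemma~\ref{pushforward of bl}, matching $O(-aE)$ with $O_{\Pn}(a)$, and, most importantly, recording that $Lj^{*}v^{*}(Q)$ is concentrated in degree $0$ (this is exactly what Corollary~\ref{pullback of Q} provides, and without it Lemma~\ref{pushforward of bl} would only control a complex rather than the sheaf $p^{*}(Q)\otimes O(-aE)$). Once these identifications are in hand, the hypothesis and conclusion of Lemma~\ref{pushforward of bl} translate into the two statements we want with no remaining computation, and part~(2) is purely formal.
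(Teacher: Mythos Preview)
Your proposal is correct and follows essentially the same route as the paper: part~(2) is deduced from Lemma~\ref{Koszul resolution on W} together with Corollary~\ref{resolution of CM sheaf} applied to the maximal Cohen--Macaulay sheaf $p^{*}(Q)$ on $W$, and part~(1) is obtained by factoring $p$ through $A\times\Pn\times\hig^{(-n)}$ via Lemma~\ref{factorization to Pn}, using Corollary~\ref{pullback of Q} to see that $Lj^{*}(v^{*}Q\otimes O(a))\simeq p^{*}(Q)\otimes O(-aE)$, and then feeding Lemma~\ref{cohomological vanishing} into Lemma~\ref{pushforward of bl}. Your write-up is somewhat more explicit than the paper's (in particular your remark that $Lj^{*}K$ must be a sheaf for the $D^{\le 0}$ conclusion to yield the desired $R^{i}$-vanishing is a worthwhile clarification), but the argument is the same.
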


\begin{proof}
For part $(1)$, notice that Lemma~\ref{factorization to Pn} implies that we have the following diagram:
$$\xymatrix{
\blA\times\hig^{(-n)} \ar[r]^{u} \ar@/_1pc/[rr]_{p}& A\times\Pn\times\hig^{(-n)} \ar[r]^{v} & \HS^{' n}\times\hig^{(-n)}\\
}
$$
Since $Lp^{*}(Q)\simeq p^{*}(Q)$, and $v$ is smooth, we see that $Lu^{*}(v^{*}(Q))\simeq p^{*}(Q)$. Hence $$Lu^{*}(v^{*}(Q)\otimes O(a))\simeq p^{*}(Q)\otimes O(-aE)$$
Now part $(1)$ follows from Lemma~\ref{cohomological vanishing} and Lemma~\ref{pushforward of bl}.\\
For Part $(2)$, notice that in Lemma~\ref{Koszul resolution on W} we already showed the existence of $\mathcal{E}$ with desired properties. Hence on $W$ we have a Koszul resolution of $O_{\blA\times_{H}\hig^{(-n)}}$ of the form:
$$\bigwedge^{*}(\check{\mathcal{E}})|_{W}\rightarrow O_{\blA\times_{H}\hig^{(-n)}}$$
Since $p^{*}(Q)$ is a maximal Cohen-Macaulay sheaf on $W$, and $\blA\times_{H}\hig^{(-n)}$ is a regular embedding in $W$, hence Corollary~\ref{resolution of CM sheaf} implies the result.

\end{proof}

Next we are going to prove part $(2)$. As we shall see, part $(2)$ follows essentially from part $(1)$ because of the next lemma. In order to state it, we first define some morphisms between the stack of Higgs bundles. Let $\alpha$ be the involution of $\hig$ given by:
$$\hig\xrightarrow{\alpha}\hig\qquad F\rightarrow \check{F}=\RH(F,O_{C})=\mathcal{H}om_{O_{C}}(F,O_{C})$$
Observe that by Lemma~\ref{description of the dual} implies that:
$$F\in\hig^{(-n)}\Rightarrow\check{F}\in\hig^{(0)}$$
So $\alpha$ defines an isomorphism: $$\hig^{(-n)}\xrightarrow{\alpha}\hig^{(0)}:\qquad F\rightarrow \mathcal{H}om_{O_{C}}(F,O_{C})=\check{F}$$
We also have an isomorphism
$$\hig^{(0)}\xrightarrow{\beta}\hig^{(-n)}:\qquad F\rightarrow F\otimes O(-n)$$
Hence we have induced isomorphisms:
$$\blA\times_{H}\hig^{(-n)}\xrightarrow{\alpha'}\blA\times_{H}\hig^{(0)}\xrightarrow{\beta'}\blA\times_{H}\hig^{(-n)}$$
$$A\times_{H}\hig^{(-n)}\xrightarrow{\alpha''}A\times_{H}\hig^{(0)}\xrightarrow{\beta''}A\times_{H}\hig^{(-n)}$$

Now we can state:
\begin{lemma}\label{dual of M}
$\RH_{\blA\times_{H}\hig^{(-n)}}(M,O(nE))\simeq (\beta'\alpha')^{*}M$
\end{lemma}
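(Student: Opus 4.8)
The plan is to reduce Lemma~\ref{dual of M} to a self-duality property of the Poincar\'e sheaf and then transport that along the morphism that defines $M$, bookkeeping the twist by $O(nE)$.

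First I would set up the reduction. By Corollary~\ref{part 1 of the main theorem} $M=g^{*}(Q)$ is maximal Cohen--Macaulay and $\blA\times_{H}\hig^{(-n)}$ is Gorenstein, so both $\RH(M,O(nE))$ and $(\beta'\alpha')^{*}M$ are maximal Cohen--Macaulay sheaves, and by Proposition~\ref{uniqueness of CM sheaf} it suffices to identify their defining line bundles on a dense open subset with complement of codimension $\ge 2$. The crucial observation is that the length $n$ subscheme $D\subseteq C_{1}$ produced by the construction always satisfies $I_{D}^{\vee}|_{\p}\simeq O\oplus O$: by Lemma~\ref{description of kernel} the ideal $I_{D}$ is fibrewise an extension of $O(-n)$ by $O(-n)$ on $\p$, which splits because $\mathrm{Ext}^{1}_{O_{\p}}(O(-n),O(-n))=0$, and then Lemma~\ref{description of the dual} gives $I_{D}^{\vee}|_{\p}\simeq O\oplus O$. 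Hence $\blA\to\HuC$ factors through the locus $U_{n}$ on which $\alpha$ is smooth, so $M$ is the pullback of the descended maximal Cohen--Macaulay sheaf $\mathcal{P}$ of Corollary~\ref{CM sheaf constructed in 1} on $\widetilde{\hig}\times_{H}\hig^{(-n)}$ along the morphism $\tilde g\colon\blA\times_{H}\hig^{(-n)}\to\widetilde{\hig}\times_{H}\hig^{(-n)}$, $(D,F')\mapsto(I_{D}^{\vee},F')$. This $\tilde g$ is a local complete intersection morphism (a smooth cover, then $g$, which is l.c.i.\ by Theorem~\ref{properties of g}, then the smooth morphism $\alpha\times\mathrm{id}$ over $U_{n}$), hence of finite tor-dimension, and Proposition~\ref{pullback of CM sheaf}(3) (with $Z=Y$) gives
$$\RH_{\blA\times_{H}\hig^{(-n)}}(M,O)\ \simeq\ \tilde g^{*}\,\RH_{\widetilde{\hig}\times_{H}\hig^{(-n)}}(\mathcal{P},O).$$

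The heart of the matter is the duality for $\mathcal{P}$. I claim that on $\widetilde{\hig}\times_{H}\hig^{(-n)}$ one has $\RH(\mathcal{P},O)\simeq(\mathrm{id}\times\beta\alpha)^{*}\mathcal{P}\otimes p_{1}^{*}\mathcal{N}$, where $p_{1}$ is the projection to $\widetilde{\hig}$, $\beta\alpha$ is $F'\mapsto\check F'\otimes O(-n)$, and $\mathcal{N}=\mathcal{P}_{\pi^{*}O(n)}$ is the restriction of the Poincar\'e line bundle to $\{\pi^{*}O(n)\}\times\widetilde{\hig}$. Both sides are maximal Cohen--Macaulay, so by Proposition~\ref{uniqueness of CM sheaf} it is enough to check this over the open locus where the second factor is a line bundle; its complement has codimension $\ge 2$ (the complement of $\hig^{reg}$ in $\hig^{(-n)}$ has codimension $\ge 2$, and non-reduced spectral curves sit in codimension even larger, cf.\ Proposition~\ref{properties of Hitchin fibration}, Proposition~\ref{spectral data}), and there $\mathcal{P}$ is the Poincar\'e line bundle of formula~\ref{Poincare line bundle}. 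For a torsion-free rank one sheaf $L_{1}$ and a line bundle $L_{2}$ on the spectral curve, the biextension (theorem of the cube) property of $\de R\pi_{*}$, together with the normalization $\Poincare(L_{1},O_{C})\simeq O$ built into formula~\ref{Poincare line bundle}, gives $\Poincare(L_{1},L_{2})^{-1}\simeq\Poincare(L_{1},\check L_{2})$, whence
$$\Poincare(L_{1},L_{2})^{-1}\ \simeq\ \Poincare\big(L_{1},\beta\alpha(L_{2})\otimes\pi^{*}O(n)\big)\ \simeq\ \Poincare(L_{1},\beta\alpha(L_{2}))\otimes\Poincare(L_{1},\pi^{*}O(n)).$$
Proposition~\ref{line bundle} then identifies $\mathcal{N}\simeq\de(p_{*}(F|_{nx_{0}}))$, with $F$ the universal Higgs bundle regarded as a rank two bundle on $\p$ and $nx_{0}$ a length $n$ divisor on $\p$.

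The last step is the twist computation, which I expect to be routine. Since the first component of $\tilde g$ sends $D$ to $I_{D}^{\vee}=\mathcal{H}om_{O_{C_{1}}}(I_{D},O_{C_{1}})$, the previous identification yields $\tilde g^{*}p_{1}^{*}\mathcal{N}\simeq\de f_{*}\big((I_{D}^{\vee})|_{nx_{0}}\big)$ on $\blA\times_{H}\hig^{(-n)}$. Using Lemma~\ref{description of the dual} to rewrite $I_{D}^{\vee}\simeq\mathcal{H}om_{O_{\p}}(I_{D},O_{\p})\otimes O(-n)$ and dualizing the sequence $0\to O(-n)\otimes O(E)\to I_{D}\to O(-n)\to 0$ of Lemma~\ref{description of kernel} over the smooth curve $\p$, one gets $0\to O_{\p}\to I_{D}^{\vee}\to O_{\p}\otimes O_{\blA}(-E)\to 0$; restricting to $nx_{0}$ and taking $\de f_{*}$ gives $\tilde g^{*}p_{1}^{*}\mathcal{N}\simeq O_{\blA}(-nE)$. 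Combining this with the two previous displays and the compatibility $(\mathrm{id}\times\beta\alpha)\circ\tilde g=\tilde g\circ(\beta'\alpha')$ (both sides being $b'\times\beta\alpha$), we obtain $\RH(M,O)\simeq(\beta'\alpha')^{*}M\otimes O_{\blA}(-nE)$, and tensoring by $O(nE)$ proves the lemma. The hard part will be the duality statement for $\mathcal{P}$: it requires the theorem-of-the-cube property of $\de R\pi_{*}$ in the precise generality above and the codimension estimates that make Proposition~\ref{uniqueness of CM sheaf} applicable; the exceptional-divisor bookkeeping in the last step, though it demands care with $O_{C_{1}}$- versus $O_{\p}$-duals, is then straightforward.
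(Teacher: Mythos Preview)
Your proof is correct and follows essentially the same route as the paper: realize $M$ as the pullback of the Poincar\'e sheaf $\mathcal{P}$ along the l.c.i.\ morphism $q\times\mathrm{id}$ (your $\tilde g$), use the duality/equivariance of $\mathcal{P}$ to identify $\check{\mathcal{P}}$ with $(\mathrm{id}\times\beta\alpha)^{*}\mathcal{P}$ up to a $\mathcal{P}_{L}$-twist, transport this via Proposition~\ref{pullback of CM sheaf}, and compute the pullback of the twist as $O(\pm nE)$ from Lemma~\ref{description of kernel}. The only difference is that the paper cites the duality $(\mathrm{id}\times\alpha)^{*}\mathcal{P}\simeq\check{\mathcal{P}}$ (Proposition~\ref{dual}, from \cite{Autoduality}) and the equivariance Lemma~\ref{equivariance property} as black boxes, whereas you re-derive their combined consequence directly from the biextension property of formula~\eqref{Poincare line bundle} over the line-bundle locus and extend by Proposition~\ref{uniqueness of CM sheaf}.
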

Assume this for the moment, we are going to prove part $(2)$

\begin{proof}(Of part $(2)$ of Theorem~\ref{reformulation of main theorem})
By Grothendieck duality, we have
$$\RH(R\pi_{*}M,O)\simeq R\pi_{*}\RH(M,\omega)$$
where $\omega$ is the relative dualizing sheaf of $\pi$. From Proposition~\ref{dualizing sheaf of bl}, $\omega\simeq O(nE)$, hence
$$\RH(R\pi_{*}M,O)\simeq R\pi_{*}\RH(M,O(nE))$$
By the previous lemma, we have
$$\RH(M,O(nE))\simeq (\beta'\alpha')^{*}M$$
Because the following diagram is commutative:
$$\xymatrix{
\blA\times_{H}\hig^{(-n)} \ar[d]^{\pi} \ar[r]^{\beta'\alpha'} & \blA\times_{H}\hig^{(-n)} \ar[d]^{\pi}\\
A\times_{H}\hig^{(-n)} \ar[r]^{\beta''\alpha''} & A\times_{H}\hig^{(-n)}\\
}
$$
We have:
$$\RH(R\pi_{*}M,O)\simeq R\pi_{*}(\beta'\alpha')^{*}M\simeq (\beta''\alpha'')^{*}R\pi_{*}M$$
Part $(1)$ of Theorem~\ref{reformulation of main theorem} implies that $R\pi_{*}M$ sits in degree $0$, so we are done.

\end{proof}

To prove Lemma~\ref{dual of M}, we need the following result in $[1]$, which describes the following symmetry satisfied by the Poincar\'e sheaf:
\begin{Proposition}(Lemma $6.2$ of $[1]$)\label{dual}
Consider the involution:
$$\overline{J}\xrightarrow{\alpha}\overline{J}$$
defined by
$$F\rightarrow \check{F}=\RH(F,O_{C})$$
Then we have: $$(id\times\alpha)^{*}\mathcal{P}\simeq\check{\mathcal{P}}=\RH(\mathcal{P},O)=\mathcal{H}om(\mathcal{P},O)$$
\end{Proposition}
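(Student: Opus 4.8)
The plan is to use that both $(id\times\alpha)^{*}\mathcal{P}$ and $\check{\mathcal{P}}=\RH(\mathcal{P},O)$ are maximal Cohen-Macaulay sheaves on $\J\times\J$, so that by Proposition~\ref{uniqueness of CM sheaf} it suffices to produce a \emph{canonical} isomorphism between them over an open subset whose complement has codimension at least $2$, and then extend. Since $\alpha$ is an involution of $\J$, the map $id\times\alpha$ is an automorphism of $\J\times\J$, and pulling back by an automorphism preserves maximal Cohen-Macaulayness; likewise $\mathcal{H}om(-,O)$ is the anti-auto-equivalence of the category of maximal Cohen-Macaulay sheaves recorded after the definition of Cohen-Macaulay sheaves, so $\check{\mathcal{P}}$ is again maximal Cohen-Macaulay and sits in degree $0$. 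The natural comparison locus is $U:=J\times\J\cup\J\times J$, the domain on which $\mathcal{P}$ is the honest Poincar\'e line bundle of formula~\ref{Poincare line bundle}; its complement $(\J\setminus J)\times(\J\setminus J)$ has codimension at least $2$, because $\J\setminus J$ has codimension at least $1$ in each factor and codimension is additive in the product.

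On $U$ both sheaves are line bundles, and since $\check{\mathcal{P}}|_{U}=\mathcal{P}|_{U}^{-1}$ it is enough to establish $(id\times\alpha)^{*}\mathcal{P}\simeq\mathcal{P}^{-1}$ there. Over the component $\J\times J$, where the second factor carries a line bundle $L$, the involution acts by $\alpha(L)=\check{L}=L^{-1}$, and since formula~\ref{Poincare line bundle} is precisely the Deligne pairing $\langle F,L\rangle$, the desired identity $\langle F,L^{-1}\rangle\simeq\langle F,L\rangle^{-1}$ is just the bi-additivity of that pairing. Over the component $J\times\J$, where the first factor carries a line bundle $F$ and the second an \emph{arbitrary} torsion-free rank-one sheaf $L$, the involution $\alpha(L)=\check{L}$ is no longer the inverse of a line bundle and the check is more delicate. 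Writing $\lambda(M)=\de R\pi_{*}(M)$, relative Grothendieck-Serre duality on the Gorenstein curve gives $\lambda(\RH(M,\omega_{C}))\simeq\lambda(M)$, hence $\lambda(\check{M}\otimes\omega_{C})\simeq\lambda(M)$ for torsion-free $M$, while Deligne-Riemann-Roch supplies $\lambda(F)^{\otimes 2}\simeq\langle F,F\otimes\omega_{C}^{-1}\rangle\otimes\lambda(O_{C})^{\otimes 2}$. Feeding these into the expression for $\langle F,\check{L}\rangle$, one finds that the $\omega_{C}$-twists produced by Serre duality and the quadratic self-pairing terms $\langle F,F\rangle$ cancel exactly, leaving $\langle F,\check{L}\rangle\simeq\langle F,L\rangle^{-1}$. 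All of these determinant manipulations are functorial in families, so they assemble into a global isomorphism $(id\times\alpha)^{*}\mathcal{P}|_{U}\simeq\check{\mathcal{P}}|_{U}$.

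Finally I would extend this isomorphism across the codimension-$\geq 2$ complement. By the $\textrm{Hom}$-part of Proposition~\ref{uniqueness of CM sheaf}, restriction induces a bijection $\textrm{Hom}_{\J\times\J}((id\times\alpha)^{*}\mathcal{P},\check{\mathcal{P}})\simeq\textrm{Hom}_{U}((id\times\alpha)^{*}\mathcal{P}|_{U},\check{\mathcal{P}}|_{U})$ and likewise in the opposite direction, so the isomorphism on $U$ and its inverse lift uniquely to maps on all of $\J\times\J$; the two composites equal the respective identities because they do so on the dense open $U$ and $\textrm{Hom}$ is detected there. This produces the desired isomorphism $(id\times\alpha)^{*}\mathcal{P}\simeq\check{\mathcal{P}}$.

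The hard part will be the $J\times\J$ sub-case: verifying that the $\omega_{C}$-twists coming from Serre duality and the quadratic $\langle F,F\rangle$ corrections cancel precisely, and, more importantly, arranging all the determinant-of-cohomology identifications to be canonical in families rather than merely fibrewise, which is exactly what legitimizes the gluing step. The $\J\times J$ sub-case, the maximal Cohen-Macaulay bookkeeping, and the final extension are by comparison routine.
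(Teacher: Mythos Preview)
The paper does not give its own proof of this proposition: it is quoted as Lemma~6.2 of \cite{Autoduality} and used as a black box in the proof of Lemma~\ref{dual of M}. So there is nothing in the present paper to compare your argument against.

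That said, your outline is a correct strategy and is in the spirit of how such statements are proved in \cite{Autoduality}. Both $(id\times\alpha)^{*}\mathcal{P}$ and $\check{\mathcal{P}}$ are maximal Cohen--Macaulay (the first by pullback along an isomorphism, the second by the anti-auto-equivalence recorded after the definition of Cohen--Macaulay sheaves), the complement of $U=J\times\J\cup\J\times J$ has codimension $\geq 2$ since $C$ is reduced, and Proposition~\ref{uniqueness of CM sheaf} then reduces everything to a canonical identification of line bundles on $U$. Your division into the two sub-cases and your diagnosis of the $J\times\J$ side as the substantive one are accurate: on $\J\times J$ the involution sends the line bundle $L$ to $L^{-1}$ and the claim is the biadditivity of the determinant-of-cohomology pairing in the line-bundle variable, which holds even when the other argument is merely torsion-free; on $J\times\J$ one genuinely needs Grothendieck--Serre duality together with the quadratic Riemann--Roch identity for $\lambda(F)^{\otimes 2}$ to see the $\omega_{C}$-twists cancel. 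The only point I would flag is your closing remark that the extension step and the $\J\times J$ case are ``routine'': they are, but you should still check that the identifications you write down are compatible on the overlap $J\times J$, since Proposition~\ref{uniqueness of CM sheaf} gives you a unique extension of a \emph{single} morphism on $U$, and for that you need the two line-bundle isomorphisms to agree where both apply.
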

We also need the following lemma:
\begin{lemma}\label{preparation for the dual}
Recall the line bundle $\mathcal{P}_{L}$ we discussed in Lemma~\ref{equivariance property}. Set $L=O(-n)$. Consider the following diagram, $h$ is given by $D\rightarrow \check{I}_{D}$ where $$\check{I}_{D}=\RH_{O_{C}}(I_{D},O_{C})=\mathcal{H}om_{O_{C}}(I_{D},O_{C})$$
$$
\xymatrix{
\blA \ar[r]^{p} \ar@/_1pc/[rr]_{q} & Hilb_{\mathcal{C}|H}^{n} \ar[r]^{h} &\widetilde{\hig}\\
}
$$
Then we have:
\begin{enumerate1}
\item $q^{*}(\mathcal{P}_{L})\simeq O(nE)$.
\item The image of $p$ lies in $U_{n}$(Recall the definition of $U_{n}$ in Proposition~\ref{definition of Q}).
\item The maximal Cohen-Macaulay sheaf $M$ on $\blA\times_{H}\hig^{(-n)}$ is the pullback of the Poincar\'e sheaf on $\widetilde{\hig}\times_{H}\hig^{(-n)}$(See Corollary~\ref{CM sheaf constructed in 1}) via:
    $$\blA\times_{H}\hig^{(-n)}\xrightarrow{q\times id}\widetilde{\hig}\times_{H}\hig^{(-n)}$$
\item The morphism $q$ is a local complete intersection morphism.
\end{enumerate1}
\end{lemma}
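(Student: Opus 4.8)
The plan is to prove the four parts in the order listed, with everything resting on an explicit description of the torsion--free rank one sheaf $\check{I}_{D}$ on $\p\times\blA$ that $q$ classifies; here $D$ is the universal length $n$ subscheme sitting inside the spectral curve $C_{1}$, and $I_{D}=K$ is its ideal sheaf in $O_{C_{1}}$, with $K$ as in Lemma~\ref{description of kernel}. So $q^{*}F\simeq\check{I}_{D}=\mathcal{H}om_{O_{C_{1}}}(K,O_{C_{1}})$.

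The first step is to produce a basic short exact sequence. Since $K$ is locally free of rank $2$ over $O_{\p}$, Lemma~\ref{description of the dual} identifies $\check{I}_{D}$ with $\mathcal{H}om_{O_{\p}}(K,O_{\p})\otimes O_{\p}(-n)$; dualizing the sequence of Lemma~\ref{description of kernel} over $O_{\p}$ and twisting by $O_{\p}(-n)$ then yields
\[ 0\longrightarrow O_{\p}\longrightarrow\check{I}_{D}\longrightarrow f^{*}O_{\blA}(-E)\longrightarrow 0 \]
on $\p\times\blA$, where $f\colon\p\times\blA\to\blA$ is the projection. For part $(1)$ I would feed $L=O(-n)$ into Proposition~\ref{line bundle} (reading off $\mathcal{P}_{L}$ for this negative--degree line bundle from the bilinear formula~\ref{Poincare line bundle}, i.e.\ via $\mathcal{P}_{O(-nx_{0})}\simeq\mathcal{P}_{O(nx_{0})}^{-1}$), which makes $q^{*}\mathcal{P}_{L}$ the ratio $\de\!\big(Rf_{*}(\check{I}_{D}\otimes O_{\p}(-n))\big)\otimes\de\!\big(Rf_{*}\check{I}_{D}\big)^{-1}$; pushing the displayed sequence and its $O_{\p}(-n)$--twist forward, using $Rf_{*}O_{\p}=O_{\blA}$ and $R^{1}f_{*}O_{\p}(-n)=H^{1}(\p,O(-n))\otimes O_{\blA}$ (of rank $n-1$), the two determinants come out to $O_{\blA}(-E)$ and $O_{\blA}((n-1)E)$, giving $q^{*}\mathcal{P}_{L}\simeq O(nE)$. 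For part $(2)$ I would restrict the displayed sequence to a fibre $\p\times\{t\}$: all three terms are flat over $\blA$, and $f^{*}O_{\blA}(-E)$ restricts to a trivial line bundle on $\p$, so $\check{I}_{D}|_{t}$ is an extension of $O_{\p}$ by $O_{\p}$ and hence $H^{1}(\check{I}_{D}|_{t})=0$; this is exactly the condition cutting out $U_{n}$, so the image of $p$ lies in $U_{n}$.

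Part $(3)$ is then formal: by $(2)$, $p$ factors through $U_{n}$, on which $Q$ is the pullback along $h\times\mathrm{id}$ of the Poincar\'e sheaf $\mathcal{P}$ on $\widetilde{\hig}\times_{H}\hig^{(-n)}$ by Proposition~\ref{definition of Q}$(2)$; since $M=g^{*}Q=(p\times\mathrm{id})^{*}Q$ (Corollary~\ref{pullback of Q}) and $p\times\mathrm{id}$ factors through $U_{n}\times_{H}\hig^{(-n)}$, we get $M\simeq\big((h\circ p)\times\mathrm{id}\big)^{*}\mathcal{P}=(q\times\mathrm{id})^{*}\mathcal{P}$. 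For part $(4)$ I would invoke the corollary to Proposition~\ref{resolve the rational map}: the image of $p$ lies in the open subscheme of $\HuC$ sitting over the open subscheme of $\HS^{n}$ of Proposition~\ref{an open subscheme of HS}. Using the section--vanishing description in Proposition~\ref{Hilbert scheme of planar curves} one checks that this open subscheme is smooth (it fibres over $\Pn$ with smooth fibres). Hence $p$ is a morphism of smooth $k$--schemes and is therefore a local complete intersection morphism by Proposition~\ref{properties of lci}$(4)$ (applied with base $\textrm{Spec}\,k$); composing with $h$, which is smooth on $U_{n}$, and applying Proposition~\ref{properties of lci}$(1)$ shows $q$ is a local complete intersection morphism.

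The hard part will be the determinant--of--cohomology bookkeeping in part $(1)$: one has to fix the sign convention for $\mathcal{P}_{L}$ on a negative--degree line bundle, carefully combine Lemma~\ref{description of the dual} with Lemma~\ref{description of kernel} to obtain the displayed short exact sequence, and correctly track the rank $n-1$ contribution of $H^{1}(\p,O(-n))$ through the determinant -- a slip there would replace $O(nE)$ by the wrong twist, and parts $(2)$--$(4)$ all rely on that sequence. A secondary, more routine point is the smoothness verification for the relevant open subscheme of $\HuC$ used in $(4)$.
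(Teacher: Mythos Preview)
Your argument is correct, and for parts (2) and (3) it coincides with the paper's. Your short exact sequence $0\to O_{\p}\to\check I_D\to f^*O_{\blA}(-E)\to 0$ is the right one (the paper's printed version has $O(E)$ in the quotient, an apparent sign slip that does not affect the outcome).

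Where you diverge is in parts (1) and (4). For (1) the paper is shorter: it invokes Proposition~\ref{line bundle} in one stroke, writing $L=O(-nx_0)$ and using bilinearity of $\mathcal{P}$ to get $q^*\mathcal{P}_L\simeq\de(\check I_D|_{nx_0})^{-1}$, then reads off $O(nE)$ from the exact sequence restricted to the length-$n$ subscheme $nx_0\subset\p$; no $Rf_*$ bookkeeping is required. Your direct computation via the determinant-of-cohomology formula is valid but, as you yourself flag, more error-prone. For (4) the paper avoids your detour through $\HuC$ entirely: the fiberwise sequence you established in (2) already gives $(\check I_D)_y\simeq O\oplus O$, so the image of $q$ lands in $\hig^{(0)}\cap\widetilde{\hig}$, which is smooth by Proposition~\ref{smoothness of semistable higgs bundles}; then Proposition~\ref{properties of lci}(4) applies directly to $q$, with no need to factor through $p$ and $h$. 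Your route does work, but note that ``fibres over $\Pn$ with smooth fibres'' is not by itself a proof of smoothness of the total space; you would still need to check that the defining section of the rank-$n$ bundle on the relevant open of $\HS^n\times H$ is everywhere transverse (it is, via the $\de(\phi)$-coordinate on $H$), or equivalently that the projection to $\Pn$ is smooth and not merely fiberwise smooth.
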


\begin{proof}
Part $(1)$ is a direct consequence of Proposition~\ref{line bundle}. In fact, on $\p\times\blA$ we have:
$$0\rightarrow K\rightarrow O_{C}\rightarrow O_{D}\rightarrow 0$$
$K$ sits in an exact sequence(Lemma~\ref{description of kernel}):
$$0\rightarrow O(E)\otimes O(-n)\rightarrow K\rightarrow O(-n)\rightarrow 0$$
Moreover, by Lemma~\ref{description of the dual} we have:
$$\mathcal{H}om_{O_{C}}(K,O_{C})\simeq \mathcal{H}om_{O_{\p}}(K,O_{\p})\otimes O(-n)$$
So if we write $O(-n)\simeq O(-nx_{0})$ for some $x_{0}\in\p$, then by Proposition~\ref{line bundle}, $q^{*}(\mathcal{P}_{L})\simeq \de(\check{K}_{nx_{0}})^{-1}\simeq O(nE)$.\\
For part $2$, notice that in our case $I_{D}=K$, and we have an exact sequence (Lemma~\ref{description of kernel}):
$$0\rightarrow O(E)\otimes O(-n)\rightarrow K\rightarrow O(-n)\rightarrow 0$$
Hence by Lemma~\ref{description of the dual}, $\check{I}_{D}$ sits in an exact sequence:
$$0\rightarrow O\rightarrow \check{I}_{D}\rightarrow O(E)\rightarrow 0$$
So if we consider:
$$\xymatrix{
\p\times\blA \ar[d]^{f}\\
\blA\\
}
$$
Then for any $y\in\blA$, we have the following exact sequence on the fiber over $y$:
$$0\rightarrow O\rightarrow (\check{I}_{D})_{y}\rightarrow O\rightarrow 0$$
Hence over each fiber, $(\check{I}_{D})_{y}\simeq O\oplus O$, so the claim follows from this.\\
For part $(3)$, notice that by the construction of $M$ and part $(2)$, $M$ is the pullback of the maximal Cohen-Macaulay sheaf $Q$ on $$U_{n}\times_{H}\hig\subseteq\textrm{Hilb}^{n}_{\mathcal{C}|H}\times_{H}\hig^{(-n)}$$
via:
$$\blA\times_{H}\hig^{(-n)}\xrightarrow{p}U_{n}\times_{H}\hig^{(-n)}$$
By Proposition~\ref{definition of Q}, $Q$ is the pullback of the Poincar\'e sheaf $\mathcal{P}$ on $\widetilde{\hig}\times_{H}\hig^{(-n)}$, this implies $(3)$\\
For part $(4)$, notice that the image of $q$ is contained in $\hig^{(0)}\bigcap\widetilde{\hig}$. In fact, in the proof of part $(2)$ we have already seen that for any $y\in\blA$, we have the following exact sequence on the fiber over $y$:
$$0\rightarrow O\rightarrow (\check{I}_{D})_{y}\rightarrow O\rightarrow 0$$
Hence it follows from the definition of $\hig^{(0)}$ that the image of $q$ is in $\hig^{(0)}$. Now since $\hig^{(0)}$ is smooth (Proposition~\ref{smoothness of semistable higgs bundles}), and $\blA$ is also smooth, hence $q$ is a local complete intersection morphism by Part $(4)$ of Proposition~\ref{properties of lci}.

\end{proof}

Now we can prove Lemma~\ref{dual of M}:
\begin{proof}
First by part $(3)$ of Lemma~\ref{preparation for the dual}, the sheaf $M$ on $\blA\times_{H}\hig^{(-n)}$ is the pullback of the Poincar\'e sheaf $\mathcal{P}$ on $\widetilde{\hig}\times_{H}\hig^{(-n)}$ via:
$$\blA\times_{H}\hig^{(-n)}\xrightarrow{q\times id} \widetilde{\hig}\times_{H}\hig^{(-n)}$$
We have the following commutative diagram:
\[\begin{CD}
\blA\times_{H}\hig^{(-n)}@>{id_{\blA}\times \beta\alpha}>>\blA\times_{H}\hig^{(-n)}\\
@V{q\times id}VV@V{q\times id}VV\\
\widetilde{\hig}\times_{H}\hig^{(-n)}@>{id_{\widetilde{\hig}}\times \beta\alpha}>>\widetilde{\hig}\times_{H}\hig^{(-n)}\\
\end{CD}\]
Since $\beta$ is given by tensoring with the line bundle $L=O(-n)$, combining Lemma~\ref{equivariance property}, Proposition~\ref{pullback of CM sheaf} and Proposition~\ref{dual}, we have:
$$(id_{\widetilde{\hig}}\times\beta\alpha)^{*}(\mathcal{P})\simeq\check{\mathcal{P}}\otimes p_{1}^{*}\mathcal{P}_{L}$$
where $p_{1}$ is the projection $\widetilde{\hig}\times_{H}\hig^{(-n)}\rightarrow \widetilde{\hig}$. Hence:
$$(id_{\blA}\times\beta\alpha)^{*}M\simeq (q\times id)^{*}(id_{\widetilde{\hig}}\times\beta\alpha)^{*}(\mathcal{P})\simeq (q\times id)^{*}(\check{\mathcal{P}}\otimes p_{1}^{*}\mathcal{P}_{L})$$
By part $(1)$ of Lemma~\ref{preparation for the dual}, we have
$$(q\times id)^{*}(p_{1}^{*}\mathcal{P}_{L})\simeq O(nE)$$
Also by Proposition~\ref{pullback of CM sheaf} and part $(4)$ of Lemma~\ref{preparation for the dual} and Proposition~\ref{pullback of CM sheaf} we have:
$$(q\times id)^{*}(\check{\mathcal{P}})\simeq\RH(M,O)$$
hence the result follows.

\end{proof}

\end{document}